\theoremstyle{plain}
\newtheorem{theorem} {Theorem} [section]
\newtheorem{lemma} [theorem] {Lemma}
\newtheorem{proposition} [theorem] {Proposition}
\newtheorem{corollary} [theorem] {Corollary}
\theoremstyle{definition}
\newtheorem{definition} [theorem] {Definition}
\newcommand{\C}{\mathbb{C}}
\newcommand{\N}{\mathbb{N}}
\newcommand{\R}{\mathbb{R}}
\newcommand{\Rn}{{\mathbb{R}^n}}
\newcommand{\forwhich}[0]{{ \ ; \ }}
\newcommand{\al}{\alpha}
\newcommand{\be}{\beta}
\newcommand{\ga}{\gamma}
\newcommand{\De}{\Delta}
\newcommand{\de}{\delta}
\newcommand{\epv}{\varepsilon}
\newcommand{\et}{\eta}
\newcommand{\Si}{\Sigma}
\newcommand{\ph}{\phi}
\newcommand{\na}{\nabla}
\DeclareMathOperator*{\curl}{curl}
\DeclareMathOperator*{\divergence}{div}
\DeclareMathOperator*{\supp}{supp}
\newcommand\norm[1]{\left\Arrowvert#1\right\Arrowvert}
\newcommand\ceil[1]{\left\lceil#1\right\rceil}
\newcommand\floor[1]{\left\lfloor#1\right\rfloor}
\newcommand\abs[1]{\left\vert#1\right\vert}
\newcommand\ra{\rightarrow}
\title{Morse Index Stability  for   Sequences of Sacks-Uhlenbeck Maps into a Sphere}
\author{Francesca Da Lio, Tristan Rivi\`ere and Dominik Schlagenhauf \thanks{Department of Mathematics, ETH Zentrum,
CH-8093 Z\"urich, Switzerland.}}
\date{ }
\begin{document}
\maketitle
\begin{abstract}{\fontfamily{cmtt}\selectfont\small
 
In this paper we consider sequences of  $p$-harmonic maps, $p>2$, from a closed Riemann surface $\Sigma$  into the $n$-dimensional  sphere $\mathbb{S}^n$ with uniform bounded energy. These are critical points of the energy
$$E_p(u) \coloneqq \int_\Sigma \left( 1+\abs{\nabla u}^2\right)^{p/2} \ dvol_\Sigma.$$
Our  two main results are   an improved pointwise estimate of the gradient in the neck regions around blow up points and the
proof  that the necks are asymptotically not contributing to the negativity of the second variation of the energy $E_p.$
This allows us, in the spirit of the   paper   of the first and second authors in collaboration with M. Gianocca \cite{DGR22},  to show the upper semicontinuity of the Morse index plus nullity for  sequences of $p$-harmonic maps into a sphere.}
\end{abstract}
{\noindent{\small{\bf Keywords.} $p$-harmonic maps, Morse index theory,    conformally invariant variational problems, energy quantization}}\par
{\noindent{\small { \bf  MSC 2020.}  35J92, 58E05, 35J50, 35J47, 58E12,58E20, 53A10, 53C43}}

\vspace{15mm}
{\fontfamily{cmtt}\selectfont\tableofcontents}

\medskip
\section{Introduction}
Let   $(\Sigma,h)$ be   a smooth closed Riemann surface and ${\mathcal{N}}^n\subset \R^m$ be an  at least $C^2$ $n$ dimensional closed manifold in $\R^m$. We consider
  the Dirichlet Energy in $2$-dimensions 
 \begin{equation}\label{DE}
E: W^{1,2}(\Sigma;{\mathcal{N}}^n) \ra \R; \qquad
E(u) \coloneqq \int_\Sigma \abs{\nabla u}^2\ dvol_\Sigma.
\end{equation}
 One of the main applications related to the study of \eqref{DE} is   the existence of critical points. These are called  {\em weak harmonic maps}. 
One of the main strategies in calculus of variations to search critical points of variational problems, which are not necessarily minimizers,  consists in the so-called min-max operations. There is a vast literature concerning different types of min-max operations according to the geometric context.   An important issue in min-max operations is the control of the Morse Index.  This permits   in some particular situations to deduce the existence of infinite number of critical points   (see, e.g. \cite{CM}).   
  It is well known that the energy functional $E$ is  conformal invariant, possesses a non-compact invariance group and represents the  limiting case where the Palais-Smale condition fails. In order to overcome this difficulty one considers suitable relaxations of the energy $E$ satisfying the Palais-Smale condition.\par
  J. Sacks and K. Uhlenbeck in the pioneering  paper \cite{SaU} considered the following regularization of the Dirichlet Energy
\begin{equation}\label{Ep}
E_p: W^{1,p}(\Sigma;{\mathcal{N}}^n) \ra \R; \qquad
E_p(u) \coloneqq \int_\Sigma \left( 1+\abs{\nabla u}^2\right)^{p/2} \ dvol_\Sigma,
\end{equation}
 where $p>2.$ We observe that the energy \eqref{Ep} is sub-critical, in the sense that every sequence of critical points with uniformly bounded energy is pre-compact. Moreover the Sobolev Space $W^{1,p}(\Sigma)$ embeds into $C^0(\Sigma)$.  Hence J. Sacks and K. Uhlenbeck could produce a sequence of smooth minima  of  \eqref{Ep} belonging  for every $p>2$ to a fixed {\em free homotopy class},   which {\bf bubble tree converges} towards $(u_\infty, v_\infty^1\ldots v_\infty^Q)$. As an application of their blow-up analysis, they were able to show the existence of a minimal
 two-sphere (modulo the  action of the first fundamental group $\Pi_1$)  under suitable topological conditions of the target manifold. \par
 It turns out that the energy \eqref{Ep} also satisfies Palais-Smale condition. This permitted later  T. Lamm  \cite{Lamm} to extend the Sacks - Uhlenbeck procedure to sequences of min-max critical points. Lamm  additionally  showed  the {\em energy identity}
in the blow-up process, under an ad-hoc entropy condition, which comes from ``Struwe's monotonicity trick'' in the case of min-max critical points (see e.g \cite{Struwe}).\par
 In a recent paper of the first author with M. Gianocca \cite{DG},  a different perturbation of \eqref{DE} has been considered,
 given by the Ginzburg-Landau energy:
\begin{equation}\label{Eeps}
 {E}_\varepsilon(u)= \int_{\Sigma}e_{\varepsilon}(u)\,dvol_h,
\end{equation}
where
 \begin{equation}\label{energydensintro}
 e_{\varepsilon}(u)=\frac{1}{2}|d u|_h^2+\frac{1}{4\varepsilon^2}(1-|u|^2)^2\,.
 \end{equation}

The energy $ {E}_\varepsilon$ is defined on the Hilbert space $W^{1,2}(\Sigma,\mathbb{R}^{n})$ and can also  be used to approximate harmonic maps $u:\Sigma\to \mathbb{S}^n$, where $\mathbb{S}^n$ is $n$-dimensional sphere (this has been done by   Karpukhin and  Stern, \cite{KaSt}). The analysis of the asymptotic behavior of critical points of $\eqref{Eeps}$ defined on a closed Riemannian manifold of dimensions $m\ge 2$ has been originally performed by Lin and Wang in \cite{LinWa2}.\par
Coming back to  Sacks-Uhlenbeck energy \eqref{Ep}, one can produce critical points whose extended index (Morse Index plus Nullity) can be controlled by  the number of parameters involved in the  min-max procedure, (see, e.g. 
Theorem 10.2 in Ghoussoub \cite{G}).\par One of the main problems  related to the applications is  the analysis of the asymptotic behavior of the extended index of a sequence of critical points  to $\eqref{Ep}$.
\par
The bound from above of the index of a harmonic map   by the index of approximating $p$-harmonic maps is quite standard. Indeed once we have the quantization of the energy,   one can easily  produce, by using a standard $\log$-cutoff method (see e.g.  \cite{KaSt}  and \cite{Riv2} ), from a negative direction of  the second derivative of the Dirichlet Energy at the {\em bubble tree limit $(u_\infty, v_\infty^1\ldots v_\infty^Q)$}, a sequence of negative variations for approximating sequences of $p$-harmonic maps.\par
Precisely for $p\to 2^+$ we have
\begin{equation}\label{lowersc}
\mathrm{Ind}_{ E_{p}}(u_{p})\geq\mathrm{Ind}_{{E}}(u_\infty)+\sum_{i=1}^Q\text{Ind}_{E}(v^i_{\infty}).
\end{equation}
 
For the readers convenience, we give a simple proof of the inequality \eqref{lowersc}, in the setting of the present paper, in the Appendix \ref{SUBSECTION: A Lemma on Capacity and the Lower Semicontinuity of the Morse Index}. \par
Compared to the lower semi-continuity of the Morse index (see e.g. \cite{CM} for minimal surfaces), the upper semi-continuity is 
typically much more delicate due to the need of obtaining precise estimates of the sequence of
solutions in regions of loss of compactness and in fact it does not always hold.

In the joint  paper  \cite{DGR22} of the first  and second authors   with  Gianocca, a new method  have been developed  to prove     the upper-semi-continuity of the Morse index and the nullity associated to conformally invariant variational problems in $2$-D, which include the case of harmonic maps. This new theory have turned to be very efficient in several other geometrical analysis settings (see the recent results in the context of biharmonic maps \cite{Mi, MiRi3}, of Willmore surfaces \cite{MiRi2}, of Yang-Mills connections \cite{GL, GLR}, of Ginzburg-Landau energies  \cite{DG} 
and of constant mean curvature surfaces \cite{Work}).\par
In the present work, we consider the case where the target manifold is the $n$-sphere. One of our  main results   is   the upper semicontinuity of the extended Morse index for sequences of critical points of $ E_{p}$ 
  \begin{theorem}\label{morseindextheoremfordirichletintr}
Let $u_{k}:(\Sigma,h)\to\mathbb{S}^n$ be a sequence of critical points of $ E_{p_k}$ with uniformly bounded energy $ E_{p_k}(u_k)\leq \Lambda$ and assume that it converges in the bubble tree sense to $(u_\infty, v_\infty^1\ldots v_\infty^Q)$. Then for $k$ large enough,
\begin{equation}
\mathrm{Ind}_{ E_{p_k}}(u_k)+\mathrm{Null}_{ E_{p_k}}(u_k)\leq\mathrm{Ind}_{ E}(u_\infty)+\mathrm{Null}_{ E}(u_\infty)+\sum_{i=1}^Q\text{Ind}_{E}(v^i_{\infty})+\sum_{i=1}^Q\text{Null}_{ E}(v^i_\infty)\,.
\end{equation}
\end{theorem}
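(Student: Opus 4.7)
My plan is to argue by contradiction, following the general strategy introduced in \cite{DGR22}: assume that for infinitely many $k$ we have $\mathrm{Ind}_{E_{p_k}}(u_k)+\mathrm{Null}_{E_{p_k}}(u_k)\geq I_\infty+1$, where $I_\infty$ is the right-hand side of the theorem. Then there exists a subspace $V_k\subset \Ga(u_k^{-1}T\mathbb{S}^n)$ (really $W^{1,p_k}$-sections) of dimension $I_\infty+1$ on which the second variation $Q_{p_k}$ of $E_{p_k}$ at $u_k$ is non-positive. The goal is to produce from $V_k$ a subspace of dimension $I_\infty+1$ on which the limiting second variations are non-positive, which contradicts the very definition of $I_\infty$.

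First I would fix a bubble tree adapted covering of $\Sigma$ by three types of pieces: (a) the \emph{body} $\Om_k^{body}\subset\Si$, whose complement consists of small disks around the concentration points; (b) the \emph{bubble regions} $\Om_k^{bub,i}$ centered at the $i$-th concentration point, rescaled to capture the bubble $v_\infty^i$; and (c) the \emph{neck regions} $N_k^{i,j}$, which are degenerating annuli connecting body/bubble pieces. Using the $C^1_{loc}$ convergence on body and bubbles (after rescaling), any sequence of directions $\ph_k\in V_k$ can be tested against a basis of test variations pulled back from $u_\infty$ and from each $v_\infty^i$: standard elliptic convergence plus the uniform $W^{1,p_k}$ bound gives strong subsequential limits of $\ph_k$ on every fixed compact set of the body and of each bubble, and these limits yield variations $\ph_\infty\in\Ga(u_\infty^{-1}T\mathbb{S}^n)$ and $\ph_\infty^i\in\Ga((v_\infty^i)^{-1}T\mathbb{S}^n)$.

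The crucial step is to control what happens on the neck regions $N_k^{i,j}$, and this is precisely where the two main results highlighted in the abstract enter. The improved pointwise estimate for $|\na u_k|$ on the necks lets me prove that the linearized operator governing $Q_{p_k}$ on $N_k^{i,j}$ is a small perturbation of the scalar $p_k$-Laplacian with a potential which is quantitatively dominated by a Hardy weight on the degenerating annulus. Combining this with a log-cutoff/capacity estimate in the spirit of Appendix~\ref{SUBSECTION: A Lemma on Capacity and the Lower Semicontinuity of the Morse Index}, I would show a coercivity inequality of the form
\begin{equation*}
Q_{p_k}(\ch_k^{neck}\,\ph,\ch_k^{neck}\,\ph)\geq c\int_{N_k}|\na(\ch_k^{neck}\ph)|^{p_k}\,dvol_h-o(1)\|\ph\|^2
\end{equation*}
for every $\ph\in V_k$, where $\ch_k^{neck}$ is a suitable cutoff localized to the necks. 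This is the content of the second main result of the paper (\emph{necks do not contribute negativity to the second variation}), and I expect it to be the principal technical obstacle, since it requires a fine three-annulus decomposition of the neck together with sharp bounds on the potential term.

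Granting this neck coercivity, I would conclude as follows. Let $\ch_k^{body}$, $\ch_k^{bub,i}$, $\ch_k^{neck}$ be a partition of unity adapted to the above domain decomposition, and decompose each $\ph\in V_k$ as $\ph=\ch_k^{body}\ph+\sum_i\ch_k^{bub,i}\ph+\ch_k^{neck}\ph$. Using $Q_{p_k}(\ph,\ph)\leq 0$, bilinearity, and the neck coercivity, the off-diagonal and neck contributions are absorbed, leaving
\begin{equation*}
Q_{E}(\ph_\infty^{body},\ph_\infty^{body})+\sum_{i=1}^Q Q_E(\ph_\infty^i,\ph_\infty^i)\leq o(1)\|\ph\|^2,
\end{equation*}
after passing to the limit (here I use that $p_k\to 2$ makes $Q_{p_k}$ converge to $Q_E$ on any fixed body/bubble piece). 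A standard linear algebra argument, exactly as in \cite{DGR22}, then guarantees that the map sending $\ph\in V_k$ to its tuple of limiting body/bubble traces is asymptotically injective, so its image is a non-positive subspace of dimension $\geq I_\infty+1$ for the direct sum of second variations at $u_\infty$ and at each $v_\infty^i$, contradicting the definition of $I_\infty$. The key thing to verify carefully for the injectivity is that a direction in $V_k$ whose body and bubble traces all vanish must be essentially supported in the necks, and hence be driven to zero by the neck coercivity; this is the technical heart of the argument and again hinges on the improved pointwise gradient estimate.
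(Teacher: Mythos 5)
Your high‐level strategy (argue by contradiction, show necks contribute no negativity, pass to body and bubble limits, close with linear algebra) correctly captures the spirit of \cite{DGR22} and of this paper. However the technical route is genuinely different from what the paper does, and your specific route has a gap.

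The paper does \emph{not} decompose $Q_{p_k}$ through a partition of unity and then absorb off-diagonal/neck terms. Instead, it diagonalizes $Q_{u_k}$ with respect to the weighted inner product $\langle w,v\rangle_{\omega_{\eta,k}} = \int_\Sigma w\cdot v\,\omega_{\eta,k}$, where the weight $\omega_{\eta,k}$ is the Hardy-type function built directly from the pointwise gradient estimate of Theorem~\ref{THEOREM: on Pointwise Estimate of the Gradient in the Necks p harm}. Sylvester's law of inertia (Lemma~\ref{LEMMA: Ind plus Null eq dim of eigenspaces}) identifies $\operatorname{Ind}+\operatorname{Null}$ with the dimension of the nonpositive weighted eigenspace of a self-adjoint Jacobi operator $\mathcal L_{\eta,k}$, and the convergence step is carried out at the level of eigenfunctions (weak $W^{2,2}_{loc}$ convergence via elliptic regularity) rather than at the level of arbitrary test variations. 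The neck coercivity (Theorem~\ref{THM: JDNW892fejeujfwef32}) is invoked \emph{only once}, in Lemma~\ref{LEMMA: On the non zero of the lim functs}, to show that a normalized nonpositive eigendirection $w_k$ cannot have vanishing body and bubble limits: the cutoff $\check w_k = w_k\,\chi(2|x|/\eta)(1-\chi(\eta|x|/\delta_k))$ is applied to $w_k$ \emph{after} one already knows $w_k\rightharpoondown 0$ on body and bubbles, so the cutoff error $Q_{u_k}(w_k)-Q_{u_k}(\check w_k)$ is controlled by the strong local convergence to zero.

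The gap in your plan lies in the passage ``off-diagonal and neck contributions are absorbed.'' If you cut off an \emph{arbitrary} $\phi\in V_k$ by $\chi^{body}_k,\chi^{bub,i}_k,\chi^{neck}_k$, then $Q_{p_k}$ being a quadratic form produces cross terms involving $\nabla\chi\otimes\nabla\phi$ and $|\nabla\chi|^2|\phi|^2$ supported in the transition annuli, whose coefficients blow up at the rate $\eta^{-2}$ (respectively $(\delta_k/\eta)^{-2}$); nothing in the $\epsilon$-regularity or the pointwise gradient bound controls these for a generic $\phi$. This is precisely why the weight-and-spectral approach was introduced: it never needs to split a generic quadratic form across a partition of unity, and the only cutoff used is applied to a direction already known to vanish on body and bubbles in the limit. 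A secondary inaccuracy: the neck coercivity you write with an $L^{p_k}$ gradient term should be a weighted $L^2$ statement, $Q_{u_k}(w)\geq \overline\kappa\int_\Sigma |w|^2\,\omega_{\eta,k}$, since $Q_{u_k}$ is quadratic in $w$ regardless of $p_k$; the weighted $L^2$ form is what makes the Sylvester/spectral step meaningful.

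So: same strategy in outline, different and more delicate mechanism in the paper. What the weighted spectral route buys is a clean way to turn the neck coercivity into a statement about the nonpositive eigenspace without ever having to absorb cutoff cross terms for generic test variations.
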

As in the   paper \cite{DGR22}, the proof of Theorem \ref{morseindextheoremfordirichletintr} strongly  relies on a precise pointwise gradient estimate  of sequences  of $p$-harmonic maps on degenerating annuli domains $A(\eta,\delta_k)=B(0,\eta)\setminus \bar{B}(0,\delta_k/\eta)$ where $\delta_k\to 0$ as $k\to +\infty$:
\begin{theorem}
\label{pointwiseestimate}
Let $u_{k}\in W^{1,p}(\Sigma;{\mathbb{S}}^n)$  be a sequence of $p_k$ harmonic maps . Then, for any given $\be \in \left(0,\log_2 (3/2) \right)$,   for $k \in \N$ large and $\eta>0$ small, it holds:
\begin{equation}
\label{pe1}
\begin{gathered}
\forall x \in A(\et,\de_k): \qquad
\abs{x}^2 \abs{\nabla u_{k}(x)}^2 \le \left[ \left( \frac{\abs{x}}{\et} \right)^\be + \left( \frac{\de_k}{\et \abs{x}} \right)^\be \right]\boldsymbol{\epsilon}_{\eta, \delta_k} +  \boldsymbol{\mathrm c}_{\eta, \delta_k},
\end{gathered}
\end{equation}
where
\begin{equation}\label{pe2}
\lim_{\eta\searrow0}\limsup_{k\rightarrow\infty}\boldsymbol{\epsilon}_{\eta, \delta_k}=0,
\qquad \text{and} \qquad
\lim_{\eta\searrow0}\limsup_{k\rightarrow\infty}\boldsymbol{\mathrm c}_{\eta, \delta_k} \log^2\left(\frac{\eta^2}{\delta_k}\right) =0. ~~~\square
\end{equation}
\end{theorem}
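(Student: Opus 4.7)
The plan is to convert the pointwise bound into an iterated $L^1$-energy decay on dyadic sub-annuli and then lift it back to $L^\infty$ via $\varepsilon$-regularity. Passing to conformal coordinates $x=e^{-t+i\theta}$ turns $A(\eta,\delta_k)$ into a long cylinder $C_k=(T_k^-,T_k^+)\times S^1$ with $T_k^+=\log(1/\eta)$, $T_k^-=\log(\eta/\delta_k)$, and the scale-invariant density $|x|^2|\nabla u_k|^2$ becomes the ordinary energy density in $(t,\theta)$. The $\varepsilon$-regularity for $p_k$-harmonic maps, applied to each sub-cylinder of length one, then controls $\max_{A_\rho}\bigl(|x|^2|\nabla u_k|^2\bigr)$ by the local $L^1$-energy on $A_{2\rho}$, where $A_\rho=B(0,2\rho)\setminus\bar B(0,\rho/2)\subset A(\eta,\delta_k)$. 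This reduces \eqref{pe1} to an analogous geometric decay of the energy at each dyadic scale $\rho\in(\delta_k/\eta,\eta)$.

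The analytic core of the argument is a three-annulus inequality: there exist $\lambda<1$ (slightly above $2/3$, which is what forces $\beta<\log_2(3/2)$) and $\varepsilon_0>0$ such that, whenever the energy on $A_{2\rho}$ lies below $\varepsilon_0$,
\begin{equation*}
\int_{A_\rho}|\nabla u_k|^2\,dx \le \lambda\,\max\!\left(\int_{A_{\rho/2}}|\nabla u_k|^2\,dx,\ \int_{A_{2\rho}}|\nabla u_k|^2\,dx\right)+R_{k,\rho},
\end{equation*}
with a remainder $R_{k,\rho}$ small enough to telescope along the neck. I obtain it by expanding $u_k(t,\cdot)$ in Fourier series on each circle $\{t\}\times S^1$: the target constraint $|u_k|\equiv 1$ together with the Euler-Lagrange equation for $E_{p_k}$ yields a perturbed ODE system whose unperturbed form is $-\partial_{tt}+m^2$ on the $m$-th mode, so every mode with $m\ge 1$ is damped geometrically. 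Iterating the inequality inward from both ends of $C_k$ produces the two decay profiles $(|x|/\eta)^\beta$ and $(\delta_k/(\eta|x|))^\beta$; the prefactor $\boldsymbol\epsilon_{\eta,\delta_k}$ is the initial energy on the outer and inner collars, which tends to zero by bubble-tree convergence together with the energy quantization result.

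The residual $\boldsymbol{\mathrm c}_{\eta,\delta_k}$ absorbs the zero Fourier mode, which is $S^1$-invariant and cannot be damped geometrically. To control it I would use a Pohozaev/Hopf-differential identity for $E_{p_k}$: the quantity
\begin{equation*}
\Theta_k(\rho) := \rho\int_{\partial B_\rho}\left(1+|\nabla u_k|^2\right)^{(p_k-2)/2}\!\left(|\partial_r u_k|^2-\rho^{-2}|\partial_\theta u_k|^2\right)
\end{equation*}
is $\rho$-independent up to an $O(p_k-2)$ error, and vanishes in the limit $\eta\searrow 0$ by the no-neck-energy statement. This forces the radial mode to have amplitude $o\!\left(1/\log(\eta^2/\delta_k)\right)$ uniformly on $C_k$; squaring gives exactly the $\boldsymbol{\mathrm c}_{\eta,\delta_k}\log^2(\eta^2/\delta_k)\to 0$ assertion of \eqref{pe2}.

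The hardest step will be making the three-annulus constant $\lambda$ and the Pohozaev identity genuinely uniform as $p_k\to 2^+$. The $p$-Laplace system degenerates exactly where $|\nabla u_k|$ is small, which is the regime that dominates along the neck, so one must track the weight $(1+|\nabla u_k|^2)^{(p_k-2)/2}$ through every linearization and keep $\lambda$ bounded away from $1$ independently of $k$. This uniform control, combined with the especially simple second fundamental form of $\mathbb{S}^n$, is what pins the admissible exponents to the open range $\beta\in(0,\log_2(3/2))$ and stops short of the sharp endpoint.
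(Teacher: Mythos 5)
Your proposal shares the broad architecture of the paper — a per-scale decay estimate fed into a dyadic iteration, an $\varepsilon$-regularity lift from $L^2$ to $L^\infty$, a Pohozaev-type identity to pin the residual constant, and the constraint $\beta<\log_2(3/2)$ entering through a geometric ratio slightly above $2/3$ — but the per-scale estimate is derived by entirely different means, and that is precisely where your sketch does not close.

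The paper does not pass to conformal cylinder coordinates and does not run a Fourier-mode/three-annulus argument. It instead sets $X_k=(1+|\nabla u_k|^2)^{p_k/2-1}\nabla u_k$ and performs the Hodge decomposition $X_k=\nabla^\perp b_{\eta,k}+\nabla\varphi_{\eta,k}+\nabla\mathfrak h_{\eta,k}$ on $B_1$, where $\varphi_{\eta,k}$ solves a Wente equation produced by the sphere conservation law $\operatorname{div}[(1+|\nabla u_k|^2)^{p_k/2-1}(u_k\wedge\nabla u_k)]=0$, $\mathfrak h_{\eta,k}$ is harmonic, and $b_{\eta,k}$ is the new obstruction: since $X_k$ is no longer a gradient for $p>2$, its $\nabla^\perp$-part is generically nonzero. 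The decisive input is a Rochberg--Weiss/Bernard--Rivi\`ere commutator estimate giving $\|\nabla b_{\eta,k}\|_{L^{p_k'}(B_1)}\le C(p_k-2)$ with a constant uniform as $p_k\to 2^+$; coupled with the monotonicity-formula entropy bound $(p_k-2)\log(\eta^2/\delta_k)\to 0$ this produces exactly the $\boldsymbol{\mathrm c}_{\eta,\delta_k}\log^2(\eta^2/\delta_k)\to 0$ statement. The dyadic decay of $\int_{A_j}|\nabla u_k|^2$ then comes from a weighted Wente inequality for $\varphi_{\eta,k}$, pointwise harmonic estimates for $\mathfrak h_{\eta,k}^\pm$, and the smallness of $\nabla b_{\eta,k}$, fed into the abstract iteration Lemma G.1 of \cite{DGR22}.

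The gap in your argument is that the Fourier-mode damping is not a valid substitute for this machinery, for two intertwined reasons. First, the critical nonlinearity $u|\nabla u|^2$ is precisely what makes harmonic-map compactness delicate; a mode-by-mode Jacobi-field argument does not tame it, and even at $p=2$ the standard route to geometric decay on degenerating annuli runs through a compensation (div--curl/Wente) structure, not a naive decoupling of Fourier modes of a ``perturbed $-\partial_{tt}+m^2$ system.'' Second, and more seriously for your parameter regime, the weight $(1+|\nabla u_k|^2)^{p_k/2-1}$ makes $X_k$ non-gradient, and the curl defect $\nabla^\perp b_{\eta,k}$ is a genuinely new term of size $\sim p_k-2$ with no analogue in the harmonic case. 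Your Pohozaev/Hopf-differential identity relates radial to angular energy but gives no handle on this curl defect, so it cannot supply the needed estimate; nor does any mode analysis, since the defect is uniformly distributed in the annulus rather than concentrated on a single Fourier mode. Without a quantitative $O(p_k-2)$ control of this piece you cannot establish a three-annulus inequality with $\lambda$ uniformly below $1$, nor verify that your remainder $R_{k,\rho}$ telescopes along the neck. You flag this as ``the hardest step,'' but it is not a step to be deferred — it is the mathematical content of the theorem and of the sphere-target hypothesis, and it is where the commutator estimate of \cite{BR19}, \cite{RW83} enters in the paper's proof.
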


\bigskip
It is a striking fact that the
  estimate \eqref{pe1} together with \eqref{pe2}  do not  hold for any   sequence of $p$-harmonic maps taking values into  a general target ${\mathcal{N}}^n$, as we  clarify later.\par
  Going back to the Dirichlet Energy, 
in  the    paper  \cite{DGR22} the authors could obtain  a   pointwise gradient estimate of   critical points to conformally invariant Lagrangians as a by-product of   an energy quantization in the Lorentz space $L^{2,1}$ (the pre-dual of the Marcinkiewicz  $L^{2,\infty}$ space).\footnote{We recall that for a given domain $\mathbb{D}$ $$
{  L^{2,\infty}(\mathbb{D})=\{f ~\mbox{measurable}:~~\sup_{\lambda\ge 0}\lambda |\{x\in \mathbb{D}~: |f(x)|\ge\lambda\}|^{1/2}<+\infty\}}
$$. 
and
$${L^{2,1}(\mathbb{D}) =\{f ~\mbox{measurable}:~~2\int_{0}^{+\infty}|\{x\in\mathbb{D}~: |f(x)|\ge\lambda\}|^{1/2} d\lambda<+\infty \}.} $$}

   We recall that the energy quantization  asserts that for a sequence of critical points of a given  Lagrangian of bounded energy, that bubble converges to a limiting critical point, the energy is preserved in the limit, i.e., the limiting energy is equal to the sum of the macroscopic maps and the bubbles. The neck regions are annular  regions that link the macroscopic map to its bubbles.   The relevance of the use interpolation Lorentz spaces  has been originally observed in the work by Lin and Rivi\`ere \cite{LiRi02}  in the study of bubbling phenomena of sequences of harmonic maps, and then generalized by Laurain and Rivi\`ere \cite{LR14} to sequences of critical points to conformally invariant variational problems. 
The $L^{2,1}$ quantization of   sequences of  harmonic maps (established first by  Laurain and Rivi\`ere in \cite{LR14}) is  crucial to get the so-called the {\it $C^0$-no neck property} or {\em necklessness property},  which says, roughly speaking,    that there is no distance between the bubbles $v^j_\infty$ and the main part $u_\infty$, namely,  the bubbles $v_\infty^j$ are ``$L^\infty$-glued'' directly to the weak limit $u_\infty$.   As it has been showed in \cite{LiWa-1} and \cite{LiWa-2} the energy identity and the necklessness property do not hold for general $p$-harmonic maps and one needs to add some additional  conditions linking the parameter $p$ and the degenerating conformal class of the neck regions.
In the case of $p $-harmonic maps into spheres  nevertheless the situation is improving and the necklessness property holds, as it has been shown in  \cite{LZ19},   where the combination of  Wente estimates   with   Lorentz spaces plays a crucial role. We mention that the combination of conservation laws and the interpolation Lorentz spaces   has been introduced twenty years before in \cite{LiRi02} and systematized in \cite{LR14}. This strategy has proven to be very versatile, in the sense that, since the work \cite{LiRi02},  it has been applied in several other geometrical problems (Willmore surfaces, bi-harmonic maps, fractional harmonic maps, Yang-Mills connections.. ).  
\par
In the current paper we   obtain  the $L^{2,1}$ quantization, and therefore the the necklessness property, for $p$-harmonic maps into the  $n$ dimensional sphere and   show the estimate
\eqref{pe1}, by taking over the approach in the paper by Bernard and Rivi\`ere  \cite{BR19}, that was in turns inspired by the commutator estimates in \cite{RW83}. The $L^{2,1}$ quantization   is of independent interest, since in the sphere case it  is not actually necessary for the proof of the Theorem \ref{morseindextheoremfordirichletintr}.\par

  The extension of our results to $p$-harmonic maps into a general manifold and in particular the necklessness property, would need
the following {\em entropy type condition}
\begin{equation}
\label{II.r7-ann}
\limsup_{p_k\rightarrow 2} \sqrt{p_k-2}\,\log\delta_k^{-1}=0.
\end{equation}
 
 The condition \eqref{II.r7-ann} has been   found  in \cite{LiWa-1} for at least $C^3$ manifolds, and then in  \cite{DR23}, by different methods, in the case of at least $C^2$   closed manifolds and for  a much wider case of PDE's including $p $-harmonic relaxation of arbitrary conformally invariant Lagrangians of maps.\par 
We recall that $\log\delta_k^{-1}$ is nothing but the conformal class of the neck domain and represents a crucial quantity.\footnote{We recall that the modulus or the conformal class  of an annulus $\{z\in\C:~~a<|z-z_0|< b\}$ with inner radius $a$ and outer radius $b$ is defined (up to a constant) to be $\log\left(\frac{b}{a}\right).$}
The condition \eqref{II.r7-ann} is stronger than 
\begin{equation}
\label{II.r7-annbis}
\limsup_{p_k\rightarrow 2}{(p_k-2)}\,\log\delta_k^{-1}=0
\end{equation}
which comes from {\em Struwe's monotonicity trick} \cite{Struwe} and which has been used by Lamm \cite{Lamm} to show the energy identity of $p$ harmonic maps.\par
We would like to conclude this section by mentioning that recently Gauvrit, Laurain and Rivi\`ere \cite{GLR} have showed the upper semicontinuity of the extended index in the case of Sack-Uhlenbeck relaxation of Yang-Mills  connections in general $4$-D domains without any type of entropy   condition. Therefore the  of $p$-harmonic case into a sphere can be seen as the $2$ dimensional version of Yang-Mills  connections in $4$ dimensions.\par

\section{Preliminary definition and results  }
\label{SECTION: Preliminary definition and results}

Let $(\Sigma,h)$ be a smooth closed Riemann surface and let $S^n\subset \R^n$ be the Euclidean unit sphere of dimension $n\in \N$.
 For $p \geq 2$ we define the $p-$energy as
\begin{equation}
E_p: W^{1,p}(\Sigma;S^n) \ra \R; \qquad
E_p(u) \coloneqq \int_\Sigma \left( 1+\abs{\nabla u}^2\right)^{p/2} \ dvol_\Sigma.
\end{equation}

\begin{definition}
[$p-$Harmonic Map]
We say that a function $u \in W^{1,p}(\Sigma;S^n)$ is a $p$-harmonic map if it is a critical point of $E_p$ with respect to inner variations. In that case $u$ satisfies the Euler-Lagrage equation
\begin{equation}
\label{EQ: Euler-Lagrange equations in div form for p harm}
-\divergence\left( \left( 1+\abs{\nabla u}\right)^{\frac{p}{2} -1} \nabla u \right)= \left( 1+ \abs{\nabla u}^2 \right)^{\frac{p}{2}-1} u \abs{\nabla u}^2 \in \R^{n+1},
\end{equation}
or in non-divergence form
\begin{equation}
\label{EQ: Euler-Lagrange equations in non-div form for p harm}
\De u +\left(\frac{p}{2}-1\right)\frac{\langle \na^2 u, \nabla u \rangle \nabla u}{1+\abs{\nabla u}^2}+ u\abs{\nabla u}^2 =0 \in \R^{n+1}.
\end{equation}
\end{definition}

\begin{lemma}
[$\varepsilon$-regularity]
\label{LEMMA: epsilon reg sacks uhlenbeck for p harm}
There exists an $\varepsilon > 0$, a constant $C>0$ and some $p_0>2$ such that for any $p$-harmonic map $u \in W^{1,p}(\Sigma;S^n)$ with $p\in [2,p_0)$ and any geodesic ball $B_r\subset \Sigma$    if
\begin{equation}
\int_{B_r} \abs{\nabla u}^2 dx \le \varepsilon,
\end{equation}
then
\begin{equation}
\norm{\nabla u_k}_{L^\infty(B_{r/2})}
\le \frac{C}{r} \norm{\nabla u_k}_{L^2(B_{r})}.~~~\square
\end{equation}
\end{lemma}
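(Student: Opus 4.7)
The plan is to combine the classical $\ep$-regularity strategy of Sacks--Uhlenbeck \cite{SaU} with the H\'elein-type conservation law available for sphere targets, treating the $p$-deviation from harmonic maps as a perturbation. First I pass to geodesic normal coordinates on $\Sigma$ centered at the ball and rescale the independent variable so that, after absorbing the powers of $r$, it suffices to prove the estimate on a flat disk $B_1\subset\R^2$; the small-energy hypothesis, the $p$-harmonic equation and the claimed conclusion are all preserved (with uniform constants) under this reduction, up to harmless perturbative terms.

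The core algebraic observation is the conservation law coming from the sphere constraint $\abs{u}^2\equiv 1$ (hence $u\cdot\na u=0$). Multiplying the divergence form \eqref{EQ: Euler-Lagrange equations in div form for p harm} componentwise by $u^j$ and antisymmetrizing in $(i,j)$ yields
\begin{equation*}
\divergence\!\left( (1+\abs{\na u}^2)^{p/2-1}\bigl(u^i \na u^j - u^j \na u^i\bigr) \right) = 0,
\end{equation*}
a $p$-weighted analogue of the familiar harmonic-map conservation law, with exactly the Jacobian structure amenable to Hodge decomposition and Wente / $L^{2,1}$--$L^{2,\infty}$ duality in the spirit of \cite{LR14, LZ19}. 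From it, once $\ep$ is chosen small enough, I expect to derive a Morrey-type decay
\begin{equation*}
\int_{B_\rho(x_0)} \abs{\na u}^2 \leq C\rho^{2\al} \int_{B_1} \abs{\na u}^2, \qquad B_{2\rho}(x_0)\subset B_1,
\end{equation*}
for some $\al\in(0,1)$, uniformly in $p\in[2,p_0)$ provided $p_0$ is close enough to $2$.

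With Morrey decay in hand, the right-hand side $u\abs{\na u}^2$ of \eqref{EQ: Euler-Lagrange equations in div form for p harm} lies in a subcritical Morrey space, whence linear elliptic theory applied to the uniformly elliptic operator $-\divergence\bigl((1+\abs{\na u}^2)^{p/2-1}\na\,\cdot\bigr)$ (whose ellipticity constants are controlled as soon as $\abs{\na u}$ is bounded) upgrades $\na u$ to $L^q_{\mathrm{loc}}$ for some $q>2$, and then to H\"older continuity. An Adams-type iteration on nested balls, or equivalently a Moser iteration on the Bochner subsolution inequality $-\De\abs{\na u}^2 \leq C\abs{\na u}^4 + O(p-2)\abs{\na^2 u}^2$ obtained by differentiating \eqref{EQ: Euler-Lagrange equations in non-div form for p harm}, then delivers the pointwise estimate $\norm{\na u}_{L^\infty(B_{1/2})}^2\leq C\int_{B_1}\abs{\na u}^2$.

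The hardest step will be controlling the quasi-linear correction $(p/2-1)\frac{\langle\na^2 u,\na u\rangle\na u}{1+\abs{\na u}^2}$ in the non-divergence form, which is second-order and is not directly controlled by the small-energy hypothesis. The strategy is to avoid the non-divergence form entirely during the Morrey-decay step and to work exclusively with the conservation law above, where the $p$-weight $A=(1+\abs{\na u}^2)^{p/2-1}$ is absorbed into the conservative coefficient without disrupting the antisymmetric structure; since $A=1+O(p-2)$ once $\abs{\na u}$ is bounded, taking $p_0$ close enough to $2$ keeps all Wente and Morrey constants uniform, and the quasi-linear term only re-enters in the bootstrap stage where standard Schauder / Moser techniques absorb it as a small perturbation.
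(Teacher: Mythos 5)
The paper does not give a proof of this lemma at all: it simply cites Sacks--Uhlenbeck \cite{SU81}, Main Estimate~3.2 and Lemma~3.4, which establish $\varepsilon$-regularity for $\alpha$-harmonic maps into an \emph{arbitrary} closed target by a perturbative elliptic argument (freezing coefficients and bootstrap, uniform as $p\to 2^+$). No sphere-specific structure is used there. Your route --- Hodge decomposition of the sphere conservation law plus Wente / $L^{2,1}$--$L^{2,\infty}$ duality --- is therefore a genuinely different approach, closer to H\'elein's method and in fact to the machinery this paper deploys later in Section~3 for the $L^{2,1}$ energy quantization. That approach is not wrong in spirit, and the antisymmetric conservation law you write down,
\[
\divergence\!\left((1+\abs{\nabla u}^2)^{p/2-1}\bigl(u^i\nabla u^j-u^j\nabla u^i\bigr)\right)=0,
\]
is exactly the paper's own \eqref{EQ: Equation for nabla B def} after antisymmetrization, so that part is sound.

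However, there is a genuine circularity in your sketch. You justify uniformity of the Wente and Morrey constants by writing ``since $A=1+O(p-2)$ once $\abs{\nabla u}$ is bounded, taking $p_0$ close enough to $2$ keeps all Wente and Morrey constants uniform'' --- but a pointwise bound on $\nabla u$ is precisely the conclusion you are trying to establish; a priori you only have $\nabla u\in L^p$, so $A=(1+\abs{\nabla u}^2)^{p/2-1}$ is only controlled in $L^{p/(p-2)}$, not in $L^\infty$. Concretely, if you Hodge-decompose $X=A\nabla u=\nabla a+\nabla^\perp b$, the Jacobian equation $-\Delta a=\nabla^\perp B\cdot\nabla u$ is indeed amenable to Wente, but (i) the estimate $\norm{\nabla B}_{L^2}\lesssim\norm{\nabla u}_{L^2}$ already uses an $L^\infty$ bound on $A$ (compare the paper's chain of estimates around \eqref{EQ: uZG813g87UH}, which quotes Lemma~\ref{LEMMA: Linfty bd on na u in the necks unif}, itself relying on $\varepsilon$-regularity), and (ii) the piece $\nabla^\perp b$ is not small merely because the energy is small --- in this paper its smallness comes from the commutator estimate of Bernard--Rivi\`ere (Lemma~\ref{LEMMA: First est of nabla b in necks}, quoting Lemma~A.5 of \cite{BR19}), which you never invoke. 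Without that ingredient the Morrey decay step does not close, and the ``$A=1+O(p-2)$'' absorption is assuming what is to be proved. To rescue this route you would need to run the commutator estimate locally on $B_r$ \emph{before} any pointwise information on $\nabla u$ is available, and then re-derive the Morrey decay for $X^{1/(p-1)}$ rather than $\nabla u$; that is a substantially larger undertaking than your outline suggests, and it is precisely why the paper defers to the more elementary, target-agnostic Sacks--Uhlenbeck argument for this lemma.
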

For a proof see in \cite{SU81} Chapter 3, Main Estimate 3.2 and Lemma 3.4.


Let
\begin{equation}
\pi: \R^{n+1} \backslash \{0\} \ra S^n, \qquad
\pi(p)=p/|p|,
\end{equation}
be the nearest point projection into $S^n$.

\begin{proposition}[Second variation of $p$-energy]\label{secondvariation}
Let $p\geq 2$, let $u \in W^{1,p}(\Sigma;S^n)$ be a $p$-harmonic map and let $\phi \in C^{\infty}(\Sigma;\R^{n+1})$.
We introduce the notations $u_t = \pi(u+t\phi)$, $w_t= \frac{du_t}{dt}$ and $w=w_0$.
Then 
\begin{equation}
\begin{aligned}
\left(\frac{d^2}{dt^2} E_p(u_t) \right)\Bigg|_{t=0}
&=p\ (p-2) \int_\Sigma\left( 1+\abs{\nabla u}^2\right)^{p/2-2} \left(\nabla u \cdot \nabla w \right)^2  \ dvol_\Si. \\
&\hspace{10mm}+p \int_\Sigma\left( 1+\abs{\nabla u}^2\right)^{p/2-1} \left[\abs{\nabla w}^2 - \abs{\nabla u}^2 \abs{w}^2 \right] \ dvol_\Sigma.
\end{aligned}
\end{equation}
\end{proposition}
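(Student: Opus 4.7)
The plan is to compute $\frac{d^2}{dt^2}E_p(u_t)$ by brute force from the chain rule and then absorb the contribution of the second time-derivative $\ddot u_0 := \frac{d^2 u_t}{dt^2}\big|_{t=0}$ by testing the $p$-harmonic equation against it. The key algebraic input is a pointwise identity relating $\ddot u_0 \cdot u$ to $\abs{w}^2$ which is specific to sphere-valued maps.

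First I would differentiate $u_t = (u+t\phi)/\rho(t)$, where $\rho(t) := \abs{u+t\phi}$. From $\rho^2(t) = 1 + 2t(u\cdot\phi) + t^2 \abs{\phi}^2$ one reads off $\dot\rho(0) = u \cdot \phi$ and $\ddot\rho(0) = \abs{\phi}^2 - (u\cdot\phi)^2$. A direct computation gives
$$w = \phi - (u\cdot \phi)\, u, \qquad \ddot u_0 = -2(u\cdot\phi)\,\phi \;+\; \bigl(3(u\cdot\phi)^2 - \abs{\phi}^2\bigr)\,u.$$
The only consequence that will be used below is the pointwise identity
$$\ddot u_0 \cdot u = (u\cdot\phi)^2 - \abs{\phi}^2 = -\abs{w}^2,$$
where the last equality follows from $\abs{w}^2 = \abs{\phi}^2 - (u\cdot\phi)^2$.

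Next I would differentiate the energy directly. Since $\frac{d}{dt}(1+\abs{\nabla u_t}^2)^{p/2} = p\,(1+\abs{\nabla u_t}^2)^{p/2-1}\,\nabla u_t \cdot \nabla w_t$, one has
$$\frac{d}{dt}E_p(u_t) = p\int_\Sigma \bigl(1+\abs{\nabla u_t}^2\bigr)^{p/2-1}\,\nabla u_t \cdot \nabla w_t\ dvol_\Sigma,$$
and a second differentiation, evaluated at $t=0$ (so that $w_0 = w$ and $\dot w_0 = \ddot u_0$), yields
$$\left(\frac{d^2}{dt^2}E_p(u_t)\right)\bigg|_{t=0} = p(p-2)\int_\Sigma \bigl(1+\abs{\nabla u}^2\bigr)^{p/2-2}(\nabla u \cdot \nabla w)^2\, dvol_\Sigma + p\int_\Sigma \bigl(1+\abs{\nabla u}^2\bigr)^{p/2-1}\Bigl(\abs{\nabla w}^2 + \nabla u \cdot \nabla \ddot u_0\Bigr) dvol_\Sigma.$$

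The final step is to dispose of the term $\int (1+\abs{\nabla u}^2)^{p/2-1}\,\nabla u \cdot \nabla \ddot u_0$. Since $u$ is $p$-harmonic, integration by parts via the Euler--Lagrange equation \eqref{EQ: Euler-Lagrange equations in div form for p harm} gives
$$\int_\Sigma \bigl(1+\abs{\nabla u}^2\bigr)^{p/2-1}\nabla u \cdot \nabla \ddot u_0\, dvol_\Sigma = \int_\Sigma \bigl(1+\abs{\nabla u}^2\bigr)^{p/2-1} \abs{\nabla u}^2\, (u \cdot \ddot u_0)\, dvol_\Sigma = -\int_\Sigma \bigl(1+\abs{\nabla u}^2\bigr)^{p/2-1}\abs{\nabla u}^2\,\abs{w}^2\, dvol_\Sigma,$$
where the second equality uses the identity $\ddot u_0 \cdot u = -\abs{w}^2$ established above. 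Substituting this back yields exactly the claimed formula. The argument poses no real obstacle; the only subtle point is the pointwise identity $\ddot u_0 \cdot u = -\abs{w}^2$, which reflects the fact that the second fundamental form of $S^n$ in $\R^{n+1}$ is proportional to $u$ itself, so that the nonlinear correction coming from the retraction $\pi$ pairs with the Euler--Lagrange term to produce precisely the tangential curvature contribution $-\abs{\nabla u}^2\abs{w}^2$.
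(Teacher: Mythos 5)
Your proof is correct and takes essentially the same route as the paper: differentiate twice via the chain rule, integrate by parts against the Euler--Lagrange equation, and reduce the resulting term to $-\abs{w}^2$ via the pointwise identity $u\cdot\ddot u_0 = -\abs{w}^2$ for sphere-valued projections. The only cosmetic difference is that you compute the full second derivative $\ddot u_0$ from $u_t = (u+t\phi)/\rho(t)$, whereas the paper differentiates the formula $w_t = \frac{1}{\abs{u+t\phi}}\bigl(\phi - \langle u_t,\phi\rangle u_t\bigr)$ and extracts only the component $u\cdot\frac{dw_t}{dt}\big|_{t=0}$; the outcome and the algebra are identical.
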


\begin{proof}
We start by computing with the chain rule
\begin{equation}
\frac{d}{dt} E_p(u_t) = p \int_\Si \left( 1+\abs{\na u_t}^2\right)^{p/2-1} \na u_t \cdot\na w_t \ dvol_\Si.
\end{equation}
and again with the chain rule and the product rule
\begin{equation}
\begin{aligned}
\frac{d^2}{dt^2} E_p(u_t) 
&= p\ (p-2) \int_\Si \left( 1+\abs{\na u_t}^2\right)^{p/2-2} \left(\na u_t \cdot \na w_t \right)^2 \ dvol_\Si \\
&\hspace{8mm} + p\int_\Si \left( 1+\abs{\na u_t}^2\right)^{p/2-1} \left( \abs{\na w_t}^2 + \na u_t \cdot \na \left( \frac{dw_t}{dt} \right) \right) \ dvol_\Si.
\end{aligned}
\end{equation}
Evaluating at $t=0$ one has
\begin{equation}
\begin{aligned}
\left(\frac{d^2}{dt^2} E_p(u_t) \right)\Bigg|_{t=0}
&= p\ (p-2) \int_\Si \left( 1+\abs{\na u}^2\right)^{p/2-2} \left(\na u \cdot \na w \right)^2 \ dvol_\Si \\
&\hspace{8mm} + p\int_\Si \left( 1+\abs{\na u}^2\right)^{p/2-1} \left( \abs{\na w}^2 + \na u \cdot  \na \left( \frac{dw_t}{dt} \right)\bigg|_{t=0} \right) \ dvol_\Si.
\end{aligned}
\end{equation}
Integrating by parts and using \eqref{EQ: Euler-Lagrange equations in div form for p harm} we find 
\begin{equation}\label{EQ: ibniuUBiZBi3813HB1wA}
\begin{aligned}
\left(\frac{d^2}{dt^2} E_p(u_t) \right)\Bigg|_{t=0}
&= p\ (p-2) \int_\Si \left( 1+\abs{\na u}^2\right)^{p/2-2} \left(\na u \cdot \na w \right)^2 \ dvol_\Si \\
&\hspace{8mm} + p\int_\Si \left( 1+\abs{\na u}^2\right)^{p/2-1} \left( \abs{\na w}^2 + \abs{\na u}^2 u \cdot \left( \frac{dw_t}{dt} \right)\bigg|_{t=0} \right) \ dvol_\Si.
\end{aligned}
\end{equation}
Rewrite
\begin{equation}
\label{EQ: Expresion of wt wjrnrf371g1}
w_t = \frac{du_t}{dt} = \frac{d}{dt} \pi(u+t\ph) = D\pi_{u+t\ph} \ \ph.
\end{equation}
Note that $D\pi_p \in \R^{n \times n}$ is a matrix and in formulas is given by
\begin{equation}
\label{EQ:  Computation of orth proj 9ft6fn1nf8}
D\pi_p [v] 
= \frac{v}{\abs{p}} -\langle p, v\rangle \frac{p}{\abs{p}^3} 
= \frac{1}{\abs{p}} \Big( v -\langle \pi(p), v\rangle \pi(p) \Big)
= \frac{1}{\abs{p}} D\pi_{\pi(p)} [v] 
, \qquad \forall v \in \Rn. 
\end{equation}
Combining \eqref{EQ: Expresion of wt wjrnrf371g1} and \eqref{EQ:  Computation of orth proj 9ft6fn1nf8} together with the fact that $D\pi_p=D\pi_{\pi(p)}$ we obtain
\begin{equation}
w_t =  D\pi_{u+t\ph} \ \ph = \frac{1}{\abs{u+t\phi}}  D\pi_{u_t} \ \ph = \frac{1}{\abs{u+t\phi}} \Big( \ph- \langle u_t, \ph \rangle u_t \Big).
\end{equation}
Now by the chain rule
\begin{equation}
\label{EQ: dwt expression one 3u2re8bf1}
\frac{dw_t}{dt} = \frac{1}{\abs{u+t\phi}}\Big( -\langle w_t,\ph\rangle u_t -\langle u_t,\ph \rangle w_t\Big) + \left(\frac{d}{dt} \frac{1}{\abs{u+t\phi}}  \right) \Big( \ph- \langle u_t, \ph \rangle u_t \Big).
\end{equation}
Evaluating
\begin{equation}
\begin{aligned}
u \cdot \left( \frac{d w_t}{dt} \right)\bigg|_{t=0}
&= -u \cdot \left(\langle w,\phi \rangle u +\langle u,\ph \rangle w \right) \\
&= - \Big( \underbrace{\abs{u}^2}_{=1}  \langle w,\ph\rangle +\langle u,\ph \rangle \underbrace{\langle u,w \rangle}_{=0} \Big) \\
&= -\langle w,\ph\rangle
= -\langle w,D\pi_u \ph\rangle =-\langle w,w\rangle \\
&= -\abs{w}^2
\end{aligned}
\end{equation}
Going back to \eqref{EQ: ibniuUBiZBi3813HB1wA} the claim follows.
\end{proof}

\begin{definition} [Morse index of $p$-harmonic maps]
\label{DEFINITION: Morse index and Nullityof p-harmonic maps}
Let $u \in W^{1,p}(\Sigma;S^n)$ be a $p$-harmonic map.
Then we introduce the space of variations as
\begin{equation}
\label{EQ: wigunUBNI8193hf23f}
V_u = \Gamma(u^{-1} TS^n)= \left\{ w \in W^{1,2}(\Sigma; \R^{n+1}) \forwhich w(x) \in T_{u(x)} S^n, \quad \text{for a.e. } x\in\Sigma \right\}.
\end{equation}
The second variation is given by $Q_u:V_u \ra \R$,
\begin{equation}
\begin{aligned}
Q_u(w) 
&\coloneqq p\ (p-2) \int_\Sigma \left( 1+\abs{\nabla u}^2\right)^{p/2-2} \left(\nabla u \cdot \nabla w \right)^2  \ dvol_\Si \\
&\hspace{20mm}+p \int_\Sigma \left( 1+\abs{\nabla u}^2\right)^{p/2-1} \left[\abs{\nabla w}^2 - \abs{\nabla u}^2 \abs{w}^2 \right] \ dvol_\Si.
\end{aligned}
\end{equation}
The \underline{Morse index} of $u$ relative to the energy $E_p(u)$:  
\begin{equation}
\operatorname{Ind}_{E_p}(u):=\max \left\{\operatorname{dim}(W) ; W\right. \text{ is a sub vector space of } V_u \text{ s.t. } \left.\left.Q_u\right|_{W\setminus\{0\}}<0\right\}
\end{equation}
and the \underline{Nullity} of $u$ to be
\begin{equation}
\operatorname{Null}_{E_p}(u):= dim\big(\ker Q_u \big).~~~\square
\end{equation}
\end{definition}

Let $p_k>2$, $k \in \N$, be a sequence of exponents with 
\begin{equation}
p_k \searrow 2, \qquad \text{ as } k \rightarrow \infty.
\end{equation}
and let  
 $u_k \in W^{1,p_k}(\Sigma;S^n)$ be a sequence of $p_k$-harmonic maps.
satisfying 
\begin{equation}
\label{EQ: Uniform bound on the energy 8912H2egBeZ}
\sup_k E_{p_k}(u_k) = \sup_k \int_\Sigma (1+\abs{\nabla u_k}^2)^{\frac{p_k}{2}} dvol_{\Sigma}<\infty.
\end{equation}
Thanks to a classical result in concentration compactness theory, see for instance \cite{SU81}, we know that the sequence will converge up to subsequences strongly to a harmonic map away from a finite set of blow up points, where bubbles start to form while passing to the limit.
For our purposes it suffices to consider the simplified case of a single blow up point with only one bubble.
In this case we have the following

\begin{definition} [Bubble tree convergence with one bubble]
\label{Definition: Bubble tree convergence of p harm map with one bubble}
We say that the sequence $u_k$ bubble tree converges to a harmonic map and one single bubble if the following happens:
There exist harmonic maps $u_\infty \in W^{1,2}(\Sigma;S^n)$ and $v_\infty \in W^{1,2}(\C;S^n)$, a sequence of radii $(\delta_k)_{k \in \N} \subset \R_{>0}$, a sequence of points $(x_k)_{k \in \N}\subset\Sigma$ and a blow up point $q\in \Sigma$ such that
\begin{equation}
\label{EQ: Cond of bubb conv ji0wr931jJGA}
\begin{aligned}
&\bullet \quad u_k \rightarrow u_\infty, \qquad \text{ in } C^\infty_{loc}(\Sigma \setminus \{q\}), \text{ as } k \rightarrow \infty, \\
&\bullet \quad v_k(z) \coloneqq u_k\left(x_k + \delta_k z \right) \rightarrow v_\infty(z), \qquad \text{ in } C^\infty_{loc}(\C), \text{ as } k \rightarrow \infty, \\
&\bullet \quad \lim _{\eta \searrow 0} \limsup_{k \rightarrow\infty} \sup _{\delta_k / \eta<\rho<2 \rho<\eta} \int_{B_{2 \rho}\left(x_k\right) \backslash B_\rho\left(x_k\right)}\left|\nabla u_k\right|^2 d v o l_\Sigma=0,
\end{aligned}
\end{equation}
where in the second line $u_k(\cdot)$ is to be understood on a fixed conformal chart around the point $q$ and also
\begin{equation}
\label{EQ: Bubb conv 2}
 x_k \rightarrow q, \quad \de_k \rightarrow 0, \qquad \text{ as } k \rightarrow \infty.~~~\square
\end{equation}
\end{definition}

\noindent

Henceforward, we will assume that we are in the setting of Definition \ref{Definition: Bubble tree convergence of p harm map with one bubble}.
Furthermore, we are working in a fixed conformal chart around the point $q$ centered at the origin and parametrized by the unit ball $B_1=B_1(0)$.
Also for the sake of simplicity $x_k=0=q$ for any $k\in\N$.

 \ \vspace{15mm}
\section{Energy quantization}
The main goal of this section is to prove first the $L^2$ and then the $L^{2,1}$ quantization of a sequence of $p$-harmonic maps into the round sphere. The second result is of independent interest, since in the  sphere case only the  $L^2$ quantization is enough for  the pointwise estimates on degenerating annuli. 
We mention that $L^2$ and the  $L^{2,1}$ have already been obtained in   \cite{LZ19} by a combination of Wente's Inequality  for the Laplacian in $2$-D and the interpolation Lorentz spaces in the spirits of what was originally discovered in \cite{LiRi02}.\par
In all the current section we will denote by  $u_k$  a sequence $u_k$ of $p_k$-harmonic maps. 
Next we are going to follow the following approach:  \par
{\bf 1. } We consider the vector field 
\begin{equation}\label{Xk}
X_{k}=(1+\abs{\nabla u_k}^2)^{\frac{p_k}{2}-1} \nabla u_k \in L^{p_k^\prime}(B_1),~~p_k^\prime=\frac{p_k}{p_k-1},
\end{equation}
which satisfies by \eqref{EQ: Euler-Lagrange equations in div form for p harm} the equation
\begin{equation}\label{eqXk}
-\divergence(X_k)= (1+\abs{\nabla u_k}^2)^{\frac{p_k}{2}-1} (u_k\wedge\nabla u_k) \cdot \nabla u_k \qquad \text{ in } B_1
\end{equation}

Given $\eta\in (0,1)$, and $\delta_k\to 0$ as $k \to +\infty$, we consider the annulus 
\begin{equation}
A(\eta, \delta_k)\coloneqq B_{\eta}(0)\setminus \overline{B}_{\delta_k/\eta}(0),
\end{equation}
which is called neck-region.\par
 We use the Hodge/Helmholtz-Weyl Decomposition from Lemma \ref{LEMMA: Hodge/Helmholtz-Weyl Decomposition} on the domain $\Omega=B_1$ to find some $a,b \in W^{1,p_k^\prime}(B_1)$ such that
\begin{equation}\label{decXk}
X_k= \nabla a_{\eta,k} + \nabla^{\perp}b_{\eta,k}\quad \text{ in } B_1
\end{equation}
and with $\partial_{\tau} b=0$ on $\partial B_1$. \par
{\bf 2.} We properly estimate $ \nabla a_{\eta,k}$, and  $\nabla^{\perp}b_{\eta,k}$ in $L^{2,1}(A(\eta,\delta_k))$. \par
The main novelty and difficulty in the present section  is the estimate of  $\|\nabla b_{\eta,k}\|_{L^{2,1}(A(\eta,\delta_k))}$   obtained by applying commutator estimates discovered in \cite{RW83} and then used by Bernard and Rivi\`ere \cite{BR19} in the context of Willmore surfaces, and very recently in a joint work  \cite{DR23}
by the first and second authors  in connection with conservation laws for $p$-relaxation of conformally invariant variational problems.\par
\subsection{Analysis of $(1+\abs{\nabla u_k}^2)^{\frac{p_k}{2}-1} (u_k\wedge\nabla u_k)$}
Let us consider $X_k$ defined in \eqref{Xk} and its decomposition \eqref{decXk}.\par

Since with \eqref{EQ: Euler-Lagrange equations in div form for p harm} one has
\begin{equation}
\divergence \Big[(1+\abs{\nabla u_k}^2)^{\frac{p_k}{2}-1} (u_k\wedge\nabla u_k) \Big]=0, \qquad \text{ in } B_1=B_1(0),
\end{equation}
by Lemma \ref{LEMMA: Poincare Lemma stated in our setting}, we can find some potential $B_{\eta,k}\in W^{1,p_k^\prime}(B_1)$ such that
\begin{equation} \label{EQ: Equation for nabla B def}
\nabla^\perp B_{\eta,k} = (1+\abs{\nabla u_k}^2)^{\frac{p_k}{2}-1} (u_k\wedge\nabla u_k) \in L^{p_k^\prime}(B_1).
\end{equation}
A straightforward application of H\"older's inequality and \eqref{EQ: Uniform bound on the energy 8912H2egBeZ} gives
\begin{equation}
\label{EQ: estimate of B 980jt4unJUN}
\begin{aligned}
\norm{\nabla B_{\eta,k}}_{L^{p_k^\prime}(A(\eta,\delta_k))} 
&\le C \norm{(1+\abs{\nabla u_k}^2)^{\frac{p_k}{2}-1}}_{L^{\frac{p_k}{p_k-2}}(A(\eta,\delta_k))} \norm{u_k\wedge\nabla u_k}_{L^{p_k}(A(\eta,\delta_k))} \\
&\le C \norm{\nabla u_k}_{L^{p_k}(A(\eta,\delta_k))}.
\end{aligned}
\end{equation}
We get the equation
\begin{equation}
-\Delta a_{\eta,k}=-\divergence(X_k)=   \nabla^\perp B_{\eta,k} \cdot \nabla u_k \qquad \text{ in } B_1.
\end{equation}
 
Let $\widetilde u_k$ be the Whitney extension to $\C$ of $u_k|_{A(\eta,\delta_k)}$ coming from Lemma \ref{LEMMA: Whitney extension for ann in Lp} with
\begin{equation}
\label{EQ: jnfemorjsfJMNi29d91}
\begin{aligned}
\norm{\nabla \widetilde u_k}_{L^{p_k}(\C)} 
\le C\norm{\nabla  u_k}_{L^{p_k}(A(\eta,\delta_k))}
\end{aligned}
\end{equation}
and also
\begin{equation}
\label{EQ: wuinuifnvuUNIDWU0139jr1}
\supp (\nabla \widetilde u_k) \subset A(2\eta,\delta_k).
\end{equation}
Let $\widetilde B_{\eta,k}$ be the Whitney extensions to $\C$ of $B_{\eta,k}|_{A(\eta,\delta_k)}$ coming from Lemma \ref{LEMMA: Whitney extension for ann in Lp} with
\begin{equation}
\label{EQ: Control of tilde B uhni893hre}
\norm{\nabla \widetilde B_{\eta,k}}_{L^{p_k^\prime}(\C)} 
\le C\norm{\nabla  B_{\eta,k}}_{L^{p_k^\prime}(A(\eta,\delta_k))}
\end{equation}
and also
\begin{equation}
\supp (\nabla \widetilde B_{\eta,k}) \subset A(2\eta,\delta_k).
\end{equation}
Let $\varphi_{\eta,k}\in W^{1,2}(\C)$ be the solution of
\begin{equation}
\label{EQ: Eq for De phi}
-\Delta \varphi_{\eta,k} = \nabla^\perp\widetilde B_{\eta,k} \cdot \nabla\widetilde u_k \qquad \text{ in } \C.
\end{equation}
Now set
\begin{equation}
\mathfrak h_{\eta,k}=a_{\eta,k} - \varphi_{\eta,k} \qquad \text{ in } B_1.
\end{equation}
Clearly, $\mathfrak h_{\eta,k}$ is harmonic in $A(\eta,\delta_k)$.
Now we decompose the harmonic part $\mathfrak h_{\eta,k}$ as follows:
\begin{equation}
\mathfrak h_{\eta,k} =\mathfrak h_{\eta,k}^+ + \mathfrak h_{\eta,k}^- + \mathfrak h_{\eta,k}^0 \qquad \text{ in } A(\eta,\delta_k),
\end{equation}
where
\begin{equation}
\mathfrak h_{\eta,k}^+ = \Re \left(\sum_{l>0} h_l^k z^l \right), \qquad
\mathfrak h_{\eta,k}^- = \Re \left(\sum_{l<0} h_l^k z^l \right), \qquad
\mathfrak h^0_{\eta,k} = h^k_0 + C^k_0 \log\abs{z}.
\end{equation}
From \eqref{decXk} we get the decomposition
\begin{equation}
\label{EQ: Final decomp of na u}
X_k= \nabla^\perp b_{\eta,k}+\nabla \varphi_{\eta,k}+ \nabla \mathfrak h_{\eta,k} \qquad \text{ in } A(\eta,\delta_k).
\end{equation}

\begin{lemma}
There holds $C^k_0=0$ and hence $\nabla \mathfrak h_{\eta,k}^0=0$.
\end{lemma}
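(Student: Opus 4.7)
The plan is to recover $C_0^k$ as a flux integral on a circle inside the annulus and then prove this integral vanishes by combining the two PDEs at our disposal with the fact that the Whitney extensions coincide with the originals on the annular part. Fix any $r\in(\delta_k/\eta,\eta)$, so that $\partial B_r\subset A(\eta,\delta_k)$ and $\mathfrak{h}_{\eta,k}$ is harmonic, hence real-analytic, in a neighborhood of $\partial B_r$. By orthogonality of $\{e^{il\theta}\}_{l\neq 0}$ on $\partial B_r$ the modes $\mathfrak{h}_{\eta,k}^{\pm}$ contribute zero after radial differentiation, while $\partial_r(h_0^k+C_0^k\log|z|)=C_0^k/r$ is radial. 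One obtains
\[
2\pi\,C_0^k=\int_{\partial B_r}\partial_r\mathfrak{h}_{\eta,k}\,d\sigma,
\]
so it suffices to show that this flux is zero.

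Next I would convert the flux into an interior integral. Because $\mathfrak{h}_{\eta,k}=a_{\eta,k}-\varphi_{\eta,k}$ is a well-defined Sobolev function on $B_r$ with distributional Laplacian in $L^1$, Green's identity (justified on the smooth curve $\partial B_r$, near which $\mathfrak{h}_{\eta,k}$ is smooth) yields $\int_{\partial B_r}\partial_r\mathfrak{h}_{\eta,k}\,d\sigma=\int_{B_r}\Delta\mathfrak{h}_{\eta,k}$. Starting from the Hodge decomposition \eqref{decXk}, using $\divergence\nabla^\perp b_{\eta,k}=0$, the Euler--Lagrange equation \eqref{eqXk} and the definition \eqref{EQ: Equation for nabla B def} of $B_{\eta,k}$, one identifies $-\Delta a_{\eta,k}=\nabla^\perp B_{\eta,k}\cdot\nabla u_k$ on the whole of $B_1$; from \eqref{EQ: Eq for De phi}, $-\Delta\varphi_{\eta,k}=\nabla^\perp\widetilde B_{\eta,k}\cdot\nabla\widetilde u_k$ on $\mathbb{C}$. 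Subtracting, and observing that on $B_r\setminus\overline{B}_{\delta_k/\eta}\subset A(\eta,\delta_k)$ the extensions coincide with the originals, the integrand vanishes on the annular part and
\[
2\pi\,C_0^k=\int_{B_{\delta_k/\eta}}\bigl(\nabla^\perp\widetilde B_{\eta,k}\cdot\nabla\widetilde u_k-\nabla^\perp B_{\eta,k}\cdot\nabla u_k\bigr).
\]

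To annihilate this last integral I would set $\rho:=\widetilde B_{\eta,k}-B_{\eta,k}$ and $\xi:=\widetilde u_k-u_k$ and split the integrand as $\nabla^\perp\rho\cdot\nabla\widetilde u_k+\nabla^\perp B_{\eta,k}\cdot\nabla\xi$. Because the Whitney extensions of $B_{\eta,k}|_{A(\eta,\delta_k)}$ and $u_k|_{A(\eta,\delta_k)}$ agree with the originals on the common boundary $\partial B_{\delta_k/\eta}$, both $\rho$ and $\xi$ have zero trace there and thus belong to $W_{0}^{1,p_k'}(B_{\delta_k/\eta})$ and $W_{0}^{1,p_k}(B_{\delta_k/\eta})$ respectively. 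The standard Jacobian identity $\int_\Omega\nabla^\perp f\cdot\nabla g=\int_{\partial\Omega}f\,\partial_\tau g$, derived from $\nabla^\perp f\cdot\nabla g=\mathrm{curl}(f\nabla g)$ and Stokes' theorem, makes the boundary term in each summand vanish as soon as one factor lies in $W_{0}^{1,\cdot}$. Hence both summands are zero, giving $C_0^k=0$ and therefore $\nabla\mathfrak{h}_{\eta,k}^0=0$.

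The main technical point is justifying the Jacobian integration-by-parts with only $(W^{1,p_k'},W^{1,p_k})$ regularity, since tangential derivative traces of such functions are not classically pointwise defined. I would handle this by smooth approximation of $\rho$ in $W_{0}^{1,p_k'}$ and of $\xi$ in $W_{0}^{1,p_k}$, for which the H\"older bound $\|\nabla^\perp f\cdot\nabla g\|_{L^1(\Omega)}\le\|\nabla f\|_{L^{p_k'}}\|\nabla g\|_{L^{p_k}}$ controls the passage to the limit while the boundary pairing collapses at each step; equivalently one may invoke the duality $W^{1-1/p_k',p_k'}(\partial\Omega)\times W^{1/p_k-1,p_k}(\partial\Omega)\to\mathbb{R}$ afforded by the Sobolev trace theorem together with $1/p_k+1/p_k'=1$.
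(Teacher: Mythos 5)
Your proof is correct, and it arrives at the same conclusion by a mildly different bookkeeping of the same structural facts. Both you and the paper first reduce the claim to showing $\int_{B_r}\bigl(\nabla^\perp \widetilde B_{\eta,k}\cdot\nabla\widetilde u_k-\nabla^\perp B_{\eta,k}\cdot\nabla u_k\bigr)\,dz=0$, and both exploit the divergence structure $\nabla^\perp f\cdot\nabla g=\divergence(g\,\nabla^\perp f)$ together with the fact that the Whitney extensions agree with the originals on the annulus. The paper applies this identity to each of the two terms separately, converting both to boundary fluxes on $\partial B_r$ (where $\widetilde B_{\eta,k}=B_{\eta,k}$ and $\widetilde u_k=u_k$), and observes the two fluxes coincide. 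You instead note the integrand already vanishes on the annular part, localize to the inner disk $B_{\delta_k/\eta}$, and split the integrand using the zero-trace differences $\rho$ and $\xi$. Both routes are valid; yours has the small advantage of making the zero-boundary-data structure explicit (which avoids the implicit appeal to a Gauss--Green formula for $L^{p'}$ vector fields with $L^1$ divergence), while the paper's is slightly more compact. Your closing discussion of the low-regularity Jacobian integration by parts is a genuine point the paper glosses over, and the density-plus-H\"older argument you sketch is the cleanest way to settle it.
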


\begin{proof}
Let $r \in \left(\frac{\delta_k}{\eta},\eta \right)$.
Then
\begin{equation}
\label{EQ: 2r381brufr1}
\int_{B_r} \De \mathfrak h_{\eta,k} \ dz
= \int_{B_r} \divergence \nabla \mathfrak h_{\eta,k} \ dz
= \int_{\partial B_r} \partial_\nu \mathfrak h_{\eta,k}^+ \ d\sigma
+ \int_{\partial B_r} \partial_\nu \mathfrak h_{\eta,k}^- \ d\sigma
+ \int_{\partial B_r} \partial_\nu \mathfrak h_{\eta,k}^0 \ d\sigma
\end{equation}
Now we compute
\begin{equation}
\label{EQ: 8ru81bf81b}
\int_{\partial B_r} \partial_\nu \mathfrak h_{\eta,k}^+ \ d\sigma = 0 =
\int_{\partial B_r} \partial_\nu \mathfrak h_{\eta,k}^- \ d\sigma.
\end{equation}
Furthermore, 
\begin{equation}
\label{EQ: Comp flux of h0 edjn293}
\int_{\partial B_r} \partial_\nu \mathfrak h_{\eta,k}^0 \ d\sigma =C^k_0 \int_{\partial B_r} \frac{1}{r} \ d\sigma = 2\pi C^k_0.
\end{equation}
Combining \eqref{EQ: 2r381brufr1}, \eqref{EQ: 8ru81bf81b} and \eqref{EQ: Comp flux of h0 edjn293} we find
\begin{equation}\label{EQ: unifw81389BN92IJ4127dudhAWQKGX}
\begin{aligned}
C^k_0 
= \frac{1}{2\pi} \int_{B_r} \De \mathfrak h_{\eta,k} \ dz = \frac{1}{2\pi} \int_{B_r} \De a_{\eta,k} - \De \varphi_k \ dz
\end{aligned}
\end{equation}
Now we compute
\begin{equation}
\begin{aligned}
\int_{B_r} \De a_{\eta,k} \ dz
&= \int_{B_r} \nabla^\perp B_{\eta,k} \cdot \nabla u_k \ dz \\
&= \int_{B_r} \divergence\left(\nabla^\perp B_{\eta,k}\ u_k\right) \ dz \\
&= \int_{\partial B_r} \left(\nabla^\perp B_{\eta,k}\ u_k\right) \cdot \nu \ d\sigma \\
&= \int_{\partial B_r} \left(\nabla^\perp\widetilde B_{\eta,k}\ \widetilde u_k\right) \cdot \nu \ d\sigma \\
&= \int_{B_r} \divergence\left(\nabla^\perp\widetilde B_{\eta,k}\ \widetilde u_k\right) \ dz \\
&= \int_{B_r} \nabla^\perp\widetilde B_{\eta,k} \cdot \nabla\widetilde u_k \ dz
= \int_{B_r} \De \varphi_{\eta,k} \ dz
\end{aligned}
\end{equation}
Going back to \eqref{EQ: unifw81389BN92IJ4127dudhAWQKGX} the claim follows.
\end{proof}


\subsection{$L^{2}$-energy quantization}

We first show the $L^{2,\infty}$ quantization which is a direct consequence of  $\varepsilon$-regularity Lemma \ref{LEMMA: epsilon reg sacks uhlenbeck for p harm}.\par
\begin{theorem}[$L^{2,\infty}$-energy quantization]\label{THM: L2infty en quant of na uk}
There holds
\begin{equation}
\lim_{\eta\searrow0}\limsup_{k\rightarrow\infty}\norm{\nabla u_k}_{L^{2,\infty}(A(\eta,\delta_k))}=0.
\end{equation}
\end{theorem}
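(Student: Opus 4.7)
My plan is to derive a pointwise gradient bound of the form $|x|\,|\nabla u_k(x)| \le C\sqrt{\epsilon_{\eta,k}}$ on $A(\eta,\delta_k)$ by combining $\varepsilon$-regularity (Lemma~\ref{LEMMA: epsilon reg sacks uhlenbeck for p harm}) with the no-neck energy condition of Definition~\ref{Definition: Bubble tree convergence of p harm map with one bubble}, and then to convert it into the desired $L^{2,\infty}$ bound via the explicit distribution function of $1/|x|$.

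Concretely, I set
$$\epsilon_{\eta,k} \coloneqq \sup_{\delta_k/\eta<\rho<2\rho<\eta} \int_{B_{2\rho}\setminus B_\rho}|\nabla u_k|^2 \,dz,$$
which by the third bullet of Definition~\ref{Definition: Bubble tree convergence of p harm map with one bubble} satisfies $\lim_{\eta \searrow 0}\limsup_{k \to \infty} \epsilon_{\eta,k}=0$. For each $x \in A(\eta, \delta_k)$, the triangle inequality gives $B_{|x|/2}(x) \subset (B_{|x|}\setminus B_{|x|/2}) \cup (B_{2|x|}\setminus B_{|x|})$, both of which are dyadic annuli of the form $B_{2\rho}\setminus B_\rho$ with $\rho \in \{|x|/2,\,|x|\}$. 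A routine check shows that for $|x| \in (\delta_k/\eta,\eta)$ both of these values of $\rho$ lie in $(\delta_k/(3\eta),\,3\eta/2)$, so they enter the sup defining $\epsilon_{3\eta,k}$, yielding
$$\int_{B_{|x|/2}(x)} |\nabla u_k|^2 \,dz \,\le\, 2\,\epsilon_{3\eta,k}.$$
Once $\eta$ is small and $k$ large enough that $2\epsilon_{3\eta,k}\le\varepsilon$, Lemma~\ref{LEMMA: epsilon reg sacks uhlenbeck for p harm} applies with $r=|x|/2$ and, evaluating at the center, gives
$$|\nabla u_k(x)| \,\le\, \|\nabla u_k\|_{L^\infty(B_{|x|/4}(x))} \,\le\, \frac{2C}{|x|}\sqrt{2\,\epsilon_{3\eta,k}} \,=\, \frac{C'\sqrt{\epsilon_{3\eta,k}}}{|x|}.$$

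With this pointwise estimate in hand, the weak-$L^2$ norm is immediate. For any $\lambda>0$,
$$\bigl|\{x \in A(\eta,\delta_k) : |\nabla u_k(x)|>\lambda\}\bigr| \,\le\, \bigl|\{x\in\R^2 : |x| < C'\sqrt{\epsilon_{3\eta,k}}/\lambda\}\bigr| \,\le\, \pi\,(C')^2\,\frac{\epsilon_{3\eta,k}}{\lambda^2},$$
so that $\|\nabla u_k\|_{L^{2,\infty}(A(\eta,\delta_k))} \le \sqrt\pi\,C'\,\sqrt{\epsilon_{3\eta,k}}$, which vanishes under the iterated limit $\lim_{\eta\searrow 0}\limsup_{k\to\infty}$ by the trivial reparametrization $\eta\mapsto 3\eta$.

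The one point that requires care is bookkeeping at the two boundary circles of the annulus: one must guarantee that the dyadic radii $|x|/2$ and $|x|$ arising from the covering of $B_{|x|/2}(x)$ both fall strictly inside the sup range of the definition. This is why I work with $\epsilon_{3\eta,k}$ rather than $\epsilon_{\eta,k}$ itself; the slight enlargement of the conformal scale by a fixed factor costs nothing in the iterated limit and removes any need to treat the inner and outer boundary layers separately.
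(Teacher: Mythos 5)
Your proof is correct and follows essentially the same route as the paper's: both use the no-neck condition from Definition~\ref{Definition: Bubble tree convergence of p harm map with one bubble} to make the energy on dyadic annuli small, apply the $\varepsilon$-regularity Lemma~\ref{LEMMA: epsilon reg sacks uhlenbeck for p harm} to convert this into a pointwise bound $|\nabla u_k(x)| \lesssim \sqrt{\boldsymbol{\epsilon}}/|x|$, and then exploit the fact that $1/|x|$ has finite $L^{2,\infty}$ norm. The only cosmetic difference is that the paper covers $B_{|x|/4}(x)$ by a single dyadic annulus centered at the origin (with $r_x=|x|/8$, $t_x=3|x|/4$) while you cover $B_{|x|/2}(x)$ by two and absorb the range overshoot into $\epsilon_{3\eta,k}$.
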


\begin{proof}
Let $x\in A(\eta,\delta_k)$. Introduce $r_x=\abs{x}/8$ and $t_x=3\abs{x}/4$.
Then we have
\begin{equation}
B_{2r_x}(x) \subset B_{2t_x} \setminus B_{t_x}
\end{equation}
and hence by \eqref{EQ: Cond of bubb conv ji0wr931jJGA} we can assume that for $\eta$ small and $k$ large one has $\norm{\nabla u_k}_{L^2(B_{2r_x}(x))}<\varepsilon$.
Then using $\varepsilon$-regularity Lemma \ref{LEMMA: epsilon reg sacks uhlenbeck for p harm} and the above inclusion of sets one has
\begin{equation}
\label{EQ: UIN893eNJ12ddqw81Ha}
\abs{\nabla u_k(x)} \le \norm{\nabla u_k}_{L^\infty(B_{r_x}(x))}
\le \frac{C}{\abs{x}} \norm{\nabla u_k}_{L^2(B_{2r_x}(x))}
\le \frac{C}{\abs{x}} \norm{\nabla u_k}_{L^2(B_{2t_x} \setminus B_{2t_x})}.
\end{equation}
Taking $L^{2,\infty}$-norms there follows
\begin{equation}
\begin{aligned}
\norm{\nabla u_k}_{L^{2,\infty}(A(\eta,\delta_k))} 
&\le C \sup _{\delta_k / 2\eta<\rho<2 \rho<2\eta} \left( \int_{B_{2 \rho} \backslash B_\rho}\left|\nabla u_k\right|^2 d v o l_\Sigma \right)^{\frac{1}{2}}.
\end{aligned}
\end{equation}
Using \eqref{EQ: Cond of bubb conv ji0wr931jJGA} the claim follows.
\end{proof}

 Let us consider again
 \begin{equation}
\label{Xk2}
X_k= \nabla^\perp b_{\eta,k}+\nabla \varphi_{\eta,k}+ \nabla \mathfrak h_{\eta,k} \qquad \text{ in } A(\eta,\delta_k).
\end{equation}

\begin{lemma}\label{LEMMA: L21 of na h}
There holds
\begin{equation}
\lim_{\eta\searrow0}\limsup_{k\rightarrow\infty} \norm{\nabla \mathfrak h_{\eta,k}^\pm}_{L^{2,1}(A(\eta,\delta_k))}=0.
\end{equation}
\end{lemma}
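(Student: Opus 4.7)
The approach leverages the fact that $\mathfrak{h}^+_{\eta,k}$ extends to a harmonic function on the whole disk $B_\eta$, since its Laurent series contains only non-negative powers of $z$; symmetrically, $\mathfrak{h}^-_{\eta,k}$ extends to a harmonic function on $\C\setminus\overline{B_{\delta_k/\eta}}$ vanishing at infinity. I will detail the $+$ case; the $-$ case follows by the Kelvin inversion $z\mapsto(\delta_k/\eta)^2/\bar z$, which swaps the two boundary circles and reduces it to the $+$ case.

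Step 1: reduce the $L^{2,1}$ bound on the annulus to an $L^2$ bound on the extended disk. Since $\mathfrak{h}^+_{\eta,k}$ is harmonic on $B_\eta$, interior gradient regularity for harmonic functions yields $\|\nabla\mathfrak{h}^+_{\eta,k}\|_{L^\infty(B_{3\eta/4})}\le C\eta^{-1}\|\nabla\mathfrak{h}^+_{\eta,k}\|_{L^2(B_\eta)}$. Using $\|1\|_{L^{2,1}(B_\eta)}\le C\eta$, this yields
\[
\|\nabla\mathfrak{h}^+_{\eta,k}\|_{L^{2,1}(A(\eta,\delta_k)\cap B_{3\eta/4})}\le C\|\nabla\mathfrak{h}^+_{\eta,k}\|_{L^2(B_\eta)}.
\]
The remaining thin outer shell $A(\eta,3\eta/4)$ is handled directly: $\varepsilon$-regularity (Lemma \ref{LEMMA: epsilon reg sacks uhlenbeck for p harm}) controls $|\nabla u_k|$, hence $|X_k|$, pointwise by the $L^2$-energy on a slightly larger dyadic shell, and bubble-tree condition (iii) in Definition \ref{Definition: Bubble tree convergence of p harm map with one bubble} then gives an $o(1)$ contribution.

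Step 2: reduce $\|\nabla\mathfrak{h}^+_{\eta,k}\|_{L^2(B_\eta)}$ to the $L^2$-energy of $\nabla\mathfrak{h}_{\eta,k}$ on the annulus. Parseval in the Laurent series yields $\|\nabla\mathfrak{h}^+_{\eta,k}\|^2_{L^2(B_r)}=\pi\sum_{l\ge1}l|h_l^k|^2 r^{2l}$; since $r^{2l}\le(r/\eta)^2\eta^{2l}$ for all $l\ge1$, the energy on the missing inner disk $B_{\delta_k/\eta}$ is at most $(\delta_k/\eta^2)^2$ times the total, hence negligible. Combined with the pairwise $L^2$-orthogonality $\nabla\mathfrak{h}^+_{\eta,k}\perp\nabla\mathfrak{h}^-_{\eta,k}$ on $A(\eta,\delta_k)$ (a direct Fourier computation in $\theta$ shows the Laurent-mode cross-terms cancel) and with $\nabla\mathfrak{h}^0_{\eta,k}=0$, one obtains
\[
\|\nabla\mathfrak{h}^+_{\eta,k}\|^2_{L^2(B_\eta)}\le(1+o(1))\,\|\nabla\mathfrak{h}_{\eta,k}\|^2_{L^2(A(\eta,\delta_k))}.
\]

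Step 3, the main obstacle: show $\|\nabla\mathfrak{h}_{\eta,k}\|_{L^2(A(\eta,\delta_k))}\to0$. Writing $\nabla\mathfrak{h}_{\eta,k}=X_k-\nabla^\perp b_{\eta,k}-\nabla\varphi_{\eta,k}$ on $A$, the pointwise $\varepsilon$-regularity bound $|X_k(z)|\lesssim o(1)/|z|$, the Hodge compatibility $|\nabla^\perp b_{\eta,k}|\le|X_k|$, and a Wente-type $L^{2,1}$-estimate for $\nabla\varphi_{\eta,k}$ exploiting the support property \eqref{EQ: wuinuifnvuUNIDWU0139jr1} and the Jacobian structure of \eqref{EQ: Eq for De phi} combine to control $\nabla\mathfrak{h}_{\eta,k}$. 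The key subtlety is that while $\|\nabla u_k\|_{L^2(A)}$ may diverge like $\sqrt{\log(\eta^2/\delta_k)}$ on the full neck, the log-divergent contribution lies entirely in the radial Laurent mode, which is eliminated by the preceding lemma ($C_0^k=0$); an angular Fourier decomposition mode by mode then yields the required log-free $L^2$-smallness.
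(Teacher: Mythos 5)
Your Steps 1--2 reduce the lemma to showing $\|\nabla\mathfrak h_{\eta,k}\|_{L^2(A(\eta,\delta_k))}\to0$; this reduction (harmonicity of $\mathfrak h^+_{\eta,k}$ on $B_\eta$, interior sup bound, $L^2$-orthogonality of the modes) is fine modulo the usual shrink of the annulus near its ends, which the paper also performs. The problem is Step 3, where the whole weight of the argument sits, and it does not close.

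First, the reduction you aim for is circular with the paper. Writing $\nabla\mathfrak h_{\eta,k}=X_k-\nabla^\perp b_{\eta,k}-\nabla\varphi_{\eta,k}$, the only smallness available at this stage is $\|\nabla b_{\eta,k}\|_{L^2(B_1)}\to0$. For the other two terms one only has $\|X_k\|_{L^2(A)}\lesssim\|\nabla u_k\|_{L^2(A)}\le C$ and (Wente) $\|\nabla\varphi_{\eta,k}\|_{L^{2,1}(\C)}\lesssim\|\nabla u_k\|_{L^2(A)}^2\le C$; these are \emph{bounded}, not small. The $L^{2,\infty}$ quantization of Theorem~\ref{THM: L2infty en quant of na uk} gives only a pointwise bound $|X_k(z)|\lesssim\varepsilon_\eta/|z|$, and $\int_A\varepsilon_\eta^2/|z|^2\,dz\sim\varepsilon_\eta^2\log(\eta^2/\delta_k)$ diverges as $k\to\infty$ for each fixed $\eta$, so it does \emph{not} yield $\|X_k\|_{L^2(A)}\to0$. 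Proving $\|\nabla u_k\|_{L^2(A)}\to0$ is precisely $L^2$-quantization, i.e.\ Theorem~\ref{THM: L2 en quant of na u}, which the paper establishes \emph{using} the present lemma.

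Second, your proposed escape — that the log-divergent piece lives in the ``radial Laurent mode'' and is killed by $C^k_0=0$ — conflates different objects and different conclusions. The statement $C^k_0=0$ removes the $\log|z|$ term from the harmonic function $\mathfrak h_{\eta,k}$; it says nothing about the zeroth angular Fourier mode of $\nabla u_k$, which enters $\mathfrak h_{\eta,k}$ only indirectly through the Hodge decomposition and two Poisson problems. What the absence of a log mode (together with the orthogonality of $\nabla\mathfrak h^\pm_{\eta,k}$) gives you is that $\|\nabla\mathfrak h^\pm_{\eta,k}\|_{L^2(A)}$ carries no $\log(\eta^2/\delta_k)$ factor, i.e.\ is controlled by $\|\nabla\mathfrak h_{\eta,k}\|_{L^2(A)}$ — boundedness, not smallness. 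Also note that the pointwise inequality $|\nabla^\perp b_{\eta,k}|\le|X_k|$ you use is false: the Hodge splitting is orthogonal in $L^2(B_1)$, not pointwise.

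The missing idea (this is the content of Lemma III.3 of \cite{DGR22}) is to localize the bound on the Laurent coefficients: for $l>0$, the coefficient $h_l^k$ can be computed by a Cauchy integral on any circle $\partial B_r$ inside the neck, and averaging over $r$ in the \emph{outermost} dyadic shell gives $|l\,h^k_l|\lesssim C^l\eta^{-l}\|\nabla\mathfrak h_{\eta,k}\|_{L^2(A_{s_1})}$ for an explicit constant $C$ (and symmetrically the innermost shell for $l<0$). Summing the geometric series then bounds $|\nabla\mathfrak h^+_{\eta,k}(x)|$ for $|x|\le c\,\eta$ by $\eta^{-1}\|\nabla\mathfrak h_{\eta,k}\|_{L^2(A_{s_1})}$. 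On a single dyadic shell the decomposition $\nabla\mathfrak h=X_k-\nabla^\perp b-\nabla\varphi$ combined with the bubble-tree condition (iii) of Definition~\ref{Definition: Bubble tree convergence of p harm map with one bubble}, the bound $\|\nabla b_{\eta,k}\|_{L^2(B_1)}\to0$, and a weighted (dyadic) Wente estimate for $\nabla\varphi_{\eta,k}$ does yield $o(1)$, with no $\log$ factor because only one shell is involved. This single-shell smallness is the quantity Step 3 needs and does not supply.
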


\begin{proof}
The proof is the same as   in Lemma III.3 of \cite{DGR22} and we omit it.
\end{proof}

\begin{lemma}
\label{LEMMA: First est of nabla b in necks}
For $k\in \N$ large and $\eta>0$  small one has
\begin{equation}
\label{EQ: HIninfwjei921ejir2}
\norm{\nabla b_{\eta,k}}_{L^{p_k^\prime}(B_1)} \le C (p_k-2)
\end{equation}
\end{lemma}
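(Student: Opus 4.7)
The plan is to convert the abstract Hodge identity into a quantitative PDE for $b_{\eta,k}$ in which a factor of $p_k-2$ appears explicitly on the right-hand side, and then apply $L^{p_k^\prime}$ Calder\'on--Zygmund theory on $B_1$. Since each component $\nabla u_k^\alpha$ is itself a gradient and hence curl-free, if we set $f_k := (1+|\nabla u_k|^2)^{(p_k-2)/2}$ and $G_k := f_k - 1$, then $X_k = \nabla u_k + G_k \,\nabla u_k$ and only the $G_k \nabla u_k$ piece contributes to the $\nabla^\perp$-part of the Hodge decomposition. Using the algebraic identity $\varepsilon_{ij}\partial_i\partial_j u_k^\alpha = 0$, one verifies $\nabla^\perp G_k \cdot \nabla u_k^\alpha = -\divergence(G_k\, \nabla^\perp u_k^\alpha)$, so taking the curl of the Hodge decomposition yields
\begin{equation*}
\Delta b_{\eta,k} \;=\; -\divergence\!\bigl(G_k\,\nabla^\perp u_k\bigr)\qquad\text{in } B_1,
\end{equation*}
coupled with $b_{\eta,k} = 0$ on $\partial B_1$ (after subtracting a constant; this is equivalent to $\partial_\tau b_{\eta,k}=0$).

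Standard $L^{p_k^\prime}$ elliptic regularity for the Dirichlet Laplacian on $B_1$, whose constants remain uniformly bounded as $p_k^\prime \to 2$, then gives
\begin{equation*}
\|\nabla b_{\eta,k}\|_{L^{p_k^\prime}(B_1)} \;\le\; C\,\|G_k \nabla u_k\|_{L^{p_k^\prime}(B_1)},
\end{equation*}
so that the whole game is to prove $\|G_k\nabla u_k\|_{L^{p_k^\prime}(B_1)} \le C(p_k-2)$. The integral representation $G_k = \int_2^{p_k} \tfrac{1}{2}\log(1+|\nabla u_k|^2)(1+|\nabla u_k|^2)^{q/2-1}\,dq$ yields the pointwise bound
\begin{equation*}
|G_k| \;\le\; \tfrac{p_k-2}{2}\,\log(1+|\nabla u_k|^2)\,(1+|\nabla u_k|^2)^{(p_k-2)/2},
\end{equation*}
and the exponent bookkeeping $(p_k-2)p_k^\prime/2 + p_k^\prime/2 = p_k/2$ produces
\begin{equation*}
\int_{B_1} |G_k \nabla u_k|^{p_k^\prime} \;\le\; C\,(p_k-2)^{p_k^\prime}\int_{B_1}\bigl[\log(1+|\nabla u_k|^2)\bigr]^{p_k^\prime}(1+|\nabla u_k|^2)^{p_k/2}.
\end{equation*}

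The main obstacle is to control the residual integral on the right by a constant uniform in $k$. The logarithm blows up in the bubble core $\{|x|\lesssim \delta_k\}$ where $|\nabla u_k|\sim \delta_k^{-1}$, so a direct estimate would cost a factor $(\log\delta_k^{-1})^{p_k^\prime}$. The plan is to absorb the logarithm via the Young-type inequality $[\log(1+s)]^{p_k^\prime} \le C_\varepsilon (1+s)^\varepsilon$ for small $\varepsilon>0$, and then to upgrade the uniform bound $|\nabla u_k|\in L^{p_k}$ to a uniform bound $|\nabla u_k|\in L^{p_k+\delta}$ for some $\delta>0$ through Gehring's reverse-H\"older argument applied to the $p_k$-harmonic system into $\mathbb{S}^n$. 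Choosing $\varepsilon$ small enough so that $p_k/2+\varepsilon \le (p_k+\delta)/2$ makes $\int_{B_1}(1+|\nabla u_k|^2)^{p_k/2+\varepsilon}$ uniformly bounded in $k$, which closes the argument and delivers the factor $(p_k-2)$.
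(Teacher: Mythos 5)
Your reduction is structurally correct up to the final estimate: taking the curl of the Hodge decomposition and using that each component of $\nabla u_k$ is curl-free gives $\Delta b_{\eta,k}=-\divergence(G_k\nabla^\perp u_k)$ with vanishing Dirichlet data, and Calder\'on--Zygmund theory on $B_1$, with constants uniform as $p_k'\to 2$, yields $\|\nabla b_{\eta,k}\|_{L^{p_k'}(B_1)}\le C\|G_k\nabla u_k\|_{L^{p_k'}(B_1)}$. The genuine gap is the last step: the target bound $\|G_k\nabla u_k\|_{L^{p_k'}(B_1)}\le C(p_k-2)$ is false, so this route cannot close.

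Your pointwise bound on $G_k$ is essentially saturated. Since $e^x-1\ge x$, one has unconditionally $G_k\ge\tfrac{p_k-2}{2}\log(1+|\nabla u_k|^2)$. On the bubble core $|x|\lesssim\delta_k$, where $|\nabla u_k(x)|\approx\delta_k^{-1}|\nabla v_k(x/\delta_k)|$ with $v_k\to v_\infty$ nontrivial, this gives $G_k\gtrsim(p_k-2)\log\delta_k^{-1}$ on a region where $\int|\nabla u_k|^{p_k'}\sim\delta_k^{2-p_k'}\gtrsim 1$ (using $\delta_k^{2-p_k}\le C$, as established in the proof of Lemma~\ref{LEMMA: Linfty bd on na u in the necks unif}). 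Hence $\|G_k\nabla u_k\|_{L^{p_k'}(B_1)}\gtrsim(p_k-2)\log\delta_k^{-1}$, which exceeds $(p_k-2)$ by the unbounded factor $\log\delta_k^{-1}$. For the same reason the Gehring step fails: for any fixed $\varepsilon>0$, $\int_{B_1}(1+|\nabla u_k|^2)^{p_k/2+\varepsilon}\gtrsim\delta_k^{2-p_k-2\varepsilon}\gtrsim\delta_k^{-2\varepsilon}\to\infty$, so no higher-integrability exponent uniform in $k$ exists; the bubble concentration is exactly critical in $L^{p_k}$, and optimizing the Young inequality over $\varepsilon$ only returns the $\log\delta_k^{-1}$ you were trying to remove.

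The deeper point is that the sharp $(p_k-2)$ rate reflects cancellation inside the Hodge projection $T$, not pointwise smallness of $G_k\nabla u_k$: one has $\nabla^\perp b_{\eta,k}=T(G_k\nabla u_k)$, and while $G_k\nabla u_k$ is genuinely of size $(p_k-2)\log\delta_k^{-1}$ in $L^{p_k'}$, its divergence-free part is smaller by a factor of $\log\delta_k^{-1}$. This is precisely what the paper's proof captures: after the normalization $S_{p_k}$, $\nabla^\perp b_{\eta,k}$ is (up to a bounded constant) the commutator $[T,S_{p_k}](\nabla u_k)$, and the Rochberg--Weiss-type estimate of Bernard--Rivi\`ere (Lemma~A.5 of \cite{BR19}) produces a clean $(p-2)$ factor for such commutators via complex interpolation of the analytic family $p\mapsto S_p$, with no logarithmic loss. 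The loss in your route is not cosmetic: feeding $\|\nabla b_{\eta,k}\|\lesssim(p_k-2)\log\delta_k^{-1}$ into Lemma~\ref{LEMMA: L21 of na b} leaves an uncontrolled factor of order $(p_k-2)^{1/2}\log\delta_k^{-1}$, which is exactly the entropy-type condition \eqref{II.r7-ann} that the paper succeeds in dispensing with for sphere targets.
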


\begin{proof}
For $p>2$ consider the operator
\begin{equation}
S_p(f)\coloneqq \left[\frac{1+|f|^2}{1+\|f\|_{L^p(B_1)}^2}\right]^{\frac{p}{2}-1} f
\end{equation}
defined for functions $f \in L^p(B_1)$.
Consider the operator $T$ on the domain $B_1$ introduced in Lemma \ref{LEMMA: T assigns div free comp is bd unif}.
We compute the commutator 
\begin{equation}
\label{EQ: Comp of the commutator}
\begin{aligned}
\left[T ,S_{p_k}\right](\nabla u_k|_{B_1})
&\coloneqq T \big[S_{p_k}(\nabla u_k|_{B_1})\big]- S_{p_k}\big[\underbrace{T (\nabla u_k|_{B_1})}_{=0}\big] \\
&= T  \left[\frac{1}{\left(1+\|\nabla u_k \|_{L^{p_k}(B_1)}^2\right)^{\frac{p_k}{2}-1}} X_k\right] \\
&= \frac{1}{\left(1+\|\nabla u_k \|_{L^{p_k}(B_1)}^2\right)^{\frac{p_k}{2}-1}} T  \left[ X_k\right] \\
&= \frac{1}{\left(1+\|\nabla u_k \|_{L^{p_k}(B_1)}^2\right)^{\frac{p_k}{2}-1}} \nabla^\perp b_{\eta,k}.
\end{aligned}
\end{equation}
Using Lemma A.5 of \cite{BR19}
\footnote{We recall Lemma A.5 in  \cite{BR19}: Let $T: L^s(B_1) \rightarrow L^s(B_1)$ be a bounded linear operator for any $4/3 < s < 5$ such that 
$$
\norm T
:= \sup_{\substack{4/3 < s < 5, \\ 0 \neq f \in L^s(B_1)}}
\frac{\norm{T(f)}_{L^s(B_1)}}{\norm{f}_{L^s(B_1)}}
< \infty.
$$
Then there exists a global constant $C>0$ such that for any $\alpha \in (p-2, 1/2)$ one has
\begin{equation*}
\norm{TS_p(f)- S_pT(f)}_{L^{\frac{p}{p-1}}(B_1)}
\leq C \ \frac{p-2}{\alpha} \ \norm{T}
\left( \norm{f}_{L^p(B_1)}^\alpha + \left( 1+\norm f_{L^p(B_1)}^2\right)^{\alpha/2} \right) \norm{f}_{L^p(B_1)}^{1-\alpha}
\end{equation*}
for any $f \in L^{p}(B_1)$.}
we find
\begin{equation}
\begin{aligned}
\norm{\left[T,S_{p_k}\right](\nabla u_k)}_{{L^{p_k^\prime}(B_1)}}
\le C\ \frac{p_k-2}{\alpha} \norm{T} \left( \norm{\nabla u_k}_{L^{p_k}(B_1)} + (1+\norm{\nabla u_k}_{L^{p_k}(B_1)}^2)^{\frac{\alpha}{2}} \norm{\nabla u_k}_{L^{p_k}(B_1)}^{1-\alpha} \right),
\end{aligned}
\end{equation}
 
where $\alpha\in (p_k-2, 1/2)$ and
\begin{equation}
\norm{T} \coloneqq \sup_{\substack{s\in(4/3,5) \\ 0\ne f \in L^s(B_1)}} \frac{\norm{T(f)}_{L^s(B_1)}}{\norm{f}_{L^s(B_1)}}<\infty.
\end{equation}
Using \eqref{EQ: Comp of the commutator} there follows
\begin{equation}
\begin{aligned}
\hspace{-3mm}\norm{\nabla b_{\eta,k}}_{{L^{p_k^\prime}(B_1)}} 
&\le C\ \frac{p_k-2}{\alpha} \norm{T} \left( \norm{\nabla u_k}_{L^{p_k}(B_1)} + (1+\norm{\nabla u_k}_{L^{p_k}(B_1)}^2)^{\frac{\alpha}{2}} \norm{\nabla u_k}_{L^{p_k}(B_1)}^{1-\alpha} \right)\\
&\hspace{30mm} \times \left(1+\|\nabla u_k \|_{L^{p_k}(B_{1})}^2\right)^{\frac{p_k}{2}-1}.
\end{aligned}
\end{equation}
The claim follows from using \eqref{EQ: Uniform bound on the energy 8912H2egBeZ}.
\end{proof}

\begin{lemma} \label{LEMMA: L21 of na b}
For $\eta>0$ we have
\begin{equation}
\label{EQ: L21 est of nabla b eta k}
\lim_{k \rightarrow\infty} \norm{\abs{\nabla b_{\eta,k}}^{1/(p_k-1)}}_{L^{2,1}(B_1)}=0.
\end{equation}
\end{lemma}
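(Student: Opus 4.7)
Setting $g_k := \abs{\nabla b_{\eta,k}}^{1/(p_k-1)}$, Lemma \ref{LEMMA: First est of nabla b in necks} yields immediately
\begin{equation*}
\norm{g_k}_{L^{p_k}(B_1)}^{p_k} = \int_{B_1} \abs{\nabla b_{\eta,k}}^{p_k/(p_k-1)} \, dz = \norm{\nabla b_{\eta,k}}_{L^{p_k^\prime}(B_1)}^{p_k^\prime} \le \bigl(C(p_k-2)\bigr)^{p_k^\prime},
\end{equation*}
so $\norm{g_k}_{L^{p_k}(B_1)} \le C(p_k-2)^{1/(p_k-1)} \to 0$ as $p_k \to 2^+$. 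The direct embedding $L^{p_k}(B_1) \hookrightarrow L^{2,1}(B_1)$ on a bounded domain has embedding constant of order $1/(p_k-2)$; combined with the above rate this gives only a uniform, not a vanishing, $L^{2,1}$-estimate. An extra compensated-compactness input is thus required.

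Such an input is provided by the Jacobian structure of the equation for $b_{\eta,k}$. Writing $X_k = f^{(k)} \nabla u_k$ with $f^{(k)} := (1+\abs{\nabla u_k}^2)^{p_k/2-1}$, applying $\curl$ to the Hodge decomposition $X_k = \nabla a_{\eta,k} + \nabla^\perp b_{\eta,k}$ and using the chain rule gives
\begin{equation*}
\Delta b_{\eta,k} = -\curl(X_k) = -\nabla^\perp f^{(k)} \cdot \nabla u_k,
\end{equation*}
which is the Jacobian of $f^{(k)}$ and $u_k$. By the Coifman-Lions-Meyer-Semmes theorem the right-hand side lies in the local Hardy space $\mathcal{H}^1(B_1)$, and Wente-type elliptic regularity in the Lorentz-Hardy setting (in the spirit of \cite{BR19, LR14, LZ19}) yields
\begin{equation*}
\norm{\nabla b_{\eta,k}}_{L^{2,1}(B_1)} \le C \, \norm{\nabla u_k}_{L^2(B_1)} \, \norm{\nabla f^{(k)}}_{L^2(B_1)}.
\end{equation*}
Since $f^{(k)} \equiv 1$ at $p_k = 2$, one expects $\norm{\nabla f^{(k)}}_{L^2(B_1)} = o(1)$ as $k \to \infty$, and so $\norm{\nabla b_{\eta,k}}_{L^{2,1}(B_1)} \to 0$.

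To conclude the statement on $g_k$, one uses the layer-cake identity (obtained via the substitution $\lambda = s^{1/(p_k-1)}$ in the distribution-function formula for the Lorentz norm)
\begin{equation*}
\norm{g_k}_{L^{2,1}(B_1)} = \frac{1}{p_k-1} \int_0^\infty \abs{\{\abs{\nabla b_{\eta,k}} > s\}}^{1/2} \, s^{-(p_k-2)/(p_k-1)} \, ds,
\end{equation*}
and splits the integral at $s = 1$: on $(0,1]$ the weight $s^{-(p_k-2)/(p_k-1)}$ contributes only a factor $(p_k-1)$, so this portion is absorbed by the vanishing $L^{2,1}$-norm of $\nabla b_{\eta,k}$; on $[1,\infty)$ the tail is controlled by Chebyshev using the $L^{p_k^\prime}$-estimate of Lemma \ref{LEMMA: First est of nabla b in necks}, which also tends to zero as $p_k \to 2^+$.

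\textbf{Main obstacle.} The hardest step is the smallness of $\norm{\nabla f^{(k)}}_{L^2(B_1)}$: its pointwise expression $\abs{\nabla f^{(k)}} \lesssim (p_k-2)(1+\abs{\nabla u_k}^2)^{p_k/2-2}\abs{\nabla u_k}\abs{\nabla^2 u_k}$ features $\nabla^2 u_k$, whose $L^2$-norm may blow up due to concentration at the bubble scale $\delta_k$. Resolving this requires localising $B_1$ into bulk, neck, and bubble-core subregions, and exploiting the conservation laws and $\epsilon$-regularity theory specific to $p_k$-harmonic maps into $S^n$ that are used throughout this paper.
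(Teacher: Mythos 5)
The proposal goes wrong at its first step. You estimate the embedding constant of $L^{p_k}(B_1)\hookrightarrow L^{2,1}(B_1)$ to be of order $(p_k-2)^{-1}$ and conclude that the elementary route yields only uniform boundedness. That estimate is not sharp. Applying H\"older with exponents $p_k$ and $p_k'$ to the decreasing rearrangement $f^*$ in the identity $\norm{f}_{L^{2,1}(B_1)}=\int_0^{|B_1|}t^{-1/2}f^*(t)\,dt$ gives
\[
\norm{f}_{L^{2,1}(B_1)}
\;\le\; |B_1|^{\frac{p_k-2}{2p_k}}\left(\frac{2(p_k-1)}{p_k-2}\right)^{\frac{p_k-1}{p_k}}\norm{f}_{L^{p_k}(B_1)},
\]
so the constant is of order $(p_k-2)^{-(p_k-1)/p_k}\approx (p_k-2)^{-1/2}$, a full power better than the $(p_k-2)^{-1}$ you assumed. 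This is precisely Lemma C.1 of \cite{DR23}, which the paper invokes. With the sharp constant your own first calculation already finishes the lemma: combining it with Lemma \ref{LEMMA: First est of nabla b in necks} gives $\norm{g_k}_{L^{2,1}(B_1)}\le C(p_k-2)^{\frac{1-p_k}{p_k}+\frac{1}{p_k-1}}$, and the exponent tends to $\tfrac12>0$, so the norm vanishes. No compensated-compactness input is needed for this lemma.

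The alternative Jacobian/Wente route is therefore unnecessary and, as written, does not close. You yourself point out the gap in showing $\norm{\nabla f^{(k)}}_{L^2(B_1)}\to 0$. There is a second gap in the layer-cake conclusion: even granting $\norm{\nabla b_{\eta,k}}_{L^{2,1}(B_1)}\to0$, the piece over $(0,1]$ with the crude bound $|\{|\nabla b_{\eta,k}|>s\}|\le |B_1|$ and $\int_0^1 s^{-(p_k-2)/(p_k-1)}\,ds=p_k-1$ yields, after multiplying by the prefactor $\tfrac{2}{p_k-1}$, a contribution of size $2|B_1|^{1/2}$, which is bounded but not vanishing; the $L^{2,1}$ smallness of $\nabla b_{\eta,k}$ does not by itself control the distribution function near $s=0$. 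In fact the split has to be run the other way: the $(0,1]$ region is handled via Chebyshev and the $L^{p_k'}$ estimate of Lemma \ref{LEMMA: First est of nabla b in necks}, and it is the $[1,\infty)$ tail that the $L^{2,1}$ smallness controls (there the weight is $\le 1$). But none of this is required once the sharp $p$-dependence of the $L^{p}\hookrightarrow L^{2,1}$ constant is in hand.
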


\begin{proof}
Using Lemma C.1 of \cite{DR23} one has
\begin{equation}
\begin{aligned}
\norm{\abs{\nabla b_{\eta,k}}^{1/(p_k-1)}}_{L^{2,1}(B_1)}
&\le C\ \pi^{\frac{p_k-2}{2p_k}} \left( \frac{2(p_k-1)}{p_k-2} \right)^{\frac{p_k-1}{p_k}} \norm{\abs{\nabla b_{\eta,k}}^{1/(p_k-1)}}_{L^{p_k}(B_1)} \\
&\le C (p_k-2)^{\frac{1-p_k}{p_k}} \norm{\nabla b_{\eta,k}}_{L^{p_k^\prime}(B_1)}^{1/(p_k-1)}
\end{aligned}
\end{equation}
Combining this estimate with Lemma \ref{LEMMA: First est of nabla b in necks} we have
\begin{equation}
\label{EQ: Comb est of nabla b in the pr}
\norm{\abs{\nabla b_{\eta,k}}^{1/(p_k-1)}}_{L^{2,1}(B_1)}
\le C\ (p_k-2)^{\frac{1-p_k}{p_k}+\frac{1}{p_k-1}}.
\end{equation}
Computing
\begin{equation}
\frac{1-p_k}{p_k}+\frac{1}{p_k-1}= \frac{3p_k-p_k^2-1}{(p_k-1)p_k} \longrightarrow \frac{1}{2}
\end{equation}
we conclude
\begin{equation}
\limsup_{k \rightarrow 0}
\norm{\abs{\nabla b_{\eta,k}}^{1/(p_k-1)}}_{L^{2,1}(B_1)}
\le C\lim_{k \rightarrow 0} (p_k-2)^{\frac{1-p_k}{p_k}+\frac{1}{p_k-1}}=0.
\end{equation}
\end{proof}

\begin{theorem}[$L^2$-energy quantization]
\label{THM: L2 en quant of na u}
There holds
\begin{equation}
\lim_{\eta\searrow0}\limsup_{k\rightarrow\infty}\norm{\nabla u_k}_{L^{2}(A(\eta,\delta_k))}=0.
\end{equation}
\end{theorem}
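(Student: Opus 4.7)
The plan is to upgrade the $L^{2,\infty}$ quantization of Theorem~\ref{THM: L2infty en quant of na uk} to an $L^{2}$ quantization by testing the decomposition \eqref{Xk2} of $X_k$ against $\nabla u_k$ and exploiting the $L^{2,1}$-$L^{2,\infty}$ duality. Since $p_k>2$, one has the pointwise inequality $(1+\abs{\nabla u_k}^2)^{p_k/2-1}\geq 1$, so that $X_k\cdot\nabla u_k\geq\abs{\nabla u_k}^2$. Integrating over the neck and inserting \eqref{Xk2} reduces the problem to showing that each of
\begin{equation*}
I_1 := \int_{A(\eta,\delta_k)}\nabla^\perp b_{\eta,k}\cdot\nabla u_k\ dz,\ \
I_2 := \int_{A(\eta,\delta_k)}\nabla\varphi_{\eta,k}\cdot\nabla u_k\ dz,\ \
I_3 := \int_{A(\eta,\delta_k)}\nabla\mathfrak{h}_{\eta,k}\cdot\nabla u_k\ dz
\end{equation*}
satisfies $\lim_{\eta\searrow 0}\limsup_{k\to\infty}I_j=0$.

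For $I_1$, H\"older's inequality with the conjugate exponents $p_k,p_k'$, combined with Lemma~\ref{LEMMA: First est of nabla b in necks} and the uniform energy bound \eqref{EQ: Uniform bound on the energy 8912H2egBeZ}, gives $\abs{I_1}\leq\norm{\nabla b_{\eta,k}}_{L^{p_k'}(B_1)}\norm{\nabla u_k}_{L^{p_k}(B_1)}\leq C(p_k-2)\to 0$. For $I_3$, the logarithmic component $\mathfrak{h}_{\eta,k}^0$ has vanishing gradient by the lemma just established, and the remaining $\pm$ pieces are controlled by Lorentz duality: $\abs{I_3}\leq C\bigl(\norm{\nabla\mathfrak{h}_{\eta,k}^+}_{L^{2,1}(A(\eta,\delta_k))}+\norm{\nabla\mathfrak{h}_{\eta,k}^-}_{L^{2,1}(A(\eta,\delta_k))}\bigr)\norm{\nabla u_k}_{L^{2,\infty}(A(\eta,\delta_k))}\to 0$, using Lemma~\ref{LEMMA: L21 of na h} and Theorem~\ref{THM: L2infty en quant of na uk}.

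The term $I_2$ is the main obstacle and is handled by the Wente / Coifman-Lions-Meyer-Semmes machinery. Since $\varphi_{\eta,k}$ solves \eqref{EQ: Eq for De phi} on all of $\C$ with Jacobian right-hand side $\nabla^\perp\widetilde B_{\eta,k}\cdot\nabla\widetilde u_k$, and since $\nabla\widetilde B_{\eta,k}$ is uniformly bounded in $L^{p_k'}(\C)$ while $\nabla\widetilde u_k$ is uniformly bounded in $L^{p_k}(\C)$ by \eqref{EQ: estimate of B 980jt4unJUN}, \eqref{EQ: jnfemorjsfJMNi29d91}, \eqref{EQ: Control of tilde B uhni893hre} and \eqref{EQ: Uniform bound on the energy 8912H2egBeZ}, an improved div-curl estimate places the product in the Hardy space $\Hardy^1(\C)$ and yields a $k,\eta$-uniform bound $\norm{\nabla\varphi_{\eta,k}}_{L^{2,1}(\C)}\leq C$. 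Invoking Lorentz duality once more then produces $\abs{I_2}\leq C\norm{\nabla u_k}_{L^{2,\infty}(A(\eta,\delta_k))}\to 0$ via Theorem~\ref{THM: L2infty en quant of na uk}, and the three estimates combine to conclude the proof.
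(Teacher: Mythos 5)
Your proposal is correct and takes a genuinely different route to the $L^2$ quantization from the one used in the paper.

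Both approaches start from the same Hodge decomposition $X_k=\nabla^\perp b_{\eta,k}+\nabla\varphi_{\eta,k}+\nabla\mathfrak h_{\eta,k}$ and from the same ingredients: the $L^{p_k'}(B_1)$-smallness of $\nabla b_{\eta,k}$ from Lemma~\ref{LEMMA: First est of nabla b in necks}, the $L^{2,1}$-smallness of $\nabla\mathfrak h_{\eta,k}^\pm$ from Lemma~\ref{LEMMA: L21 of na h}, and a uniform $L^{2,1}(\C)$ bound for $\nabla\varphi_{\eta,k}$ coming from a Wente-type (equivalently CLMS) estimate. The paper, however, never pairs $X_k$ against $\nabla u_k$; it works throughout with the renormalised quantity $\abs{X_k}^{1/(p_k-1)}$, establishes the sub-linearity estimate \eqref{EQ: ZBU3rZfi8U9djogqw} for the $L^{2,1}$-norm of such fractional powers, transfers $L^{2,1}$ and $L^{2,\infty}$ control piece by piece onto $\abs{\nabla\varphi_{\eta,k}}^{1/(p_k-1)}$, and then invokes the Lorentz-space product rule $L^{2,\infty}\cdot L^{2,1}\subset L^2$ before finally using $\abs{\nabla u_k}\le\abs{X_k}^{1/(p_k-1)}$. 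Your route is cleaner: the elementary pointwise observation $X_k\cdot\nabla u_k\ge\abs{\nabla u_k}^2$ for $p_k>2$ reduces the whole theorem to three bilinear pairings $I_1,I_2,I_3$, each killed directly by $L^{p_k}$--$L^{p_k'}$ H\"older (for $\nabla^\perp b_{\eta,k}$) or by $L^{2,1}$--$L^{2,\infty}$ duality (for $\nabla\varphi_{\eta,k}$ and $\nabla\mathfrak h_{\eta,k}^\pm$, against $\norm{\nabla u_k}_{L^{2,\infty}}$ which vanishes by Theorem~\ref{THM: L2infty en quant of na uk}). The trade-off is that your duality argument produces only the $L^2$ quantization, while the paper's fractional-power machinery is the engine that also drives the $L^{2,1}$ quantization of the next subsection; for the $L^2$ statement alone your argument is a legitimate and more transparent shortcut. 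One small point: in controlling $I_2$ you should simply record that the CLMS/Hardy-space inequality applied to $-\Delta\varphi_{\eta,k}=\nabla^\perp\widetilde B_{\eta,k}\cdot\nabla\widetilde u_k$ with $\nabla\widetilde B_{\eta,k}\in L^{p_k'}(\C)$, $\nabla\widetilde u_k\in L^{p_k}(\C)$ yields the uniform bound $\norm{\nabla\varphi_{\eta,k}}_{L^{2,1}(\C)}\le C$; this is exactly the bound the paper records in \eqref{EQ: uZG813g87UH}, and no further refinement is needed.
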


\begin{proof}
We have
\begin{equation}
\begin{aligned}
\norm{\abs{X_k}^{\frac{1}{p_k-1}}}_{L^{2,\infty}(A(\eta,\delta_k))}
&\le \norm{(1+\abs{\nabla u_k})}_{L^{2,\infty}(A(\eta,\delta_k))} \\
&\le C \left( \norm{1}_{L^{2,\infty}(A(\eta,\delta_k))} + \norm{\nabla u_k}_{L^{2,\infty}(A(\eta,\delta_k))} \right) \\
&\le C \left( \eta + \norm{\nabla u_k}_{L^{2,\infty}(A(\eta,\delta_k))} \right)
\end{aligned}
\end{equation}
and hence with Theorem \ref{THM: L2infty en quant of na uk}
\begin{equation}
\label{EQ: L21 of Xk ifn3N51nM}
\lim_{\eta\searrow0}\limsup_{k\rightarrow\infty}\norm{\abs{X_k}^{\frac{1}{p_k-1}}}_{L^{2,\infty}(A(\eta,\delta_k))}=0.
\end{equation}
For any function $f$ on a bounded domain $\Omega$ and any $p>2$ we can estimate
\begin{equation}
\label{EQ: ZBU3rZfi8U9djogqw}
\begin{aligned}
\norm{f^{\frac{1}{p-1}}}_{L^{2,1}(\Omega)}
&= 2 \int_0^\infty t\ \abs{\{\abs{f}\ge t^{p-1}\}}^{\frac{1}{2}} \frac{dt}{t}\\
&= 2 \int_0^1 t\ \abs{\{\abs{f}\ge t^{p-1}\}}^{\frac{1}{2}} \frac{dt}{t} + 2 \int_1^\infty t\ \big|\{\abs{f}\ge \underbrace{t^{p-1}}_{\eqqcolon s}\}\big|^{\frac{1}{2}} \frac{dt}{t} \\
&= 2 \int_0^1  \abs{\{\abs{f}\ge t^{p-1}\}}^{\frac{1}{2}} \ dt + \frac{2}{p-1} \int_1^\infty s^{\frac{1}{p-1}} \abs{\{\abs{f}\ge s\}}^{\frac{1}{2}} \frac{ds}{s} \\
& \le 2\ \abs{\Omega}^{\frac{1}{2}} + \frac{2}{p-1} \int_1^\infty s \ \abs{\{\abs{f}\ge s\}}^{\frac{1}{2}} \frac{ds}{s}
 \\ 
& \le 2\ \abs{\Omega}^{\frac{1}{2}} + \frac{2}{p-1} \int_0^\infty s \ \abs{\{\abs{f}\ge s\}}^{\frac{1}{2}} \frac{ds}{s}
 \\ 
& = 2\ \abs{\Omega}^{\frac{1}{2}} + \frac{2}{p-1} \norm{f}_{L^{2,1}(\Omega)}.
\end{aligned}
\end{equation}
Combining \eqref{EQ: ZBU3rZfi8U9djogqw} with Lemma \ref{LEMMA: L21 of na h} we obtain
\begin{equation}
\label{EQ: L21 skewed na h ZH82bn9Ns1}
\lim_{\eta\searrow0}\limsup_{k\rightarrow\infty} \norm{\abs{\nabla \mathfrak h_{\eta,k}^\pm}^{\frac{1}{p_k-1}}}_{L^{2,1}(A(\eta,\delta_k))}=0.
\end{equation}
Going back to the decomposition \eqref{EQ: Final decomp of na u} and using Lemma \ref{LEMMA: L21 of na b}, \eqref{EQ: L21 of Xk ifn3N51nM}, \eqref{EQ: L21 skewed na h ZH82bn9Ns1}, we find
\begin{equation}
\label{EQ: NUunf81b1dvwdJ12J}
\lim_{\eta\searrow0}\limsup_{k\rightarrow\infty} \norm{\abs{\nabla \varphi_{\eta,k}}^{\frac{1}{p_k-1}}}_{L^{2,\infty}(A(\eta,\delta_k))}=0.
\end{equation}
Using Wente's inequality with \eqref{EQ: Eq for De phi} and also \eqref{EQ: jnfemorjsfJMNi29d91}, \eqref{EQ: estimate of B 980jt4unJUN}, \eqref{EQ: Control of tilde B uhni893hre}, \eqref{EQ: Uniform bound on the energy 8912H2egBeZ} one finds
\begin{equation}
\label{EQ: uZG813g87UH}
\norm{\nabla \varphi_{\eta,k}}_{L^{2,1}(\C)} \le C \norm{\nabla\widetilde B_{\eta,k}}_{L^{2}(\C)} \norm{\nabla\widetilde u_{k}}_{L^{2}(\C)} \le C \norm{\nabla u_{k}}_{L^{2}(A(\eta,\delta_k))}^2\le C.
\end{equation}
Combining \eqref{EQ: uZG813g87UH} with \eqref{EQ: ZBU3rZfi8U9djogqw}
we find
\begin{equation}
\label{EQ: Ifw8193rH23nN}
\norm{\abs{\nabla \varphi_{\eta,k}}^{\frac{1}{p_k-1}}}_{L^{2,1}(A(\eta,\delta_k))} \le C.
\end{equation}
 By H\"older's inequality in Lorentz spaces
\begin{equation}
\norm{\abs{\nabla \varphi_{\eta,k}}^{\frac{1}{p_k-1}}}_{L^{2}(A(\eta,\delta_k))}
\le \norm{\abs{\nabla \varphi_{\eta,k}}^{\frac{1}{p_k-1}}}_{L^{2, \infty}(A(\eta,\delta_k))} \norm{\abs{\nabla \varphi_{\eta,k}}^{\frac{1}{p_k-1}}}_{L^{2, 1}(A(\eta,\delta_k))}
\end{equation}
Hence, using \eqref{EQ: Ifw8193rH23nN} and \eqref{EQ: NUunf81b1dvwdJ12J} we get
\begin{equation}
\label{EQ: UNuwf83hn1H12fd}
\lim_{\eta\searrow0}\limsup_{k\rightarrow\infty} \norm{\abs{\nabla \varphi_{\eta,k}}^{\frac{1}{p_k-1}}}_{L^{2}(A(\eta,\delta_k))}=0.
\end{equation}
Going back to the decomposition \eqref{EQ: Final decomp of na u} and using Lemma \ref{LEMMA: L21 of na b}, \eqref{EQ: L21 skewed na h ZH82bn9Ns1} and \eqref{EQ: UNuwf83hn1H12fd} we obtain
\begin{equation}
\lim_{\eta\searrow0}\limsup_{k\rightarrow\infty}\norm{\abs{X_{k}}^{\frac{1}{p_k-1}}}_{L^{2}(A(\eta,\delta_k))}=0.
\end{equation}
The bound $\abs{\nabla u_k}\le \abs{X_k}^{\frac{1}{p_k-1}}$ gives the claimed result.
\end{proof}

\subsection{$L^{2,1}$-energy quantization}

\begin{theorem}[$L^{2,1}$-energy quantization]
There holds
\begin{equation}
\lim_{\eta\searrow0}\limsup_{k\rightarrow\infty}\norm{\nabla u_k}_{L^{2,1}(A(\eta,\delta_k))}=0.
\end{equation}
\end{theorem}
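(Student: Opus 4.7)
The plan is to upgrade the freshly proved $L^{2}$ quantization (Theorem \ref{THM: L2 en quant of na u}) into its $L^{2,1}$ counterpart by analyzing the Hodge decomposition \eqref{EQ: Final decomp of na u} on the neck $A(\eta,\delta_k)$ piece by piece. Since $|X_k|=(1+|\nabla u_k|^2)^{(p_k-2)/2}|\nabla u_k|\geq|\nabla u_k|^{p_k-1}$, one has pointwise $|\nabla u_k|\leq|X_k|^{1/(p_k-1)}$. The exponent $1/(p_k-1)\in(0,1)$ makes $t\mapsto t^{1/(p_k-1)}$ concave and vanishing at the origin, hence subadditive, so using $\nabla \mathfrak{h}^0_{\eta,k}=0$ I obtain
\begin{equation}
|\nabla u_k| \leq |\nabla b_{\eta,k}|^{\frac{1}{p_k-1}} + |\nabla \varphi_{\eta,k}|^{\frac{1}{p_k-1}} + |\nabla \mathfrak{h}^+_{\eta,k}|^{\frac{1}{p_k-1}} + |\nabla \mathfrak{h}^-_{\eta,k}|^{\frac{1}{p_k-1}}
\end{equation}
on $A(\eta,\delta_k)$. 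Taking $L^{2,1}$-norms and applying the triangle inequality reduces the theorem to four vanishing statements.

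Two of the four are already in hand: the $b$-term is precisely Lemma \ref{LEMMA: L21 of na b}, and the harmonic $\mathfrak{h}^\pm$-terms are \eqref{EQ: L21 skewed na h ZH82bn9Ns1}. The remaining task, which I expect to be the main obstacle, is to prove
\begin{equation}
\lim_{\eta\searrow 0}\limsup_{k\to\infty}\bigl\||\nabla \varphi_{\eta,k}|^{1/(p_k-1)}\bigr\|_{L^{2,1}(A(\eta,\delta_k))}=0.
\end{equation}
The point is that the earlier bound \eqref{EQ: Ifw8193rH23nN} relied only on the uniform $L^2$ bound for $\nabla u_k$, which is too weak for the Lorentz endpoint. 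Now Theorem \ref{THM: L2 en quant of na u} furnishes the much stronger $\|\nabla u_k\|_{L^2(A(\eta,\delta_k))}\to 0$, which I will feed into Wente's inequality applied to \eqref{EQ: Eq for De phi} to deduce
\begin{equation}
\|\nabla \varphi_{\eta,k}\|_{L^{2,1}(\C)} \leq C\,\|\nabla \widetilde B_{\eta,k}\|_{L^2(\C)}\,\|\nabla \widetilde u_k\|_{L^2(\C)}.
\end{equation}
The second factor is controlled by $\|\nabla u_k\|_{L^2(A(\eta,\delta_k))}$ via \eqref{EQ: jnfemorjsfJMNi29d91} and thus tends to $0$. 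For the first factor, the pointwise $\varepsilon$-regularity \eqref{EQ: UIN893eNJ12ddqw81Ha} together with $L^2$-smallness on the neck forces $\|\nabla u_k\|_{L^\infty(A(\eta,\delta_k))}\to 0$, so $(1+|\nabla u_k|^2)^{(p_k-2)/2}\leq 2$ on the neck and $|\nabla B_{\eta,k}|\leq C|\nabla u_k|$ there, whence \eqref{EQ: Control of tilde B uhni893hre} bounds $\|\nabla \widetilde B_{\eta,k}\|_{L^2(\C)}$ by $C\|\nabla u_k\|_{L^2(A(\eta,\delta_k))}$ as well. Both factors in the Wente product therefore vanish in the double limit.

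To close, I convert this $L^{2,1}$-decay of $\nabla\varphi_{\eta,k}$ into the decay of its $\tfrac{1}{p_k-1}$-power by plugging $f=|\nabla\varphi_{\eta,k}|$ and $\Omega=A(\eta,\delta_k)$ into \eqref{EQ: ZBU3rZfi8U9djogqw}:
\begin{equation}
\bigl\||\nabla \varphi_{\eta,k}|^{1/(p_k-1)}\bigr\|_{L^{2,1}(A(\eta,\delta_k))} \leq 2\,|A(\eta,\delta_k)|^{1/2} + \frac{2}{p_k-1}\|\nabla \varphi_{\eta,k}\|_{L^{2,1}(A(\eta,\delta_k))}.
\end{equation}
The first summand is $\mathcal{O}(\eta)$ and vanishes as $\eta\searrow 0$; the prefactor $2/(p_k-1)$ stays bounded as $p_k\searrow 2$, so the second summand is negligible after $\limsup_{k\to\infty}$. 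Assembling the four vanishing contributions finishes the proof. The real difficulty is concentrated in the $\varphi$-analysis: promoting Wente's estimate from boundedness to actual decay is only possible by consuming the $L^{2}$ quantization together with pointwise $\varepsilon$-regularity, which together provide the quadratic smallness of the right-hand side needed to reach the Lorentz endpoint.
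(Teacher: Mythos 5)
Your proposal follows essentially the same route as the paper: decompose $X_k = \nabla^\perp b_{\eta,k}+\nabla\varphi_{\eta,k}+\nabla\mathfrak h_{\eta,k}$ on the neck, use $\abs{\nabla u_k}\le\abs{X_k}^{1/(p_k-1)}$ together with subadditivity of $t\mapsto t^{1/(p_k-1)}$, dispose of the $b$- and $\mathfrak h^\pm$-terms via Lemma~\ref{LEMMA: L21 of na b} and \eqref{EQ: L21 skewed na h ZH82bn9Ns1}, and upgrade the $\varphi$-term using Wente combined with the freshly proved $L^2$-quantization (Theorem~\ref{THM: L2 en quant of na u}) before converting via \eqref{EQ: ZBU3rZfi8U9djogqw}. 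The architecture is correct and matches the paper.

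However, your justification of the bound $\norm{\nabla\widetilde B_{\eta,k}}_{L^2(\C)}\le C\norm{\nabla u_k}_{L^2(A(\eta,\delta_k))}$ contains a genuine error. You assert that the pointwise $\varepsilon$-regularity estimate \eqref{EQ: UIN893eNJ12ddqw81Ha} plus $L^2$-smallness forces $\norm{\nabla u_k}_{L^\infty(A(\eta,\delta_k))}\to 0$. This is false: \eqref{EQ: UIN893eNJ12ddqw81Ha} gives only
\begin{equation}
\abs{\nabla u_k(x)}\le\frac{C}{\abs{x}}\,\norm{\nabla u_k}_{L^2(B_{2t_x}\setminus B_{t_x})},
\end{equation}
so it is the weighted quantity $\abs{x}\,\abs{\nabla u_k(x)}$ that tends to $0$, while $\abs{\nabla u_k}$ itself can be of order $\delta_k^{-1}$ near the inner boundary of the annulus --- exactly what Lemma~\ref{LEMMA: Linfty bd on na u in the necks unif} establishes via \textbf{Claim~1}. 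The correct substitute, which the paper implicitly relies on, is precisely that lemma: despite $\nabla u_k$ being unbounded, the combination $\norm{(1+\abs{\nabla u_k}^2)^{(p_k-2)/2}}_{L^\infty(\Sigma)}\le C$ holds because $\delta_k^{2-p_k}\le C$ (this is the content of \eqref{EQ: Unifwun38913HUus12}). With that replacement, $\abs{\nabla B_{\eta,k}}\le C\abs{\nabla u_k}$ pointwise in the annulus, and the Wente product then vanishes in the double limit as you intend. The rest of your argument --- feeding the two $L^2$-small factors into Wente, then converting $\norm{\nabla\varphi_{\eta,k}}_{L^{2,1}}\to 0$ into the vanishing of its $1/(p_k-1)$-power via \eqref{EQ: ZBU3rZfi8U9djogqw} --- is correct and closes the proof. (As a side remark: note that an $L^{p_k^\prime}$ bound on $\nabla B_{\eta,k}$ such as \eqref{EQ: estimate of B 980jt4unJUN} does \emph{not} suffice here, since $p_k^\prime<2$; the $L^\infty$ bound on the power is genuinely needed.)
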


\begin{proof}
By Wente's inequality, \eqref{EQ: jnfemorjsfJMNi29d91} and \eqref{EQ: estimate of B 980jt4unJUN} there holds
\begin{equation}
\begin{aligned}
\norm{\nabla \varphi_{\eta,k}}_{L^{2,1}(\C)}
&\le C \norm{\nabla\widetilde B_{\eta,k}}_{L^{2}(\C)} \norm{\nabla\widetilde u_k} _{L^{2}(\C)} \\
&\le C \norm{\nabla  B_{\eta,k}}_{L^{2}(A(\eta,\delta_k))} \norm{\nabla  u_{k}}_{L^{2}(A(\eta,\delta_k))} \\
&\le C \norm{\nabla  u_{k}}_{L^{2}(A(\eta,\delta_k))}^2
\end{aligned}
\end{equation}
Using Theorem \ref{THM: L2 en quant of na u} we find
\begin{equation}
\label{EQ: jNU8uj893NIJ1}
\lim_{\eta\searrow0}\limsup_{k\rightarrow\infty} \norm{\nabla \varphi_{\eta,k}}_{L^{2,1}(\C)}=0.
\end{equation}
Using \eqref{EQ: ZBU3rZfi8U9djogqw} we find
\begin{equation}
\label{EQ: un23J2jd10jN012}
\lim_{\eta\searrow0}\limsup_{k\rightarrow\infty} \norm{\abs{\nabla \varphi_{\eta,k}}^{\frac{1}{p_k-1}}}_{L^{2,1}(A(\eta,\delta_k))}=0.
\end{equation}
Going back to the decomposition \eqref{EQ: Final decomp of na u} and combining Lemma \ref{LEMMA: L21 of na b}, \eqref{EQ: L21 skewed na h ZH82bn9Ns1} and \eqref{EQ: un23J2jd10jN012}
\begin{equation}
\lim_{\eta\searrow0}\limsup_{k\rightarrow\infty} \norm{\abs{X_k}^{\frac{1}{p_k-1}}}_{L^{2,1}(A(\eta,\delta_k))}=0.
\end{equation}
The bound $\abs{\nabla u_k}\le \abs{X_k}^{\frac{1}{p_k-1}}$ gives the claimed result.
\end{proof}

\newpage
\ \vspace{15mm}
\section{Pointwise Control of the Gradient in the Neck Regions}

The aim of this section is to get an improved pointwise  estimate of  a sequence of critical points of $E_p$   in the neck regions, i.e. annuli connecting the macroscopic surface with the bubbles. Such an estimate is obtained with the aid of suitable conservation laws and a delicate dydic argument. The core of such estimates, in combination with  a weighted Poincar\`e's  inequality in annuli,   is to show later  that negative variations of
the second derivative of the energy $E_p$ cannot be located in the neck regions and therefore these variations do not asymptotically contribute to the Morse index.
    \begin{theorem}
\label{THEOREM: on Pointwise Estimate of the Gradient in the Necks p harm}
For any given $\be \in \left(0,\log_2 (3/2) \right)$ we find that for $k \in \N$ large and $\eta>0$ small
\begin{equation}
\label{EQ: 283hfunv0lpdnu13ef6zh1}
\begin{gathered}
\forall x \in A(\et,\de_k): \qquad
\abs{x}^2 \abs{\nabla u_k(x)}^2 \le \left[ \left( \frac{\abs{x}}{\et} \right)^\be + \left( \frac{\de_k}{\et \abs{x}} \right)^\be \right]\boldsymbol{\epsilon}_{\eta, \delta_k} +  \boldsymbol{\mathrm c}_{\eta, \delta_k},
\end{gathered}
\end{equation}
where
\begin{equation}
\lim_{\eta\searrow0}\limsup_{k\rightarrow\infty}\boldsymbol{\epsilon}_{\eta, \delta_k}=0,
\qquad \text{and} \qquad
\lim_{\eta\searrow0}\limsup_{k\rightarrow\infty}\boldsymbol{\mathrm c}_{\eta, \delta_k} \log^2\left(\frac{\eta^2}{\delta_k}\right) =0.
\end{equation}
\end{theorem}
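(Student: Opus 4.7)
The plan is to combine the $\varepsilon$-regularity of Lemma \ref{LEMMA: epsilon reg sacks uhlenbeck for p harm} with the decomposition $X_k=\nabla^\perp b_{\eta,k}+\nabla\varphi_{\eta,k}+\nabla\mathfrak{h}_{\eta,k}^{+}+\nabla\mathfrak{h}_{\eta,k}^{-}$ from Section 3 (the radial piece $\mathfrak{h}^0_{\eta,k}$ has vanishing gradient by the preceding lemma), and to close the estimate via a three-annulus (hole-filling) iteration on the neck $A(\eta,\delta_k)$. The two holomorphic/antiholomorphic pieces $\mathfrak{h}^{\pm}$ are precisely what will carry the algebraic decays $(|x|/\eta)^{\beta}$ and $(\delta_k/(\eta|x|))^{\beta}$, whereas the contributions of $\nabla b_{\eta,k}$ and $\nabla\varphi_{\eta,k}$ will be absorbed into the additive term $\boldsymbol{c}_{\eta,\delta_k}$ via the quantization results of the preceding section.

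Concretely, for a point $x\in A(\eta,\delta_k)$ I would set $r_x=|x|/8$ and apply Lemma \ref{LEMMA: epsilon reg sacks uhlenbeck for p harm} (whose smallness hypothesis holds for $k$ large and $\eta$ small by \eqref{EQ: Cond of bubb conv ji0wr931jJGA}) to reduce matters to
\begin{equation*}
|x|^2|\nabla u_k(x)|^2\le C\int_{A_{|x|}}|\nabla u_k|^2,
\end{equation*}
where $A_{|x|}$ is a concentric dyadic annulus at scale $|x|$. Then, using $|\nabla u_k|\le|X_k|^{1/(p_k-1)}$ and splitting $X_k$ via \eqref{EQ: Final decomp of na u}, I would estimate each summand on $A_{|x|}$ separately. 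The Laurent expansions of $\mathfrak{h}^{\pm}_{\eta,k}$ yield by direct Fourier orthogonality
\begin{equation*}
\int_{A_r}|\nabla\mathfrak{h}^+_{\eta,k}|^2\le \left(\tfrac{r}{\eta}\right)^{\!2}\!\!\int_{A_\eta}|\nabla\mathfrak{h}^+_{\eta,k}|^2,\qquad \int_{A_r}|\nabla\mathfrak{h}^-_{\eta,k}|^2\le \left(\tfrac{\delta_k}{\eta r}\right)^{\!2}\!\!\int_{A_{\delta_k/\eta}}|\nabla\mathfrak{h}^-_{\eta,k}|^2,
\end{equation*}
with $r=|x|$. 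The remaining terms $\nabla\varphi_{\eta,k}$ and $\nabla b_{\eta,k}$ are small in the relevant integral sense by Wente's inequality combined with Theorem \ref{THM: L2 en quant of na u}, respectively by Lemma \ref{LEMMA: L21 of na b}.

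The dyadic iteration then proceeds as follows. Writing $E_j=\int_{A_j}|\nabla u_k|^2$ for the dyadic pieces $A_j=B_{2^{-j}\eta}\setminus B_{2^{-j-1}\eta}$ decomposing the neck, the control above gives a hole-filling inequality of the form
\begin{equation*}
E_j\le \theta\,(E_{j-1}+E_{j+1})+\boldsymbol{e}_{\eta,k},
\end{equation*}
where $\theta$ is determined by the sharp $L^2$-estimate for harmonic functions on concentric annuli and can be driven below $6/13$ for $\eta$ small and $k$ large. A standard linear-recursion lemma then forces $E_j\le C\bigl[2^{-\beta j}+2^{-\beta(J-j)}\bigr]\boldsymbol{\epsilon}_{\eta,\delta_k}+\boldsymbol{c}_{\eta,\delta_k}$ for any $\beta<\log_2(3/2)$, where $J\sim\log_2(\eta^2/\delta_k)$. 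Feeding this back through $\varepsilon$-regularity produces exactly \eqref{EQ: 283hfunv0lpdnu13ef6zh1}.

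The main obstacle is the strong quantitative smallness required of the additive term, namely $\boldsymbol{c}_{\eta,\delta_k}\log^2(\eta^2/\delta_k)\to0$ rather than merely $\boldsymbol{c}_{\eta,\delta_k}\to0$. Since the neck contains $J\sim\log_2(\eta^2/\delta_k)$ dyadic scales, one extra logarithmic factor is unavoidable from summing errors across the iteration; to beat the \emph{second} logarithm one must exploit the fact that $\norm{|\nabla b_{\eta,k}|^{1/(p_k-1)}}_{L^{2,1}}$ decays polynomially in $p_k-2$ (Lemma \ref{LEMMA: L21 of na b}, via \eqref{EQ: Comb est of nabla b in the pr}) and that $\nabla\varphi_{\eta,k}$ is quadratically small in the already-small $L^2$-energy on the neck (by Wente). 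Coordinating these algebraic gains along the three-annulus recursion so as to swallow the full $\log^2$ factor is the principal technical hurdle.
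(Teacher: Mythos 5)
Your high-level strategy — $\varepsilon$-regularity to reduce to dyadic integral bounds, the Hodge-type decomposition $X_k=\nabla^\perp b_{\eta,k}+\nabla\varphi_{\eta,k}+\nabla\mathfrak{h}_{\eta,k}$, Fourier/Laurent orthogonality for $\mathfrak{h}^{\pm}$ to produce the algebraic decays, and a weighted discrete iteration on the dyadic annuli — is exactly the paper's strategy, and your identification of the $\boldsymbol{c}_{\eta,\delta_k}\log^2(\eta^2/\delta_k)\to0$ requirement as the principal hurdle is accurate. However, there are two genuine gaps in how you propose to close it.

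First, the bound $|\nabla u_k|\le|X_k|^{1/(p_k-1)}$ is pointwise and non-linear in $X_k$, so it does \emph{not} split along the decomposition \eqref{EQ: Final decomp of na u}: you cannot "estimate each summand on $A_{|x|}$ separately" after taking the power $1/(p_k-1)$. Passing from $L^{p_k'}$-control of $X_k$ on a dyadic annulus to $L^2$-control of $\nabla u_k$ there is exactly the role of Lemma \ref{LEMMA: Lemma on the start of dyadic est}, which is itself nontrivial: it uses H\"older and Minkowski together with the lower bound $(p_k-2)\le C\bigl(\int_{A_j}|\nabla u_k|^2+2^{-2j}\bigr)$ of Lemma \ref{LEMMA: Lower bound on the mass over diadic annulus with p-2}, the latter coming from the monotonicity formula. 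Without this the extra $(p_k-2)$-dependent factor between the $L^{p_k}$ and $L^2$ quantities is uncontrolled.

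Second, to beat the second logarithm you appeal to Lemma \ref{LEMMA: L21 of na b}, which for fixed $\eta$ gives $\|\,|\nabla b_{\eta,k}|^{1/(p_k-1)}\|_{L^{2,1}(B_1)}\lesssim(p_k-2)^{\theta_k}$ with $\theta_k\to1/2$. But smallness in a power of $p_k-2$ alone says nothing about $\log^2(\eta^2/\delta_k)$ unless one can compare $p_k-2$ with $\log(\eta^2/\delta_k)^{-1}$. This link is supplied by the entropy condition of Lemma \ref{LEMMA: Growth of  pk and ln etadelta}, namely $\lim_{\eta\searrow0}\limsup_k(p_k-2)\log(\eta^2/\delta_k)=0$ (itself a consequence of monotonicity plus the $L^2$-quantization), and it is this, fed through the linear bound $\|\nabla b_{\eta,k}\|_{L^{p_k'}(B_1)}\le C(p_k-2)$ of Lemma \ref{LEMMA: First est of nabla b in necks} and through Lemma \ref{LEMMA: Control of const of na b in necks}, that produces the constant $C_{\eta,k}$ with $\log(\eta^2/\delta_k)C_{\eta,k}\to0$; squaring gives the desired $\log^2$ decay. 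Your proposal never invokes this entropy estimate, which is essential, and the Lorentz estimate you cite is not the one actually used in the iteration. A smaller point: the nearest-neighbor "hole-filling" $E_j\le\theta(E_{j-1}+E_{j+1})+\boldsymbol{e}_{\eta,k}$ understates the coupling — the weighted Wente bound for $\nabla\varphi_{\eta,k}$ produces the full exponential convolution $\sum_l\gamma^{|l-j|}\int_{A_l}|\nabla\widetilde u_k|^2$, and the iteration lemma (Lemma G.1 of \cite{DGR22}) is stated precisely for that form; the constant $6/13$ you quote does not appear anywhere in the argument, whereas the relevant threshold is $\gamma\le2/3$, whence $\mu>2/3$ and $\beta=-\log_2\mu<\log_2(3/2)$.
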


\noindent
We  introduce the notation
\begin{equation}
A_j=B_{2^{-j}}\setminus B_{2^{-j-1}} , \qquad j \in \N.
\end{equation}
Moreover we will denote by $u_k$, a sequence $u_{p_k}$ of sequence of $p_k$-harmonic maps, with $p_k\to 2^+$ as $k\to +\infty.$

\begin{lemma} \label{LEMMA: Linfty bd on na u in the necks unif}
There is a constant $C>0$ such that for $k\in\N$ large there holds
\begin{equation}\label{linftyuk}
\norm{\left(1+\abs{\nabla u_k}^2 \right)^{\frac{p_k}{2}-1}}_{L^\infty(\Sigma)} \le C.
\end{equation}
\end{lemma}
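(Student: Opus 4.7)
The plan is to partition $\Sigma$ into three regions and bound $(1+|\nabla u_k|^2)^{(p_k-2)/2}$ separately on each: the macroscopic piece $\Sigma\setminus B_{\eta_0}(q)$ for a fixed small $\eta_0$, the concentration ball $B_{\delta_k/\eta_0}(x_k)$, and the neck annulus $A(\eta_0,\delta_k)$ in between. Since the exponent $(p_k-2)/2$ tends to $0^+$, the target bound will reduce to a pointwise estimate of the form $|\nabla u_k(x)|\le C(\eta_0)\,\delta_k^{-1}$ on the concentration and neck regions, together with the mild entropy estimate $(p_k-2)\log\delta_k^{-1}\le C$.

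The macroscopic piece is handled by the first bullet of Definition \ref{Definition: Bubble tree convergence of p harm map with one bubble}: $C^\infty_{loc}$ convergence $u_k\to u_\infty$ on $\Sigma\setminus\{q\}$ gives $\|\nabla u_k\|_{L^\infty(\Sigma\setminus B_{\eta_0}(q))}\le C$, so the quantity is trivially bounded there. The concentration ball is handled by the second bullet: $v_k(z)=u_k(x_k+\delta_k z)\to v_\infty$ in $C^\infty_{loc}(\C)$ gives $|\nabla v_k|\le C(\eta_0)$ on $B_{1/\eta_0}(0)$, and rescaling yields $|\nabla u_k(x)|=\delta_k^{-1}|\nabla v_k((x-x_k)/\delta_k)|\le C(\eta_0)\,\delta_k^{-1}$ on $B_{\delta_k/\eta_0}(x_k)$.

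On the neck, the third bullet of Definition \ref{Definition: Bubble tree convergence of p harm map with one bubble} ensures that, for $\eta_0$ small and $k$ large, the $L^2$ energy of $\nabla u_k$ on every dyadic annulus $B_{2\rho}(x_k)\setminus B_\rho(x_k)$ with $\delta_k/\eta_0\le\rho\le\eta_0/2$ lies below the $\varepsilon$ threshold of Lemma \ref{LEMMA: epsilon reg sacks uhlenbeck for p harm}. Applying $\varepsilon$-regularity on balls $B_{|x-x_k|/8}(x)$ exactly as in the proof of Theorem \ref{THM: L2infty en quant of na uk} produces $|\nabla u_k(x)|\le C/|x-x_k|\le C(\eta_0)\,\delta_k^{-1}$ throughout the neck. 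Combining the three regional estimates gives
\begin{equation*}
(1+|\nabla u_k(x)|^2)^{(p_k-2)/2}\le C\,\delta_k^{-(p_k-2)}=C\,\exp\bigl((p_k-2)\log\delta_k^{-1}\bigr)\qquad\text{on all of }\Sigma.
\end{equation*}

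The main obstacle is the final step: one needs $(p_k-2)\log\delta_k^{-1}$ to stay bounded. This is the weaker of the two entropy conditions recalled in the introduction (compare \eqref{II.r7-annbis} with \eqref{II.r7-ann}), a consequence of Struwe's monotonicity trick underlying Lamm's \cite{Lamm} energy identity for the Sacks--Uhlenbeck $p$-energy, and is available in our setting thanks to the uniform $p$-energy bound \eqref{EQ: Uniform bound on the energy 8912H2egBeZ}. Granted this, the estimate \eqref{linftyuk} follows.
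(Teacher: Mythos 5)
Your decomposition of $\Sigma$ into the macroscopic region, the neck, and the concentration ball, and the derivation of $|\nabla u_k|\le C(\eta_0)\,\delta_k^{-1}$ on each piece via $C^\infty_{loc}$ convergence and $\varepsilon$-regularity, exactly reproduce the paper's Claim~1. The problem is the final step. You reduce the lemma to the bound $(p_k-2)\log\delta_k^{-1}\le C$, correctly identify this as the crux, and then cite it as ``a consequence of Struwe's monotonicity trick underlying Lamm's energy identity \ldots available thanks to the uniform $p$-energy bound.'' This is a genuine gap: in Lamm's work and in the general-target setting, \eqref{II.r7-annbis} is not a consequence of the energy bound alone but an extra entropy hypothesis secured by a careful choice of $p_k$ along a subsequence (Struwe's monotonicity trick). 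The present paper makes no such assumption, so you cannot invoke it as given.

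What actually closes the gap is a short, self-contained computation that you omit. By the change of variables $\int_{B_{\delta_k/\eta}}|\nabla u_k|^{p_k}\,dx=\delta_k^{2-p_k}\int_{B_{1/\eta}}|\nabla v_k|^{p_k}\,dy$, the uniform energy bound \eqref{EQ: Uniform bound on the energy 8912H2egBeZ} gives $\delta_k^{2-p_k}\int_{B_{1/\eta}}|\nabla v_k|^{p_k}\le\Lambda$. Since the bubble $v_\infty$ is nontrivial, $\|\nabla v_\infty\|_{L^{p_k}(B_{1/\eta})}$ is bounded below for $\eta$ small, and $v_k\to v_\infty$ in $C^\infty_{loc}(\C)$ carries this lower bound over to $\|\nabla v_k\|_{L^{p_k}(B_{1/\eta})}$ for $k$ large. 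Hence $\delta_k^{2-p_k}\le C$, which is equivalent to the entropy bound you need. Without this explicit argument (which is precisely the opening of the paper's proof of the lemma), the claim that the entropy condition ``is available'' is an assertion, not a proof.
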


\begin{proof}
This result was shown in \cite{LZ19}.
We reproduce here the  proof by clarifying all the details.\par
Compute with a change of variables
\begin{equation}
\label{EQ: uwif399813hAO2Pd01}
\int_{B_{\delta_k/\eta}} \abs{\nabla u_k}^{p_k} dx
=\delta_k^2 \int_{B_{1/\eta}} \abs{\nabla u_k(\delta_k y)}^{p_k} dy
=\delta_k^{2-{p_k}} \int_{B_{1/\eta}} \abs{\nabla v_k(y)}^{p_k} dy.
\end{equation}
For $\eta>0$ small we have  
\begin{equation} \label{EQ: 873rh7u943141g1}
\norm{\nabla v_\infty}_{L^{p_k}(B_{1/\eta})} 
\ge \frac{1}{2} \norm{\nabla v_\infty}_{L^{p_k}(\C)}>0,
\end{equation}
where the last inequality is true since $v_\infty$ is nontrivial.
Furthermore, for $k$ large we have
\begin{equation}\label{EQ: 23n89hunfinf23}
\norm{\nabla v_k}_{L^{p_k}(B_{1/\eta})} 
\ge \frac{1}{2} \norm{\nabla v_\infty}_{L^{p_k}(B_{1/\eta})}.
\end{equation}
Combining this with \eqref{EQ: uwif399813hAO2Pd01} and \eqref{EQ: Uniform bound on the energy 8912H2egBeZ} we get for $k$ large
\begin{equation}
\delta_k^{2-p_k} \le C.
\end{equation}
{\bf Claim 1.}  It holds
\begin{equation}
\norm{\nabla u_k}_{L^\infty(\Omega)} \le \frac{C}{\delta_k}.
\end{equation}
for $\Omega=A(\eta,\delta_k)$, $\Omega=B_{\frac{\delta_k}{\eta}}$ and $\Omega=\Sigma \setminus B_{\eta}$.

Observe that the estimate \eqref{linftyuk} directly follows from {\bf Claim 1}:
\begin{equation}
\label{EQ: Unifwun38913HUus12}
\begin{aligned}
\norm{\left(1+\abs{\nabla u_k}^2 \right)^{\frac{p_k}{2}-1}}_{L^\infty(\Sigma)}
&\le \norm{\left(1+\abs{\nabla u_k}^2 \right)}_{L^\infty(\Sigma)}^{\frac{p_k}{2}-1}
\le \left( 1+C \ \delta_k^{-2} \right)^{\frac{p_k}{2}-1} \\
&=(\delta_k^2+C)^{\frac{p_k}{2}-1}\ \delta_k^{2-p_k}
\le C\ \delta_k^{2-p_k}\le C.
\end{aligned}
\end{equation}
\par
{\bf Proof of Claim 1.}\par

\textbf{1.)} $\boldsymbol{\Omega=A(\eta,\delta_k)}:$
\\
From \eqref{EQ: UIN893eNJ12ddqw81Ha} and \eqref{EQ: Uniform bound on the energy 8912H2egBeZ} we have for $x \in A(\eta,\delta_k)$
\begin{equation}
\abs{\nabla u_k(x)} \le \frac{C}{\abs{x}} \le \frac{C}{\delta_k}.
\end{equation}
\textbf{2.)} $\boldsymbol{\Omega=B_{\frac{\delta_k}{\eta}}}:$
\\
We claim that there exists some $\lambda_0>0$ such that
\begin{equation}\label{EQ: nhb732hr7zt05d2rh1z2ueth4}
\sup_{z \in \C} \norm{\nabla v_\infty}_{L^2(B_{\lambda_0}(z))}<\frac{\varepsilon}{2},
\end{equation}
where $\varepsilon>0$ is as in Lemma \ref{LEMMA: epsilon reg sacks uhlenbeck for p harm}.
Otherwise, we can find a sequence of radii $\lambda_j \rightarrow 0$ and points $z_j \in \C$ such that $\norm{\nabla v_\infty}_{L^2(B_{\lambda_j}(z_j))} \ge \varepsilon/2$.
If $\abs{z_j} \rightarrow \infty$, then this is not possible as we can then find an infinite sequence of disjoint balls where the energy on each ball is bounded from bellow.
This would contradict the assumption $\nabla v_\infty \in L^{2}(\C)$.
If otherwise $\abs{z_j} \nrightarrow \infty$, then we can assume up to subsequences $z_j \rightarrow z_0$. But then we have with Lebesgues differentiation theorem the following contradiction
\begin{equation}
0= \lim_{j \rightarrow \infty} \int_{B_{\max(\lambda_j,2|z_j-z_0|)}(z_j)} \abs{\nabla v_\infty}^2 \ dz \ge \liminf_{j \rightarrow \infty} \int_{B_{\lambda_j}(z_j)} \abs{\nabla v_\infty}^2 \ dz \geq \frac{\varepsilon}{2} >0.
\end{equation}
Hence, we have shown \eqref{EQ: nhb732hr7zt05d2rh1z2ueth4}.
Now as $\nabla v_k \rightarrow \nabla v_\infty$ in $L^\infty_{loc}(\C)$ we can assume that for any given $\eta>0$ we have for $k\in \N$ large 
\begin{equation}
\norm{\nabla v_k- \nabla v_\infty}_{L^2\left(B_{\frac{1}{\eta}+\lambda_0 }\right)} < \frac{\varepsilon}{2} .
\end{equation}
(Here we again using the simplified notation $B_{\frac{1}{\eta} + \lambda_0}=B_{\frac{1}{\eta} + \lambda_0}(0)$.)
Hence, for any $x \in B_{\de_k/\eta}$ one has
\begin{equation}
\begin{aligned}
\norm{\nabla v_k}_{L^2(B_{\lambda_0}(x/\delta_k ))}
&\le \norm{\nabla v_\infty}_{L^2(B_{\lambda_0}(x/\delta_k ))} + \norm{\nabla v_k -\nabla v_\infty}_{L^2(B_{\lambda_0}(x/\delta_k ))} \\
&\le \frac{\varepsilon}{2} + \norm{\nabla v_k -\nabla v_\infty}_{L^2\left(B_{\frac{1}{\eta} + \lambda_0}\right)} \\
& <\varepsilon.
\end{aligned}
\end{equation}
where we also used \eqref{EQ: nhb732hr7zt05d2rh1z2ueth4}.
 
\par
We obtain with Lemma \ref{LEMMA: epsilon reg sacks uhlenbeck for p harm} that
\begin{equation}
\abs{\nabla u_k (x)}
= \frac{1}{\delta_k} \abs{\nabla v_k \left(\frac{x}{\delta_k}\right)}
\le \frac{C}{\delta_k \ \lambda_0} \norm{\nabla v_k}_{L^{2}(B_{\lambda_0}(x/\delta_k))}
\le \frac{C}{\delta_k}.
\end{equation}
\textbf{3.)} $\boldsymbol{\Omega=\Sigma\setminus B_{\eta}}:$
\\
By the same arguments as above we can show that there exists some $\lambda_0>0$ such that
\begin{equation}
\sup_{x \in \Sigma} \norm{\nabla u_\infty}_{L^{2}\left(B_{\lambda_0}(x)\right)} < \frac{\varepsilon}{2} .
\end{equation}
Now suppose $\eta \in (0,\lambda_0)$.
Then for large $k\in\N$ one has
\begin{equation}
\norm{\nabla u_k - \nabla u_\infty}_{L^2\left(\Sigma \setminus B_{\eta/2}\right)} < \frac{\varepsilon}{2}.
\end{equation}
Let now $x \in \Sigma \setminus B_{\eta}$.
Then using the fact that $B_{\eta/2}(x) \subset \Sigma \setminus B_{\eta/2}$ we obtain
\begin{equation}
\begin{aligned}
\norm{\nabla u_k}_{L^2(B_{\eta/2}(x))}
&\le \norm{\nabla u_\infty}_{L^2(B_{\eta/2}(x))} + \norm{\nabla u_k - \nabla u_\infty}_{L^2(B_{\eta/2}(x))} \\
&\le \norm{\nabla u_\infty}_{L^2(B_{\lambda_0}(x))} + \norm{\nabla u_k - \nabla u_\infty}_{L^2(\Sigma \setminus B_{\eta/2})} \\
&< \frac{\varepsilon}{2} + \frac{\varepsilon}{2} = \varepsilon.
\end{aligned}
\end{equation}
Hence, by Lemma \ref{LEMMA: epsilon reg sacks uhlenbeck for p harm} one has
\begin{equation}
\abs{\nabla u_{k}(x)}
\le \norm{\nabla u_k}_{L^\infty(B_{\eta/4}(x))}
\le \frac{C}{\eta} \norm{\nabla u_k}_{L^2(B_{\eta/2}(x))} \le \frac{C}{\eta} \le \frac{C}{\delta_k}
\end{equation}
\end{proof}
 
Now recall that we are working with the decomposition introduced in \eqref{EQ: Final decomp of na u}. \par

\begin{lemma}[Estimate of $\nabla \varphi_{\eta,k}$]\label{LEMMA: Dyadic est of na phi start}
For any $\gamma \in \left(0,\frac{2}{3}\right]$ there is a constant $C=C(\gamma)>0$ such that for $k\in\N$ large and $\eta>0$ small
\begin{equation}
\int_{A_j} \abs{\nabla \varphi_{\eta,k}}^2 \ dx 
\le C  \norm{\nabla u_k}_{L^2(A(\eta,\delta_k))}^2 \left(\gamma^j  + \sum_{l=0}^\infty \gamma^{\abs{l-j}} \int_{A_l} \abs{\nabla\widetilde u_k}^2 \ dx\right),
\end{equation}
where $j\in \N$.
\end{lemma}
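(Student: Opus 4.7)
The plan is to represent $\varphi_{\eta,k}$ as the Newtonian potential of the Jacobian $\nabla^\perp\widetilde B_{\eta,k}\cdot\nabla\widetilde u_k$ on $\C$, exploit both the divergence structure $\nabla^\perp\widetilde B_{\eta,k}\cdot\nabla\widetilde u_k=\mathrm{div}(\widetilde u_k\nabla^\perp\widetilde B_{\eta,k})$ and the pointwise bound $|\nabla B_{\eta,k}|\le C|\nabla u_k|$ (which follows from identity \eqref{EQ: Equation for nabla B def} together with the uniform $L^\infty$-estimate of Lemma \ref{LEMMA: Linfty bd on na u in the necks unif}), and carry out a dyadic analysis tailored to the annuli $\{A_l\}_{l\ge 0}$. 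The global bound $\|\nabla\widetilde B_{\eta,k}\|_{L^2(\C)}\le C\|\nabla u_k\|_{L^2(A(\eta,\delta_k))}$ will account for the overall prefactor $\|\nabla u_k\|_{L^2(A(\eta,\delta_k))}^2$ in the statement.

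Introduce a smooth partition of unity $\{\chi_l\}_{l\ge 0}$ subordinate to fixed enlargements $A_l'$ of $A_l$, with $|\nabla\chi_l|\le C\,2^l$, and let $c_l$ denote the mean of $\widetilde u_k$ over $A_l'$. By linearity of the Laplace equation, decompose $\varphi_{\eta,k}=\sum_l\psi_l+\psi_R$, where $\psi_l$ solves
\[
-\Delta\psi_l=\nabla^\perp\widetilde B_{\eta,k}\cdot\nabla\bigl(\chi_l(\widetilde u_k-c_l)\bigr)\qquad\text{in }\C
\]
and $\psi_R$ gathers the constant correctors needed so that $\sum_l\chi_l(\widetilde u_k-c_l)$, corrected, reproduces $\widetilde u_k$. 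The mean-value subtraction is crucial: with it, Poincar\'e's inequality on $A_l'$ controls $\|(\widetilde u_k-c_l)\nabla\chi_l\|_{L^2}$ by $C\|\nabla\widetilde u_k\|_{L^2(A_l')}$, so that Wente's inequality for the genuine Jacobian yields
\[
\|\nabla\psi_l\|_{L^{2,1}(\C)}\le C\,\|\nabla\widetilde B_{\eta,k}\|_{L^2(\C)}\,\|\nabla\widetilde u_k\|_{L^2(A_l')}.
\]

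Since the source of $\psi_l$ is supported in $A_l'$, each $\psi_l$ is harmonic both on $B_{2^{-l-2}}$ and on $\C\setminus\overline{B}_{2^{-l+1}}$. The Taylor/Laurent expansion of $\psi_l$ on the relevant connected component (using that $\nabla\psi_l\in L^2(\C)$ forces the $\log|z|$ coefficient to vanish and the lowest positive/negative modes $z^{\pm 1}$ to dominate the decay) yields $\|\nabla\psi_l\|_{L^2(A_j)}^2\le C\,4^{-|l-j|}\,\|\nabla\psi_l\|_{L^2(\C)}^2$ whenever $|l-j|\ge 2$; for $|l-j|\le 1$ the direct Wente bound above is enough. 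Summing over $l$ via triangle and Cauchy–Schwarz, which costs a square root on the decay rate and leaves any $\gamma\in[1/2,2/3]$ admissible, and absorbing into the baseline $\gamma^j\|\nabla u_k\|_{L^2(A(\eta,\delta_k))}^2$ the contribution from $\psi_R$ together with the lowest Taylor mode of the global Newton potential, one obtains the claim. The main technical point will be the treatment of the residual $\psi_R$: since consecutive means $c_l-c_{l+1}$ are again controlled by $\|\nabla\widetilde u_k\|_{L^2(A_l'\cup A_{l+1}')}$ via Poincar\'e, $\psi_R$ reduces to a sum of small corrections that fit into exactly the same off-diagonal framework, but bookkeeping these carefully while tracking the dependence on $j$ and $l$ is the delicate part.
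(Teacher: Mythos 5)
The paper's proof is essentially a one-liner: it applies the weighted Wente inequality (Lemma F.1 of \cite{DGR22}) as a black box to $-\Delta\varphi_{\eta,k}=\nabla^\perp\widetilde B_{\eta,k}\cdot\nabla\widetilde u_k$, then bounds the two factors $\|\nabla\widetilde B_{\eta,k}\|_{L^2}$ and the global $\|\nabla\varphi_{\eta,k}\|_{L^2(\C)}$ using the Whitney extension, \eqref{EQ: Equation for nabla B def}, Lemma \ref{LEMMA: Linfty bd on na u in the necks unif}, and classical Wente. Your proposal instead re-derives that weighted Wente estimate from first principles: partition of unity with mean-value correction, Wente $+$ Poincar\'e on each dyadic piece, harmonic decay $4^{-|l-j|}$ off the support, and Cauchy--Schwarz to reassemble. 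Conceptually this is correct and in fact is close to how such estimates are typically proved, so it buys self-containedness at the cost of length; the paper's route is short precisely because the heavy lifting is delegated to \cite{DGR22}. Two caveats, both of which you flag honestly: (i) your Cauchy--Schwarz step gives only $\gamma\in[1/2,2/3]$, not the full range $(0,2/3]$ of the statement --- this happens to be harmless for Theorem \ref{THEOREM: on Pointwise Estimate of the Gradient in the Necks p harm}, where one only needs a single $\gamma<\mu=2^{-\beta}$ with $\mu\in(2/3,1)$, so $\gamma=1/2$ already suffices, but it means the lemma as stated is not fully proven; (ii) the residual $\psi_R$ (source $\nabla^\perp\widetilde B_{\eta,k}\cdot\nabla(\sum_l c_l\chi_l)$) is only sketched. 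That source is still a Jacobian and $\|\nabla(\sum_l c_l\chi_l)\|_{L^2(A_l')}\lesssim\|\nabla\widetilde u_k\|_{L^2(A_{l-1}'\cup A_l'\cup A_{l+1}')}$ by the telescoping/Poincar\'e argument you indicate, so it can be fed back through the same dyadic machine; this should be spelled out rather than waved at, since it is exactly where the $\gamma^j$ baseline term originates and where the bookkeeping can go wrong.
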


\begin{proof}
Applying the weighted Wente inequality Lemma F.1 of \cite{DGR22} to \eqref{EQ: Eq for De phi} we have
\begin{equation}
\begin{aligned}
\int_{A_j} \abs{\nabla \varphi_{\eta,k}}^2 \ dx 
&\le \gamma^j \int_{\C} \abs{\nabla \varphi_{\eta,k}}^2 \ dx + C \int_{A(2\eta,\delta_k)} \abs{\nabla\widetilde B_{\eta,k}}^2 \ dx\ \sum_{l=0}^\infty \gamma^{\abs{l-j}} \int_{A_l} \abs{\nabla\widetilde u_k}^2 \ dx
\end{aligned}
\end{equation}
Using Lemma \ref{LEMMA: Whitney extension for ann in Lp}, \eqref{EQ: Equation for nabla B def} and Lemma \ref{LEMMA: Linfty bd on na u in the necks unif} we find that
\begin{equation}
\begin{aligned}
\int_{A(2\eta,\delta_k)} \abs{\nabla\widetilde B_{\eta,k}}^2 \ dx
&\le \int_{A(\eta,\delta_k)} \abs{\nabla B_{\eta,k}}^2 \ dx \\
&\le \norm{\left(1+\abs{\nabla u_k}^2 \right)^{\frac{p_k}{2}-1}}_{L^\infty(A(\eta,\delta_k))} \norm{\nabla u_k}_{L^2(A(\eta,\delta_k))}^2 \\
&\le C \norm{\nabla u_k}_{L^2(A(\eta,\delta_k))}^2
\end{aligned}
\end{equation}
By Wente's inequality applied to \eqref{EQ: Eq for De phi} and the above estimate we have
\begin{equation} \label{EQ: nuifwerifndqJNDW093e12e}
\int_{\C} \abs{\nabla \varphi_{\eta,k}}^2 \ dx \le C \norm{\nabla\widetilde B_{\eta,k}}_{L^2(\C)} \norm{\nabla\widetilde u_k}_{L^2(\C)}
\le C \norm{\nabla u_k}_{L^2(A(\eta,\delta_k))}^2.
\end{equation}
Hence,
\begin{equation}
\int_{A_j} \abs{\nabla \varphi_{\eta,k}}^2 \ dx 
\le C \gamma^j \norm{\nabla u_k}_{L^2(A(\eta,\delta_k))}^2 + C \norm{\nabla u_k}_{L^2(A(\eta,\delta_k))}^2 \sum_{l=0}^\infty \gamma^{\abs{l-j}} \int_{A_l} \abs{\nabla\widetilde u_k}^2 \ dx.
\end{equation}
\end{proof}

\begin{lemma}[Estimate of $\nabla\mathfrak h_{\eta,k}$]
\label{LEMMA: on the est of na h in necks}
For $k\in\N$ large and $\eta>0$ small there holds
\begin{equation}
\int_{A_j} \abs{\nabla \mathfrak h_{\eta,k}}^2 dz 
\le C \left[ \left( \frac{2^{-j}}{\et}\right)^2 + \left( \frac{\de_k}{2^{-j}\et}\right)^2 \right] \left(\norm{\nabla u_k}_{L^2(A(\eta,\delta_k))}^2 + \norm{\nabla b_{\eta,k}}_{L^{p_k^\prime}(B_1)} \right) ,
\end{equation}
where $j\in\N$ is such that $\frac{2\delta_k}{\eta}\le2^{-j-1}<2^{-j}\le \frac{\eta}{2} $.
\end{lemma}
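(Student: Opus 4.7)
I plan to prove this lemma in three stages: first use the Laurent expansion of the harmonic function $\mathfrak h_{\eta,k}$ on the annulus, then perform a dyadic scaling comparison, and finally bound the total annular energy via the decomposition \eqref{EQ: Final decomp of na u}. Since the preceding lemma gives $C_0^k=0$, on $A(\eta,\delta_k)$ we have $\mathfrak h_{\eta,k}=\mathfrak h^+_{\eta,k}+\mathfrak h^-_{\eta,k}$ with $\mathfrak h^+=\Re\sum_{l>0}h_l^k z^l$ extending harmonically to $B_\eta$ and $\mathfrak h^-=\Re\sum_{l<0}h_l^k z^l$ extending harmonically to $\C\setminus B_{\delta_k/\eta}$. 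Parseval yields
$$\int_{B_r}\abs{\nabla\mathfrak h^+}^2=2\pi\sum_{l>0}l\,\abs{h_l^k}^2 r^{2l}, \qquad \int_{\C\setminus B_r}\abs{\nabla\mathfrak h^-}^2=2\pi\sum_{l<0}\abs{l}\,\abs{h_l^k}^2 r^{2l},$$
and these two pieces are orthogonal in the Dirichlet pairing.

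For the dyadic comparison I would write $\int_{A_j}\abs{\nabla\mathfrak h^+}^2=2\pi\sum_{l>0}l\abs{h_l^k}^2(2^{-j})^{2l}(1-2^{-2l})$; since $2^{-j}/\eta\le 1/2$, the factor $(2^{-j}/\eta)^{2l}\le(2^{-j}/\eta)^2$ for every $l\ge 1$, giving $\int_{A_j}\abs{\nabla\mathfrak h^+}^2\le(2^{-j}/\eta)^2\int_{B_\eta}\abs{\nabla\mathfrak h^+}^2$. For $k$ large enough that $\delta_k/\eta^2\le 1/2$ one additionally gets $\int_{B_\eta}\abs{\nabla\mathfrak h^+}^2\le\tfrac{4}{3}\int_{A(\eta,\delta_k)}\abs{\nabla\mathfrak h^+}^2$. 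The mirror argument based on $2^{-j-1}\ge 2\delta_k/\eta$ yields $\int_{A_j}\abs{\nabla\mathfrak h^-}^2\le C(\delta_k/(2^{-j}\eta))^2\int_{A(\eta,\delta_k)}\abs{\nabla\mathfrak h^-}^2$. Adding the two estimates and invoking orthogonality gives
$$\int_{A_j}\abs{\nabla\mathfrak h_{\eta,k}}^2\le C\Big[(2^{-j}/\eta)^2+(\delta_k/(2^{-j}\eta))^2\Big]\int_{A(\eta,\delta_k)}\abs{\nabla\mathfrak h_{\eta,k}}^2.$$

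The remaining step is to bound $\int_{A(\eta,\delta_k)}\abs{\nabla\mathfrak h_{\eta,k}}^2$. Applying the triangle inequality to \eqref{EQ: Final decomp of na u},
$$\int_{A(\eta,\delta_k)}\abs{\nabla\mathfrak h_{\eta,k}}^2\le 3\int\abs{X_k}^2+3\int\abs{\nabla b_{\eta,k}}^2+3\int\abs{\nabla\varphi_{\eta,k}}^2.$$
Lemma \ref{LEMMA: Linfty bd on na u in the necks unif} yields $\abs{X_k}\le C\abs{\nabla u_k}$, controlling the first term by $C\norm{\nabla u_k}_{L^2(A(\eta,\delta_k))}^2$; Wente's inequality \eqref{EQ: nuifwerifndqJNDW093e12e} bounds the third term by the same. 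For the $\nabla b$-term I would exploit H\"older's inequality with dual exponents $p_k'$ and $p_k$:
$$\int\abs{\nabla b_{\eta,k}}^2\le\norm{\nabla b_{\eta,k}}_{L^{p_k'}(B_1)}\norm{\nabla b_{\eta,k}}_{L^{p_k}(B_1)},$$
where $\norm{\nabla b_{\eta,k}}_{L^{p_k}(B_1)}\le C$ follows from boundedness of the operator $T$ on $L^{p_k}$ (Lemma A.5 / the assumption of Lemma \ref{LEMMA: First est of nabla b in necks}) together with $\abs{X_k}\le C\abs{\nabla u_k}$ and the uniform energy bound \eqref{EQ: Uniform bound on the energy 8912H2egBeZ}. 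Substituting into the dyadic estimate above finishes the proof.

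The main obstacle is the last step: because $p_k'<2$, the control $\norm{\nabla b_{\eta,k}}_{L^{p_k'}}\le C(p_k-2)$ from Lemma \ref{LEMMA: First est of nabla b in necks} does not by itself give $L^2$-control of $\nabla b_{\eta,k}$, and the dual H\"older pairing $L^{p_k'}\times L^{p_k}\hookrightarrow L^1$ is precisely the mechanism that explains why the statement contains $\norm{\nabla b_{\eta,k}}_{L^{p_k'}(B_1)}$ to the first power rather than squared.
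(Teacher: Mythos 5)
Your proof is correct and follows the same overall architecture as the paper's: a dyadic estimate isolating the factors $(2^{-j}/\eta)^2$ and $(\delta_k/(2^{-j}\eta))^2$, the Dirichlet orthogonality of $\nabla\mathfrak h^+$ and $\nabla\mathfrak h^-$ on $A(\eta,\delta_k)$, a bound on $\|\nabla\mathfrak h_{\eta,k}\|_{L^2(A(\eta,\delta_k))}^2$ through the decomposition \eqref{EQ: Final decomp of na u}, and a H\"older-type argument that yields $\|\nabla b_{\eta,k}\|_{L^{p_k'}(B_1)}$ to the \emph{first} power. The two places where your route differs from the paper's are minor but real. First, you establish the dyadic comparison directly from the Laurent coefficients of $\mathfrak h^\pm$ via Parseval, using $2^{-j}\le\eta/2$ and $2\delta_k/\eta\le 2^{-j-1}$ to absorb the tails, whereas the paper invokes the pointwise decay estimates from Lemma D.1 of \cite{DGR22}; the two are equivalent and yours is self-contained. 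Second, to control $\|\nabla b_{\eta,k}\|_{L^2(B_1)}^2$ you pair $\nabla b_{\eta,k}$ against itself via H\"older, $\|\nabla b\|_{L^2}^2\le\|\nabla b\|_{L^{p_k'}}\|\nabla b\|_{L^{p_k}}$, and then close with the boundedness of the Hodge projector $T$ from Lemma \ref{LEMMA: T assigns div free comp is bd unif} together with $\|X_k\|_{L^{p_k}(B_1)}\le C$; the paper instead uses the identity $\|\nabla b_{\eta,k}\|_{L^2(B_1)}^2=\int_{B_1}\nabla^\perp b_{\eta,k}\cdot X_k$ (the cross term with $\nabla a_{\eta,k}$ vanishes thanks to $\partial_\tau b=0$ on $\partial B_1$) and then H\"older against $\|X_k\|_{L^{p_k}(B_1)}$. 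Both hinge on the same uniform ingredient $\|X_k\|_{L^{p_k}(B_1)}\le C$ from Lemma \ref{LEMMA: Linfty bd on na u in the necks unif} and the energy bound, and both deliver the linear $\|\nabla b_{\eta,k}\|_{L^{p_k'}(B_1)}$ dependence in the statement.
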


\begin{proof}
For $x$ with $\frac{2\delta_k}{\eta}\le \abs{x} \le \frac{\eta}{2}$
by Lemma D.1 of \cite{DGR22} we have
\begin{equation}
\abs{\nabla \mathfrak h_{\eta,k}^+(x)}^2 \le \frac{C}{\eta^2} \norm{\nabla \mathfrak h_{\eta,k}^+}_{L^2(A(\eta,\delta_k))}^2
\end{equation}
and
\begin{equation}
\abs{\nabla \mathfrak h_{\eta,k}^-(x)}^2 \le \frac{C\delta_k^2}{\eta^2\abs{x}^4} \norm{\nabla \mathfrak h_{\eta,k}^-}_{L^2(A(\eta,\delta_k))}^2.
\end{equation}
Integrating over $x \in A_j$ we find that 
\begin{equation}
\int_{A_j}\abs{\nabla \mathfrak h_{\eta,k}^+(x)}^2 dz \le \frac{C\ 2^{-2j}}{\eta^2} \norm{\nabla \mathfrak h_{\eta,k}^+}_{L^2(A(\eta,\delta_k))}^2
\end{equation}
and
\begin{equation}
\int_{A_j}\abs{\nabla \mathfrak h_{\eta,k}^-(x)}^2 dz \le \frac{C\delta_k^2}{\eta^2\ 2^{-2j}} \norm{\nabla \mathfrak h_{\eta,k}^-}_{L^2(A(\eta,\delta_k))}^2.
\end{equation}
Hence,
\begin{equation}
\label{EQ: jiwnreINqwd0921e1dsw1274}
\int_{A_j} \abs{\nabla \mathfrak h_{\eta,k}}^2 dz 
\le C \left[ \left( \frac{2^{-j}}{\et}\right)^2 + \left( \frac{\de_k}{2^{-j}\et}\right)^2 \right] \left( \norm{\nabla \mathfrak h_{\eta,k}^+}_{L^2(A(\eta,\delta_k))}^2 + \norm{\nabla \mathfrak h_{\eta,k}^-}_{L^2(A(\eta,\delta_k))}^2 \right).
\end{equation}
As $\int_{\partial B_\rho} \nabla \mathfrak h_{\eta,k}^+ \cdot \nabla \mathfrak h_{\eta,k}^- \ d\sigma=0$ for any $\rho \in \left(\frac{\delta_k}{\eta}, \eta \right)$, we have with H\"older's inequality
\begin{equation}
\label{EQ: iweJIN2190je1NJ}
\begin{aligned}
&\hspace{-20mm}\norm{\nabla \mathfrak h_{\eta,k}^+}_{L^2(A(\eta,\delta_k))}^2 + \norm{\nabla \mathfrak h_{\eta,k}^-}_{L^2(A(\eta,\delta_k))}^2 \\
&= \norm{\nabla \mathfrak h_{\eta,k}}_{L^2(A(\eta,\delta_k))}^2 \\
&\le C\left(\norm{X_k}_{L^2(A(\eta,\delta_k))}^2 + \norm{\nabla \varphi_{\eta,k}}_{L^2(A(\eta,\delta_k))}^2 + \norm{\nabla b_{\eta,k}}_{L^2(A(\eta,\delta_k))}^2 \right) \\
&\le C\left(\norm{X_k}_{L^2(A(\eta,\delta_k))}^2 + \norm{\nabla u_{k}}_{L^2(A(\eta,\delta_k))}^2 + \norm{\nabla b_{\eta,k}}_{L^2(B_1)}^2 \right),
\end{aligned}
\end{equation}
where in the last line we used \eqref{EQ: nuifwerifndqJNDW093e12e} and \eqref{EQ: Final decomp of na u} .
Now we bound with Lemma \ref{LEMMA: Linfty bd on na u in the necks unif}
\begin{equation}
\label{EQ: uwibfeUNNWD0219hn10}
\norm{X_k}_{L^2(A(\eta,\delta_k))}^2 \le 
\norm{\left(1+\abs{\nabla u_k}^2 \right)^{\frac{p_k}{2}-1}}_{L^\infty(A(\eta,\delta_k))}^2 \norm{\nabla u_k}_{L^2(A(\eta,\delta_k))}^2
\le C \norm{\nabla u_k}_{L^2(A(\eta,\delta_k))}^2
\end{equation}
Furthermore, integrating by parts and by H\"older's inequality
\begin{equation}
\label{EQ: wifnUINDW0912eh811eSDW}
\norm{\nabla b_{\eta,k}}_{L^2(B_1)}^2
= \int_{B_1} \nabla^\perp  b_{\eta,k} \cdot X_k
\le \norm{\nabla b_{\eta,k}}_{L^{p_k^\prime}(B_1)} \norm{X_{k}}_{L^{p_k}(B_1)}.
\end{equation}
We bound with Lemma \ref{LEMMA: Linfty bd on na u in the necks unif} and with \eqref{EQ: Uniform bound on the energy 8912H2egBeZ}
\begin{equation}
\label{EQ: weuiofnBNIDW0913n13SW}
\begin{aligned}
\norm{X_{k}}_{L^{p_k}(B_1)}^{p_k}
&=\int_{B_1} (1+\abs{\nabla u_k}^2)^{(\frac{p_k}{2} - 1)p_k} \abs{\nabla u_k}^{p_k} dx \\
&\le \norm{\left(1+\abs{\nabla u_k}^2 \right)^{\frac{p_k}{2}-1}}_{L^\infty(\Sigma)}^{p_k} \norm{\nabla u_k}_{L^{p_k}(\Sigma)}^{p_k} \\
&\le C.
\end{aligned}
\end{equation}
Combining \eqref{EQ: jiwnreINqwd0921e1dsw1274}, \eqref{EQ: iweJIN2190je1NJ}, \eqref{EQ: uwibfeUNNWD0219hn10}, \eqref{EQ: wifnUINDW0912eh811eSDW} and \eqref{EQ: weuiofnBNIDW0913n13SW} one finds
\begin{equation}
\int_{A_j} \abs{\nabla \mathfrak h_{\eta,k}}^2 dz 
\le C \left[ \left( \frac{2^{-j}}{\et}\right)^2 + \left( \frac{\de_k}{2^{-j}\et}\right)^2 \right] \left(\norm{\nabla u_k}_{L^2(A(\eta,\delta_k))}^2 + \norm{\nabla b_{\eta,k}}_{L^{p_k^\prime}(B_1)} \right)
\end{equation}
\end{proof}

In the  following lemma we show a sort of entropy condition linking the parameter $p$ and the conformal class of the neck regions. In the sphere case it is actually a consequence of the $\varepsilon$-regularity as it was already shown in \cite{LZ19}.\par

\begin{lemma}
\label{LEMMA: Growth of  pk and ln etadelta}
For $\eta>0$ small there holds
\begin{equation}
\lim_{\eta\searrow0}\limsup_{k\rightarrow\infty}\ (p_k-2) \log\left(\frac{\eta^2}{\delta_k}\right)=0.
\end{equation}
\end{lemma}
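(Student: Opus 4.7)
Since $(p_k-2)\log\eta\to 0$ for each fixed $\eta$, the claim will reduce to showing $\delta_k^{2-p_k}\to 1$. Thus the plan is to sharpen the uniform bound $\delta_k^{2-p_k}\le C$ already established in Lemma \ref{LEMMA: Linfty bd on na u in the necks unif} into an asymptotic identity, by deriving matching sharp bounds on $E_{p_k}(u_k)$.

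For the lower bound, I will use the elementary convex inequality $(1+a)^{p/2}\ge 1+a^{p/2}$ for $a\ge 0,\,p\ge 2$ to reduce to
\[
E_{p_k}(u_k)\ge |\Sigma|+\int_\Sigma|\nabla u_k|^{p_k}dx,
\]
and then split the right-hand integral over $B_{\delta_k/\eta}\cup A(\eta,\delta_k)\cup(\Sigma\setminus B_\eta)$. The bubble rescaling $y=x/\delta_k$ together with dominated convergence, using $v_k\to v_\infty$ in $C^\infty_{loc}$, gives $\int_{B_{\delta_k/\eta}}|\nabla u_k|^{p_k}=\delta_k^{2-p_k}(E_\eta^{(v_\infty)}+o_k(1))$ with $E_\eta^{(v_\infty)}\coloneqq\int_{B_{1/\eta}}|\nabla v_\infty|^2\,dy\to E(v_\infty)>0$ as $\eta\searrow 0$. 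The $C^\infty$-convergence $u_k\to u_\infty$ off the blow-up point gives $\int_{\Sigma\setminus B_\eta}|\nabla u_k|^{p_k}\to E_\eta^{(u_\infty)}$. On the neck, the $\varepsilon$-regularity pointwise bound $|\nabla u_k|\le C\sqrt{\mu_{\eta,k}}/|x|$ combined with Lemma \ref{LEMMA: Linfty bd on na u in the necks unif} makes $\|\nabla u_k\|_{L^\infty(A)}^{p_k-2}$ uniformly bounded, so the $L^2$-energy quantization of Theorem \ref{THM: L2 en quant of na u} forces $\int_{A(\eta,\delta_k)}|\nabla u_k|^{p_k}\to 0$. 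Combining these three pieces yields
\[
E_{p_k}(u_k)\ge |\Sigma|+\delta_k^{2-p_k}E_\eta^{(v_\infty)}+E_\eta^{(u_\infty)}+o_{k,\eta}(1).
\]

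The main obstacle will be producing the matching asymptotic upper bound $\limsup_k E_{p_k}(u_k)\le |\Sigma|+E(u_\infty)+E(v_\infty)$: since $|\nabla u_k|\sim\delta_k^{-1}$ in the bubble ball, the correction factor $(1+|\nabla u_k|^2)^{(p_k-2)/2}$ there is of order $\delta_k^{2-p_k}$, only a priori known to be uniformly bounded rather than tending to $1$. Overcoming this will require the sphere-specific $L^{2,1}$-energy quantization of Section 3 together with a sharp expansion of $(1+|\nabla u_k|^2)^{p_k/2}=(1+|\nabla u_k|^2)(1+|\nabla u_k|^2)^{(p_k-2)/2}$ in the rescaled bubble variables; combined with the $L^2$-energy identity of Theorem \ref{THM: L2 en quant of na u} this will promote to an $L^{p_k}$-energy identity $\int_\Sigma|\nabla u_k|^{p_k}\to E(u_\infty)+E(v_\infty)$. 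Matching this against the lower bound and dividing by $E_\eta^{(v_\infty)}>0$ before letting $\eta\searrow 0$ will yield $\limsup_k\delta_k^{2-p_k}\le 1$; together with the trivial $\delta_k^{2-p_k}\ge 1$ this will force $\delta_k^{2-p_k}\to 1$, completing the proof.
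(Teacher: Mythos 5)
Your reformulation of the lemma as $\delta_k^{2-p_k}\to 1$ is correct, and the decomposition of $E_{p_k}(u_k)$ into macroscopic, neck, and bubble pieces gives, once the neck contribution is killed by the $L^\infty$-bound and Theorem~\ref{THM: L2 en quant of na u}, the asymptotic \emph{identity} $E_{p_k}(u_k)=|\Sigma|+E_\eta^{(u_\infty)}+\delta_k^{2-p_k}E_\eta^{(v_\infty)}+o_{k,\eta}(1)$, not merely a lower bound. This is precisely where the gap opens: the matching upper bound you need, $\limsup_k E_{p_k}(u_k)\le|\Sigma|+E(u_\infty)+E(v_\infty)$, is \emph{logically equivalent}, via the identity you have just derived, to $\limsup_k\delta_k^{2-p_k}\le 1$ — the very thing to be proved. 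The $L^{2,1}$-quantization of Section~3 gives finer control on the neck, but the neck term already vanishes; it does nothing about the factor $\delta_k^{2-p_k}$ weighting the bubble integral, which is the entire obstruction. A genuine $p_k$-energy identity is not available a priori here — the paper emphasizes that for general targets it fails without an entropy condition, and conditions of exactly the form $(p_k-2)\log\delta_k^{-1}\to 0$ are what must be \emph{established}, not assumed. So the proposed argument is circular.

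The paper avoids this entirely by working locally with the stationarity (Pohozaev-type) identity. Applying the integrated monotonicity estimate of Lemma~\ref{LEMMA: On the mono est for p harm} on the annulus $B_\eta\setminus B_{\delta_k/\eta}$ gives directly
\begin{equation*}
(p_k-2)\log\!\left(\tfrac{\eta^2}{\delta_k}\right)\,\norm{\nabla u_k}_{L^2(B_{\delta_k/\eta})}^2
\le C\left(\norm{\nabla u_k}_{L^2(A(\eta,\delta_k))}^2+\eta^2\right),
\end{equation*}
using the uniform bound $\norm{(1+|\nabla u_k|^2)^{p_k/2-1}}_{L^\infty}\le C$. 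The bubble being nontrivial forces $\norm{\nabla u_k}_{L^2(B_{\delta_k/\eta})}^2$ away from zero, and then the $L^2$-neck quantization (Theorem~\ref{THM: L2 en quant of na u}) drives the right-hand side to zero as $k\to\infty$, $\eta\searrow 0$. No global energy comparison is needed and $\delta_k^{2-p_k}$ never appears. You should replace the global energy-matching strategy with this local Pohozaev argument.
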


\begin{proof}
We apply Lemma \ref{LEMMA: On the mono est for p harm} for the radii $r=\delta_k/\eta$ and $R=\eta$ to get
\begin{equation}
\begin{aligned}
&(p_k-2) \log\left(\frac{\eta^2}{\delta_k} \right) \norm{\nabla u_k}_{L^2(B_\frac{\delta_k}{\eta})}^2 \\
&\hspace{20mm} \le p_k \norm{(1+\abs{\nabla u_k }^2)^{\frac{p_k }{2}-1}}_{L^\infty(A(\eta,\delta_k))} \left(2\norm{\nabla u_k }_{L^2(A(\eta,\delta_k))}^2 + \eta^2-\frac{\delta_k^2}{\eta^2}  \right)
\end{aligned}
\end{equation}
For large $k$ and small $\eta>0$ there holds
\begin{equation}
\label{EQ: iwnfbiININ12081dn012s}
\norm{\nabla v_\infty}_{L^2(\C)} 
\le 2\norm{\nabla v_\infty}_{L^2(B_\frac{1}{\eta})} 
\le 4\norm{\nabla v_k}_{L^2(B_\frac{1}{\eta})} 
=4 \norm{\nabla u_k}_{L^2(B_\frac{\delta_k}{\eta})}.
\end{equation}
Note that as $v_\infty$ is nontrivial one has $\norm{\nabla v_\infty}_{L^2(\C)} >0$.
Using also Lemma \ref{LEMMA: Linfty bd on na u in the necks unif} we find that 
\begin{equation}
(p_k-2) \log\left(\frac{\eta^2}{\delta_k} \right) 
\le C  \left(\norm{\nabla u_k }_{L^2(A(\eta,\delta_k))}^2 + \eta^2\right)
\end{equation}
The claim follows by using Theorem \ref{THM: L2 en quant of na u}.
\end{proof}

\begin{corollary}
\label{COROLLARY: Limit of na uk to pk minus 2 eq 1}
\begin{equation}
\lim_{k \rightarrow \infty} \norm{\left(1+\abs{\nabla u_k}^2 \right)^{\frac{p_k}{2}-1}}_{L^\infty(\Sigma)}=1.
\end{equation}
\end{corollary}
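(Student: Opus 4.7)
The plan is to combine the gradient bound $\|\nabla u_k\|_{L^\infty(\Sigma)} \le C/\delta_k$ established inside the proof of Lemma \ref{LEMMA: Linfty bd on na u in the necks unif} with the entropy estimate in Lemma \ref{LEMMA: Growth of  pk and ln etadelta}, in order to upgrade the uniform upper bound $C$ of that lemma into the sharp value $1$.

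First I would observe that since $(1+|\nabla u_k|^2)^{p_k/2-1}\ge 1$ a.e.\ on $\Sigma$ (as $p_k>2$), one automatically has
\begin{equation}
\bigl\|(1+|\nabla u_k|^2)^{\frac{p_k}{2}-1}\bigr\|_{L^\infty(\Sigma)}\ \ge\ 1.
\end{equation}
For the matching upper bound, plug the pointwise estimate $|\nabla u_k|\le C/\delta_k$ (from Claim 1 of Lemma \ref{LEMMA: Linfty bd on na u in the necks unif}) into the monotone function $t\mapsto (1+t^2)^{p_k/2-1}$ to obtain
\begin{equation}
\bigl\|(1+|\nabla u_k|^2)^{\frac{p_k}{2}-1}\bigr\|_{L^\infty(\Sigma)}
\ \le\ \bigl(1+C\delta_k^{-2}\bigr)^{\frac{p_k}{2}-1}
\ =\ \exp\!\left(\frac{p_k-2}{2}\,\log\bigl(1+C\delta_k^{-2}\bigr)\right).
\end{equation}
As $\delta_k\searrow 0$ we have $\log(1+C\delta_k^{-2})=2\log\delta_k^{-1}+O(1)$, so it suffices to show $(p_k-2)\log\delta_k^{-1}\to 0$.

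The main (and only substantial) step is to extract this fact from Lemma \ref{LEMMA: Growth of  pk and ln etadelta}. Writing
\begin{equation}
(p_k-2)\log\delta_k^{-1}\ =\ (p_k-2)\log\!\left(\frac{\eta^2}{\delta_k}\right)\ -\ 2(p_k-2)\log\eta,
\end{equation}
I fix an arbitrary $\epsilon>0$ and choose $\eta>0$ so small that $\limsup_{k}(p_k-2)\log(\eta^2/\delta_k)<\epsilon$, which is possible by Lemma \ref{LEMMA: Growth of  pk and ln etadelta}. For this fixed $\eta$, the second term satisfies $2(p_k-2)\log\eta\to 0$ since $p_k\to 2^+$. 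Hence $\limsup_{k}(p_k-2)\log\delta_k^{-1}\le\epsilon$; as $\epsilon$ was arbitrary and the quantity is nonnegative for $k$ large, we conclude
\begin{equation}
\lim_{k\to\infty}(p_k-2)\log\delta_k^{-1}\ =\ 0.
\end{equation}

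Finally, combining everything yields
\begin{equation}
1\ \le\ \bigl\|(1+|\nabla u_k|^2)^{\frac{p_k}{2}-1}\bigr\|_{L^\infty(\Sigma)}
\ \le\ \exp\!\left((p_k-2)\log\delta_k^{-1}+O(p_k-2)\right)\ \longrightarrow\ 1,
\end{equation}
which proves the corollary. The argument is routine once Lemma \ref{LEMMA: Linfty bd on na u in the necks unif} and Lemma \ref{LEMMA: Growth of  pk and ln etadelta} are in place, with the only mild subtlety being the manipulation needed to pass from the $\eta$-dependent entropy statement to a $\eta$-free conclusion.
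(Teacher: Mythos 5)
Your proof is correct and follows essentially the same route as the paper: both establish $\|\cdot\|_{L^\infty}\ge 1$ trivially, plug the bound $\|\nabla u_k\|_{L^\infty}\le C/\delta_k$ from Lemma \ref{LEMMA: Linfty bd on na u in the necks unif} into the monotone map, and then invoke Lemma \ref{LEMMA: Growth of  pk and ln etadelta} to kill the factor $\delta_k^{2-p_k}$. The only difference is cosmetic: you spell out the $\varepsilon$-$\eta$ bookkeeping needed to pass from the double limit $\lim_{\eta\searrow 0}\limsup_{k}(p_k-2)\log(\eta^2/\delta_k)=0$ to the single-limit statement $(p_k-2)\log\delta_k^{-1}\to 0$, a step the paper compresses into the identity $1=\lim_{\eta\searrow 0}\limsup_k \eta^{2(p_k-2)}\delta_k^{2-p_k}=\lim_k\delta_k^{2-p_k}$.
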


\begin{proof}
By Lemma \ref{LEMMA: Growth of  pk and ln etadelta} we have
\begin{equation}
1= \lim_{\eta\searrow0}\limsup_{k\rightarrow\infty} \frac{\eta^{2(p_k-2)}}{\delta_k^{p_k-2}} = \lim_{k\rightarrow\infty} \delta_k^{2-p_k}.
\end{equation}
Following estimate \eqref{EQ: Unifwun38913HUus12} we have
\begin{equation}
\begin{aligned}
\norm{\left(1+\abs{\nabla u_k}^2 \right)^{\frac{p_k}{2}-1}}_{L^\infty(\Sigma)}
&\le \underbrace{(\delta_k^2+C)^{\frac{p_k}{2}-1}}_{\rightarrow 1} \ \underbrace{\delta_k^{2-p_k}}_{\rightarrow 1}.
\end{aligned}
\end{equation}
\end{proof}

 The new precise control on the energy of $b_{\eta,k}$ developed in Lemma \ref{LEMMA: First est of nabla b in necks} together with the entropy condition as in Lemma \ref{LEMMA: Growth of  pk and ln etadelta} (coming from \cite{LZ19}) allows to suitably control $\nabla b_{\eta,k}$ in the necks:
\begin{lemma}\label{LEMMA: Control of const of na b in necks}
For $k\in\N$ large and $\eta>0$ small there holds
\begin{equation}
\norm{\nabla b_{\eta,k}}_{L^{p_k^\prime}(A(\eta,\delta_k))}
\le C_{\eta,k},
\end{equation}
where
\begin{equation}
\lim_{\eta\searrow0}\limsup_{k\rightarrow\infty}\ \log\left(\frac{\eta^2}{\delta_k}\right) C_{\eta,k}=0.
\end{equation}
\end{lemma}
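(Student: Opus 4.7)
The plan is to obtain this estimate essentially for free by combining the two previous key results of the subsection. The bound we want on the neck $A(\eta,\delta_k)$ is weaker than the global bound on $B_1$, so the starting observation is the trivial monotonicity of the norm: since $A(\eta,\delta_k)\subset B_1$, one has
\begin{equation}
\norm{\nabla b_{\eta,k}}_{L^{p_k^\prime}(A(\eta,\delta_k))}
\le \norm{\nabla b_{\eta,k}}_{L^{p_k^\prime}(B_1)}.
\end{equation}
Then I would invoke Lemma \ref{LEMMA: First est of nabla b in necks}, which via the commutator identity \eqref{EQ: Comp of the commutator} and Lemma A.5 of \cite{BR19} furnishes the crucial factor $(p_k-2)$, giving
\begin{equation}
\norm{\nabla b_{\eta,k}}_{L^{p_k^\prime}(B_1)} \le C(p_k-2)
\end{equation}
for $k$ large (independently of $\eta$). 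So the natural candidate is simply $C_{\eta,k}:=C(p_k-2)$, and the first assertion of the lemma is proved.

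It remains to verify the limit. Multiplying by $\log(\eta^2/\delta_k)$ we obtain
\begin{equation}
\log\!\left(\frac{\eta^2}{\delta_k}\right) C_{\eta,k}
= C\,(p_k-2)\,\log\!\left(\frac{\eta^2}{\delta_k}\right),
\end{equation}
and applying Lemma \ref{LEMMA: Growth of  pk and ln etadelta} — the entropy-type bound coming from the monotonicity formula, the nontriviality of the bubble $v_\infty$, and the $L^2$-energy quantization Theorem \ref{THM: L2 en quant of na u} — one concludes
\begin{equation}
\lim_{\eta\searrow0}\limsup_{k\rightarrow\infty}\ \log\!\left(\frac{\eta^2}{\delta_k}\right) C_{\eta,k}=0,
\end{equation}
which is the desired vanishing.

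The only real difficulty here has already been absorbed into the two inputs: the commutator/Lorentz-type gain of the factor $(p_k-2)$ in Lemma \ref{LEMMA: First est of nabla b in necks}, and the logarithmic entropy estimate of Lemma \ref{LEMMA: Growth of  pk and ln etadelta}. Once both are in hand, the present lemma is merely their product, and I do not anticipate a genuine obstacle beyond observing that the $L^{p_k^\prime}$-bound on $b_{\eta,k}$ from Lemma \ref{LEMMA: First est of nabla b in necks} has a constant independent of $\eta$, so that the same $C_{\eta,k}=C(p_k-2)$ works uniformly across the family of neck regions.
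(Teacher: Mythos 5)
Your argument is correct and matches the paper's own proof exactly: both set $C_{\eta,k}=C(p_k-2)$ via Lemma \ref{LEMMA: First est of nabla b in necks} and then quote Lemma \ref{LEMMA: Growth of  pk and ln etadelta} for the vanishing of $(p_k-2)\log(\eta^2/\delta_k)$. The only difference is that you spell out the trivial monotonicity of the norm across domains, which the paper leaves implicit.
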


\begin{proof}
By Lemma \ref{LEMMA: First est of nabla b in necks}
\begin{equation}
\label{EQ: uundqu72ZB2jUQlswQ}
\norm{\nabla b_{\eta,k}}_{L^{p_k^\prime}(B_1)}
\le C (p_k-2).
\end{equation}
The claim follows by applying Lemma \ref{LEMMA: Growth of  pk and ln etadelta}.
\end{proof}

\begin{lemma} \label{LEMMA: Lower bound on the mass over diadic annulus with p-2}
There exists a constant $C>0$ such that for $k\in\N$ large and $\eta>0$ small the following holds:
For any $j\in\N$ with $\frac{\delta_k}{\eta}<2^{-j}<\eta$ we have
\begin{equation}
(p_k-2) \le C\left( \int_{A_j} \abs{\nabla u_k}^2 dx + 2^{-2j} \right).
\end{equation}
\end{lemma}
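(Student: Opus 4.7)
The plan is to apply the $p$-harmonic monotonicity formula (Lemma \ref{LEMMA: On the mono est for p harm}) directly to the single dyadic annulus $A_j$, playing the role that $A(\eta,\delta_k)$ had in the proof of Lemma \ref{LEMMA: Growth of  pk and ln etadelta}. The constraint $\delta_k/\eta < 2^{-j}$ guarantees that the interior ball still captures the full bubble, so its Dirichlet energy stays bounded below by a positive constant uniformly in $j$.

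Concretely, I would take $r = 2^{-j-1}$ and $R = 2^{-j}$, so that $B_R\setminus B_r = A_j$, $\log(R/r)=\log 2$ and $R^2-r^2 = \tfrac{3}{4}\,2^{-2j}$. Reading the monotonicity formula as it is used in the proof of Lemma \ref{LEMMA: Growth of  pk and ln etadelta}, it yields
\[
(p_k-2)\,\log 2\,\int_{B_{2^{-j-1}}} |\nabla u_k|^2\,dx \;\le\; p_k\,\bigl\|(1+|\nabla u_k|^2)^{p_k/2-1}\bigr\|_{L^\infty(A_j)}\Bigl(2\int_{A_j}|\nabla u_k|^2\,dx + \tfrac{3}{4}\,2^{-2j}\Bigr).
\]
The $L^\infty$ prefactor is bounded uniformly in $k$ and $\eta$ by Lemma \ref{LEMMA: Linfty bd on na u in the necks unif}, so the right-hand side is controlled by $C\bigl(\int_{A_j}|\nabla u_k|^2\,dx + 2^{-2j}\bigr)$.

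It remains to bound $\int_{B_{2^{-j-1}}}|\nabla u_k|^2$ from below by a positive constant independent of $j$. The hypothesis $2^{-j} > \delta_k/\eta$ forces $B_{2^{-j-1}} \supset B_{\delta_k/(2\eta)}$, and the change of variables $y = x/\delta_k$ gives
\[
\int_{B_{\delta_k/(2\eta)}}|\nabla u_k|^2\,dx \;=\; \int_{B_{1/(2\eta)}}|\nabla v_k|^2\,dy.
\]
By the $C^\infty_{\mathrm{loc}}(\C)$ convergence $v_k \to v_\infty$, for $k$ large (depending on $\eta$) the right-hand side is at least $\tfrac{1}{2}\int_{B_{1/(2\eta)}}|\nabla v_\infty|^2\,dy$, which in turn is at least $\tfrac{1}{4}\|\nabla v_\infty\|_{L^2(\C)}^2 > 0$ once $\eta$ is small enough, since $v_\infty$ is a nontrivial harmonic map with finite energy (compare \eqref{EQ: iwnfbiININ12081dn012s}). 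Crucially, this lower bound is independent of $j$.

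Combining the two inequalities and absorbing $\log 2$ together with the positive lower bound into the constant yields the claimed estimate. I do not anticipate any real obstacle here: the proof is essentially a rescaled reuse of the mechanism already employed in Lemma \ref{LEMMA: Growth of  pk and ln etadelta}, applied one dyadic scale at a time rather than across the entire neck.
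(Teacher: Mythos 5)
Your proof is correct and follows essentially the same route as the paper: apply the monotonicity formula (Lemma \ref{LEMMA: On the mono est for p harm}) on the single dyadic annulus with $r=2^{-j-1}$, $R=2^{-j}$, absorb the $L^\infty$ prefactor via Lemma \ref{LEMMA: Linfty bd on na u in the necks unif}, and bound $\int_{B_{2^{-j-1}}}|\nabla u_k|^2$ from below by the bubble energy as in \eqref{EQ: iwnfbiININ12081dn012s}. If anything you are more careful than the paper's brief proof, which writes $2^{-j}$ instead of $R^2-r^2\sim 2^{-2j}$ and uses $B_{\delta_k/\eta}\subset B_{2^{-j-1}}$ where the stated hypothesis $2^{-j}>\delta_k/\eta$ only guarantees $B_{\delta_k/(2\eta)}\subset B_{2^{-j-1}}$; your use of $B_{\delta_k/(2\eta)}$ and $B_{1/(2\eta)}$ resolves both points.
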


\begin{proof}
Combining Lemma \ref{LEMMA: On the mono est for p harm} and Lemma \ref{LEMMA: Linfty bd on na u in the necks unif} we find
\begin{equation}
(p_k-2) \norm{\nabla u_k}_{L^2(B_{2^{-j-1}})}^2
\le C \left( \int_{A_j} \abs{\nabla u_k}^2 dx + 2^{-j} \right)
\end{equation}
To conclude we use $\norm{\nabla u_k}_{L^2(B_{2^{-j-1}})} \ge \norm{\nabla u_k}_{L^2(B_\frac{\delta_k}{\eta})}$ and also \eqref{EQ: iwnfbiININ12081dn012s}.
\end{proof}

\begin{lemma}\label{LEMMA: Lemma on the start of dyadic est}
There exists a constant $C>0$ such that for $k\in\N$ large and $\eta>0$ small the following holds:
For any $j\in\N$ with $\frac{\delta_k}{\eta}<2^{-j}<\eta$ we have
\begin{equation}
\int_{A_j} \abs{\nabla u_k}^2 dx 
\le C \left( \norm{\nabla b_{\eta,k}}_{L^{p_k^\prime}(A_j)}^2 + \int_{A_j}\abs{\nabla \varphi_{\eta,k}}^2 dx + \int_{A_j}\abs{\nabla \mathfrak h_{\eta,k}}^2 dx+ 2^{-2j(p_k-1)} \right).
\end{equation}
\end{lemma}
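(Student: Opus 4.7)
The strategy is to combine the Hodge decomposition \eqref{EQ: Final decomp of na u} with the elementary pointwise inequality
\[
|\nabla u_k|^2 \le (1+|\nabla u_k|^2)^{p_k/2-1}|\nabla u_k|^2 = X_k\cdot\nabla u_k,
\]
which holds since $p_k\ge 2$. Integrating over $A_j$ and substituting $X_k=\nabla^\perp b_{\eta,k}+\nabla\varphi_{\eta,k}+\nabla\mathfrak h_{\eta,k}$ reduces the task to estimating three cross-term integrals.

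The two exact-differential terms are routine. Cauchy--Schwarz followed by Young's inequality with a small parameter $\epsilon$ gives
\[
\int_{A_j}\nabla\varphi_{\eta,k}\cdot\nabla u_k \le \frac{1}{2\epsilon}\int_{A_j}|\nabla\varphi_{\eta,k}|^2 + \frac{\epsilon}{2}\int_{A_j}|\nabla u_k|^2,
\]
and analogously for $\nabla\mathfrak h_{\eta,k}$. Choosing $\epsilon$ small (e.g.\ $\epsilon=1/8$), the $|\nabla u_k|^2$ tails get absorbed into the left-hand side.

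The delicate piece is the divergence-free term. I would apply H\"older's inequality with the dual pair $(p_k',p_k)$,
\[
\int_{A_j}\nabla^\perp b_{\eta,k}\cdot\nabla u_k \le \|\nabla b_{\eta,k}\|_{L^{p_k'}(A_j)}\|\nabla u_k\|_{L^{p_k}(A_j)},
\]
and control $\|\nabla u_k\|_{L^{p_k}(A_j)}$ through the pointwise estimate $|\nabla u_k(x)|\le C/|x|\le C\cdot 2^{j+1}$ on $A_j$, which follows from the $\varepsilon$-regularity Lemma \ref{LEMMA: epsilon reg sacks uhlenbeck for p harm} together with the $L^2$-energy quantization (Theorem \ref{THM: L2 en quant of na u}). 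A direct integration of $(C/|x|)^{p_k}$ over $A_j$ yields $\|\nabla u_k\|_{L^{p_k}(A_j)}^{p_k}\le C\cdot 2^{j(p_k-2)}$, and the entropy bound of Lemma \ref{LEMMA: Growth of  pk and ln etadelta} keeps $2^{j(p_k-2)}$ uniformly bounded for $j$ in the admissible range $\delta_k/\eta<2^{-j}<\eta$. A final Young's inequality $ab\le a^2/2+b^2/2$ then separates the H\"older product into $\tfrac12\|\nabla b_{\eta,k}\|_{L^{p_k'}(A_j)}^2$ and a purely geometric residue.

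The main technical obstacle is to identify this residue with the claimed $2^{-2j(p_k-1)}$ rather than the crude scale that comes from a naive H\"older computation. The trick is to exploit the sharp conjugate pointwise bound $|X_k(x)|\le C|x|^{-(p_k-1)}$ coming directly from $X_k=(1+|\nabla u_k|^2)^{p_k/2-1}\nabla u_k$ in combination with $|\nabla u_k|\le C/|x|$: after measuring $|X_k|$ in $L^{p_k'}(A_j)$ via this bound one finds, using $(p_k-1)p_k'=p_k$ and $|A_j|\sim 2^{-2j}$, that the natural geometric scale appearing after the splitting is exactly $|A_j|^{p_k-1}\sim 2^{-2j(p_k-1)}$. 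The entropy condition $(p_k-2)\log(\eta^2/\delta_k)\to 0$ from Lemma \ref{LEMMA: Growth of  pk and ln etadelta} ensures no logarithmic corrections survive and keeps all constants uniform in $k$.
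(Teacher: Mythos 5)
Your approach is genuinely different from the paper's, but it has a real gap: it does not produce the decaying residue $2^{-2j(p_k-1)}$, and I do not see how the proposed ``trick'' closes this.

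Your plan pairs the equation against $\nabla u_k$ and handles the divergence-free piece by H\"older $\|\nabla b_{\eta,k}\|_{L^{p_k'}(A_j)}\|\nabla u_k\|_{L^{p_k}(A_j)}$ followed by the $L^2$-Young inequality $ab\le a^2/2+b^2/2$. This leaves the residue $\tfrac12\|\nabla u_k\|_{L^{p_k}(A_j)}^2$. You cannot absorb it into $\int_{A_j}|\nabla u_k|^2$ (on a fixed bounded domain the $L^{p_k}$-norm is \emph{stronger} than the $L^2$-norm, so the inequality runs the wrong way), and the pointwise bound $|\nabla u_k|\le C/|x|$ together with Lemma~\ref{LEMMA: Growth of pk and ln etadelta} only gives
\[
\|\nabla u_k\|_{L^{p_k}(A_j)}^2 \;\lesssim\; 2^{2j(p_k-2)/p_k}\;=\;O(1),
\]
uniformly in $j$. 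An $O(1)$, $j$-independent residue is \emph{not} the same as $2^{-2j(p_k-1)}$. This matters downstream: in the proof of Theorem~\ref{THEOREM: on Pointwise Estimate of the Gradient in the Necks p harm}, the term $2^{-2l(p_k-1)}$ in $b_l$ is summed against the geometric weight $\mu^{|l-j|}$ and yields contributions of size $\eta^2(2^{-j}/\eta)^\beta$ and $\eta(2^{-j}/\eta)^\beta$ (steps {\bf IV}--{\bf V} of that proof), which feed into the $\boldsymbol{\epsilon}_{\eta,\delta_k}$ bucket. A constant residue instead lands in $\boldsymbol{\mathrm c}_{\eta,\delta_k}$ and the required condition $\boldsymbol{\mathrm c}_{\eta,\delta_k}\log^2(\eta^2/\delta_k)\to0$ would not be guaranteed, since the $L^2$-quantization only gives the constant is $o(1)$, with no rate. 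The trick you propose --- $|X_k|\le C|x|^{-(p_k-1)}$ measured in $L^{p_k'}(A_j)$ --- gives $\|X_k\|_{L^{p_k'}(A_j)}^{p_k'}\lesssim \int_{A_j}|x|^{-p_k}\sim 2^{j(p_k-2)}$, again an $O(1)$ quantity after the entropy condition, not $2^{-2j(p_k-1)}$; the heuristic identification with $|A_j|^{p_k-1}$ does not come out of this computation.

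The paper's route is different in an essential way. It goes in the \emph{reverse} direction: $\int_{A_j}|\nabla u_k|^2\le C(\int_{A_j}|\nabla u_k|^{p_k})^{2/p_k}$ by H\"older, adds $2^{-jp_k}$ and applies Minkowski, and then splits the exponent $p_k/2 = p_k'/2 + p_k(p_k-2)/(2(p_k-1))$; the second factor is controlled from \emph{below} using Lemma~\ref{LEMMA: Lower bound on the mass over diadic annulus with p-2}, namely $(p_k-2)\lesssim \int_{A_j}|\nabla u_k|^2 + 2^{-2j}$, a consequence of the monotonicity formula and the nontriviality of the bubble. This mass lower bound is the real technical input, and it has no counterpart in your argument. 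After dividing out the $(p_k-2)^{p_k-2}\to1$ factor and raising to the power $2/p_k'$, the artificially added $2^{-jp_k}$ becomes exactly $2^{-jp_k\cdot 2/p_k'}=2^{-2j(p_k-1)}$, which is where the decaying residue originates. I would encourage you to look at how Lemma~\ref{LEMMA: Lower bound on the mass over diadic annulus with p-2} enters: without a lower bound on the local mass, there is no mechanism to trade the $(p_k-2)$-power away, and the dual-pairing approach does not recover it.
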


\begin{proof}
Applying H\"older's inequality we find that
\begin{equation}
\int_{A_j} \abs{\nabla u_k}^2 dx 
\le \left(\int_{A_j} \abs{\nabla u_k}^{p_k} dx \right)^{\frac{2}{p_k}} \underbrace{\abs{A_j}^{\frac{p_k-2}{p_k}}}_{\le C}
\le C \left(\int_{A_j} \abs{\nabla u_k}^{p_k} dx \right)^{\frac{2}{p_k}}.
\end{equation}
By Minkowski's inequality and the above estimate
\begin{equation}
\begin{aligned}
\int_{A_j} \abs{\nabla u_k}^{p_k} dx + 2^{-jp_k}
&\ge 2^{1-\frac{p_k}{2}} \left[\left( \int_{A_j} \abs{\nabla u_k}^{p_k} dx\right)^{\frac{2}{p_k}} + 2^{-2j} \right]^{\frac{p_k}{2}} \\
&\ge C \left[ \int_{A_j} \abs{\nabla u_k}^2 dx + 2^{-2j} \right]^{\frac{p_k}{2}} \\
&= C \left[ \int_{A_j} \abs{\nabla u_k}^2 dx + 2^{-2j} \right]^{\frac{p_k^\prime}{2}} \left[ \int_{A_j} \abs{\nabla u_k}^2 dx + 2^{-2j} \right]^{\frac{p_k(p_k-2)}{2(p_k-1)}},
\end{aligned}
\end{equation}
where in the last line we used that $\frac{p_k}{2} = \frac{p_k^\prime}{2} + \frac{p_k(p_k-2)}{2(p_k-1)}$.
By Lemma \ref{LEMMA: Lower bound on the mass over diadic annulus with p-2} we get
\begin{equation}
\left[ \int_{A_j} \abs{\nabla u_k}^2 dx + 2^{-2j} \right]^{\frac{p_k(p_k-2)}{2(p_k-1)}}
\ge \Big[C (p_k-2) \Big]^{\frac{p_k(p_k-2)}{2(p_k-1)}}
= \underbrace{C^{\frac{p_k(p_k-2)}{2(p_k-1)}}}_{\rightarrow1}  \Big[ \underbrace{(p_k-2)^{(p_k-2)}}_{\rightarrow 1} \Big]^{\frac{p_k}{2(p_k-1)}},
\end{equation}
where we use the fact that $\lim_{x\searrow0}x^x=1.$
We conclude that for $k$ large there holds
\begin{equation}
\left[ \int_{A_j} \abs{\nabla u_k}^2 dx + 2^{-2j} \right]^{\frac{p_k^\prime}{2}}
\le C \int_{A_j} \abs{\nabla u_k}^{p_k} dx + 2^{-jp_k}
\end{equation}
Finally, taking the $\frac{2}{p_k^\prime}$ power, using Minkowski's inequality and H\"older's inequality one bounds
\begin{equation}
\begin{aligned}
\int_{A_j} \abs{\nabla u_k}^2 dx + 2^{-2j}
&\le C \left[\int_{A_j} \abs{\nabla u_k}^{p_k} dx + 2^{-jp_k} \right]^{\frac{2}{p_k^\prime}} \\
&\le C\left(\int_{A_j} \abs{\nabla u_k}^{p_k} dx  \right)^{\frac{2}{p_k^\prime}} + C\ 2^{-2j(p_k-1)} \\
&\le C\left(\int_{A_j} \abs{X_k}^{p_k^\prime} dx  \right)^{\frac{2}{p_k^\prime}} + C\ 2^{-2j(p_k-1)} \\
&\le C \left( \norm{\nabla b_{\eta,k}}_{L^{p_k^\prime}(A_j)}^2 + \norm{\nabla \varphi_{\eta,k}}_{L^{p_k^\prime}(A_j)}^2 + \norm{\nabla \mathfrak h_{\eta,k}}_{L^{p_k^\prime}(A_j)}^2 + 2^{-2j(p_k-1)} \right) \\
&\le C \left( \norm{\nabla b_{\eta,k}}_{L^{p_k^\prime}(A_j)}^2 + \norm{\nabla \varphi_{\eta,k}}_{L^{2}(A_j)}^2 + \norm{\nabla \mathfrak h_{\eta,k}}_{L^{2}(A_j)}^2 + 2^{-2j(p_k-1)} \right).
\end{aligned}
\end{equation}
\end{proof}

\begin{proof}[Proof (of Theorem \ref{THEOREM: on Pointwise Estimate of the Gradient in the Necks p harm})]
For $j \in \N$ with $\frac{4\delta_k}{\eta}\le 2^{-j} \le \frac{\eta}{2}$ combining Lemma \ref{LEMMA: Lemma on the start of dyadic est} and Lemma \ref{LEMMA: Dyadic est of na phi start} one has
\begin{equation}
\begin{aligned}
\int_{A_j} \abs{\nabla u_k}^2 dx 
&\le c_0 \Bigg[ 2^{-2j(p_k-1)}+ \gamma^j \norm{\nabla u_k}_{L^2(A(\eta,\delta_k))}^2 + \norm{\nabla \mathfrak h_{\eta,k}}_{L^{2}(A_j)}^2+ \norm{\nabla b_{\eta,k}}_{L^{p_k^\prime}(A_j)}^2 \\
&\hspace{20mm}+\norm{\nabla u_k}_{L^2(A(\eta,\delta_k))}^2 \sum_{l=0}^\infty \gamma^{\abs{l-j}} \int_{A_l} \abs{\nabla\widetilde u_k}^2  dx
 \Bigg].
\end{aligned}
\end{equation}
for some constant $c_0>0$ independent of $j$, $\eta$ or $k$.
Let us introduce
\begin{equation}
\begin{aligned}
a_j & \coloneqq \int_{A_j} \abs{\na\widetilde u_k}^2  dx, \\
b_j & \coloneqq c_0\left[ 2^{-2j(p_k-1)}+ \gamma^j \norm{\nabla u_k}_{L^2(A(\eta,\delta_k))}^2 + \norm{\nabla \mathfrak h_{\eta,k}}_{L^{2}(A_j)}^2+ \norm{\nabla b_{\eta,k}}_{L^{p_k^\prime}(A_j)}^2\right], \\
\varepsilon_0 &= \varepsilon_0(\et,\de_k) \coloneqq  c_0 \int_{A(\eta,\delta_k)} \abs{\nabla u_k}^2  dx.
\end{aligned}
\end{equation}
In this notation this is equivalent to
\begin{equation}
a_j \le b_j
+ \epv_0 \sum_{l=0}^\infty \gamma^{\abs{l-j}} a_l, \qquad \forall j \in [s_1,s_2]\coloneqq \Bigg[\ceil{-\log_2\left(\frac{\eta}{2}\right)}, \floor{-\log_2\left(\frac{4\delta_k}{\eta}\right)} \Bigg].
\end{equation}
Now we apply Lemma G.1 of \cite{DGR22} for some fixed $j \in \{s_1,\dots,s_2 \}$. 
Then for $\gamma<\mu<1$ there exists $C_{\mu, \gamma}>0$ such that
\begin{equation}
\begin{aligned}
\sum_{l=s_1}^{s_2} \mu^{|l-j|} a_{l} 
&= \sum_{l=s_1}^{s_2} \mu^{|l-j|} b_{l} + C_{\mu, \gamma}\ \varepsilon_0 \sum_{l=s_1}^{s_2} \mu^{|l-j|} a_{l} \\
&\hspace{8mm} + C_{\mu, \gamma}\ \varepsilon_0 \left( \mu^{|s_1-1-j|} a_{s_1-1} + \mu^{|s_1-2-j|} a_{s_1-2}
+ \mu^{|s_2+1-j|} a_{s_2+1} + \mu^{|s_2+2-j|} a_{s_2+2}\right),
\end{aligned}
\end{equation}
where we used the fact that $a_l=0$ for any $l \le s_1-3$ or $l \ge s_2+3$ coming from \eqref{EQ: wuinuifnvuUNIDWU0139jr1}.
By Theorem \ref{THM: L2 en quant of na u} 
\begin{equation}
\lim_{\eta\searrow0} \limsup_{k\rightarrow \infty}\ \varepsilon_0(\eta,\delta_k)=0.
\end{equation}
Hence, we can assume that $\et>0$ small enough and for $k \in \N$ large enough we have
\begin{equation}
C_{\mu,\gamma} \ \epv_0 < \frac{1}{2}.
\end{equation}
allowing to absorb the sum to the left-hand side
\begin{equation}
\begin{aligned}
&\sum_{l=s_1}^{s_2} \mu^{|l-j|} a_{l} \\
&\hspace{7mm} \le C \sum_{l=s_1}^{s_2} \mu^{|l-j|} b_{l} + C \ \varepsilon_0 \left( \mu^{j-s_1} a_{s_1-1} + \mu^{j-s_1} a_{s_1-2}
+ \mu^{s_2-j} a_{s_2+1} + \mu^{s_2-j} a_{s_2+2}\right), \\
&\hspace{7mm} \le C \sum_{l=s_1}^{s_2} \mu^{|l-j|} b_{l} + C \ \varepsilon_0 \left( \mu^{j-s_1} + \mu^{s_2-j}\right)\norm{\nabla u_k}_{L^2(A(\eta,\delta_k))}^2,
\end{aligned}
\end{equation}
where in the last line we used that for any $i$ one has $a_i \le \norm{\na\widetilde u_k}_{L^2(\C)}^2 \le C\norm{\nabla u_k}_{L^2(A(\eta,\delta_k))}^2$.
We introduce $\beta \coloneqq - \log_2 \mu\in (0,-\log_2 \gamma)\subset(0,1)$ such that
\begin{equation}
\mu = 2^{-\beta}.
\end{equation}
Now we focus on the bound of the expression
\begin{equation}
\begin{aligned}
&\sum_{l=s_1}^{s_2} \mu^{|l-j|} b_{l}\\
&\hspace{12mm} = c_0\sum_{l=s_1}^{s_2} \mu^{|l-j|} \left[ 2^{-2l(p_k-1)} + \gamma^l \norm{\nabla u_k}_{L^2(A(\eta,\delta_k))}^2 + \norm{\nabla \mathfrak h_{\eta,k}}_{L^{2}(A_l)}^2+ \norm{\nabla b_{\eta,k}}_{L^{p_k^\prime}(A_l)}^2\right].
\end{aligned}
\end{equation}
\textbf{1.)}
\begin{equation}
\begin{aligned}
\sum_{l=s_1}^{s_2} \mu^{|l-j|} 2^{-2l(p_k-1)}
&= \sum_{l=s_1}^{j} \mu^{j-l} 2^{-2l(p_k-1)}
+ \sum_{l=j+1}^{s_2} \mu^{l-j} 2^{-2l(p_k-1)} \\
&\le \mu^{j}\sum_{l=s_1}^{j} \mu^{-l} 2^{-2l}
+ \mu^{-j} \sum_{l=j+1}^{s_2} \mu^{l} 2^{-2l} \\
&\le \mu^{j}\sum_{l=s_1}^{j} \left(\frac{1}{4\mu}\right)^l
+ \mu^{-j} \sum_{l=j+1}^{s_2}  \left( \frac{\mu}{4}\right)^l \\
&\le \left( \frac{1}{4\mu}\right)^{s_1-1} \ \mu^{j}
+ \mu^{-j}  \left( \frac{\mu}{4}\right)^j \\
&= \left( \frac{1}{4\mu}\right)^{s_1-1} \ 2^{-\beta j}+ 2^{-2j} \\
&\le \left( \frac{1}{4\mu}\right)^{s_1-1} \ 2^{-\beta j} + 2^{-s_1} 2^{-j}.
\end{aligned}
\end{equation}
\textbf{2.)}
\begin{equation}
\begin{aligned}
\sum_{l=s_1}^{s_2} \mu^{\abs{l-j}} \gamma^l 
&= \mu^{j} \sum_{l=s_1}^{j} \left(\frac{\gamma}{\mu} \right)^l + \mu^{-j} \sum_{l=j+1}^{s_2} \left(\mu\ga\right)^l \\
&\le \mu^j \left(\frac{\gamma}{\mu} \right)^{s_1-1} + \mu^{-j} (\mu\ga)^{j} \\
&\le \mu^{j}  + \gamma^j \le 2\mu^j =2\ 2^{-\beta j}.
\end{aligned}
\end{equation}
\textbf{3.)}
With Lemma \ref{LEMMA: on the est of na h in necks} we get
\begin{equation}
\begin{aligned}
&\hspace{-10mm} \sum_{l=s_1}^{s_2} \mu^{|l-j|} \norm{\nabla \mathfrak h_{\eta,k}}_{L^{2}(A_l)}^2 \\
&\le C \left(\norm{\nabla u_k}_{L^2(A(\eta,\delta_k))}^2 + \norm{\nabla b_{\eta,k}}_{L^{p_k^\prime}(B_1)} \right) \sum_{l=s_1}^{s_2} \mu^{|l-j|} \left[ \left( \frac{2^{-l}}{\et}\right)^2 + \left( \frac{\de_k}{2^{-l}\et}\right)^2 \right] ,
\end{aligned}
\end{equation}
and compute
\begin{equation}
\begin{aligned}
&\hspace{-12mm}\sum_{l=s_1}^{s_2} \mu^{\abs{l-j}} \left[ \left( \frac{2^{-l}}{\et}\right)^2 + \left( \frac{\de_k}{2^{-l}\et}\right)^2 \right] \\
&= \frac{1}{\et^2} \left[ \sum_{l=s_1}^{j} \mu^{j}\left( \left( \frac{1}{4\mu} \right)^l + \left( \frac{4}{\mu} \right)^l \de_k^2 \right)
+  \sum_{l=j+1}^{s_2} \mu^{-j}\left( \left( \frac{\mu}{4} \right)^l + \left( 4\mu \right)^l \de_k^2 \right) \right] \\
& \le \frac{C}{\et^2} \left[ \mu^{j}\left( \left( \frac{1}{4\mu} \right)^{s_1-1} + \left( \frac{4}{\mu} \right)^{j+1} \de_k^2 \right)
+   \mu^{-j}\left( \left( \frac{\mu}{4} \right)^{j} + \left( 4\mu \right)^{s_2+1} \de_k^2 \right) \right] \\
& \le \frac{C}{\et^2} \Big( \mu^{j-s_1}\ 2^{-2s_1} + 2^{2j}\ \de_k^2 + 2^{-2j} + \mu^{s_2-j}\ 2^{2s_2}\ \de_k^2 \Big) \\
& = \frac{C}{\et^2} \Big(2^{(s_1-j)(-\log_2\mu)} \ 2^{-2s_1} + 2^{2j}\ \de_k^2 + 2^{-2j} + 2^{(j-s_2)(-\log_2 \mu)} \ 2^{2s_2}\ \de_k^2 \Big) \\
& = C \left[2^{(s_1-j)\be} \underbrace{\left( \frac{2^{-s_1}}{\et} \right)^2}_{\le C} + 2^{2j}\left( \frac{\de_k}{\et} \right)^2 + \left( \frac{2^{-j}}{\et} \right)^2 + 2^{(j-s_2)\be} \underbrace{\left( \frac{\de_k}{2^{-s_2}\et} \right)^2}_{\le C}  \right] \\
&\le C \left[ \left( \frac{2^{-j}}{\et} \right)^\be + \left( \frac{\de_k}{2^{-j}\et} \right)^2 + \left( \frac{2^{-j}}{\et} \right)^2 + \left( \frac{\de_k}{2^{-j}\et} \right)^\be \right] \\
&\le C \left[ \left( \frac{2^{-j}}{\et} \right)^\be + \left( \frac{\de_k}{2^{-j}\et} \right)^\be \right].
\end{aligned}
\end{equation}
where in the last line we used that $\be <2$, $\frac{2^{-j}}{2\et}\le 1$ and $\frac{\de_k}{2^{-j}\et}\le 1$.
\\ \textbf{4.)}
We bound using \eqref{EQ: HIninfwjei921ejir2}
\begin{equation}
\begin{aligned}
\label{EQ: 0921niwebfweijwefn8913}
\sum_{l=s_1}^{s_2} \mu^{|l-j|}  \norm{\nabla b_{\eta,k}}_{L^{p_k^\prime}(A_l)}^2
&= \sum_{l=s_1}^{s_2} \mu^{|l-j|}  \norm{\nabla b_{\eta,k}}_{L^{p_k^\prime}(A_l)}^{p_k^\prime} \norm{\nabla b_{\eta,k}}_{L^{p_k^\prime}(A_l)}^{2-p_k^\prime} \\
&\le \norm{\nabla b_{\eta,k}}_{L^{p_k^\prime}(A(\eta,\delta_k))}^{2-p_k^\prime} \sum_{l=s_1}^{s_2} \mu^{|l-j|}  \norm{\nabla b_{\eta,k}}_{L^{p_k^\prime}(A_l)}^{p_k^\prime} \\
&\le (C (p_k-2))^{2-p_k^\prime} \sum_{l=s_1}^{s_2}  \norm{\nabla b_{\eta,k}}_{L^{p_k^\prime}(A_l)}^{p_k^\prime} \\
&\le (C (p_k-2))^{2-p_k^\prime} \norm{\nabla b_{\eta,k}}_{L^{p_k^\prime}(A(\eta,\delta_k))}^{p_k^\prime},
\end{aligned}
\end{equation}
where in the last line we used additivity of the integral.
As $2-p_k^\prime=\frac{p_k-2}{p_k-1}$ we have
\begin{equation}
(C (p_k-2))^{2-p_k^\prime} 
\le C (p_k-2)^{2-p_k^\prime}
= C (\underbrace{(p_k-2)^{p_k-2}}_{\rightarrow 1})^\frac{1}{p_k-1}
\le C
\end{equation}
and also
\begin{equation}
\begin{aligned}
\norm{\nabla b_{\eta,k}}_{L^{p_k^\prime}(A(\eta,\delta_k))}^{p_k^\prime}
& \le \Big[\norm{\nabla b_{\eta,k}}_{L^{p_k^\prime}(A(\eta,\delta_k))} + (p_k-2)\Big]^{p_k^\prime} \\
&= \Big[\norm{\nabla b_{\eta,k}}_{L^{p_k^\prime}(A(\eta,\delta_k))} + (p_k-2)\Big]^{2} \ \Big[\norm{\nabla b_{\eta,k}}_{L^{p_k^\prime}(A(\eta,\delta_k))} + (p_k-2)\Big]^{\overbrace{p_k^\prime-2}^{<0}} \\
&\le C \Big[\norm{\nabla b_{\eta,k}}_{L^{p_k^\prime}(A(\eta,\delta_k))}^2 + (p_k-2)^2\Big] \ \underbrace{\Big[p_k-2\Big]^{p_k^\prime-2}}_{\rightarrow 1}.
\end{aligned}
\end{equation}
With \eqref{EQ: 0921niwebfweijwefn8913} we get
\begin{equation}
\begin{aligned}
\sum_{l=s_1}^{s_2} \mu^{|l-j|}  \norm{\nabla b_{\eta,k}}_{L^{p_k^\prime}(A_l)}^2
&\le C \Big[\norm{\nabla b_{\eta,k}}_{L^{p_k^\prime}(A(\eta,\delta_k))}^2 + (p_k-2)^2\Big] \\
&\eqqcolon \left(C_{\eta,k}\right)^2,
\end{aligned}
\end{equation}
where with Lemma \ref{LEMMA: Control of const of na b in necks} and Lemma \ref{LEMMA: Growth of  pk and ln etadelta} one has $\lim_{\eta\searrow0}\limsup_{k\rightarrow\infty}\ \log\left(\frac{\eta^2}{\delta_k}\right) C_{\eta,k}=0$. \\
Putting these bounds together we find
\begin{equation}
\label{EQ: INuhnfue91uqfwUNqw}
\begin{aligned}
\int_{A_j} \abs{\nabla u_k}^2 dx 
&=a_j \le \sum_{l=s_1}^{s_2} \mu^{|l-j|} a_{l} \\
&\le \left(\norm{\nabla u_k}_{L^2(A(\eta,\delta_k))}^2 + \norm{\nabla b_{\eta,k}}_{L^{p_k^\prime}(B_1)} \right) \Bigg[ \left( \frac{2^{-j}}{\et} \right)^\be + \left( \frac{\de_k}{2^{-j}\et} \right)^\be \Bigg] \\
&\hspace{35mm} + C \norm{\nabla u_k}_{L^2(A(\eta,\delta_k))}^2 \Bigg[ \mu^{j-s_1} + \mu^{s_2-j} + 2^{-\beta j} \Bigg] \\
&\hspace{45mm} + C \Bigg[ \left( \frac{1}{4\mu}\right)^{s_1} \ 2^{-\beta j} + 2^{-s_1} 2^{-j} + \left(C_{\eta,k}\right)^2 \Bigg].
\end{aligned}
\end{equation}
We have the following:
\\ \textbf{I:}
\begin{equation}
\mu^{j-s_1} =\left(2^{-\beta(j-s_1)} \right) = \left(2^{-j} 2^{s_1} \right)^\beta 
\le C \left( \frac{2^{-j}}{\eta} \right)^\beta
\end{equation}
\\ \textbf{II:}
\begin{equation}
\mu^{s_2-j} =\left(2^{-\beta(s_2-j)} \right) = \left(2^{j}\ 2^{-s_2} \right)^\beta 
\le C \left( \frac{\delta_k}{2^{-j}\eta} \right)^\beta 
\end{equation}
\\ \textbf{III:}
\begin{equation}
2^{-\beta j}
\le \left( \frac{2^{-j}}{\eta} \right)^\beta
\end{equation}
\\ \textbf{IV:}
\begin{equation}
\left( \frac{1}{4\mu}\right)^{s_1} \ 2^{-\beta j}
\le C\left( 2^{\beta-2}\right)^{-\log_2(\eta)} 2^{-\beta j}
\le C \eta^{2-\beta} 2^{-\beta j}
=  C \eta^{2} \left( \frac{2^{-j}}{\eta} \right)^\beta
\end{equation}
\\ \textbf{V:}
\begin{equation}
2^{-s_1} 2^{-j} \le C \ 2^{\log_2 \eta} 2^{-j}
\le C \ \eta \left( \frac{2^{-j}}{\eta} \right)
\le C \ \eta \left( \frac{2^{-j}}{\eta} \right)^\beta
\end{equation}
Going back to \eqref{EQ: INuhnfue91uqfwUNqw} we have found
\begin{equation}
\label{EQ: ijnvewiUN1298d1}
\begin{aligned}
&\int_{A_j} \abs{\nabla u_k}^2 dx \\
&\hspace{12mm}\le C \left( \norm{\nabla u_k}_{L^2(A(\eta,\delta_k))}^2+ \norm{\nabla b_{\eta,k}}_{L^{p_k^\prime}(B_1)} + \eta+\eta^2 \right)
\left( \left( \frac{2^{-j}}{\et} \right)^\be + \left( \frac{\de_k}{2^{-j}\et} \right)^\be \right) + \left(C_{\eta,k}\right)^2
\end{aligned}
\end{equation}
Let $x \in A_j$.
Put $r_x = \abs{x}/4$.
One has
\begin{equation}
B_{r_x}(x) \subset A_{j-1} \cup A_{j} \cup A_{j+1}.
\end{equation}
By $\varepsilon$-regularity Lemma \ref{LEMMA: epsilon reg sacks uhlenbeck for p harm} we can bound
\begin{equation}
\label{EQ: iwunfeiuNnquf39831dasdas}
\begin{aligned}
\abs{x}^2 \abs{\nabla u_k(x)}^2 
&= 2^6 \left(\frac{r_x}{2}\right)^2 \abs{\nabla u_k(x)}^2 \le 2^6 \left(\frac{r_x}{2}\right)^2 \norm{\nabla u_k}_{L^{\infty}(B_{r_x/2}(x))}^2 \le C \ \norm{\nabla u_k}_{L^2(B_{r_x}(x))}^2 \\
&\le C \left( \norm{\nabla u_k}_{L^2(A_{j-1})}^2 + \norm{\nabla u_k}_{L^2(A_{j})}^2 + \norm{\nabla u_k}_{L^2(A_{j+1})}^2 \right).
\end{aligned}
\end{equation}
Combining \eqref{EQ: ijnvewiUN1298d1} and \eqref{EQ: iwunfeiuNnquf39831dasdas} with the fact that $2^{-j-1}\le \abs{x}\le 2^{-j}$ we get.
\begin{equation}
\abs{x}^2 \abs{\nabla u_k(x)}^2 
\le C \left( \norm{\nabla u_k}_{L^2(A(\eta,\delta_k))}^2+ \norm{\nabla b_{\eta,k}}_{L^{p_k^\prime}(B_1)} + \eta+\eta^2 \right)
\left( \left( \frac{\abs{x}}{\et} \right)^\be + \left( \frac{\de_k}{\abs{x}\et} \right)^\be \right) + \left(C_{\eta,k}\right)^2
\end{equation}
With Theorem \ref{THM: L2 en quant of na u}, Lemma \ref{LEMMA: First est of nabla b in necks} and Lemma \ref{LEMMA: Control of const of na b in necks} for all $x \in A_j$ we have
\begin{equation}
\label{EQ: jwiefnuIN301nidqw}
\abs{x}^2 \abs{\nabla u_k(x)}^2 \le C\left[ \left( \frac{\abs{x}}{\et} \right)^\be + \left( \frac{\de_k}{\et \abs{x}} \right)^\be \right]\boldsymbol{\epsilon}_{\eta, \delta_k} +  \boldsymbol{\mathrm c}_{\eta, \delta_k},
\end{equation}
where
\begin{equation}
\lim_{\eta\searrow0}\limsup_{k\rightarrow\infty}\boldsymbol{\epsilon}_{\eta, \delta_k}=0,
\qquad \text{and} \qquad
\lim_{\eta\searrow0}\limsup_{k\rightarrow\infty}\boldsymbol{\mathrm c}_{\eta, \delta_k} \log^2\left(\frac{\eta^2}{\delta_k}\right) =0.
\end{equation}
Let now $x\in A\left(\frac{\eta}{4},\delta_k\right)$.
Then we can find some $j\in \N$ such that $2^{-j-1}\le \abs{x}\le 2^{-j}$.
But then $\frac{4\delta_k}{\eta} \le \abs{x} \le 2^{-j} \le 2 \abs{x} \le \frac{\eta}{2}$.
Therefore $j \in \{s_1, \dots,s_2 \}$ and estimate \eqref{EQ: jwiefnuIN301nidqw} is valid for $x\in A\left(\frac{\eta}{4},\delta_k\right)$.
This completes the proof.
\end{proof}

\newpage
\ \vspace{15mm}
\section{Stability of the Morse Index}

In this section we finally show the upper semicontinuity of the Morse index plus nullity for Sacks-Uhlenbeck sequences to spheres, more precisely
\begin{theorem}
\label{THM: usc morse index sacks-uhlenbeck with one bubble}
For $k\in \N $ large there holds
\begin{equation}
\operatorname{Ind}_{E_{p_k}}(u_k) + \operatorname{Null}_{E_{p_k}}(u_k)
\le \operatorname{Ind}_{E}(u_\infty) + \operatorname{Null}_{E}(u_\infty) + \operatorname{Ind}_{E}(v_\infty) + \operatorname{Null}_{E}(  v_\infty)
\end{equation}
\end{theorem}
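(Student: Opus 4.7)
The plan is to argue by contradiction following the scheme of \cite{DGR22}, adapted to the $p$-harmonic setting by exploiting the pointwise estimate of Theorem \ref{THEOREM: on Pointwise Estimate of the Gradient in the Necks p harm} and Corollary \ref{COROLLARY: Limit of na uk to pk minus 2 eq 1}. Denote the right-hand side by $N$. If the conclusion failed along a subsequence, then for each $k$ one could find a vector subspace $W_k \subset V_{u_k}$ with $\dim W_k = N+1$ on which $Q_{u_k} \le 0$. The goal is to use $W_k$ to construct, in the limit, an $(N+1)$-dimensional space of admissible variations for the Dirichlet second variation of the bubble tree limit $(u_\infty, v_\infty)$ on which the Dirichlet quadratic form is nonpositive, which contradicts the dimension identity for the Morse index plus nullity of the sum.

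First I would introduce three cutoff functions $\chi_k^{mac}, \chi_k^{neck}, \chi_k^{bub}$ forming a partition of unity on $\Sigma$, with $\chi_k^{mac} \equiv 1$ on $\Sigma \setminus B_{2\eta}$, $\chi_k^{bub} \equiv 1$ on $B_{\delta_k/(2\eta)}$ and $\chi_k^{neck}$ supported in the neck $A(\eta/2, 2\delta_k)$, designed with logarithmic $\log$-cutoffs so that $\|\nabla \chi_k^\#\|_{L^2}^2 \lesssim 1/\log(\eta^2/\delta_k)$. Given $w \in W_k$, write $w = \chi_k^{mac}\, w + \chi_k^{neck}\, w + \chi_k^{bub}\, w$, project each piece onto $T_{u_k} S^n$ (which costs only a harmless error, since $\pi$-projections commute with limits outside the blowup), and estimate the cross terms in $Q_{u_k}$ produced by the Leibniz rule; these cross terms are controlled by Theorem \ref{THM: L2 en quant of na u} and the $L^{2,1}$-quantization together with standard Cauchy–Schwarz and the bound $(1+|\nabla u_k|^2)^{p_k/2-1}\to 1$ in $L^\infty$ from Corollary \ref{COROLLARY: Limit of na uk to pk minus 2 eq 1}.

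The heart of the proof, and the main obstacle, is to show that the neck contribution is asymptotically nonnegative, namely
\begin{equation*}
Q_{u_k}(\chi_k^{neck} w) \ge -o(1)\,\|w\|_{W^{1,2}}^{2}, \qquad \text{uniformly in } w \in W_k.
\end{equation*}
The first, $(p_k-2)$-weighted term in $Q_{u_k}$ is nonnegative so poses no obstruction. The obstruction sits in the term $-p_k \int (1+|\nabla u_k|^2)^{p_k/2-1} |\nabla u_k|^2 |\chi_k^{neck} w|^2$, which must be absorbed into $p_k \int (1+|\nabla u_k|^2)^{p_k/2-1} |\nabla (\chi_k^{neck} w)|^2$. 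The pointwise bound of Theorem \ref{THEOREM: on Pointwise Estimate of the Gradient in the Necks p harm} provides
\begin{equation*}
|\nabla u_k(x)|^2 \le \frac{1}{|x|^2}\Bigl\{\bigl[(|x|/\eta)^\beta + (\delta_k/(\eta|x|))^\beta\bigr] \boldsymbol{\epsilon}_{\eta,\delta_k} + \boldsymbol{\mathrm c}_{\eta,\delta_k}\Bigr\},
\end{equation*}
and coupling this with the weighted Hardy/Poincaré inequality on degenerating annuli of the form $\int |w|^2/(|x|^2 \log^2(\eta^2/\delta_k))\, dx \lesssim \int |\nabla w|^2\, dx$ for $w$ supported in $A(\eta,\delta_k)$ (cf. Appendix arguments in \cite{DGR22}), the condition $\boldsymbol{\mathrm c}_{\eta,\delta_k} \log^2(\eta^2/\delta_k) \to 0$ precisely gives the required absorption; the $\boldsymbol{\epsilon}_{\eta,\delta_k}$ piece is handled by the decaying powers $|x|^\beta$ and $(\delta_k/|x|)^\beta$, which kill the Hardy singularity.

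Once the neck piece is controlled, the decomposition implies that for $k$ large the map $w \mapsto (\chi_k^{mac} w, \chi_k^{bub} w(\delta_k\cdot))$ is, modulo a term of size $o(1)$, injective on $W_k$ (otherwise a nontrivial element would be essentially neck-supported, contradicting the previous step combined with $Q_{u_k}(w)\le 0$). Passing to subsequences, $\chi_k^{mac} w$ converges strongly in $W^{1,2}$ to some $w^\infty \in \Gamma(u_\infty^{-1} TS^n)$ on $\Sigma \setminus \{q\}$ and the rescaling $(\chi_k^{bub} w)(\delta_k\cdot)$ converges to some $\tilde w^\infty \in \Gamma(v_\infty^{-1} TS^n)$ on $\C$. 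Using Corollary \ref{COROLLARY: Limit of na uk to pk minus 2 eq 1}, the quadratic $Q_{u_k}$ (both terms) converges to the Dirichlet second variation $Q^{Dir}_{u_\infty}(w^\infty) + Q^{Dir}_{v_\infty}(\tilde w^\infty)$, and the limit pair $(w^\infty, \tilde w^\infty)$ satisfies $Q^{Dir} \le 0$. Since the injectivity above is preserved in the limit, we obtain an $(N+1)$-dimensional subspace of $\Gamma(u_\infty^{-1} TS^n) \oplus \Gamma(v_\infty^{-1} TS^n)$ on which the Dirichlet second variation is nonpositive; this contradicts $N = \operatorname{Ind}_E(u_\infty) + \operatorname{Null}_E(u_\infty) + \operatorname{Ind}_E(v_\infty) + \operatorname{Null}_E(v_\infty)$, finishing the proof.
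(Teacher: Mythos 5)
Your proposal correctly identifies the three central inputs — the pointwise gradient estimate of Theorem~\ref{THEOREM: on Pointwise Estimate of the Gradient in the Necks p harm}, the weighted Hardy/Poincar\'e inequality on degenerating annuli, and the role of the condition $\boldsymbol{\mathrm c}_{\eta,\delta_k}\log^2(\eta^2/\delta_k)\to0$ in killing the Hardy singularity in the neck — and the overall contradiction scheme is the right one. But you take a genuinely different route (a log-cutoff partition of unity on a nonpositive subspace $W_k$) from the paper, which instead reformulates $Q_{u_k}$ as $\langle \mathcal L_{\eta,k}w,w\rangle_{\omega_{\eta,k}}$ for a self-adjoint Jacobi operator in the \emph{weighted} inner product $\langle\cdot,\cdot\rangle_{\omega_{\eta,k}}$ (the weight built from the pointwise estimate), proves a spectral decomposition (Lemma~\ref{LEMMA: Spectral decomp of L}) and Sylvester's law of inertia (Lemma~\ref{LEMMA: Ind plus Null eq dim of eigenspaces}), and then works with an orthonormal family of eigenfunctions. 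This is not cosmetic: the spectral framework supplies exactly the compactness that your sketch glosses over.

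Concretely, there is a genuine gap in three places. First, you assert that $\chi^{mac}_kw$ converges \emph{strongly} in $W^{1,2}$; uniform $W^{1,2}$ bounds on an arbitrary nonpositive subspace only give weak convergence, and weak convergence to zero does not yield $\|\nabla(w_k-\check w_k)\|_{L^2}\to0$, which is essential to transfer $Q_{u_k}(w_k)\le0$ to the neck truncation $\check w_k$. The paper upgrades weak to strong local convergence via a priori $W^{2,2}_{loc}$ bounds (Claims~1 and~2 in Lemma~\ref{LEMMA: On the non zero of the lim functs}) obtained from elliptic regularity applied to the \emph{eigenvalue equation} $\mathcal L_{\eta,k}\phi^j_k=\lambda^j_k\phi^j_k$ — an equation that elements of an arbitrary $W_k$ do not satisfy. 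Second, your neck estimate $Q_{u_k}(\chi^{neck}_kw)\ge-o(1)\|w\|_{W^{1,2}}^2$ is too weak: the paper's Theorem~\ref{THM: JDNW892fejeujfwef32} gives the strictly positive lower bound $Q_{u_k}(w)\ge\overline\kappa\int|w|^2\omega_{\eta,k}$, and it is precisely the normalization $\int|w_k|^2\omega_{\eta,k}=1$ that makes this positivity produce a contradiction in Lemma~\ref{LEMMA: On the non zero of the lim functs}; asymptotic nonnegativity alone cannot rule out negative directions concentrating in the neck. Third, the claim that the cross terms from the Leibniz rule are ``controlled by the $L^2$ and $L^{2,1}$ quantization'' is mistaken: those cross terms live on fixed annuli such as $B_{2\eta}\setminus B_{\eta/2}$, where the quantization theorems provide no smallness, and ``injectivity modulo $o(1)$'' on a finite-dimensional family needs to be converted into actual linear independence of the limits (as the paper does via orthonormality in $\omega_{\eta,k}$ and the nontriviality Lemma~\ref{LEMMA: On the non zero of the lim functs}). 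In short, the weighted spectral machinery is not a stylistic choice but the device that makes the compactness, the normalization, and the linear-independence step rigorous; your sketch reproduces the shape of the argument but omits the mechanism.
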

We adapt the strategy introduced in \cite{DGR22}.
Let us briefly explain what this is. 
First, we show that the necks are not contributing to the negativity of the second variation. This we do by combining the pointwise control as in estimate \eqref{EQ: 283hfunv0lpdnu13ef6zh1} and a weighted Poincar\`e inequality (Lemma \ref{LEMMA: Lemma on neg in the necks Weighted Poincare}).
Second, we use Sylvester's law of inertia to change to a different measure incorporating the weights obtained in estimate \eqref{EQ: 283hfunv0lpdnu13ef6zh1}.
Finally, we apply spectral theory to the Jacobi operator of the second variation.
The result follows by combining these techniques.

\subsection{Positive contribution of the Necks}

In this section we prove that any variation supported in the neck region evaluates positively in the quadratic form.
More concrete:

\begin{theorem}
\label{THM: JDNW892fejeujfwef32}
For every $\be \in \left(0,\log_2 (3/2) \right)$ there exists some constant $\overline\kappa>0$ such that for $k\in \N$ large and $\eta >0$ small one has
\begin{equation}
\forall w \in V_{u_k} : \
(w=0 \text{ in } \Sigma \setminus A(\eta,\delta_k))
\Rightarrow
Q_{u_k}(w) \ge \overline\kappa \int_\Sigma \abs{w}^2 \omega_{\eta,k} \ dvol_h \ge 0,
\end{equation} 
where the weight function is given by
\begin{equation}
\omega_{\eta,k}=
\begin{cases}
\frac{1}{\abs{x}^2} \left[ \frac{\abs{x}^\beta}{\eta^\beta} + \frac{\delta_k^\beta}{\eta^\beta \abs{x}^\beta} + \frac{1}{\log^2\left(\frac{\eta^2}{\delta_k} \right)} \right] &\text{ if } x \in A(\eta,\delta_k), \\
\frac{1}{\eta^2} \left[1+  \frac{\delta_k^\beta}{\eta^{2\beta}} + \frac{1}{\log^2\left(\frac{\eta^2}{\delta_k} \right)} \right] &\text{ if } x \in \Sigma \setminus B_\eta, \\
\frac{\eta^2}{\delta_k^2} \left[\frac{(1+\eta^2)^2}{\eta^4\left(1+\delta_k^{-2} \abs{x}^2\right)^2} +  \frac{\delta_k^\beta}{\eta^{2\beta}} + \frac{1}{\log^2\left(\frac{\eta^2}{\delta_k} \right)} \right] &\text{ if } x \in B_{\delta_k/\eta}.
\end{cases}
\end{equation}
\end{theorem}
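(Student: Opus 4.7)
The plan is to reduce the positivity claim to a weighted Poincar\'e inequality by inserting the pointwise gradient estimate from Theorem \ref{THEOREM: on Pointwise Estimate of the Gradient in the Necks p harm} into $Q_{u_k}(w)$. First, I would discard the non-negative summand $p_k(p_k-2)\int (1+|\nabla u_k|^2)^{p_k/2-2}(\nabla u_k \cdot \nabla w)^2$ in $Q_{u_k}(w)$, and use Corollary \ref{COROLLARY: Limit of na uk to pk minus 2 eq 1} to estimate $1 \le (1+|\nabla u_k|^2)^{p_k/2-1} \le 1+o(1)$ uniformly on $\Sigma$. Applying the lower bound to the positive $|\nabla w|^2$ term and the upper bound to the negative $|\nabla u_k|^2 |w|^2$ term yields, for $k$ large,
\[
Q_{u_k}(w) \ge p_k \int_{A(\eta,\delta_k)} |\nabla w|^2 \, dvol_h \; - \; 2 p_k \int_{A(\eta,\delta_k)} |\nabla u_k|^2 |w|^2 \, dvol_h.
\]

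Next I would insert the pointwise bound from Theorem \ref{THEOREM: on Pointwise Estimate of the Gradient in the Necks p harm} into the negative integrand, obtaining
\[
\int |\nabla u_k|^2 |w|^2 \, dx \le \boldsymbol{\epsilon}_{\eta,\delta_k} \int \frac{|w|^2}{|x|^2}\left[\left(\frac{|x|}{\eta}\right)^{\beta} + \left(\frac{\delta_k}{\eta |x|}\right)^{\beta}\right] dx + \boldsymbol{\mathrm c}_{\eta,\delta_k} \log^2\!\left(\frac{\eta^2}{\delta_k}\right) \int \frac{|w|^2}{|x|^2 \log^2(\eta^2/\delta_k)}\, dx.
\]
The weighted Poincar\'e inequality of Lemma \ref{LEMMA: Lemma on neg in the necks Weighted Poincare}, whose weight $\omega_{\eta,k}$ on $A(\eta,\delta_k)$ is tailored precisely to dominate each of the three integrands on the right-hand side, gives the single bound
\[
\int_{A(\eta,\delta_k)} |w|^2 \omega_{\eta,k} \, dvol_h \le C \int_{A(\eta,\delta_k)} |\nabla w|^2 \, dvol_h.
\]
Since $\boldsymbol{\epsilon}_{\eta,\delta_k} \to 0$ and $\boldsymbol{\mathrm c}_{\eta,\delta_k} \log^2(\eta^2/\delta_k) \to 0$, the negative contribution becomes $o(1) \int |\nabla w|^2$, and for $k$ large and $\eta$ small it is absorbed into half of the positive term:
\[
Q_{u_k}(w) \ge \tfrac{p_k}{2} \int_{A(\eta,\delta_k)} |\nabla w|^2 \, dvol_h \ge \frac{p_k}{2C} \int_{A(\eta,\delta_k)} |w|^2 \omega_{\eta,k} \, dvol_h.
\]
Since $w \equiv 0$ outside $A(\eta,\delta_k)$, the last integral coincides with the one over $\Sigma$ in the statement, and the conclusion follows with $\overline\kappa = 1/C$ (valid once $p_k \ge 2$).

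The main obstacle I anticipate lies in the weighted Poincar\'e inequality itself: each of the three pieces of $\omega_{\eta,k}$ on $A(\eta,\delta_k)$ must be controlled by $\int|\nabla w|^2$ with a constant uniform in $\eta$ and $k$. Under the radial change of variables $|x|=e^{-t}$, which maps the annulus to a strip of length $\log(\eta^2/\delta_k)$, the $(|x|/\eta)^\beta$ and $(\delta_k/\eta|x|)^\beta$ weights reduce to exponentially-weighted one-sided Hardy inequalities on the strip, while the $\log^{-2}$ piece reduces to the standard Dirichlet--Poincar\'e inequality on the strip; the other ingredients (Theorem \ref{THEOREM: on Pointwise Estimate of the Gradient in the Necks p harm} and Corollary \ref{COROLLARY: Limit of na uk to pk minus 2 eq 1}) enter as black boxes, so the whole proof is essentially a careful bookkeeping of exponents matched against the calibrated weight $\omega_{\eta,k}$.
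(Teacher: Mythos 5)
Your proposal is correct and takes essentially the same approach as the paper: both discard the nonnegative $(p_k-2)$ summand, uniformly sandwich $(1+|\nabla u_k|^2)^{p_k/2-1}$ (you via Corollary \ref{COROLLARY: Limit of na uk to pk minus 2 eq 1}, the paper via Lemma \ref{LEMMA: Linfty bd on na u in the necks unif}), insert the pointwise gradient estimate of Theorem \ref{THEOREM: on Pointwise Estimate of the Gradient in the Necks p harm}, and close using the weighted Poincar\'e inequality of Lemma \ref{LEMMA: Lemma on neg in the necks Weighted Poincare}. The only cosmetic difference is that you first absorb the negative term into $o(1)\int|\nabla w|^2$ and then reapply Poincar\'e to recover $\int|w|^2\omega_{\eta,k}$, whereas the paper splits $\int|\nabla w|^2$ into three Poincar\'e contributions in a single step and groups coefficients.
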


\begin{proof}
Let $w \in V_{u_k}$ with $w=0 \text{ in } \Sigma \setminus A(\eta,\delta_k))$.
Let $c_0>0$ be as in Lemma \ref{LEMMA: Lemma on neg in the necks Weighted Poincare}.
Let $\boldsymbol{\epsilon}_{\eta, \delta_k}$ and $\boldsymbol{\mathrm c}_{\eta, \delta_k}$ be as in Theorem \ref{THEOREM: on Pointwise Estimate of the Gradient in the Necks p harm} and we may assume with Lemma \ref{LEMMA: Linfty bd on na u in the necks unif}  that for $k\in\N$ large and $\eta>0$ small
\begin{equation}
\label{EQ: iwunfeUNIUDW8912ehn912}
\begin{aligned}
\frac{c_0}{3} - \boldsymbol{\epsilon}_{\eta, \delta_k} 3(1+\norm{\nabla u_k}_{L^\infty(\Sigma)}^2)^{\frac{p_k}{2}-1}  
&\ge \frac{c_0}{4} \eqqcolon \overline\kappa \\
\frac{c_0}{3} -\boldsymbol{\mathrm c}_{\eta, \delta_k} \log^2\left(\frac{\eta^2}{\delta_k} \right) 3(1+\norm{\nabla u_k}_{L^\infty(\Sigma)}^2)^{\frac{p_k}{2}-1}
&\ge \overline\kappa.
\end{aligned}
\end{equation}
Using Definition \ref{DEFINITION: Morse index and Nullityof p-harmonic maps}, Theorem \ref{THEOREM: on Pointwise Estimate of the Gradient in the Necks p harm}, Lemma \ref{LEMMA: Lemma on neg in the necks Weighted Poincare} and \eqref{EQ: iwunfeUNIUDW8912ehn912} we find for large $k$
\begin{equation}
\begin{aligned}
& Q_{u_k}(w) \\
&\hspace{2mm} = \underbrace{p_k\ (p_k-2) \int_\Sigma \left( 1+\abs{\nabla u_k}^2\right)^{p_k/2-2} \left(\nabla u_k \cdot \nabla w \right)^2  \ dvol_\Si}_{\ge 0} \\
&\hspace{20mm}+ \underbrace{p_k}_{1 \le\abs{\cdot} \le 3} \int_\Sigma \underbrace{\left( 1+\abs{\nabla u_k}^2\right)^{p_k/2-1}}_{1 \le \abs{\cdot}\le (1+\norm{\nabla u_k}_{L^\infty(\Sigma)}^2)^{\frac{p_k}{2}-1}} \left[\abs{\nabla w}^2 - \abs{\nabla u_k}^2 \abs{w}^2 \right] \ dvol_\Si \\
&\hspace{2mm} \ge \int_\Sigma \abs{\nabla w}^2 - \abs{\nabla u_k}^2 \abs{w}^2 3(1+\norm{\nabla u_k}_{L^\infty(\Sigma)}^2)^{\frac{p_k}{2}-1} \  dvol_\Si \\
&\hspace{2mm}= \int_{A(\eta,\delta_k))} \abs{\nabla w}^2 -  \abs{ w}^2 \abs{\nabla u_k}^2 3(1+\norm{\nabla u_k}_{L^\infty(\Sigma)}^2)^{\frac{p_k}{2}-1} \ dx \\
&\hspace{2mm}\ge \int_{A(\eta,\delta_k))} \abs{\nabla w}^2 - \frac{\abs{ w}^2}{\abs{x}^2} \Bigg[ \left( \left( \frac{\abs{x}}{\et} \right)^\beta + \left( \frac{\de_k}{\et \abs{x}} \right)^\beta \right)  \boldsymbol{\epsilon}_{\eta, \delta_k} + \boldsymbol{\mathrm c}_{\eta, \delta_k} \Bigg] 3(1+\norm{\nabla u_k}_{L^\infty(\Sigma)}^2)^{\frac{p_k}{2}-1} dx \\
&\hspace{2mm}\ge \int_{A(\eta,\delta_k))} \frac{\abs{ w}^2}{\abs{x}^2} \Bigg[ \left( \left( \frac{\abs{x}}{\et} \right)^\beta + \left( \frac{\de_k}{\et \abs{x}} \right)^\beta \right) \underbrace{\left(\frac{c_0}{3} - \boldsymbol{\epsilon}_{\eta, \delta_k}3(1+\norm{\nabla u_k}_{L^\infty(\Sigma)}^2)^{\frac{p_k}{2}-1} \right)}_{\ge \overline\kappa} \\
 &\hspace{35mm}+ \underbrace{\left( \frac{c_0}{3} -\boldsymbol{\mathrm c}_{\eta, \delta_k} \log^2\left(\frac{\eta^2}{\delta_k} \right) 3(1+\norm{\nabla u_k}_{L^\infty(\Sigma)}^2)^{\frac{p_k}{2}-1} \right)}_{\ge \overline\kappa} \frac{1}{\log^2\left(\frac{\eta^2}{\delta_k} \right)} \Bigg] dx \\
&\hspace{2mm}\ge \overline\kappa \int_{A(\eta,\delta_k))} \frac{\abs{ w}^2}{\abs{x}^2} \Bigg[ \left( \frac{\abs{x}}{\et} \right)^\beta + \left( \frac{\de_k}{\et \abs{x}} \right)^\beta + \frac{1}{\log^2\left(\frac{\eta^2}{\delta_k} \right)} \Bigg] dx.
\end{aligned}
\end{equation}
\end{proof}

\subsection{The Diagonalization of $Q_{u_k}$ with respect to the Weights $\omega_{\eta,k}$}

Consider the inner product
\begin{equation}
\langle w,v \rangle_{\omega_{\eta,k}} \coloneqq \int_{\Sigma} w \cdot v \ \omega_{\eta,k} \ dvol_{\Sigma}.
\end{equation}
\noindent
Then we look for the self-adjoint operator with respect to $\langle \cdot,\cdot \rangle_{\omega_{\eta,k}}$ of the quadratic form $Q_{u_k}$ and compute integrating by parts
\begin{equation}
\begin{aligned}
Q_{u_k}(w) 
&= p_k\ (p_k-2) \int_\Sigma \left( 1+\abs{\nabla u_k}^2\right)^{p_k/2-2} \left(\nabla u_k \cdot \nabla w \right)^2  \ dvol_\Sigma \\
&\hspace{20mm}+p_k \int_\Sigma \left( 1+\abs{\nabla u_k}^2\right)^{p_k/2-1} \left[\abs{\nabla w}^2 - \abs{\nabla u_k}^2 \abs{w}^2 \right] \ dvol_\Sigma \\
&= \int_\Sigma \Bigg[ -p_k\ (p_k-2) \divergence \left( \left( 1+\abs{\nabla u_k}^2\right)^{p_k/2-2} \left(\nabla u_k \cdot \nabla w \right)\ \nabla u_k \right)  \Bigg] \cdot w \ dvol_\Sigma \\
&\hspace{20mm}+ \int_\Sigma \Bigg[ - p_k \divergence\left( \left( 1+\abs{\nabla u_k}^2\right)^{p_k/2-1} \nabla w \right) \Bigg] \cdot w \ dvol_\Sigma \\
&\hspace{40mm}+ \int_\Sigma \Bigg[- p_k \left( 1+\abs{\nabla u_k}^2\right)^{p_k/2-1} \abs{\nabla u_k}^2 w\Bigg] \cdot w \ dvol_\Sigma
\end{aligned}
\end{equation}
Let us introduce the operator
\begin{equation}
\begin{aligned}
\mathcal L_{\eta,k}(w) 
&\coloneqq \omega_{\eta,k}^{-1}\ P_{u_k} \Bigg[ \left(-p_k\ (p_k-2) \divergence \left( \left( 1+\abs{\nabla u_k}^2\right)^{p_k/2-2} \left(\nabla u_k \cdot \nabla w \right)\ \nabla u_k \right) \right) \\
&\hspace{10mm} - p_k \divergence\left( \left( 1+\abs{\nabla u_k}^2\right)^{p_k/2-1} \nabla w \right) - p_k \left( 1+\abs{\nabla u_k}^2\right)^{p_k/2-1} \abs{\nabla u_k}^2 w \Bigg],
\end{aligned}
\end{equation}
where $P_{u_k}(x): \R^{n+1}\rightarrow T_{u_k(x)} S^n$ is the orthogonal projection.
Then we have found the formula
\begin{equation}
\label{EQ: iwubfnIUBPN239fn321}
Q_{u_k}(w)= \langle \mathcal L_{\eta,k} w,w \rangle_{\omega_{\eta,k}}. 
\end{equation}
Note that by construction $\mathcal L_{\eta,k}$ is self-adjoint with respect to the inner product $\langle \cdot ,\cdot \rangle_{\omega_{\eta,k}}$, i.e.
\begin{equation}\label{EQ: wijenfuNIUD138en9qd}
\langle \mathcal L_{\eta,k} w,v \rangle_{\omega_{\eta,k}} = \langle w, \mathcal L_{\eta,k} v \rangle_{\omega_{\eta,k}} .
\end{equation}
Recall the definition of $V_{u_k}$ in \eqref{EQ: wigunUBNI8193hf23f} and consider also the larger space
\begin{equation}
U_{u_k} = \left\{ w \in L^{2}_{\omega_{\eta,k}}(\Sigma; \R^{n+1}) \forwhich w(x) \in T_{u_k(x)} S^n, \quad \text{for a.e. } x\in\Sigma \right\}.
\end{equation}

\begin{lemma}[Spectral Decomposition]
\label{LEMMA: Spectral decomp of L}
There exists a Hilbert basis of the space $(U_{u_k}, \langle\cdot,\cdot\rangle_{\omega_{\eta,k}})$ of eigenfunctions of the operator $\mathcal L_{\eta,k}$ and the eigenvalues of $\mathcal L_{\eta,k}$ satisfy 
\begin{equation}
\lambda_1<\lambda_2<\lambda_3\ldots \rightarrow\infty.
\end{equation}
Furthermore, one has the orthogonal decomposition 
\begin{equation}
U_{u_k} = \bigoplus_{\lambda \in \Lambda_{\eta,k}}\mathcal E_{\eta,k}(\lambda),
\end{equation}
where 
\begin{equation}
\mathcal E_{\eta,k}(\lambda) \coloneqq \{ w \in V_{u_k} \ ; \ \mathcal L_{\eta,k}(w)=\lambda w \},
\qquad \Lambda_{\eta,k} \coloneqq\big\{ \lambda\in \R \ ; \ \mathcal E_{\eta,k}(\lambda) \setminus\{0\}  \neq \emptyset \big\}
\end{equation}
\end{lemma}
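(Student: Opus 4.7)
The plan is to reduce the statement to the standard spectral theorem for compact self-adjoint operators on a Hilbert space. The starting observation is that for \emph{fixed} $\eta>0$ and $\delta_k>0$, the weight $\omega_{\eta,k}$ is bounded above and below by strictly positive constants on $\Sigma$ (the only potential singularities are at $x=0$, but near the origin we are in the regime $|x|\le \delta_k/\eta$ where the expression becomes comparable to $\delta_k^{-2}$ times a bounded factor). Consequently $U_{u_k}$, as a Hilbert space, is isomorphic as a set to the usual $L^2(\Sigma; \mathbb{R}^{n+1})$ intersected with the pointwise constraint $w(x)\in T_{u_k(x)}S^n$, with equivalent but rescaled norm.

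The first step is to record that $\mathcal L_{\eta,k}$ is a second order operator whose principal part is
\begin{equation*}
w\mapsto -\,p_k\,\omega_{\eta,k}^{-1}\,P_{u_k}\Bigl[\divergence\bigl((1+|\nabla u_k|^2)^{p_k/2-1}\nabla w\bigr)+(p_k-2)\divergence\bigl((1+|\nabla u_k|^2)^{p_k/2-2}(\nabla u_k\cdot\nabla w)\nabla u_k\bigr)\Bigr],
\end{equation*}
which is uniformly elliptic on $\Sigma$ since the coefficient $(1+|\nabla u_k|^2)^{p_k/2-1}$ is bounded above and below (using Lemma~\ref{LEMMA: Linfty bd on na u in the necks unif}, recalling $p_k>2$), and the extra term only adds a nonnegative contribution to the symbol. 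Self-adjointness of $\mathcal L_{\eta,k}$ on $U_{u_k}$ with respect to $\langle\cdot,\cdot\rangle_{\omega_{\eta,k}}$ has already been observed in \eqref{EQ: wijenfuNIUD138en9qd} and follows directly from the symmetric form of $Q_{u_k}$ and from \eqref{EQ: iwubfnIUBPN239fn321}.

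The second step is coercivity. From the definition of $Q_{u_k}$ and the uniform bound on $|\nabla u_k|$ at the level $k$ one gets, for a suitable constant $C_k>0$,
\begin{equation*}
Q_{u_k}(w)+C_k\,\langle w,w\rangle_{\omega_{\eta,k}}\ \ge\ c_k\int_\Sigma |\nabla w|^2\,dvol_h \qquad\forall\,w\in V_{u_k},
\end{equation*}
with $c_k>0$. Hence the shifted form $Q_{u_k}+C_k\langle\cdot,\cdot\rangle_{\omega_{\eta,k}}$ is an inner product on $V_{u_k}$ equivalent to the $W^{1,2}$-inner product, and the Lax–Milgram theorem produces a bounded inverse
\begin{equation*}
(\mathcal L_{\eta,k}+C_k\,\mathrm{Id})^{-1}\colon U_{u_k}\longrightarrow V_{u_k}.
\end{equation*}
Composing with the compact embedding $V_{u_k}\hookrightarrow U_{u_k}$ (Rellich–Kondrachov, using that $\omega_{\eta,k}$ is bounded and that the tangential constraint is preserved in the limit by continuity of $u_k$), one obtains a \emph{compact}, self-adjoint, positive operator $K_{\eta,k}\coloneqq(\mathcal L_{\eta,k}+C_k\mathrm{Id})^{-1}$ on the Hilbert space $(U_{u_k},\langle\cdot,\cdot\rangle_{\omega_{\eta,k}})$.

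The final step is routine: the spectral theorem for compact self-adjoint operators gives a Hilbert basis of $U_{u_k}$ of eigenfunctions of $K_{\eta,k}$, with eigenvalues $\mu_j>0$ accumulating only at $0$; transferring back yields eigenfunctions of $\mathcal L_{\eta,k}$ with eigenvalues $\lambda_j=\mu_j^{-1}-C_k\to+\infty$, and the orthogonal decomposition $U_{u_k}=\bigoplus_{\lambda\in\Lambda_{\eta,k}}\mathcal E_{\eta,k}(\lambda)$ follows. Elliptic regularity for the system shows the eigenfunctions are as smooth as the regularity of $u_k$ permits, so they indeed lie in $V_{u_k}$. The main (mild) obstacle is bookkeeping the tangential constraint $w\in u_k^{-1}TS^n$ throughout: one must check that both the Lax–Milgram argument and the spectral decomposition take place \emph{within} this closed subspace, which is ensured by the presence of $P_{u_k}$ in the definition of $\mathcal L_{\eta,k}$ and by the fact that $Q_{u_k}$ has been constructed precisely for variations tangent to $S^n$.
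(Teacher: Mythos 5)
Your argument follows the same strategy as the paper's proof: shift $\mathcal L_{\eta,k}$ by a large multiple of the identity to obtain a coercive bilinear form, invert the shifted operator from $U_{u_k}$ to $V_{u_k}$, compose with the compact Rellich–Kondrachov embedding $V_{u_k}\hookrightarrow U_{u_k}$, and apply the spectral theorem for compact self-adjoint operators (the paper cites Brezis, Theorem~6.11). The only cosmetic difference is that you invoke Lax–Milgram where the paper minimizes the quadratic functional $J_\lambda(w)=Q_{u_k}(w)+\int_\Sigma w\cdot(\lambda w-2f)\,\omega_{\eta,k}$; the paper also records the needed coercivity (its eq.\ gives $\int_\Sigma|\nabla w|^2 + \int_\Sigma|w|^2\omega_{\eta,k}\le\langle\mathcal J_\lambda w,w\rangle_{\omega_{\eta,k}}$, i.e.\ full $W^{1,2}$-control, which you should state explicitly rather than only the gradient term in your displayed inequality). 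Your supplementary remarks on uniform ellipticity and on the role of $P_{u_k}$ in preserving the tangential constraint are correct but not needed at this stage in the paper's proof, where the $W^{1,2}$ bound on solutions is built directly into the a priori estimate.
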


\begin{proof}
By the identity \eqref{EQ: wijenfuNIUD138en9qd} it is clear that $\mathcal L_{\eta,k}w$ has distributional meaning for every $w\in U_{u_k}$.
For $\lambda\in \R$ introduce
\begin{equation}
\mathcal J _\lambda \coloneqq \mathcal L_{\eta,k}+\lambda\ id
\end{equation}
\textbf{Claim 1:}
There exists some $\lambda_0>0$ (which may depend on $u_k$, $\eta$ or $\delta_k$) such that for any $\lambda\ge \lambda_0$ there holds
\begin{equation}
\begin{cases}
\displaystyle \forall f \in U_{u_k}:\ \exists! w \in V_{u_k}:\ \mathcal J _\lambda w=f, \\[2mm]
\displaystyle \int_{\Sigma} \left( \abs{\nabla w}^2 + \abs{w}^2 \omega_{\eta,k} \right) dvol_{\Sigma} \le 4 \norm{f}_{L^2_{\omega_{\eta,k}}(\Sigma)}^2. 
\end{cases}
\end{equation}
\textbf{Proof of Claim 1}
We start by establishing a-priori estimates.
Multiply the equation $\mathcal J _\lambda w=f$ by $w$ and integrating
\begin{equation}
\label{EQ: IBNNinwf29nUN13e}
\langle \mathcal J_{\lambda} w,w \rangle_{\omega_{\eta,k}}
= \langle\mathcal L_{\eta,k} w,w \rangle_{\omega_{\eta,k}} + \lambda \langle w,w\rangle_{\omega_{\eta,k}} 
= \langle f,w\rangle_{\omega_{\eta,k}}
\le \norm{f}_{L^2_{\omega_{\eta,k}}(\Sigma)} \norm{w}_{L^2_{\omega_{\eta,k}}(\Sigma)}.
\end{equation}
Now
\begin{equation}
\label{EQ: inwernIJNUEN203n233dfSW}
\begin{aligned}
\langle \mathcal J_{\lambda} w,w \rangle_{\omega_{\eta,k}}
&=Q_{u_k}(w) + \lambda \langle w,w\rangle_{\omega_{\eta,k}} \\
&= \underbrace{p_k\ (p_k-2) \int_\Sigma \left( 1+\abs{\nabla u_k}^2\right)^{p_k/2-2} \left(\nabla u_k \cdot \nabla w \right)^2  \ dvol_\Sigma}_{\ge 0} + \lambda \langle w,w\rangle_{\omega_{\eta,k}} \\
&\hspace{20mm}+\underbrace{p_k}_{1 \le\abs{\cdot} \le 3} \int_\Sigma \underbrace{\left( 1+\abs{\nabla u_k}^2\right)^{p_k/2-1}}_{1 \le \abs{\cdot}\le (1+\norm{\nabla u_k}_{L^\infty(\Sigma)}^2)^{\frac{p_k}{2}-1}} \left[\abs{\nabla w}^2 - \abs{\nabla u_k}^2 \abs{w}^2 \right] \ dvol_\Sigma  \\
& \ge \int_\Sigma \abs{\nabla w}^2 dvol_\Sigma + \int_\Sigma \abs{w}^2 \left(\lambda_0\ \omega_{\eta,k} - \norm{\nabla u_k}_{L^\infty(\Sigma)} 3(1+\norm{\nabla u_k}_{L^\infty(\Sigma)}^2)^{\frac{p_k}{2}-1} \right)  \ dvol_\Sigma
\end{aligned}
\end{equation}
Note that
\begin{equation}
\label{EQ: uifENNUIFENU2I389RH321}
\inf_{\Sigma} \omega_{\eta,k} >0,
\end{equation}
where the infimum may depend on $\eta$ and $\delta_k$.
So if we choose
\begin{equation}
\lambda_0 \coloneqq \frac{\norm{\nabla u_k}_{L^\infty(\Sigma)} 3(1+\norm{\nabla u_k}_{L^\infty(\Sigma)}^2)^{\frac{p_k}{2}-1}}{\inf_{\Sigma} \omega_{\eta,k}}+1>0
\end{equation}
we find with \eqref{EQ: IBNNinwf29nUN13e} and \eqref{EQ: inwernIJNUEN203n233dfSW} that
\begin{equation}
\label{EQ: weoufinINDW10rj3e1jWD}
\int_\Sigma \abs{\nabla w}^2 dvol_\Sigma + \int_\Sigma \abs{w}^2  \omega_{\eta,k}\ dvol_\Sigma
\le \langle \mathcal J_{\lambda} w,w \rangle_{\omega_{\eta,k}}
\le \norm{f}_{L^2_{\omega_{\eta,k}}(\Sigma)} \norm{w}_{L^2_{\omega_{\eta,k}}(\Sigma)}
\end{equation}
Using the  inequality $\norm{f}_{L^2_{\omega_{\eta,k}}(\Sigma)} \norm{w}_{L^2_{\omega_{\eta,k}}(\Sigma)} \le 2 \norm{f}_{L^2_{\omega_{\eta,k}}(\Sigma)}^2 + \frac{1}{2} \norm{w}_{L^2_{\omega_{\eta,k}}(\Sigma)}^2$ there follows
\begin{equation}
\int_{\Sigma} \left( \abs{\nabla w}^2 + \abs{w}^2 \omega_{\eta,k} \right) dvol_{\Sigma} \le 4 \norm{f}_{L^2_{\omega_{\eta,k}}(\Sigma)}^2.
\end{equation}
For the existence consider the functional
\begin{equation}
J_\lambda(w) \coloneqq Q_{u_k}(w)+ \int_{\Sigma}w \cdot (\lambda\ w -2f)\ \omega_{\eta,k} \ dvol_{\Sigma}
\end{equation}
and bound with \eqref{EQ: weoufinINDW10rj3e1jWD}
\begin{equation}
\begin{aligned}
J_\lambda(w) 
&= \langle \mathcal J_{\lambda} w,w \rangle_{\omega_{\eta,k}} -2 \int_\Sigma w \cdot f \ \omega_{\eta,k} \ dvol_{\Sigma} \\
&\ge \int_\Sigma \abs{\nabla w}^2 dvol_\Sigma + \int_\Sigma \abs{w}^2  \omega_{\eta,k}\ dvol_\Sigma -2 \int_\Sigma w \cdot f \ \omega_{\eta,k} \ dvol_{\Sigma} \\
&\ge \int_\Sigma \abs{\nabla w}^2 dvol_\Sigma + \frac{1}{2} \int_\Sigma \abs{w}^2  \omega_{\eta,k}\ dvol_\Sigma -2 \int_\Sigma \abs{f}^2  \omega_{\eta,k} \ dvol_{\Sigma},
\end{aligned}
\end{equation}
where in the last line we used that
\begin{equation}
2 \int_\Sigma w \cdot f \ \omega_{\eta,k} \ dvol_{\Sigma}
\le \frac{1}{2}  \int_\Sigma \abs{w}^2  \omega_{\eta,k} \ dvol_{\Sigma}
+ 2 \int_\Sigma \abs{f}^2  \omega_{\eta,k} \ dvol_{\Sigma}.
\end{equation}
Existence and uniqueness of minimizers $w$ to the functional $J_\lambda$ follows with classical Euler-Lagrange variational theory as in \cite{DGR22} appendix section B.
This completes the proof of Claim 1.
Rellich-Kondrachov together with Claim 1 gives that $\mathcal J_{\lambda}: U_{u_k} \rightarrow U_{u_k}$ is a compact linear operator on the separable Hilbert space $(U_{u_k}, \langle\cdot,\cdot\rangle_{\omega_{\eta,k}})$.
The spectral decomposition follows from theorem 6.11 in \cite{B11}.
\end{proof}

\begin{lemma}[Sylvester Law of Inertia]
\label{LEMMA: Ind plus Null eq dim of eigenspaces}
\begin{equation}
\operatorname{Ind}(u_k) + \operatorname{Null}(u_k) = \operatorname{dim}\left( \bigoplus_{\lambda\le 0} \mathcal E_{\eta,k}(\lambda) \right)
\end{equation}
\end{lemma}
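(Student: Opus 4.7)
The plan is to exploit the spectral decomposition of $\mathcal L_{\eta,k}$ from Lemma \ref{LEMMA: Spectral decomp of L} and reduce the statement to a standard diagonalization argument for the quadratic form $Q_{u_k}$. First I would set
\[
E_- \coloneqq \bigoplus_{\lambda<0}\mathcal E_{\eta,k}(\lambda), \qquad E_0\coloneqq\mathcal E_{\eta,k}(0), \qquad E_+\coloneqq\bigoplus_{\lambda>0}\mathcal E_{\eta,k}(\lambda),
\]
so that $U_{u_k}=E_-\oplus E_0\oplus E_+$ is an orthogonal decomposition with respect to $\langle\cdot,\cdot\rangle_{\omega_{\eta,k}}$. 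Using \eqref{EQ: iwubfnIUBPN239fn321} together with the self-adjointness of $\mathcal L_{\eta,k}$ and orthogonality of distinct eigenspaces, the quadratic form $Q_{u_k}$ diagonalizes: for $w=w_-+w_0+w_+$ one has $Q_{u_k}(w)=\sum_{\lambda\in\Lambda_{\eta,k}}\lambda\,\|w_\lambda\|_{\omega_{\eta,k}}^2$, in particular negative definite on $E_-$, identically zero on $E_0$, and positive definite on $E_+$.

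Next I would identify the nullity. The bilinear form polarizing $Q_{u_k}$ is $B(w,v)=\langle\mathcal L_{\eta,k}w,v\rangle_{\omega_{\eta,k}}$, so its kernel is exactly $\ker \mathcal L_{\eta,k}=E_0$, giving $\operatorname{Null}_{E_{p_k}}(u_k)=\dim E_0$. Then for the Morse index I would prove $\operatorname{Ind}_{E_{p_k}}(u_k)=\dim E_-$ by two inclusions. The lower bound is immediate since $Q_{u_k}$ is negative definite on the subspace $E_-\subset V_{u_k}$ (elliptic regularity applied to the eigenvalue equation $\mathcal L_{\eta,k}w=\lambda w$ places each eigenfunction inside $V_{u_k}$). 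For the upper bound, let $W\subset V_{u_k}$ be any subspace on which $Q_{u_k}|_{W\setminus\{0\}}<0$, and consider the $\langle\cdot,\cdot\rangle_{\omega_{\eta,k}}$-orthogonal projection $\pi_-\colon U_{u_k}\to E_-$. If $\pi_-|_W$ failed to be injective, there would exist $w\in W\setminus\{0\}$ with $w=w_0+w_+\in E_0\oplus E_+$, yielding $Q_{u_k}(w)=\sum_{\lambda\ge 0}\lambda\|w_\lambda\|_{\omega_{\eta,k}}^2\ge 0$, a contradiction. Hence $\dim W\le\dim E_-$.

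Combining the two identities gives
\[
\operatorname{Ind}_{E_{p_k}}(u_k)+\operatorname{Null}_{E_{p_k}}(u_k)=\dim E_-+\dim E_0=\dim\!\bigoplus_{\lambda\le 0}\mathcal E_{\eta,k}(\lambda),
\]
as claimed. The only genuine subtlety—and the step I would devote most care to—is checking that the eigenfunctions of $\mathcal L_{\eta,k}$ actually live in $V_{u_k}$ rather than merely in the weighted space $U_{u_k}$, so that they are admissible test functions for the definition of $\operatorname{Ind}_{E_{p_k}}$; this follows from elliptic regularity applied to the equation $\mathcal L_{\eta,k}w=\lambda w$ (noting that the weight $\omega_{\eta,k}$ is bounded below away from zero on $\Sigma$, as already used in \eqref{EQ: uifENNUIFENU2I389RH321}). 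Apart from that, the argument is the standard Sylvester inertia reduction.
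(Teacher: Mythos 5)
Your proof is correct and follows essentially the same route as the paper's: both identify the nullity with $\dim\mathcal E_{\eta,k}(0)$ via the formula $Q_{u_k}(w)=\langle\mathcal L_{\eta,k}w,w\rangle_{\omega_{\eta,k}}$, and both establish $\operatorname{Ind}(u_k)=\dim\bigoplus_{\lambda<0}\mathcal E_{\eta,k}(\lambda)$ by a two-sided inclusion using the spectral decomposition of Lemma \ref{LEMMA: Spectral decomp of L}. The one place you are actually a bit more careful than the paper is the upper bound: the paper deduces from $W\cap\bigoplus_{\lambda\geq 0}\mathcal E_{\eta,k}(\lambda)=\{0\}$ that $W\subset\bigoplus_{\lambda<0}\mathcal E_{\eta,k}(\lambda)$, which as stated is slightly too strong (a trivial intersection only gives a dimension bound, not a set inclusion), whereas your injectivity-of-the-projection argument proves exactly the dimension inequality $\dim W\leq\dim E_-$ that is actually needed — so your version is the correct way to close that step.
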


\begin{proof}
By \eqref{EQ: iwubfnIUBPN239fn321} it is clear that $\operatorname{Null}(u_k) = \operatorname{dim} \left(\mathcal E_{\eta,k}(0) \right)$. 
In the following we show the equality $\operatorname{Ind}(u_k) = \operatorname{dim}\left( \oplus_{\lambda< 0} \mathcal E_{\eta,k}(\lambda) \right)$
\\
"$\ge$": Let $W\coloneqq \bigoplus_{\lambda<0} \mathcal E_{\eta,k}(\lambda)$.
Then $W \subset V_{u_k}$ and clearly $Q_{u_k}|_{W}<0$. Hence, $\operatorname{Ind}(u_k)\ge \operatorname{dim}(W)$.
"$\le$": 
Let $W \subset V_{u_k}$ be such that $Q_{u_k}|_{W}<0$. Then by Lemma \ref{LEMMA: Spectral decomp of L}
\begin{equation}
 V_{u_k}= \bigoplus_{\lambda \in \Lambda_{\eta,k}} \mathcal E_{\eta,k}(\lambda).
\end{equation}
Clearly,
\begin{equation}
W\cap\bigoplus_{\lambda\ge 0} \mathcal E_{\eta,k}(\lambda)= \{0\}.
\end{equation}
From the two equations above there follows $W \subset \bigoplus_{\lambda< 0} \mathcal E_{\eta,k}(\lambda)$. This shows the desired inequality.
\end{proof}

Set
\begin{equation}
\mu_{\eta,k}\coloneqq \norm{\frac{\abs{\nabla u_k}^2}{\omega_{\eta,k}}}_{L^\infty(\Sigma)}.
\end{equation}
Then one has

\begin{lemma}
\label{LEMMA: prop on the seq mu eta k}
\begin{equation}
\begin{aligned}
&\hspace{-6mm} \exists \eta_0>0: \ \exists k_0>0: \ \exists C>0: \\
&i)\ \mu_0\coloneqq \sup_{\eta\in(0,\eta_0)} \sup_{k\ge k_0} \mu_{\eta,k}< \infty,  \\
&ii)\ \lim_{\eta \searrow0} \limsup_{k \rightarrow \infty} \mu_{\eta,k}=0, \\
&iii)\ \forall\eta \in(0,\eta_0): \forall k \ge k_0: \inf \Lambda_{\eta,k} \ge -C\ \mu_{\eta,k}\ge-C\ \mu_0.
\end{aligned}
\end{equation}
\end{lemma}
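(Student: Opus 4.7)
The plan is to handle (i) and (ii) together by pointwise control of the ratio $\abs{\nabla u_k}^2/\omega_{\eta,k}$ on each of the three regions appearing in the definition of $\omega_{\eta,k}$, and to deduce (iii) from the Rayleigh-quotient characterization of $\inf\Lambda_{\eta,k}$ furnished by Lemma~\ref{LEMMA: Spectral decomp of L}.

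On the neck $A(\eta,\delta_k)$ I apply Theorem~\ref{THEOREM: on Pointwise Estimate of the Gradient in the Necks p harm} directly, dividing estimate \eqref{EQ: 283hfunv0lpdnu13ef6zh1} by $\omega_{\eta,k}$. The three summands of $\omega_{\eta,k}$ are tailored to match term by term the right-hand side, so the ratio is bounded above by $\max(\boldsymbol{\epsilon}_{\eta,\delta_k},\,\boldsymbol{\mathrm c}_{\eta,\delta_k}\log^2(\eta^2/\delta_k))$; this is uniformly bounded for $\eta$ small and $k$ large, and tends to $0$ in the iterated limit by the decay properties of $\boldsymbol{\epsilon}_{\eta,\delta_k}$ and $\boldsymbol{\mathrm c}_{\eta,\delta_k}$ stated in Theorem~\ref{THEOREM: on Pointwise Estimate of the Gradient in the Necks p harm}. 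On $\Sigma\setminus B_\eta$, the $C^\infty_{\mathrm{loc}}$-convergence $u_k\to u_\infty$ from \eqref{EQ: Cond of bubb conv ji0wr931jJGA} combined with smoothness of $u_\infty$ gives $\norm{\nabla u_k}_{L^\infty(\Sigma\setminus B_\eta)}\le C$ for $k$ large, while $\omega_{\eta,k}\ge \eta^{-2}$ there, so $\abs{\nabla u_k}^2/\omega_{\eta,k}\le C\eta^2$.

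On $B_{\delta_k/\eta}$ I change variables $z=x/\delta_k$, so that $\abs{\nabla u_k(x)}^2=\delta_k^{-2}\abs{\nabla v_k(z)}^2$ while the leading summand of $\omega_{\eta,k}$ rewrites as $(1+\eta^2)^2\bigl(\delta_k^2\eta^2(1+\abs{z}^2)^2\bigr)^{-1}$. The ratio thus reduces to controlling $\eta^2(1+\abs{z}^2)^2\abs{\nabla v_k(z)}^2$ for $\abs{z}\le 1/\eta$. For the limiting bubble $v_\infty$, Sacks--Uhlenbeck removability yields a smooth extension $S^2\to S^n$, from which $(1+\abs{z}^2)^2\abs{\nabla v_\infty(z)}^2\le C$ globally on $\C$; this bound transfers to $v_k$ for $k$ large via the $C^\infty_{\mathrm{loc}}$-convergence $v_k\to v_\infty$ on a slightly larger ball, combined with the $\varepsilon$-regularity of Lemma~\ref{LEMMA: epsilon reg sacks uhlenbeck for p harm} for $\abs{z}$ close to $1/\eta$, exactly along the lines of the proof of Lemma~\ref{LEMMA: Linfty bd on na u in the necks unif}. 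Assembling the three regional bounds gives (i), and letting $\eta\searrow 0$ and then $k\to\infty$ gives (ii). This bubble-region estimate is the single technical point that requires real care; the other two regions reduce to direct comparison of explicit expressions.

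For (iii), Lemma~\ref{LEMMA: Spectral decomp of L} identifies $\inf\Lambda_{\eta,k}$ with the infimum of the Rayleigh quotient $Q_{u_k}(w)/\langle w,w\rangle_{\omega_{\eta,k}}$. Dropping the non-negative $p_k(p_k-2)$-term and the non-negative $\abs{\nabla w}^2$-contribution in $Q_{u_k}(w)$, and bounding $(1+\abs{\nabla u_k}^2)^{p_k/2-1}\le C$ by Lemma~\ref{LEMMA: Linfty bd on na u in the necks unif}, I obtain
\[
Q_{u_k}(w) \ge -C\int_\Sigma \abs{\nabla u_k}^2\abs{w}^2\,dvol_\Sigma \ge -C\,\mu_{\eta,k}\int_\Sigma \abs{w}^2\omega_{\eta,k}\,dvol_\Sigma,
\]
whence $\inf\Lambda_{\eta,k}\ge -C\mu_{\eta,k}$, and $\mu_{\eta,k}\le\mu_0$ from (i) closes the last inequality in (iii).
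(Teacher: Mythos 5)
Your proof is correct and follows essentially the same route as the paper: the same three-region decomposition of $\abs{\nabla u_k}^2/\omega_{\eta,k}$ (neck via Theorem \ref{THEOREM: on Pointwise Estimate of the Gradient in the Necks p harm}, outer region via $C^\infty_{loc}$ convergence to the smooth $u_\infty$, bubble region via rescaling and the decay $(1+\abs{z}^2)^2\abs{\nabla v_\infty}^2\le C$), and the same Rayleigh-quotient lower bound for (iii) obtained by discarding the non-negative terms in $Q_{u_k}$ and using Lemma \ref{LEMMA: Linfty bd on na u in the necks unif}. The only cosmetic difference is that you invoke $\varepsilon$-regularity near $\abs{z}=1/\eta$ in the bubble region, which is harmless but unnecessary since for fixed $\eta$ the ball $B_{1/\eta}$ is a fixed compact set and $C^\infty_{loc}$ convergence of $v_k\to v_\infty$ already suffices there.
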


\begin{proof}
\textit{i) \& ii):}
We decompose
\begin{equation}
\label{EQ: 2iufnujfnuiwseUJN}
\mu_{\eta,k} \le \norm{\frac{\abs{\nabla u_k}^2}{\omega_{\eta,k}}}_{L^\infty(\Sigma\setminus B_{\eta} )} + \norm{\frac{\abs{\nabla u_k}^2}{\omega_{\eta,k}}}_{L^\infty(B_\eta\setminus B_{\delta_k/\eta} )} + \norm{\frac{\abs{\nabla u_k}^2}{\omega_{\eta,k}}}_{L^\infty(B_{\delta_k/\eta} ))}
\end{equation}
 Note that by Theorem \ref{THEOREM: on Pointwise Estimate of the Gradient in the Necks p harm} we have
\begin{equation}
\label{EQ: i29jtgjijsgrisdfNJD}
\norm{\frac{\abs{\nabla u_k}^2}{\omega_{\eta,k}}}_{L^\infty(B_\eta\setminus B_{\delta_k/\eta} )} \leq \boldsymbol{\epsilon}_{\eta, \delta_k} + \boldsymbol{\mathrm c}_{\eta, \delta_k} \log^2\left(\frac{\eta^2}{\delta_k}\right)\longrightarrow0,
\end{equation}
as $k\rightarrow\infty,\eta\searrow0$.
Furthermore
\begin{equation}
\label{EQ: 93j4ugj3uNJUSDJU}
\norm{\frac{\abs{\nabla u_k}^2}{\omega_{\eta,k}}}_{L^\infty(\Sigma\setminus B_{\eta} )}
=\norm{\frac{\abs{\nabla u_k}^2}{\eta^{-2}\left(1+\frac{\delta_k^\beta}{\eta^{2\beta}}+ \frac{1}{\log^2\left(\frac{\eta^2}{\delta^k} \right)}\right)}}_{L^\infty(\Sigma\setminus B_{\eta} )}
\le \eta^2 \norm{\nabla u_k}_{L^\infty(\Sigma\setminus B_{\eta} )}
\end{equation}
Note that
\begin{equation}
\label{EQ: oi2fj2gfonJNJDW}
\lim_{\eta \searrow0} \limsup_{k \rightarrow \infty}\
\eta^2 \norm{\nabla u_k}_{L^\infty(\Sigma\setminus B_{\eta} )}
= \lim_{\eta \searrow0}\
\eta^2 \norm{\nabla u_\infty}_{L^\infty(\Sigma\setminus B_{\eta} )}=0.
\end{equation}
Recall that due to the point removability theorem and the stereographic projection one has that
\begin{equation}
\label{EQ: uiwpgjnpi324gnu2f3njJUNDW}
\abs{\nabla v_\infty (y)}^2 \le C\frac{1}{\left(1+\abs{y}^2\right)^2}.
\end{equation}
For $x \in B_{\delta_k/\eta}$ we estimate
\begin{equation}
\label{EQ: NJUWij2f2ifjm2f12}
\begin{aligned}
\frac{\abs{\nabla u_k(x)}^2}{\omega_{\eta,k}(x)}
&= \frac{\abs{\nabla u_k(x)}^2}{\frac{\eta^2}{\delta_k^2} \left[\frac{(1+\eta^2)^2}{\eta^4\left(1+\delta_k^{-2} \abs{x}^2\right)^2} +  \frac{\delta_k^\beta}{\eta^{2\beta}}+ \frac{1}{\log^2\left(\frac{\eta^2}{\delta^k} \right)} \right] } \\
&\le \frac{\abs{\nabla u_k(x)}^2}{\frac{\eta^2}{\delta_k^2} \left[\frac{(1+\eta^2)^2}{\eta^4\left(1+\delta_k^{-2} \abs{x}^2\right)^2} \right] } \\
&\le \frac{\eta^2}{(1+\eta^2)^2} \delta_k^2 \abs{\nabla u_k(x)}^2 \left(1+\delta_k^{-2} \abs{x}^2\right)^2 \\
&= \frac{\eta^2}{(1+\eta^2)^2} \norm{\abs{\nabla v_k(y)}^2 \left(1+ \abs{y}^2\right)^2}_{L^\infty(B_{1/\eta})}.
\end{aligned}
\end{equation}
Note that due to uniform convergence 
\begin{equation}
\limsup_{k \rightarrow \infty} \norm{\abs{\nabla v_k(y)}^2 \left(1+ \abs{y}^2\right)^2}_{L^\infty(B_{1/\eta})}
= \norm{\abs{\nabla v_\infty(y)}^2 \left(1+ \abs{y}^2\right)^2}_{L^\infty(B_{1/\eta})} \le C,
\end{equation}
where we used \eqref{EQ: uiwpgjnpi324gnu2f3njJUNDW} in the last inequality. 
Going back to \eqref{EQ: NJUWij2f2ifjm2f12} this allows to finally get
\begin{equation}
\label{EQ: 2j3mi2g4njnJNJDW}
\lim_{\eta\searrow0}\limsup_{k \rightarrow \infty}\norm{\frac{\abs{\nabla u_k}^2}{\omega_{\eta,k}}}_{L^\infty(B_{\delta_k/\eta} ))}
\le C\lim_{\eta\searrow0}\frac{\eta^2}{(1+\eta^2)^2}=0.
\end{equation}
Going back to \eqref{EQ: 2iufnujfnuiwseUJN} and combining \eqref{EQ: i29jtgjijsgrisdfNJD}, \eqref{EQ: 93j4ugj3uNJUSDJU}, \eqref{EQ: oi2fj2gfonJNJDW}, \eqref{EQ: 2j3mi2g4njnJNJDW} we conclude \textit{i)} and \textit{ii)}.

\noindent
\textit{iii):} 
Let $\lambda \in \Lambda_{\eta,k}$.
Then there exists an eigenvector $0\ne w \in V_{u_k}$ of $\mathcal L_{\eta,k}$ corresponding to the eigenvalue $\lambda$, i.e. $\mathcal L_{\eta,k}(w)=\lambda w$.
We get
\begin{equation}
\begin{aligned}
\lambda \langle w,w \rangle_{\omega_{\eta,k}}
&=  \langle \mathcal L_{\eta,k}(w),w \rangle_{\omega_{\eta,k}} =Q_{u_k}(w) \\
&= \underbrace{p_k\ (p_k-2) \int_\Sigma \left( 1+\abs{\nabla u_k}^2\right)^{p_k/2-2} \left(\nabla u_k \cdot \nabla w \right)^2  \ dvol_\Sigma}_{\ge 0}  \\
&\hspace{20mm}+p_k \int_\Sigma \left( 1+\abs{\nabla u_k}^2\right)^{p_k/2-1} \left[\abs{\nabla w}^2 - \abs{\nabla u_k}^2 \abs{w}^2 \right] \ dvol_\Sigma \\
&\ge  -\int_\Sigma p_k\left( 1+\abs{\nabla u_k}^2\right)^{p_k/2-1} \abs{\nabla u_k}^2 \abs{w}^2 \ dvol_\Sigma
\end{aligned}
\end{equation}
By Lemma \ref{LEMMA: Linfty bd on na u in the necks unif} one has that for large $k$ one has
\begin{equation}
p_k\left( 1+\abs{\nabla u_k}^2\right)^{p_k/2-1}  \le  C
\end{equation}
and hence
\begin{equation}
\begin{aligned}
\lambda \langle w,w \rangle_{\omega_{\eta,k}} 
&\ge -C \int_\Sigma   \abs{\nabla u_k}^2 \abs{w}^2 \ dvol_\Sigma \\
&\ge -C\ \mu_{\eta,k} \int_{\Sigma} \abs{w}^2 \omega_{\eta,k} \ dvol_{\Sigma} \\
&= - C\ \mu_{\eta,k} \langle w,w \rangle_{\omega_{\eta,k}}.
\end{aligned}
\end{equation}
This completes the proof of the lemma.
\end{proof}

In the following we focus on the limiting maps $u_\infty:\Sigma\rightarrow S^n$ and $v_\infty: \C \rightarrow S^n$ as appearing in Definition \ref{Definition: Bubble tree convergence of p harm map with one bubble}.
We proceed analogous to \cite{DGR22}.
We compute for $w\in V_{u_\infty}$ integrating by parts
\begin{equation}
\begin{aligned}
Q_{u_\infty}(w) 
&= 2 \int_{\Sigma} \abs{\nabla w}^2 - \abs{\nabla u_\infty}^2 \abs{w}^2 dvol_\Sigma \\
&=2 \int_{\Sigma} \left( -\Delta w - w \abs{\nabla u_\infty}^2 \right) \cdot w\ dvol_\Sigma.
\end{aligned}
\end{equation}
Note that for any fixed $\eta>0$ we have the pointwise limit
\begin{equation}
\omega_{\eta,k}(x) \rightarrow \omega_{\eta,\infty}(x) \coloneqq \begin{cases}
\frac{1}{\eta^2}, &\text{ if } x\in \Sigma \setminus B_{\eta} \\
\frac{1}{\eta^\beta \abs{x}^{2-\beta}}, &\text{ if } x\in B_{\eta}
\end{cases}, \qquad \text{ as } k \rightarrow \infty.
\end{equation}
We introduce 
\begin{equation}
\mathcal L_{\eta,\infty}: V_{u_{\infty}} \rightarrow V_{u_{\infty}};
\quad\mathcal L_{\eta,\infty}(w) \coloneqq 2\ P_{u_\infty} \left(\omega_{\eta,\infty}^{-1}(- \Delta w  -w \abs{\nabla u_\infty}^2) \right), 
\end{equation}
such that
\begin{equation}
Q_{u_\infty}(w)= \langle \mathcal L_{\eta,\infty} w, w \rangle_{\omega_{\eta,\infty}},
\end{equation}
where we used 
\begin{equation}
\langle w,v \rangle_{\omega_{\eta,\infty}} \coloneqq \int_{\Sigma} w \cdot v \ \omega_{\eta,\infty} \ dvol_{\Sigma}.
\end{equation}
As above a simple integration by parts shows that
\begin{equation}
\begin{aligned}
Q_{v_\infty}(w)
&= 2\int_\C \left(- \Delta w  -w \abs{\nabla v_\infty}^2 \right) \cdot w  \ dz,
\end{aligned}
\end{equation}
Let $v_k(z) \coloneqq u_k(\delta_k z)$ as in Definition \ref{Definition: Bubble tree convergence of p harm map with one bubble}.
With a change of variables
\begin{equation}
\int_{B_{\eta}} \abs{\nabla u_k(x)}^2 \omega_{\eta,k}(x) \ dx
= \int_{B_{\frac{\eta}{\delta_k}}} \abs{\nabla v_k(z)}^2 \delta_k^2 \ \omega_{\eta,k}(\delta_k z) \ dz
\end{equation}
motivating the definition of
\begin{equation}
\widehat\omega_{\eta,k}(z) \coloneqq \delta_k^2 \ \omega_{\eta,k}(\delta_k z), \qquad z \in B_{\frac{\eta}{\delta_k}}.
\end{equation}
One has the pointwise limit
\begin{equation}
\widehat \omega_{\eta,k}(z) = \delta_k^2 \ \omega_{\eta,k}(\delta_k z) \rightarrow \widehat \omega_{\eta,\infty} (z) \coloneqq \begin{cases}
\frac{1}{\eta^\beta} \frac{1}{\abs{z}^{2+\beta}} , &\text{ if } z\in \C \setminus B_{1/\eta} \\
\frac{1}{\eta^2} \frac{(1+\eta^2)^2}{(1+\abs{z}^2)^2}, &\text{ if } z\in B_{1/\eta}
\end{cases},
\qquad \text{ as } k \rightarrow \infty.
\end{equation}
We introduce 
\begin{equation}
\widehat{\mathcal L}_{\eta,\infty}: V_{v_{\infty}} \rightarrow V_{v_{\infty}}; 
\qquad\widehat{\mathcal L}_{\eta,\infty}(w) \coloneqq 2\ P_{v_\infty} \left(\widehat\omega_{\eta,\infty}^{-1}(- \Delta w  -w \abs{\nabla v_\infty}^2) \right),
\end{equation}
such that 
\begin{equation}
Q_{v_\infty}(w)= \langle \widehat{\mathcal L}_{\eta,\infty} w, w \rangle_{\widehat\omega_{\eta,\infty}},
\end{equation}
where we used 
\begin{equation}
\langle w,v \rangle_{\widehat\omega_{\eta,\infty}} \coloneqq \int_{\C} w \cdot v \ \widehat \omega_{\eta,\infty} \ dz.
\end{equation}
In the following let 
\begin{equation}
St: S^2 \rightarrow \C
\end{equation}
denote the stereographic projection.
We introduce the notation
\begin{equation}
\widetilde v_\infty \coloneqq v_\infty \circ St, \quad
\widetilde w\coloneqq w \circ St, \quad
\widetilde\omega_{\eta,\infty} \coloneqq [\widehat\omega_{\eta,\infty}(y)(1+\abs{y}^2)^2] \circ St.
\end{equation}
With a change of variables
\begin{equation}
\begin{aligned}
Q_{v_\infty}(w)
&= 2\int_\C \left(\widehat\omega_{\eta,\infty}^{-1}(- \Delta w  -w \abs{\nabla v_\infty}^2) \right) \cdot w \ \widehat\omega_{\eta,\infty} \ dvol_{\Sigma} \\
&= 2\int_{S^2} \left(\widetilde\omega_{\eta,\infty}^{-1}(- \Delta\widetilde w  -\widetilde w \abs{\nabla \widetilde v_\infty}^2) \right) \cdot\widetilde w \ \widetilde\omega_{\eta,\infty} \ dvol_{\Sigma} = Q_{\widetilde v_\infty}(\widetilde w),
\end{aligned}
\end{equation}
We introduce
\begin{equation}
\widetilde{\mathcal L}_{\eta,\infty}: V_{\widetilde v_{\infty}} \rightarrow V_{\widetilde v_{\infty}}; 
\qquad\widetilde{\mathcal L}_{\eta,\infty}(\widetilde w) \coloneqq2\ P_{\widetilde v_\infty} \left(\widetilde\omega_{\eta,\infty}^{-1}(- \Delta \widetilde w  -\widetilde w \abs{\nabla \widetilde v_\infty}^2) \right)
\end{equation}
Let
\begin{equation}
U_{u_\infty} = \left\{ w \in L^{2}_{\omega_{\eta,\infty}}(\Sigma; \R^{n+1}) \forwhich w(x) \in T_{u_\infty(x)} S^n, \quad \text{for a.e. } x\in\Sigma \right\},
\end{equation}
and 
\begin{equation}
U_{\widetilde v_\infty} = \left\{ w \in L^{2}_{\widetilde\omega_{\eta,\infty}}(S^2; \R^{n+1}) \forwhich w(x) \in T_{ \widetilde v_\infty(x)} S^n, \quad \text{for a.e. } x\in S^2 \right\}.
\end{equation}
In Lemma IV.5 of \cite{DGR22} the following result was shown:

\begin{lemma}
\begin{enumerate}
\item The separable Hilbert space $(U_{u_\infty}, \langle\cdot,\cdot\rangle_{\omega_{\eta,\infty}})$ has a Hilbert basis consisting of eigenfunctions of $\mathcal L_{\eta,\infty}$.
\item The separable Hilbert space $(U_{\widetilde v_\infty}, \langle\cdot,\cdot\rangle_{\widetilde \omega_{\eta,\infty}})$ has a Hilbert basis consisting of eigenfunctions of $\widetilde{\mathcal L}_{\eta,\infty}$.
\end{enumerate}
\end{lemma}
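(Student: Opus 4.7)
The plan is to adapt, to each of the two limit settings, the proof strategy of Lemma \ref{LEMMA: Spectral decomp of L}; this is in fact the approach followed in Lemma IV.5 of \cite{DGR22}.

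For the first assertion, I observe that $u_\infty$ is a smooth harmonic map on all of $\Sigma$ (the blow-up point $q$ being a removable singularity), so $\abs{\nabla u_\infty}^2 \in L^\infty(\Sigma)$. Moreover the weight satisfies $\omega_{\eta,\infty} \ge \eta^{-2}$ uniformly on $\Sigma$: outside $B_\eta$ this is by definition, and inside $B_\eta$ one has $\eta^{-\beta}\abs{x}^{\beta-2} \ge \eta^{-2}$ since $\abs{x}\le \eta$ and $\beta>0$. The key intermediate step, analogous to Claim~1 in the proof of Lemma \ref{LEMMA: Spectral decomp of L}, is to show that for $\lambda_0>0$ large enough the shifted operator $\mathcal J_\lambda \coloneqq \mathcal L_{\eta,\infty} + \lambda\, \mathrm{id}$ is a bounded bijection on $U_{u_\infty}$ satisfying the a priori estimate
\begin{equation*}
\int_\Sigma \left( \abs{\nabla w}^2 + \abs{w}^2 \omega_{\eta,\infty}\right) dvol_\Sigma \le C \norm{\mathcal J_\lambda w}_{L^2_{\omega_{\eta,\infty}}(\Sigma)}^2.
\end{equation*}
Coercivity of the associated quadratic form $\langle \mathcal J_\lambda w, w\rangle_{\omega_{\eta,\infty}} = Q_{u_\infty}(w) + \lambda \norm{w}^2_{L^2_{\omega_{\eta,\infty}}}$ is immediate from the explicit expression for $Q_{u_\infty}$ by choosing $\lambda_0 \ge 2\norm{\nabla u_\infty}^2_{L^\infty(\Sigma)}/\inf_\Sigma \omega_{\eta,\infty} + 1$; existence and uniqueness of $w$ solving $\mathcal J_\lambda w = f$ then follow by direct minimization of the associated strictly convex coercive functional, exactly as in Appendix~B of \cite{DGR22}.

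The main obstacle is to verify compactness of $\mathcal J_\lambda^{-1}: U_{u_\infty} \to U_{u_\infty}$. The a priori estimate reduces this to the compactness of the embedding $W^{1,2}(\Sigma) \hookrightarrow L^2_{\omega_{\eta,\infty}}(\Sigma)$. This compactness follows from the observation that $\omega_{\eta,\infty} \in L^r(\Sigma)$ for some $r>1$: on $B_\eta$ the singularity is $\abs{x}^{\beta-2}$, which lies in $L^r$ for $r < 2/(2-\beta)$, and such $r>1$ exists because $\beta>0$. Combining H\"older's inequality with the compact two-dimensional Sobolev embedding $W^{1,2}(\Sigma) \hookrightarrow L^{2r'}(\Sigma)$ yields the desired compactness. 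Theorem~6.11 of \cite{B11}, applied to the compact self-adjoint operator $\mathcal J_\lambda^{-1}$, then provides a Hilbert basis of eigenfunctions, which are simultaneously eigenfunctions of $\mathcal L_{\eta,\infty}$.

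For the second assertion, after the stereographic projection the problem lives on the closed manifold $S^2$. A direct computation shows that on the disc $\{\abs{y}\le 1/\eta\}$ the weight $\widetilde \omega_{\eta,\infty}$ reduces to the constant $(1+\eta^2)^2/\eta^2$, while on $\{\abs{y}>1/\eta\}$ it grows like $\abs{y}^{2-\beta}$, corresponding via $y = St(\xi)$ to an integrable singularity of order $d(\xi,N)^{\beta-2}$ near the pole $N$. Since $\beta>0$, this singularity gives $\widetilde\omega_{\eta,\infty} \in L^r(S^2)$ for some $r>1$, and the same argument as above yields a compact self-adjoint resolvent $(\widetilde{\mathcal L}_{\eta,\infty}+\lambda\, \mathrm{id})^{-1}$, from which the spectral decomposition of $\widetilde{\mathcal L}_{\eta,\infty}$ follows.
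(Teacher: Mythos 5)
Your proposal is correct and follows essentially the same route as Lemma IV.5 of \cite{DGR22}, which the paper simply cites without reproving: coercivity of the shifted form $Q_{u_\infty} + \lambda\langle\cdot,\cdot\rangle_{\omega_{\eta,\infty}}$ using the lower bound $\omega_{\eta,\infty}\ge\eta^{-2}$, and compactness of the resolvent via the compact embedding $W^{1,2}\hookrightarrow L^{2}_{\omega_{\eta,\infty}}$, itself deduced from $\omega_{\eta,\infty}\in L^{r}$ for some $r>1$ (admissible since $\beta>0$ makes the singularity $\abs{x}^{\beta-2}$ subcritical in two dimensions), with the identical mechanism on $S^{2}$ for $\widetilde\omega_{\eta,\infty}$ after stereographic projection. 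The argument is sound as written.
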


\noindent
We continue by introducing the limiting eigenspaces 
\begin{equation}
\mathcal E_{\eta,\infty}(\lambda) \coloneqq \{ w \in V_{u_\infty} \ ; \ \mathcal L_{\eta,\infty}(w)=\lambda w \},
\end{equation}
and
\begin{equation}
\widehat{\mathcal E}_{\eta,\infty}(\lambda) \coloneqq \{ w \in V_{v_\infty} \ ; \ \widehat{\mathcal L}_{\eta,\infty}(w)=\lambda w \}.
\end{equation}
And their nonpositive contribution
\begin{equation}
\mathcal E_{\eta,\infty}^0\coloneqq \bigoplus_{\lambda \le 0} \mathcal E_{\eta,\infty}(\lambda), \qquad
\widehat{\mathcal E}_{\eta,\infty}^0\coloneqq \bigoplus_{\lambda \le 0} \widehat{\mathcal E}_{\eta,\infty}(\lambda).
\end{equation}
In \cite{DGR22} in (IV.38) and (IV.45) the following result was shown:
\begin{lemma}
\label{LEMMA: Ind plus Null eq dim of eig for infty lim func}
\begin{equation}
\begin{aligned}
&i)\ \dim\left(\mathcal E_{\eta,\infty}^0\right) 
\le \operatorname{Ind}(u_\infty) + \operatorname{Null}(u_\infty), \\
&ii)\ \dim\left(\widehat{\mathcal E}_{\eta,\infty}^0\right) 
\le \operatorname{Ind}(\widetilde v_\infty) + \operatorname{Null}(\widetilde v_\infty),
\end{aligned}
\end{equation}
\end{lemma}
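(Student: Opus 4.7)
The plan is to mirror, at the level of the limiting objects $u_\infty$ and $\widetilde v_\infty$, the finite-dimensional Sylvester-type argument already carried out for $u_k$ in Lemma \ref{LEMMA: Ind plus Null eq dim of eigenspaces}. The key observation is that on each eigenspace of the weighted Jacobi operator $\mathcal{L}_{\eta,\infty}$ (resp.\ $\widehat{\mathcal{L}}_{\eta,\infty}$), the quadratic form $Q_{u_\infty}$ (resp.\ $Q_{v_\infty}$) acts diagonally as the eigenvalue times the weighted $L^2$ norm. Consequently, the direct sum of nonpositive eigenspaces furnishes a subspace on which $Q$ is nonpositive, and comparing with the variational characterisation of $\mathrm{Ind}+\mathrm{Null}$ on the unweighted space of admissible variations yields the claimed dimensional bound.

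For part (i), I would proceed in three steps. First, verify that any eigenfunction $w\in\mathcal{E}_{\eta,\infty}(\lambda)$ with $\lambda\le 0$ is an admissible variation, i.e.\ $w\in V_{u_\infty}=W^{1,2}(\Sigma;u_\infty^{-1}TS^n)$. Since $u_\infty$ is a smooth harmonic map on $\Sigma$, the identity $-\Delta w - |\nabla u_\infty|^2 w = \tfrac{\lambda}{2}\,\omega_{\eta,\infty}\, w \pmod{N_{u_\infty}S^n}$ combined with the a priori bound $w\in L^2_{\omega_{\eta,\infty}}$ gives, by elliptic bootstrapping, that $w\in W^{1,2}(\Sigma)$. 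Second, using that eigenfunctions associated to distinct eigenvalues of the self-adjoint operator $\mathcal{L}_{\eta,\infty}$ are orthogonal in $\langle\cdot,\cdot\rangle_{\omega_{\eta,\infty}}$, I decompose any $w\in\mathcal{E}_{\eta,\infty}^0$ as $w=\sum_{\lambda\le 0}w_\lambda$ and compute
\begin{equation*}
Q_{u_\infty}(w)=\langle\mathcal{L}_{\eta,\infty}w,w\rangle_{\omega_{\eta,\infty}}=\sum_{\lambda\le 0}\lambda\,\|w_\lambda\|_{\omega_{\eta,\infty}}^2\le 0,
\end{equation*}
hence $Q_{u_\infty}\big|_{\mathcal{E}_{\eta,\infty}^0}\le 0$. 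Third, invoking standard spectral theory applied to the unweighted Jacobi operator $-\Delta-|\nabla u_\infty|^2$ on $V_{u_\infty}$, one has $\mathrm{Ind}(u_\infty)+\mathrm{Null}(u_\infty)=\max\{\dim W: W\subset V_{u_\infty},\, Q_{u_\infty}|_W\le 0\}$, from which inequality (i) follows.

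For part (ii), I first reduce the bubble problem on $\mathbb{C}$ to a problem on $S^2$ via the stereographic projection $St$: by the point-removability theorem, $v_\infty$ extends to a smooth harmonic map $\widetilde v_\infty:S^2\to S^n$, and the conformal pull-back $w\mapsto \widetilde w=w\circ St$ preserves both the Dirichlet energy and the pointwise quantity $|\nabla v_\infty|^2|w|^2$, so $Q_{v_\infty}(w)=Q_{\widetilde v_\infty}(\widetilde w)$. By construction the weight $\widetilde\omega_{\eta,\infty}$ is exactly the push-forward of $\widehat\omega_{\eta,\infty}$, so the weighted eigenvalue problems match under $St$, and $\widehat{\mathcal{E}}_{\eta,\infty}^0$ corresponds isometrically to the nonpositive eigenspace of $\widetilde{\mathcal{L}}_{\eta,\infty}$. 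The same three-step argument as in part (i), now applied to the smooth harmonic map $\widetilde v_\infty$ on the compact surface $S^2$, then yields (ii).

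The main technical hurdle I anticipate is the first step in each part: controlling the regularity of the weighted eigenfunctions so that they qualify as genuine variations in $V_{u_\infty}$ (resp.\ $V_{\widetilde v_\infty}$). The weights $\omega_{\eta,\infty}$ and $\widehat\omega_{\eta,\infty}$ degenerate (or blow up) at the blow-up point and at infinity respectively, so the bootstrap must be performed with some care, exploiting that $|\nabla u_\infty|$ and $|\nabla \widetilde v_\infty|$ are bounded on their respective compact domains so that the right-hand side $\omega_{\eta,\infty}w$, though only in a weighted space, still lies in a Sobolev space sufficient to conclude $w\in W^{1,2}$. Once this regularity is secured, the remaining algebra is formal.
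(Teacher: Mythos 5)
Your proposal is correct. Note that the paper itself does not prove this lemma but cites (IV.38) and (IV.45) of \cite{DGR22}, so a line-by-line comparison is not possible; nevertheless, your argument is precisely the analogue, in the limiting setting, of the Sylvester Law of Inertia that the paper does prove for the finite-$p_k$ case (Lemma \ref{LEMMA: Ind plus Null eq dim of eigenspaces}), and the logic is sound: $\mathcal E^0_{\eta,\infty}$ is by definition a subspace of $V_{u_\infty}$, the eigenvalue expansion together with the identity $Q_{u_\infty}(w)=\langle\mathcal L_{\eta,\infty}w,w\rangle_{\omega_{\eta,\infty}}$ and $\omega$-orthogonality of eigenspaces shows $Q_{u_\infty}\le 0$ there, and the spectral characterization of $\mathrm{Ind}+\mathrm{Null}$ as the maximal dimension of a $Q$-nonpositive subspace then gives the bound. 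The reduction of (ii) to the compact model $S^2$ via the conformal identities $Q_{v_\infty}(w)=Q_{\widetilde v_\infty}(\widetilde w)$ and the push-forward of the weight is also handled correctly. One small remark: your opening regularity step is already built into the paper's conventions, since $\mathcal E_{\eta,\infty}(\lambda)$ is defined as a subset of $V_{u_\infty}$ rather than of $U_{u_\infty}$; the elliptic bootstrap you sketch is the content of the spectral decomposition lemma quoted from \cite{DGR22} and need not be repeated, though it is not wrong to observe that this is where the $W^{1,2}$-regularity of the weighted eigenfunctions is secured.
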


We consider the unit sphere (finite dimensional as the ambient space is finite dimensional) given by
\begin{equation}
\mathcal S_{\eta,k}^0 \coloneqq \left\{ w \in \bigoplus_{\lambda \le 0} \mathcal E_{\eta,k}(\lambda) \ ; \ \langle w,w \rangle_{\omega_{\eta,k}}=1 \right\}.
\end{equation}

\begin{lemma}
\label{LEMMA: On the non zero of the lim functs}
For any $k \in \N$ let $w_k \in \mathcal S_{\eta,k}^0$.
Then there exist a subsequence such that
\begin{equation}
w_k \rightharpoondown w_\infty, \text{ weakly in } W^{1,2}(\Sigma) \cap W^{2,2}_{loc}(\Sigma\setminus\{q\}),
\end{equation}
\begin{equation}
w_{k}(\delta_k y) \rightharpoondown \sigma_\infty(y), \text{ weakly in } W^{2,2}_{loc}(\C)
\end{equation}
and 
\begin{equation}
\text{ either }w_\infty \ne 0, \quad \text{ or } \sigma_\infty \ne 0.
\end{equation}
\end{lemma}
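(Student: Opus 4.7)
The plan is to establish uniform $W^{1,2}(\Sigma)$ and $W^{2,2}_{loc}$ bounds for the sequences $w_k$ and $\hat w_k(y) = w_k(\delta_k y)$, extract weak subsequential limits $w_\infty, \sigma_\infty$, and then argue by contradiction, using the normalization $\langle w_k, w_k \rangle_{\omega_{\eta,k}} = 1$, that they cannot both vanish.

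\textbf{Uniform bounds.} Since $w_k$ lies in a nonpositive eigenspace, $Q_{u_k}(w_k) \le 0$. Dropping the nonnegative $p_k(p_k-2)$ contribution in $Q_{u_k}$ and using $(1+|\nabla u_k|^2)^{p_k/2-1} \ge 1$ gives
\[
\int_\Sigma |\nabla w_k|^2 \, dvol_\Sigma \le C \int_\Sigma |\nabla u_k|^2 |w_k|^2 \, dvol_\Sigma \le C\, \mu_{\eta,k},
\]
where the last inequality uses $|\nabla u_k|^2 \le \mu_{\eta,k}\, \omega_{\eta,k}$ together with the normalization. Combined with the trivial estimate $\int_{\Sigma\setminus B_\eta} |w_k|^2 \le C\eta^2$ (from $\omega_{\eta,k} \gtrsim \eta^{-2}$ outside $B_\eta$) and a Poincar\'e inequality on $\Sigma$, this yields a uniform $W^{1,2}(\Sigma)$ bound. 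Away from the blow-up point $q$ the maps $u_k$ are smooth with uniform bounds, and unfolding the projection in $\mathcal L_{\eta,k} w_k = \lambda_k w_k$ writes the eigenvalue problem as a linear elliptic system for $w_k$ with smooth coefficients and $L^2$ right-hand side; interior elliptic regularity then gives uniform $W^{2,2}_{loc}(\Sigma \setminus \{q\})$ bounds. The rescaled sequence $\hat w_k$ is handled in parallel: the change of variables $x = \delta_k y$ yields $\int_{B_R} |\nabla \hat w_k|^2 \,dy \le C\mu_{\eta,k}$, while $\omega_{\eta,k}(x) \gtrsim \delta_k^{-2}$ on $B_{R\delta_k}$ for fixed $\eta, R \le 1/\eta$ gives $\int_{B_R} |\hat w_k|^2 \le C(\eta,R)$; elliptic regularity on the rescaled PDE, whose coefficients converge smoothly on $B_{1/\eta}$ since $v_k \to v_\infty$ in $C^\infty_{loc}$, gives uniform $W^{2,2}_{loc}(\C)$ bounds. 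A diagonal extraction produces the asserted weak limits $w_\infty$ and $\sigma_\infty$.

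\textbf{Non-triviality.} Assume $w_\infty \equiv 0$ and $\sigma_\infty \equiv 0$; the aim is to show $\int_\Sigma |w_k|^2 \omega_{\eta,k} \to 0$, contradicting the normalization. Split
\[
1 = \int_{\Sigma \setminus B_\eta} |w_k|^2 \omega_{\eta,k} + \int_{A(\eta,\delta_k)} |w_k|^2 \omega_{\eta,k} + \int_{B_{\delta_k/\eta}} |w_k|^2 \omega_{\eta,k}.
\]
The macroscopic integral is bounded by $C(\eta) \|w_k\|_{L^2(\Sigma)}^2 \to 0$ by Rellich on $w_k$. The bubble integral rewrites via $x = \delta_k y$ as $\int_{B_{1/\eta}} |\hat w_k|^2 \widehat\omega_{\eta,k}\,dy \le C(\eta) \|\hat w_k\|_{L^2(B_{1/\eta})}^2 \to 0$ by Rellich on $\hat w_k$, since $\widehat\omega_{\eta,k}$ is uniformly bounded on $B_{1/\eta}$ for fixed $\eta$. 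For the neck, where $\omega_{\eta,k}$ genuinely blows up, I would introduce a Lipschitz cutoff $\chi$ equal to $1$ on $\{2\delta_k/\eta < |x| < \eta/2\}$ and vanishing outside $A(\eta,\delta_k)$, with $|\nabla\chi| \lesssim \eta^{-1}$ in the outer transition and $|\nabla\chi| \lesssim \eta/\delta_k$ in the inner one. The weighted Poincar\'e inequality underlying Theorem~\ref{THM: JDNW892fejeujfwef32}, applied to the compactly supported $\chi w_k$, gives
\[
\int (\chi w_k)^2 \omega_{\eta,k} \le C \int |\nabla \chi|^2 |w_k|^2 + C \int |\nabla w_k|^2,
\]
in which the pure-gradient term is $O(\mu_{\eta,k})$, the outer transition piece is bounded by $\eta^{-2} \int_{B_\eta \setminus B_{\eta/2}} |w_k|^2 \to 0$ by Rellich on $w_k$, and the inner transition piece rescales to $\eta^2 \int_{B_{2/\eta} \setminus B_{1/\eta}} |\hat w_k|^2 \to 0$ by Rellich on $\hat w_k$. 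The two boundary strips $\{\eta/2 < |x| < \eta\}$ and $\{\delta_k/\eta < |x| < 2\delta_k/\eta\}$ of the neck not covered by $\{\chi = 1\}$ are controlled exactly as the macroscopic and bubble pieces, since $\omega_{\eta,k}$, respectively $\widehat\omega_{\eta,k}$, is uniformly bounded there for fixed $\eta$. Choosing $\eta$ small enough (so that $\limsup_k \mu_{\eta,k}$ is below any prescribed threshold, by Lemma~\ref{LEMMA: prop on the seq mu eta k}) and $k$ large, the three integrals sum to strictly less than $1$, yielding the contradiction.

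The main obstacle is the neck analysis: the weight $\omega_{\eta,k} \asymp |x|^{-2}$ there is not tame enough for ordinary Rellich compactness to handle, so the weighted mass must be absorbed into the Dirichlet energy via the weighted Poincar\'e inequality, and the cutoff surgery is essential because $w_k$ has no reason to vanish on $\partial A(\eta, \delta_k)$; the quantitative control of $\mu_{\eta,k}$ provided by Lemma~\ref{LEMMA: prop on the seq mu eta k} is exactly what makes the resulting error terms beat the normalization.
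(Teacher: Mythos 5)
Your approach to the uniform $W^{1,2}$ and $W^{2,2}_{loc}$ bounds is essentially the same as the paper's (Claims 1 and 2 there), modulo one small inaccuracy: $w_k$ is a linear combination of eigenfunctions, not an eigenfunction, so the equation should read $\mathcal L_{\eta,k}(w_k)=\sum_j c^j_k\lambda^j_k\phi^j_k$; what actually feeds into the elliptic estimate is the bound $\|\mathcal L_{\eta,k}(w_k)\|_{L^2_{\omega_{\eta,k}}}\le -\inf\Lambda_{\eta,k}\le C\mu_0$. You should also note that the interior elliptic regularity requires verifying the Legendre--Hadamard condition for the coefficients
\[
A_{i,j}^{\alpha,\beta}=p_k(p_k-2)(1+|\nabla u_k|^2)^{\frac{p_k}{2}-2}\partial_\alpha u_k^i\,\partial_\beta u_k^j+p_k(1+|\nabla u_k|^2)^{\frac{p_k}{2}-1}\delta_{\alpha\beta}\delta_{ij},
\]
which the paper does in the proof of Claim 1 by showing the anisotropic first summand is dominated by the identity part for $k$ large, using $p_k-2\to0$ and the uniform bound on $(1+|\nabla u_k|^2)^{\frac{p_k}{2}-1}$; without this, ``smooth coefficients'' alone is not enough.

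For the non-triviality statement the route you take is genuinely different from the paper's, and is in fact somewhat leaner. The paper introduces the cutoff $\check w_k$, proves in three separate steps that $\|\nabla(w_k-\check w_k)\|_{L^2}\to0$, that $|Q_{u_k}(w_k)-Q_{u_k}(\check w_k)|\to0$ (a page-long computation using the $L^\infty$ bound on $(1+|\nabla u_k|^2)^{\frac{p_k}{2}-1}$ and the strong $W^{1,2}_{loc}$ convergence furnished by the $W^{2,2}_{loc}$ bounds via Rellich), and that $\int|\check w_k|^2\omega_{\eta,k}\to1$; it then invokes Theorem \ref{THM: JDNW892fejeujfwef32} (positive contribution of the necks) to conclude $\liminf_k Q_{u_k}(\check w_k)\ge\overline\kappa>0$, contradicting $\limsup_k Q_{u_k}(\check w_k)\le0$. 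You instead cut $w_k$ off in the neck and apply Lemma \ref{LEMMA: Lemma on neg in the necks Weighted Poincare} \emph{directly} to $\chi w_k$, absorbing the weighted mass into $\int|\nabla(\chi w_k)|^2\le C\int|\nabla\chi|^2|w_k|^2+C\mu_{\eta,k}$ and killing the remaining pieces by Rellich. This sidesteps the entire $Q_{u_k}(w_k)$-versus-$Q_{u_k}(\check w_k)$ comparison and the appeal to Theorem \ref{THM: JDNW892fejeujfwef32}, and it only needs strong $L^2$ convergence (from the $W^{1,2}$ weak limits) rather than strong $W^{1,2}_{loc}$ convergence; the quantitative control you need ($\lim_\eta\limsup_k\mu_{\eta,k}=0$ and the uniform Poincar\'e constant $c_0$, both independent of $\eta,k$) is exactly what Lemma \ref{LEMMA: prop on the seq mu eta k} and Lemma \ref{LEMMA: Lemma on neg in the necks Weighted Poincare} supply. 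The paper's route has the merit of isolating the ``necks do not contribute negatively'' statement as a reusable theorem; yours is the more economical derivation of this particular lemma. Both ultimately rest on the same two ingredients --- the pointwise gradient estimate of Theorem \ref{THEOREM: on Pointwise Estimate of the Gradient in the Necks p harm} (through $\mu_{\eta,k}$) and the weighted Poincar\'e inequality.
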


\begin{proof}
We have $Q_{u_k}(w_k)\le 0$.
With Lemma \ref{LEMMA: Linfty bd on na u in the necks unif} and Lemma \ref{LEMMA: prop on the seq mu eta k} we can estimate 
\begin{equation}
\begin{aligned}
&\hspace{-15mm}\int_\Sigma \left( 1+\abs{\nabla u_k}^2\right)^{p_k/2-1}  \abs{\nabla u_k}^2 \abs{w_k}^2 \ dvol_\Sigma \\
&\le \norm{\left(1+\abs{\nabla u_k}^2 \right)^{\frac{p_k}{2}-1}}_{L^\infty(\Sigma)}  \norm{\frac{\abs{\nabla u_k}^2}{\omega_{\eta,k}}}_{L^{\infty}(\Sigma)} \int_{\Sigma} \abs{w_k}^2 \omega_{\eta,k} \ dvol_{\Sigma}\\ &\le C.
\end{aligned}
\end{equation}
This implies
\begin{equation}
\begin{aligned}
\int_{\Sigma} \abs{\nabla w_k}^2 \ dvol_{\Sigma}
&\le p_k \int_{\Sigma} \left( 1+\abs{\nabla u_k}^2\right)^{p_k/2-1} \abs{\nabla w_k}^2 \ dvol_{\Sigma} \\
& = \underbrace{Q_{u_k}(w_k) }_{\le 0}  \underbrace{-p_{k}(p_k-2) \int_{\Sigma } \left( 1+\abs{\nabla u_k}^2\right)^{p_k/2-2} \left(\nabla u_k \cdot \nabla w \right)^2  \ dvol_\Sigma}_{\le0} \\
&\hspace{20mm} + \underbrace{p_k \int_\Sigma \left( 1+\abs{\nabla u_k}^2\right)^{p_k/2-1}  \abs{\nabla u_k}^2 \abs{w_k}^2 \ dvol_\Sigma}_{\le C} \\
&\le C.
\end{aligned}
\end{equation}
Therefore we may assume up to passing to subsequences that
\begin{equation}
\label{EQ: HINUJNFIUWEinwoienui9812}
w_k \rightharpoondown w_\infty \quad \text{ in } W^{1,2}(\Sigma) \qquad \text{ and } \qquad
w_{k}(\delta_k y+x_k) \rightharpoondown \sigma_\infty(y) \quad \text{ in } W^{1,2}(\C).
\end{equation}
In the following we show
\\
\textbf{Claim 1:}
$\forall\eta>0:\exists C, k_0: \forall k\ge k_0:
\norm{\nabla^2 w_k}_{L^2(\Sigma\setminus B_{\eta})} \le C(\eta)$.
\\
\textbf{Proof of Claim 1:}
For $w\in V_{u_k}$ we consider the operator
\begin{equation}
\mathfrak E_{\eta,k}(w)^i \coloneqq- \partial_\alpha \left( A_{i,j}^{\alpha,\beta} \ \partial_\beta w^j \right),
\end{equation}
where
\begin{equation}
A_{i,j}^{\alpha,\beta} 
\coloneqq p_k (p_k-2)(1+\abs{\nabla u_k}^2)^{\frac{p_k}{2}-2}\ \partial_\alpha u_k^i\ \partial_\beta u_k^j
+ p_k (1+\abs{\nabla u_k}^2)^{\frac{p_k}{2}-1}\ \delta_{\alpha\beta}\ \delta_{ij}.
\end{equation}
There holds
\begin{equation}
\label{EQ: IJNIUqnufien12983JIN12dnunqdi918g}
\mathcal L_{\eta,k} (w)
= \omega_{\eta,k}^{-1}\ P_{u_k} \left[ \mathfrak E_{\eta,k}(w)-p_k(1+\abs{\nabla u_k}^2)^{\frac{p_k}{2}-1} \abs{\nabla u_k}^2 w \right].
\end{equation}
Next, we show that the operator $\mathfrak E_{\eta,k}$ is elliptic in the sense that the coefficients satisfy for large $k$ the Legendre-Hadamard condition
\begin{equation}
\label{EQ: wbwef1715JNDWqoif9n3r13}
A_{i,j}^{\alpha,\beta} a_\alpha a_\beta b^i b^j
\ge c \abs{a}^2 \abs{b}^2, \qquad \forall a \in \R^2, \forall b \in \R^{n+1},
\end{equation}
as in section 3.4.1 in \cite{GM13}.
We can bound
\begin{equation}
\begin{aligned}
&\hspace{-20mm}\abs{p_k (p_k-2)(1+\abs{\nabla u_k}^2)^{\frac{p_k}{2}-2}\ \partial_\alpha u_k^i\ \partial_\beta u_k^j\ a_\alpha a_\beta b^i b^j} \\
&\le 2(n+1) \ p_k \underbrace{(p_k-2)}_{\rightarrow 0} \underbrace{\frac{\abs{\nabla u_k}^2}{1+\abs{\nabla u_k}^2}}_{\le 1} \underbrace{\norm{\left(1+\abs{\nabla u_k}^2 \right)^{\frac{p_k}{2}-1}}_{L^\infty(\Sigma)}}_{\le C} \abs{a}^2 \abs{b}^2 \\
&\le C (p_k-2) \abs{a}^2 \abs{b}^2,
\end{aligned}
\end{equation}
where we used also Lemma \ref{LEMMA: Linfty bd on na u in the necks unif}.
Hence, for large  $k$ we may assume that 
\begin{equation}
\abs{p_k (p_k-2)(1+\abs{\nabla u_k}^2)^{\frac{p_k}{2}-2}\ \partial_\alpha u_k^i\ \partial_\beta u_k^j\ a_\alpha a_\beta b^i b^j} \le \abs{a}^2 \abs{b}^2.
\end{equation}
This allows to bound
\begin{equation}
\begin{aligned}
A_{i,j}^{\alpha,\beta} a_\alpha a_\beta b^i b^j
&\ge - \abs{a}^2 \abs{b}^2 + p_k (1+\abs{\nabla u_k}^2)^{\frac{p_k}{2}-1}\ \abs{a}^2 \abs{b}^2 \\
&\ge - \abs{a}^2 \abs{b}^2 + 2 \abs{a}^2 \abs{b}^2 \\
&= \abs{a}^2 \abs{b}^2.
\end{aligned}
\end{equation}
We have showed \eqref{EQ: wbwef1715JNDWqoif9n3r13} with constant $c=1$.
This proves that $\mathfrak E_{\eta,k}$ is an elliptic operator and the theory of elliptic systems as in section 4.3.1 of \cite{GM13} applies, i.e.
there exists some constant $C=C(\eta)>0$ which may depend on $\eta$ but not $k$ such that
\begin{equation}
\label{EQ: INUINiuwneg29e983rfn9}
\norm{\nabla^2 w_k}_{L^2(\Sigma \setminus B_\eta)}
\le C \left( \norm{w_k}_{W^{1,2}(\Sigma)} + \norm{\mathfrak E_{\eta,k} (w_k)}_{L^2(\Sigma \setminus B_{\frac{\eta}{2}})} \right).
\end{equation}
It remains to show that
\begin{equation}
\norm{\mathfrak E_{\eta,k} (w_k)}_{L^2(\Sigma \setminus B_{\frac{\eta}{2}})} \le C.
\end{equation}
To that end, we write with \eqref{EQ: IJNIUqnufien12983JIN12dnunqdi918g}
\begin{equation}
\begin{aligned}
\label{EQ: upINUFIPN9813rn1ior3n1io212e}
\mathfrak E_{\eta,k} (w_k) 
&= P_{u_k} \mathfrak E_{\eta,k} (w_k) + (id - P_{u_k}) \mathfrak E_{\eta,k} (w_k) \\
&= P_{u_k} \mathfrak E_{\eta,k} (w_k) + (id - P_{u_k}) \mathfrak E_{\eta,k} (w_k) \\
&= P_{u_k} \left[ \mathfrak E_{\eta,k}(w_k)-p_k(1+\abs{\nabla u_k}^2)^{\frac{p_k}{2}-1} \abs{\nabla u_k}^2 w_k \right] + P_{u_k} \left[p_k(1+\abs{\nabla u_k}^2)^{\frac{p_k}{2}-1} \abs{\nabla u_k}^2 w_k \right] \\
&\hspace{20mm} + \left\langle \mathfrak E_{\eta,k} (w_k), u_k \right\rangle u_k \\
&= \omega_{\eta,k}\ \mathcal L_{\eta,k} (w_k) + P_{u_k} \left[p_k(1+\abs{\nabla u_k}^2)^{\frac{p_k}{2}-1} \abs{\nabla u_k}^2 w_k \right] + \left\langle \mathfrak E_{\eta,k} (w_k), u_k \right\rangle u_k.
\end{aligned}
\end{equation}
As by assumption $w_k \in \mathcal S_{\eta,k}^0$ we can write 
\begin{equation}
w_k = \sum_{j=1}^{N_k} c_k^j\ \phi_k^j,
\qquad \text{ where } \sum_{j=1}^{N_k} (c_k^j)^2=1
\end{equation}
and $\phi_k^1,\ldots,\phi_k^{N_k}$ is an orthonormal basis of $\oplus_{\lambda \le 0} \mathcal E_{\eta,k}(\lambda)$.
Then
\begin{equation}
\mathcal L_{\eta,k} (w_k) 
= \sum_{j=1}^{N_k} c_k^j \ \mathcal L_{\eta,k} (\phi_k^j)
= \sum_{j=1}^{N_k} c_k^j \ \lambda_{k}^j\ \phi_k^j.
\end{equation}
Hence,
\begin{equation}
\begin{aligned}
\label{EQ: IBNIUNFIPUNUIFWIN21}
\norm{\omega_{\eta,k}\ \mathcal L_{\eta,k} (w_k)}_{L^2(\Sigma \setminus B_{\frac{\eta}{2}})}
&\le \norm{\omega_{\eta,k}}_{L^\infty(\Sigma \setminus B_{\frac{\eta}{2}})}^{1/2} \norm{\mathcal L_{\eta,k} (w_k)}_{L^2_{\omega_{\eta,k}}(\Sigma\setminus B_{\frac{\eta}{2}})} \\
&\le C \norm{\mathcal L_{\eta,k} (w_k)}_{L^2(\Sigma)} \\
&\le C \sum_{j=1}^{N_k} (c_k^j \ \lambda_{k}^j)^2 \\
&\le C (\inf \Lambda_{\eta,k})^2 \\
&\le C\  \mu_0^2,
\end{aligned}
\end{equation}
where we used Lemma \ref{LEMMA: prop on the seq mu eta k} and its notations as well as the fact that $\norm{\omega_{\eta,k}}_{L^\infty(\Sigma \setminus B_{\frac{\eta}{2}})}  \le C=C(\eta)$.
We also have
\begin{equation}
\begin{aligned}
\label{EQ: bfe238913bnip1nrBNIUdq}
&\hspace{-15mm}\norm{P_{u_k} \left[p_k(1+\abs{\nabla u_k}^2)^{\frac{p_k}{2}-1} \abs{\nabla u_k}^2 w_k \right]}_{L^2(\Sigma \setminus B_{\frac{\eta}{2}})} \\
&= p_k\norm{(1+\abs{\nabla u_k}^2)^{\frac{p_k}{2}-1} \abs{\nabla u_k}^2 w_k }_{L^2(\Sigma \setminus B_{\frac{\eta}{2}})} \\
&\le p_k \norm{(1+\abs{\nabla u_k}^2)^{\frac{p_k}{2}-1} \abs{\nabla u_k}}_{L^\infty(\Sigma \setminus B_{\frac{\eta}{2}})} \mu_{\eta,k}^{\frac{1}{2}} \ \underbrace{\norm{w_k}_{L^2_{\omega_{\eta,k}}(\Sigma \setminus B_{\frac{\eta}{2}})}}_{=1} \\
&\le C(\eta),
\end{aligned}
\end{equation}
where we used the strong convergence in \eqref{EQ: Cond of bubb conv ji0wr931jJGA} and also Lemma \ref{LEMMA: prop on the seq mu eta k} with its notations.
Now
\begin{equation}
\begin{aligned}
\label{EQ: inIFUPf293ngfgo4on2g}
\left\langle \mathfrak E_{\eta,k} (w_k), u_k \right\rangle
&= p_k\ (p_k-2) \divergence \left( \left( 1+\abs{\nabla u_k}^2\right)^{p_k/2-2} \left(\nabla u_k \cdot \nabla w_k \right)\ \nabla u_k \right) \cdot u_k \\
&\hspace{20mm} + p_k \divergence\left( \left( 1+\abs{\nabla u_k}^2\right)^{p_k/2-1} \nabla w_k \right) \cdot u_k \\
&= p_k\ (p_k-2) \Bigg[ \underbrace{\nabla\left( \frac{\left(\nabla u_k \cdot \nabla w_k \right)}{1+\abs{\nabla u_k}^2} \right) \left( 1+\abs{\nabla u_k}^2\right)^{p_k/2-1} \cdot \nabla u_k \cdot u_k}_{=0 \text{ (since }\partial_\alpha u \cdot u=0)} \\
&\hspace{20mm} + \frac{\left(\nabla u_k \cdot \nabla w_k \right)}{1+\abs{\nabla u_k}^2}\divergence \left( \left( 1+\abs{\nabla u_k}^2\right)^{p_k/2-1} \nabla u_k \right) \cdot u_k \Bigg] \\
&\hspace{40mm} + p_k \divergence\left( \left( 1+\abs{\nabla u_k}^2\right)^{p_k/2-1} \nabla w_k \right) \cdot u_k \\
&= -p_k\ (p_k-2) \frac{\left(\nabla u_k \cdot \nabla w_k \right)}{1+\abs{\nabla u_k}^2} \left( 1+\abs{\nabla u_k}^2\right)^{p_k/2-1} \abs{\nabla u_k}^2 \\
&\hspace{20mm} + p_k \divergence \left( \left( 1+\abs{\nabla u_k}^2\right)^{p_k/2-1} \nabla w_k \right) \cdot u_k,
\end{aligned}
\end{equation}
where we also used \eqref{EQ: Euler-Lagrange equations in div form for p harm}.
Furthermore, using the facts that
\begin{equation}
w_k \cdot u_k=0, \qquad
\nabla w_k \cdot u_k= - w_k \cdot \nabla u_k,
\end{equation}
we get
\begin{equation}
\begin{aligned}
\label{EQ: njIFIN0913rj1f3okJNIqdw}
&\hspace{-15mm}\divergence \left( \left( 1+\abs{\nabla u_k}^2\right)^{p_k/2-1} \nabla w_k \right) \cdot u_k \\
&= \divergence \left( \left( 1+\abs{\nabla u_k}^2\right)^{p_k/2-1} \nabla w_k \cdot u_k \right) - \left( 1+\abs{\nabla u_k}^2\right)^{p_k/2-1} \nabla w_k \cdot \nabla u_k \\
&= -\divergence \left( \left( 1+\abs{\nabla u_k}^2\right)^{p_k/2-1} \nabla u_k \cdot w_k \right) - \left( 1+\abs{\nabla u_k}^2\right)^{p_k/2-1} \nabla w_k \cdot \nabla u_k \\
&= -\divergence \left( \left( 1+\abs{\nabla u_k}^2\right)^{p_k/2-1} \nabla u_k \right) \cdot w_k - 2\left( 1+\abs{\nabla u_k}^2\right)^{p_k/2-1} \nabla w_k \cdot \nabla u_k \\
&= - \left( 1+\abs{\nabla u_k}^2\right)^{p_k/2-1} \abs{\nabla u_k}^2 u_k \cdot w_k - 2\left( 1+\abs{\nabla u_k}^2\right)^{p_k/2-1} \nabla w_k \cdot \nabla u_k,
\end{aligned}
\end{equation}
where we also used \eqref{EQ: Euler-Lagrange equations in div form for p harm}.
We can with \eqref{EQ: inIFUPf293ngfgo4on2g} and \eqref{EQ: njIFIN0913rj1f3okJNIqdw} now bound
\begin{equation}
\begin{aligned}
\label{EQ: ZUBFIUEZBNUI1893rhn1r12e}
&\hspace{-5mm}\norm{\left\langle \mathfrak E_{\eta,k} (w_k), u_k \right\rangle u_k}_{L^2(\Sigma\setminus B_{\frac{\eta}{2}})} \\
&= \norm{\left\langle \mathfrak E_{\eta,k} (w_k), u_k \right\rangle }_{L^2(\Sigma\setminus B_{\frac{\eta}{2}})} \\
&\le C \norm{(1+\abs{\nabla u_k}^2)^{\frac{p_k}{2}-1} \abs{\nabla u_k} }_{L^\infty(\Sigma \setminus B_{\frac{\eta}{2}})} \norm{\nabla w_k}_{L^2(\Sigma\setminus B_{\frac{\eta}{2}})} \\
&\hspace{20mm} + C \norm{(1+\abs{\nabla u_k}^2)^{\frac{p_k}{2}-1} \abs{\nabla u_k} }_{L^\infty(\Sigma \setminus B_{\frac{\eta}{2}})} \mu_{\eta,k}^{\frac{1}{2}} \norm{w_k}_{L^2_{\omega_{\eta,k}}(\Sigma\setminus B_{\frac{\eta}{2}})} \\
&\hspace{40mm} +C \norm{(1+\abs{\nabla u_k}^2)^{\frac{p_k}{2}-1} \abs{\nabla u_k} }_{L^\infty(\Sigma \setminus B_{\frac{\eta}{2}})} \norm{\nabla w_k}_{L^2(\Sigma\setminus B_{\frac{\eta}{2}})} \\
&\le C(\eta),
\end{aligned}
\end{equation}
where in the last line we used \eqref{EQ: HINUJNFIUWEinwoienui9812}, the strong convergence coming from \eqref{EQ: Cond of bubb conv ji0wr931jJGA} and Lemma \ref{LEMMA: prop on the seq mu eta k}.
Combining \eqref{EQ: INUINiuwneg29e983rfn9}, \eqref{EQ: upINUFIPN9813rn1ior3n1io212e}, \eqref{EQ: IBNIUNFIPUNUIFWIN21}, \eqref{EQ: bfe238913bnip1nrBNIUdq} and \eqref{EQ: ZUBFIUEZBNUI1893rhn1r12e} Claim 1 follows.
\\
Let now $\sigma_k(y) \coloneqq w_{k}(\delta_k y)$.
Proceeding similar as in the proof of Claim 1 we can also show
\\
\textbf{Claim 2:}
$\forall\eta>0:\exists C, k_0: \forall k\ge k_0:
\norm{\nabla^2 \sigma_k}_{L^2(B_{\frac{1}{\eta}})} \le C(\eta)$.
\\
With Claim 1 and Claim 2 we find that
\begin{equation}
\label{EQ: JINFU10930fi13fjm}
w_k \rightharpoondown w_\infty, \text{ weakly in } W^{2,2}_{loc}(\Sigma\setminus\{q\}),
\end{equation}
and
\begin{equation}
\label{EQ: UIN1902j1i0niqw1221mnq12D}
w_{k}(\delta_k y) \rightharpoondown \sigma_\infty(y), \text{ weakly in } W^{2,2}_{loc}(\C).
\end{equation}
It remains to show that either $w_\infty \ne 0$ or $\sigma_\infty \ne 0$.
For a contradiction assume that $w_\infty = 0$ and $\sigma_\infty = 0$.
Let $\chi \in C^\infty([0,\infty);[0,1])$ with
\begin{equation}
\chi = \begin{cases}
1, &\text{ on } [0,1] \\
0, &\text{ on } [2,\infty)
\end{cases}
\end{equation}
Introduce the notation
\begin{equation}
\check w_k \coloneqq
w_k\ \chi\left(2 \frac{\left|x\right|}{\eta}\right)\left(1-\chi\left(\eta \frac{\left|x\right|}{\delta_k}\right)\right)  \in W^{1,2}_0(A(\eta,\delta_k);\R^{n+1})\cap V_{u_k}.
\end{equation}
Because of \eqref{EQ: JINFU10930fi13fjm}, \eqref{EQ: UIN1902j1i0niqw1221mnq12D} and because $w_\infty = 0$ and $\sigma_\infty = 0$ we find that
\begin{equation}
\label{EQ: BPNBFEIU193rnu13nijqUN129812}
\lim_{k \rightarrow \infty} \norm{\nabla(w_k- \check w_k)}_{L^2(\Sigma)}=0.
\end{equation}
We have
\begin{equation}
\begin{aligned}
\label{EQ: iwebfnwuUIN8913rn1d1312}
\abs{Q_{u_k}(w_k)-Q_{u_k}(\check w_k)}
&\le p_k (p_k-2) \int_\Sigma \left( 1+\abs{\nabla u_k}^2\right)^{\frac{p_k}{2} -2} \abs{\left(\nabla u_k \cdot \nabla w_k \right)^2 - \left(\nabla u_k \cdot \nabla \check w_k \right)^2 } \ dvol_\Sigma \\
&+ p_k \int_\Sigma \left( 1+\abs{\nabla u_k}^2\right)^{\frac{p_k}{2} - 1} \left( \abs{\abs{\nabla w_k}^2-\abs{\nabla\check w_k}^2} + \abs{\nabla u_k}^2 \abs{\abs{w}^2 - \abs{\check w}^2} \right) \ dvol_\Sigma \\
&\eqqcolon I + II
\end{aligned}
\end{equation}
First, with Lemma \ref{LEMMA: Linfty bd on na u in the necks unif} and \eqref{EQ: BPNBFEIU193rnu13nijqUN129812}
\begin{equation}
\begin{aligned}
I &\le p_k (p_k-2) \underbrace{\norm{\left(1+\abs{\nabla u_k}^2 \right)^{\frac{p_k}{2}-1}}_{L^\infty(\Sigma)}}_{\le C} \int_\Sigma \underbrace{\frac{\abs{\nabla u_k}^2}{1+\abs{\nabla u_k}^2}}_{\le 1} \left(\abs{\nabla w_k}^2 + \abs{\nabla\check w_k}^2 \right) \ dvol_\Sigma \\
&\le C (p_k-2) \int_\Sigma \left(\abs{\nabla w_k}^2 + \abs{\nabla\check w_k}^2 \right) \ dvol_\Sigma \\
&\le C (p_k-2) \rightarrow 0, \qquad \text{ as } k \rightarrow \infty.
\end{aligned}
\end{equation}
Second, with Lemma \ref{LEMMA: Linfty bd on na u in the necks unif}, \eqref{EQ: JINFU10930fi13fjm}, \eqref{EQ: UIN1902j1i0niqw1221mnq12D}, \eqref{EQ: BPNBFEIU193rnu13nijqUN129812} and \eqref{EQ: Cond of bubb conv ji0wr931jJGA}
\begin{equation}
\begin{aligned}
II
&\le C \norm{\left(1+\abs{\nabla u_k}^2 \right)^{\frac{p_k}{2}-1}}_{L^\infty(\Sigma)}\int_\Sigma \left( \abs{\abs{\nabla w_k}^2-\abs{\nabla\check w_k}^2} + \abs{\nabla u_k}^2 \abs{\abs{w}^2 - \abs{\check w}^2} \right) \ dvol_\Sigma \\
&\le C \underbrace{\int_{\Sigma\setminus B_{\frac{\eta}{2}}} \left( \abs{\abs{\nabla w_k}^2-\abs{\nabla\check w_k}^2} + \mu_{\eta,k}\ \overbrace{\abs{\nabla u_k}^2}^{\le C(\eta)} \abs{\abs{w}^2 - \abs{\check w}^2} \right) \ dvol_\Sigma}_{\rightarrow 0, \text{ as }k \rightarrow \infty } \\
&\hspace{20mm} +C\underbrace{\int_{B_{\frac{2\delta_k}{\eta}}} \left( \abs{\abs{\nabla w_k}^2-\abs{\nabla\check w_k}^2} + \mu_{\eta,k}\ \overbrace{\abs{\nabla u_k}^2}^{\le C(\eta)} \abs{\abs{w}^2 - \abs{\check w}^2} \right) \ dvol_\Sigma}_{\rightarrow 0, \text{ as }k \rightarrow \infty }.
\end{aligned}
\end{equation}
Going back to \eqref{EQ: iwebfnwuUIN8913rn1d1312} we have shown
\begin{equation}
\lim_{k \rightarrow\infty} \abs{Q_{u_k}(w_k)-Q_{u_k}(\check w_k)}=0.
\end{equation}
The fact that $Q_{u_k}(w_k) \le 0$ implies 
\begin{equation}
\label{EQ: vuicojnthbonurihy}
\limsup_{k\rightarrow\infty} Q_{u_k}(\check w_k) \le 0.
\end{equation}
But now also with  \eqref{EQ: JINFU10930fi13fjm} and \eqref{EQ: UIN1902j1i0niqw1221mnq12D}
\begin{equation}
\begin{aligned}
\abs{1- \int_{\Sigma} \abs{\check w_k}^2 \omega_{\eta,k}\ dvol_{\Sigma}}
&= \abs{\int_{\Sigma} \abs{ w_k}^2 \omega_{\eta,k}\ dvol_{\Sigma} - \int_{\Sigma} \abs{\check w_k}^2 \omega_{\eta,k}\ dvol_{\Sigma} } \\
& \le \underbrace{\int_{\Sigma\setminus B_{\frac{\eta}{2}}} \abs{\abs{ w_k}^2 - \abs{\check w_k}^2} \overbrace{\omega_{\eta,k}}^{\le C(\eta)} \ dvol_{\Sigma}}_{\rightarrow0, \text{ as }k \rightarrow\infty} + \underbrace{\int_{B_{\frac{2\delta_k}{\eta}}} \abs{\abs{ w_k}^2 - \abs{\check w_k}^2} \overbrace{\omega_{\eta,k}}^{\le C(\eta)} \ dvol_{\Sigma}}_{\rightarrow0, \text{ as }k \rightarrow\infty},
\end{aligned}
\end{equation}
which implies
\begin{equation}
\lim_{k\rightarrow\infty}
\int_{\Sigma} \abs{\check w_k}^2 \omega_{\eta,k}\ dvol_{\Sigma}=1.
\end{equation}
Since $\check w_k \in W^{1,2}_0(A(\eta,\delta_k);\R^{n+1})\cap V_{u_k}$ we have thanks to Theorem \ref{THM: JDNW892fejeujfwef32} for some constant $\overline\kappa >0$ 
\begin{equation}
\liminf_{k\rightarrow\infty} Q_{u_k} (\check w_k)
\ge \overline\kappa \ \lim_{k\rightarrow\infty} \int_{\Sigma} \abs{\check w_k}^2 \omega_{\eta,k}\ dvol_{\Sigma}
= \overline\kappa >0.
\end{equation}
This is a contradiction to \eqref{EQ: vuicojnthbonurihy} and we have shown that either $w_\infty \ne 0$ or $\sigma_\infty \ne 0$.
\end{proof}

We can finally show

\begin{proof}[Proof (of Theorem \ref{THM: usc morse index sacks-uhlenbeck with one bubble})]
By Lemma \ref{LEMMA: Ind plus Null eq dim of eigenspaces} and Lemma \ref{LEMMA: Ind plus Null eq dim of eig for infty lim func} it suffices to show that for $k \in \N$ large and $\eta>0$ small
\begin{equation}
\operatorname{dim}\left( \bigoplus_{\lambda\le 0} \mathcal E_{\eta,k}(\lambda) \right) 
\le \dim\left(\mathcal E_{\eta,\infty}^0\right) + \dim\left(\widehat{\mathcal E}_{\eta,\infty}^0\right).
\end{equation}
Let $N\in\N$ be fixed. 
For $k \in\N$ let $\phi_k^1,\ldots,\phi_k^N$ be a free orthonormal family of $U_{u_k}$ of eigenfunctions of the operator $\mathcal L_{\eta,k}$ with according negative eigenvalues $\lambda_{k}^1, \ldots, \lambda_{k}^N \le 0$.
For a contradiction we assume that 
\begin{equation}
\label{EQ: jnioecnHI1829he1n1ed231}
N > \dim\left(\mathcal E_{\eta,\infty}^0\right) + \dim\left(\widehat{\mathcal E}_{\eta,\infty}^0\right).
\end{equation}
By Lemma \ref{LEMMA: On the non zero of the lim functs} we find that up to subsequences
\begin{equation}
\label{EQ: iuwenfiUNIUND9183hrn9231}
\phi_k^j \rightharpoondown \phi_\infty^j, \text{ weakly in } W^{2,2}_{loc}(\Sigma\setminus\{q\})
\end{equation}
and 
\begin{equation}
\sigma_k^j(z) \coloneqq \phi_k^j(\delta_k^j z) \rightharpoondown \sigma_\infty^j(z), \text{ weakly in } W^{2,2}_{loc}(\C).
\end{equation}
Let $r>0$ and $w\in W^{1,2}(\Sigma;\R^m)$ with $\supp (w)\subset \Sigma \setminus B_r(q)$. Consider
\begin{equation}
\label{EQ: inwejnJININF130fn}
\begin{aligned}
\langle \mathcal L_{\eta,k} \phi_k^j , w \rangle_{\omega_{\eta,k}} 
&= \underbrace{p_k\ (p_k-2) \int_{\Sigma \setminus B_r(q)} \left( 1+\abs{\nabla u_k}^2\right)^{p_k/2-2} \left(\nabla u_k \cdot \nabla \phi_k^j \right) \left(\nabla u_k \cdot P_{u_k} \nabla w \right)  \ dvol_\Sigma}_{\eqqcolon I_{\eta,k}} \\
&\hspace{10mm}+\underbrace{p_k \int_{\Sigma \setminus B_r(q)} \left( 1+\abs{\nabla u_k}^2\right)^{p_k/2-1} \nabla  \phi_k^j \cdot P_{u_k} \nabla w \ dvol_\Sigma}_{\eqqcolon II_{\eta,k}} \\
&\hspace{20mm}-\underbrace{p_k \int_{\Sigma \setminus B_r(q)} \left( 1+\abs{\nabla u_k}^2\right)^{p_k/2-1}  \abs{\nabla u_k}^2 \phi_k^j \cdot P_{u_k}w \ dvol_\Sigma}_{\eqqcolon III_{\eta,k}} \\
\end{aligned}
\end{equation}
First, with Lemma \ref{LEMMA: Linfty bd on na u in the necks unif}
\begin{equation}
\begin{aligned}
\abs{I_{\eta,k}} 
&\le p_k (p_k-2) \underbrace{\norm{\left(1+\abs{\nabla u_k}^2 \right)^{\frac{p_k}{2}-1}}_{L^\infty(\Sigma)}}_{\le C} \int_{\Sigma \setminus B_r(q)} \underbrace{\frac{\abs{\nabla u_k}^2}{1+\abs{\nabla u_k}^2}}_{\le 1} \abs{\nabla \phi_k^j} \ \abs{\nabla w}  \ dvol_\Sigma \\
&\le C (p_k-2) \underbrace{\norm{\nabla \phi_k^j}_{L^2(\Sigma \setminus B_r(q))}}_{\le C} \norm{\nabla w}_{L^2(\Sigma)} \\
& \le C (p_k-2) \rightarrow0, \qquad \text{ as } k \rightarrow \infty.
\end{aligned}
\end{equation}
Second, using \eqref{EQ: iuwenfiUNIUND9183hrn9231} and Corollary \ref{COROLLARY: Limit of na uk to pk minus 2 eq 1} we know that
\begin{equation}
\left( 1+\abs{\nabla u_k}^2\right)^{p_k/2-1} \nabla  \phi_k^j \rightharpoondown \nabla  \phi_\infty^j, \text{ weakl in } W^{1,2}_{loc}(\Sigma\setminus\{q\})
\end{equation} 
and hence also with \eqref{EQ: Cond of bubb conv ji0wr931jJGA}
\begin{equation}
II_{\eta,k} \rightarrow 2 \int_\Sigma \nabla  \phi_\infty^j \cdot P_{u_\infty} \nabla w \ dvol_\Sigma,  \qquad \text{ as } k \rightarrow \infty.
\end{equation}
Third, using \eqref{EQ: iuwenfiUNIUND9183hrn9231}, \eqref{EQ: Cond of bubb conv ji0wr931jJGA} and Corollary \ref{COROLLARY: Limit of na uk to pk minus 2 eq 1} we know that
\begin{equation}
\left( 1+\abs{\nabla u_k}^2\right)^{p_k/2-1} \abs{\nabla u_k}^2 \phi_k^j \rightharpoondown \abs{\nabla u_\infty}^2 \phi_\infty^j, \text{ weakl in } W^{2,2}_{loc}(\Sigma\setminus\{q\})
\end{equation} 
and hence 
\begin{equation}
III_{\eta,k} \rightarrow 2 \int_\Sigma \abs{\nabla u_\infty}^2 \phi_\infty^j \cdot P_{u_\infty} w \ dvol_\Sigma,  \qquad \text{ as } k \rightarrow \infty.
\end{equation}
Going back to \eqref{EQ: inwejnJININF130fn} we have shown that 
\begin{equation}
\langle \mathcal L_{\eta,k} \phi_k^j , w \rangle_{\omega_{\eta,k}} \rightarrow \langle \mathcal L_{\eta,\infty} \phi_\infty^j , w \rangle_{\omega_{\eta,\infty}} , \qquad \text{ as } k \rightarrow \infty.
\end{equation}
This means that
\begin{equation}
\mathcal L_{\eta,k} \phi_k^j \rightharpoondown  \mathcal L_{\eta,\infty} \phi_\infty^j, \text{ weakly in } W^{1,2}_{loc}(\Sigma\setminus\{q\}).
\end{equation}
This together with
\begin{equation}
\mathcal L_{\eta,k} \phi_k^j = \lambda_k^j \phi_k^j \rightharpoondown \lambda_\infty^j \phi_\infty^j, \text{ weakly in } W^{1,2}_{loc}(\Sigma\setminus\{q\})
\end{equation}
 gives
\begin{equation}
\mathcal L_{\eta,\infty} \phi_\infty^j = \lambda_\infty^j \phi_\infty^j \text{ in } \mathcal D^\prime(\Sigma\setminus\{q\}).
\end{equation}
Since $\phi_\infty^j\in W^{1,2}(\Sigma)$ we can deduce using Lemma \ref{LEMMA: On the point removability of Sob functions} that indeed
\begin{equation}
\mathcal L_{\eta,\infty} \phi_\infty^j = \lambda_\infty^j \phi_\infty^j \text{ in } \Sigma .
\end{equation}
Similar one shows that
\begin{equation}
\widehat {\mathcal L}_{\eta,\infty} \sigma_\infty^j = \lambda_\infty^j \sigma_\infty^j \text{ in } \Sigma.
\end{equation}
Now since by \eqref{EQ: jnioecnHI1829he1n1ed231} $N > \dim({\mathcal E}_{\eta,\infty}^0 \times \widehat{\mathcal E}_{\eta,\infty}^0)$ we have that the family $(\phi_\infty^j,\sigma_\infty^j)_{j=1\ldots N}$ is linearly dependent and we can find some $(c_\infty^1, \ldots, c_\infty^N)\neq 0$ such that
\begin{equation}
\label{EQ: wifnuJNIOFN103mnf223}
\sum_{j=1}^N c_\infty^j \phi_\infty^j=0 \qquad \text{ and } \qquad \sum_{j=1}^N c_\infty^j \sigma_\infty^j=0.
\end{equation}
Let 
\begin{equation}
w_k \coloneqq \frac{1}{\sum_{j=1}^N c_\infty^j} \sum_{j=1}^N c_\infty^j \phi_k^j.
\end{equation}
Then $w_k \in \mathcal S_{\eta,k}^0$ and by Lemma \ref{LEMMA: On the non zero of the lim functs} up to subsequences
\begin{equation}
w_k \rightharpoondown w_\infty, \text{ in } \dot W^{1,2}(\Sigma) \qquad \text{ and } \qquad
w_{k}(\delta_k y+x_k) \rightharpoondown \sigma_\infty(y), \text{ in } \dot W^{1,2}(\C)
\end{equation}
and either $w_\infty \ne 0$ or $\sigma_\infty \ne 0$.
But by \eqref{EQ: wifnuJNIOFN103mnf223} one has $(w_\infty,\sigma_\infty)=(0,0)$.
This is a contradiction.

\end{proof}

\newpage
\newpage
\ \vspace{15mm}
\appendix
\section{Appendix}
\subsection{Whitney Type Extension}

In the following we always use the notation $B_r=B_r(0) \subset \R^2$.
\begin{lemma}
\label{LEMMA: Whitney extension for ann in Lp}
Let $0< 2r<R$ and $\frac{3}{2}\le p \le 4$. Suppose $a\in W^{1,p}(B_R\setminus B_r)$. 
Then there exists an extension $\widetilde{a}\in W^{1,p}({\C})$ of $a$ with
\begin{equation}
\supp(\nabla\widetilde{a})\subset B_{2R}\setminus B_{r/2} ,
\end{equation}
such that
\begin{equation}
 \int_{\C}|\nabla \widetilde{a}|^p\ dx \le C \int_{B_R\setminus B_r}|\nabla a|^p\ dx ,
\end{equation}
where $C>0$ is independent of $p$, $r$ and $R$. We can also impose moreover
\begin{equation}
\int_{B_{2R}\setminus B_R}|\nabla \widetilde{a}|^p\ dx\le C \int_{B_R\setminus B_{R/2}}|\nabla a|^p\ dx ,
\end{equation}
and
\begin{equation}
\int_{B_{r}\setminus B_{r/2}}|\nabla \widetilde{a}|^p\ dx\le C \int_{B_{2r}\setminus B_{r}}|\nabla a|^p\ dx.
\end{equation}
Furthermore, we remark that the extension is independent of $p$, i.e. if $a \in W^{1,p_1}\cap W^{1,p_2}$ then the according extensions $\widetilde a_{p_1}$ and $\widetilde a_{p_2}$ are equal $\widetilde a=\widetilde a_{p_1}=\widetilde a_{p_2}$.
\end{lemma}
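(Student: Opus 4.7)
The plan is to build $\widetilde{a}$ by reflecting $a$ across the two boundary circles $\partial B_R$ and $\partial B_r$ using the inversions $\Phi_R(x)=R^2x/|x|^2$ and $\Phi_r(x)=r^2x/|x|^2$, and then gluing to constants via radial cutoffs that depend only on $|x|$ (hence the extension is $p$-independent). Note that $\Phi_R$ sends $B_{2R}\setminus B_R$ diffeomorphically onto $B_R\setminus B_{R/2}$, fixes $\partial B_R$ pointwise, and its differential satisfies $|D\Phi_R(x)|\le 1$ on $\{|x|\ge R\}$ with Jacobian $|\det D\Phi_R(x)|=R^4/|x|^4\in[1/16,1]$ on this annulus; symmetric statements hold for $\Phi_r$ on $B_r\setminus B_{r/2}$.

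First I would fix smooth cutoffs $\chi_{\mathrm{out}},\chi_{\mathrm{in}}:[0,\infty)\to[0,1]$ with $\chi_{\mathrm{out}}\equiv 1$ on $[0,R]$, $\chi_{\mathrm{out}}\equiv 0$ on $[2R,\infty)$, $|\chi_{\mathrm{out}}'|\le C/R$, and analogously $\chi_{\mathrm{in}}\equiv 1$ on $[r,\infty)$, $\chi_{\mathrm{in}}\equiv 0$ on $[0,r/2]$, $|\chi_{\mathrm{in}}'|\le C/r$. Let $\overline{a}_R=\fint_{B_R\setminus B_{R/2}}a\,dx$ and $\overline{a}_r=\fint_{B_{2r}\setminus B_r}a\,dx$ and define
\begin{equation*}
\widetilde{a}(x)=\begin{cases}
\overline{a}_R & |x|\ge 2R,\\
\chi_{\mathrm{out}}(|x|)\bigl(a(\Phi_R(x))-\overline{a}_R\bigr)+\overline{a}_R & R\le |x|\le 2R,\\
a(x) & r\le |x|\le R,\\
\chi_{\mathrm{in}}(|x|)\bigl(a(\Phi_r(x))-\overline{a}_r\bigr)+\overline{a}_r & r/2\le |x|\le r,\\
\overline{a}_r & |x|\le r/2.
\end{cases}
\end{equation*}
Since $\Phi_R$ and $\Phi_r$ fix their respective boundary circles and the cutoffs equal $1$ there, the traces of $\widetilde{a}$ on $\partial B_R$ and $\partial B_r$ coincide with those of $a$, so $\widetilde{a}\in W^{1,p}_{\mathrm{loc}}(\C)$. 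The support condition $\supp(\nabla\widetilde{a})\subset B_{2R}\setminus B_{r/2}$ is immediate.

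For the gradient estimate on the outer region, I would write
\begin{equation*}
\nabla\widetilde{a}(x)=\chi_{\mathrm{out}}'(|x|)\,\tfrac{x}{|x|}\bigl(a(\Phi_R(x))-\overline{a}_R\bigr)+\chi_{\mathrm{out}}(|x|)\,D\Phi_R(x)^T\nabla a(\Phi_R(x)),
\end{equation*}
and estimate the two pieces separately. The second term, using $|D\Phi_R|\le 1$ and the change of variable $y=\Phi_R(x)$ with $dx=(|y|/R)^{-4}\,R^4|y|^{-4}\cdots$ giving a Jacobian bounded above and below by $p$-independent constants on the relevant annulus, is controlled directly by $\int_{B_R\setminus B_{R/2}}|\nabla a|^p\,dy$. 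The first term is handled by combining $|\chi_{\mathrm{out}}'|\le C/R$ with the Poincar\'e--Wirtinger inequality on the dyadic annulus $B_R\setminus B_{R/2}$, namely $\|a-\overline{a}_R\|_{L^p(B_R\setminus B_{R/2})}\le C\,R\,\|\nabla a\|_{L^p(B_R\setminus B_{R/2})}$, and again the change of variables via $\Phi_R$ with bounded Jacobian. The analogous argument on the inner region, via $\Phi_r$ and the Poincar\'e inequality on $B_{2r}\setminus B_r$, yields the companion bound on $B_r\setminus B_{r/2}$. Summing gives the desired global inequality, along with the two local refinements on $B_{2R}\setminus B_R$ and $B_r\setminus B_{r/2}$.

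The main technical point is uniformity of the Poincar\'e constant in $p$ on dyadic annuli: by scaling we may reduce to the unit annulus $B_1\setminus B_{1/2}$, where a standard continuity/compactness argument shows that the Poincar\'e constant is bounded uniformly for $p\in[3/2,4]$; this restriction on $p$ is precisely what the hypothesis $\frac{3}{2}\le p\le 4$ is used for. Finally, independence of $\widetilde{a}$ from $p$ is clear from the construction: $\Phi_R,\Phi_r,\chi_{\mathrm{out}},\chi_{\mathrm{in}},\overline{a}_R,\overline{a}_r$ are defined without reference to $p$, so if $a\in W^{1,p_1}\cap W^{1,p_2}$ the two resulting extensions coincide pointwise.
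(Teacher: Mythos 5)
Your construction matches the paper's: both build $\widetilde{a}$ by inverting across $\partial B_r$ and $\partial B_R$, interpolating to the dyadic average with a radial cutoff, and controlling the cutoff-derivative term through a Poincar\'e--Wirtinger inequality on a dyadic annulus; whether one applies the cutoff before or after the inversion is purely cosmetic (and on the inner annulus $B_r\setminus B_{r/2}$ the conformal factor is $r^2/\abs{x}^2\le 4$ rather than $\le 1$, which is harmless). The one substantive difference is how the $p$-uniformity of the Poincar\'e constant over $[3/2,4]$ is established: the paper appeals to complex interpolation (Bergh--L\"ofstr\"om, Theorem 4.1.2), whereas you gesture at a compactness/continuity argument. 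Your route is viable, but you can avoid both interpolation and compactness by a purely elementary chain: on the unit annulus $A=B_1\setminus B_{1/2}$, writing $\overline{f}$ for the mean of $f$ over $A$, one has for every $p\in[3/2,4]$ that
\begin{equation*}
\norm{f-\overline{f}}_{L^p(A)}\le C\,\norm{f-\overline{f}}_{L^6(A)}\le C\,\norm{\nabla f}_{L^{3/2}(A)}\le C\,\norm{\nabla f}_{L^p(A)},
\end{equation*}
using H\"older on the bounded domain (valid since $3/2\le p\le 4<6$), the two-dimensional Sobolev embedding $W^{1,3/2}(A)\hookrightarrow L^6(A)$, and the Poincar\'e inequality at the single fixed exponent $3/2$; the resulting constant is manifestly independent of $p$. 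Everything else in your argument — the support of $\nabla\widetilde{a}$, the Jacobian bounds for the inversions on dyadic annuli, the two local refinements, and the $p$-independence of the extension — tracks the paper exactly.
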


\begin{proof}
Let $\chi \in C^\infty({\R}_{\ge0})$ with $0\le \chi\le1$ such that $\chi\equiv 1$ on $[0,1]$ and $\chi\equiv 0$ on $[2,\infty)$. We denote for any $t>0$
\begin{equation}
\chi_t(x)\coloneqq\chi\left(\frac{|x|}{t}\right), \qquad x \in \C.
\end{equation}
For $|x|\ge r$ we define
\begin{equation}
\widehat{a}(x):=\chi_r(x)\ a(x)+(1-\chi_r(x))\ \overline{a}_r, \quad\text{ where }\quad\overline{a}_r:=\frac{1}{|B_{2r}\setminus B_r|}\int_{B_{2r}\setminus B_r} a(x)\ dx.
\end{equation}
Because of Poincar\`e's inequality there holds
\begin{equation}
\label{EQ: uinwfi1en89Nj1Q}
\int_{{\C}\setminus B_r}|\nabla \widehat{a}|^p\ dx
\le C  \int_{B_{2r}\setminus B_{r}}|\nabla a|^p\ dx.
\end{equation}
Here we can find a constant that does not depend on $p$ by complex interpolation, see Theorem 4.1.2. in \cite{BL12}.
Now, we extend $a$ inside $B_r$ by taking
\begin{equation}
\widetilde{a}(x)\coloneqq\widehat{a}\left(r^2\,\frac{x}{|x|^2}\right).
\end{equation}
With the change of variables $y=x\ r^2/\abs{x}^2$ we bound
\begin{equation}
\begin{aligned}
\int_{B_r} \abs{\nabla\widetilde a(x)}^p \ dx
&= \int_{B_r} \abs{\nabla \widehat a \left(r^2\,\frac{x}{|x|^2}\right)}^p \frac{r^{2p}}{\abs{x}^{2p}} \ dx \\
&= \int_{B_{2r}\setminus B_r} \abs{\nabla \widehat a \left(y\right)}^p \frac{\abs{y}^{2p-4}}{r^{2p-4}} \ dy \\
&\le C \int_{B_{2r}\setminus B_r} \abs{\nabla \widehat a \left(y\right)}^p \ dy \\
&\le C \int_{B_{2r}\setminus B_{r}}|\nabla a|^p\ dx,
\end{aligned}
\end{equation}
where in the last line we used \eqref{EQ: uinwfi1en89Nj1Q}.
We do something similar in order to extend $a$ outside $B_R$. For $|x|<R$ we define
\begin{equation}
\breve{a}(x)\coloneqq(1-\chi_R(x))\ a(x)+\chi_R(x)\ \overline{a}_R,\quad\text{ where }\quad\overline{a}_R:=\frac{1}{|B_{R}\setminus B_{R/2}|}\int_{B_{R}\setminus B_{R/2}} a(x)\ dx.
\end{equation}
Because of Poincar\`e's inequality there holds
\begin{equation}
\label{EQ: o2ifIHB183ndH1nQ}
\int_{B_R}|\nabla \breve{a}|^p\ dx
\le C  \int_{B_{R}\setminus B_{R/2}}|\nabla a|^p\ dx.
\end{equation}
Here we can find a constant that does not depend on $p$ by complex interpolation, see Theorem 4.1.2. in \cite{BL12}.
Now, we extend $a$ outside $B_R$ by taking
\begin{equation}
\widetilde{a}(x):=\breve{a}\left(R^2\,\frac{x}{|x|^2}\right).
\end{equation}
With the change of variables $y=x\ R^2/\abs{x}^2$ we bound
\begin{equation}
\begin{aligned}
\int_{\C\setminus B_R} \abs{\nabla\widetilde a(x)}^p \ dx
&= \int_{\C\setminus B_R} \abs{\nabla \breve a \left(R^2\,\frac{x}{|x|^2}\right)}^p \frac{R^{2p}}{\abs{x}^{2p}} \ dx \\
&= \int_{B_{R}\setminus B_{R/2}} \abs{\nabla \breve a \left(y\right)}^p \frac{\abs{y}^{2p-4}}{R^{2p-4}} \ dy \\
&\le C \int_{B_{R}\setminus B_{R/2}} \abs{\nabla \breve a \left(y\right)}^p \ dy \\
&\le C \int_{B_{R}\setminus B_{R/2}}|\nabla a|^p\ dx,
\end{aligned}
\end{equation}
where in the last line we used \eqref{EQ: o2ifIHB183ndH1nQ}.
\end{proof}

\subsection{Monotonicity Formula}
\begin{lemma}\label{LEMMA: Stationary p harm map equation}
Let $p>2$ and $u\in W^{1,p}(B_1;S^n)$.
Then $u$ is a stationary $p$-harmonic map if and only if
\begin{equation}\label{EQ: nuinf293uhh3fnu321fHBUOIS}
0= \frac{\partial}{\partial x^i}\left( (1+\abs{\nabla u}^2)^{\frac{p}{2}} \right) -p\ \divergence\left( (1+\abs{\nabla u}^2)^{\frac{p}{2} -1} \left\langle\frac{\partial u}{\partial x^i} , \nabla u \right\rangle \right),
\qquad \forall i =1,2.
\end{equation}
\end{lemma}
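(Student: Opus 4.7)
The statement is a standard characterization of stationarity with respect to inner (domain) variations, and the plan is to derive it by computing $\frac{d}{dt}\big|_{t=0} E_p(u\circ \phi_t)$ for a one-parameter family of compactly supported diffeomorphisms $\phi_t:B_1\to B_1$ with $\phi_0=\mathrm{id}$ and infinitesimal generator $X = \phi_t|_{t=0}'\in C_c^\infty(B_1;\R^2)$.

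First I would fix $\phi_t(x)=x+tX(x)$ so that, for $t$ small, $\phi_t$ is a diffeomorphism with inverse $\psi_t(y)=y-tX(y)+O(t^2)$. After the change of variables $y=\phi_t(x)$ one has
\begin{equation*}
E_p(u\circ\phi_t)=\int_{B_1}\bigl(1+|\nabla u(y)\,D\psi_t(y)^{-1}|^2\bigr)^{p/2}\,|\det D\psi_t(y)|\,dy.
\end{equation*}
The next step is a first order expansion: from $D\psi_t=I-t\,DX+O(t^2)$ I get
\begin{equation*}
\det D\psi_t = 1-t\,\divergence X + O(t^2),\qquad D\psi_t^{-1}=I+t\,DX+O(t^2),
\end{equation*}
and therefore, in index notation,
\begin{equation*}
\bigl|\nabla u \,D\psi_t^{-1}\bigr|^2 = |\nabla u|^2 + 2t\sum_{j,k}\bigl\langle \partial_j u,\partial_k u\bigr\rangle\,\partial_j X^k + O(t^2).
\end{equation*}

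Plugging these expansions in, expanding $(1+|\nabla u|^2+2t\,T)^{p/2}$ to first order and multiplying by the Jacobian expansion gives
\begin{equation*}
\frac{d}{dt}\bigg|_{t=0}E_p(u\circ \phi_t) = \int_{B_1}\!\left[p\,(1+|\nabla u|^2)^{p/2-1}\!\sum_{j,k}\!\langle\partial_j u,\partial_k u\rangle\,\partial_j X^k - (1+|\nabla u|^2)^{p/2}\,\partial_j X^j\right]dy.
\end{equation*}
I would then integrate by parts (using that $X$ has compact support), collect the factor $X^i$ in the $i$-th component, and rewrite everything as
\begin{equation*}
\frac{d}{dt}\bigg|_{t=0}E_p(u\circ\phi_t) = \int_{B_1} X^i\left[\frac{\partial}{\partial x^i}(1+|\nabla u|^2)^{p/2} - p\,\divergence\!\left((1+|\nabla u|^2)^{p/2-1}\,\langle\partial_i u,\nabla u\rangle\right)\right] dy.
\end{equation*}
Stationarity, i.e.\ vanishing of this expression for every $X\in C_c^\infty(B_1;\R^2)$, is then equivalent (by the fundamental lemma of the calculus of variations, applied componentwise) to \eqref{EQ: nuinf293uhh3fnu321fHBUOIS}; conversely, if \eqref{EQ: nuinf293uhh3fnu321fHBUOIS} holds for every $i=1,2$, then the displayed first variation vanishes for every inner variation $X$, so $u$ is stationary.

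The only nontrivial bookkeeping is the sign and the factor $p$ arising in the chain rule differentiating $(1+|\nabla u|^2)^{p/2}$ and matching it against the Jacobian contribution $-(1+|\nabla u|^2)^{p/2}\divergence X$; no PDE regularity is needed beyond $u\in W^{1,p}$ so that the integrals and the pointwise identity make sense distributionally, and the integration by parts can be justified by a standard approximation argument on vector fields $X$ compactly supported in $B_1$.
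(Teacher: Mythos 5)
Your proposal is correct and follows essentially the same route as the paper: both compute the first inner (domain) variation of $E_p$ and then localize with compactly supported vector fields to read off the stationarity equation. The only organizational difference is that you change variables $y=\phi_t(x)$ at the outset and expand $D\psi_t^{-1}$ and $\det D\psi_t$ to first order, whereas the paper differentiates $E_p(u\circ\Psi_s)$ directly and invokes a change of variables mid-computation to handle the zeroth-order term; both yield the same first-variation formula and the same conclusion.
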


\begin{proof}
Let $\psi \in C^\infty_c(B_1, \R^2)$ and let $\Psi_s(x)=x +s\psi(x)$, for $s$ in a small interval around $0\in\R$ we will have that $\Psi_s\in B_1$ and that $\Psi_s$ is a diffeomorphism of $B_1$ to itself.
Introduce
\begin{equation}
u_s=u \circ \Psi_s .
\end{equation}
We compute the expression
\begin{equation}
\frac{d}{ds}\bigg|_{s=0}E_p(u_s).
\end{equation}
By the chain rule we compute
\begin{equation}\label{EQ: uh29hjfu2h45gnHZBNHUBD}
\nabla u_s(x) = \nabla u(\Psi_s(x)) + s \sum_{i=1}^2 \frac{\partial u}{\partial x^i}(\Psi_s(x)) \ \nabla \psi^i(x).
\end{equation}
Also by the chain rule one has
\begin{equation}\label{EQ: 29ufn3unfHBU23r12r}
\frac{d}{ds} E_p(u_s)
= \frac{p}{2} \int_{B_1} (1+\abs{\nabla u_s}^2)^{\frac{p}{2} -1} \left( \frac{d}{ds} \abs{\nabla u_s}^2 \right) \ dx.
\end{equation}
Now squaring the expression in \eqref{EQ: uh29hjfu2h45gnHZBNHUBD} we get
\begin{equation}\label{EQ: 2u9fu9023fjUHNUI2uf20}
\begin{aligned}
\abs{\nabla u_s}^2
&= \left|\nabla u \circ \Psi_s\right|^2 +2s \sum_{i,j=1}^m \left\langle\frac{\partial u}{\partial x^i} \circ \Psi_s, \frac{\partial u}{\partial x^j} \circ \Psi_s\right\rangle \frac{\partial \psi^i}{\partial x^j} \\
 &\hspace{10mm} +s^2 \sum_{i,j=1}^m \left\langle\frac{\partial u}{\partial x^i} \circ \Psi_s, \frac{\partial u}{\partial x^j} \circ \Psi_s\right\rangle \nabla \psi^i \cdot \nabla \psi^j.
\end{aligned}
\end{equation}
With a change of variables
\begin{equation}\label{EQ: u39hjgHBU80u2jr89}
\begin{aligned}
&\hspace{-30mm} \left( \frac{p}{2} \int_{B_1} (1+\abs{\nabla u_s}^2)^{\frac{p}{2}-1} \left( \frac{d}{ds} \abs{\nabla u \circ \Psi_s}^2 \right) \ dx \right)\bigg|_{s=0} \\
&= \frac{p}{2} \int_{B_1} (1+\abs{\nabla u}^2)^{\frac{p}{2} -1} \left( \frac{d}{ds}\bigg|_{s=0} \abs{\nabla u \circ \Psi_s}^2 \right) \ dx \\
&=  \int_{B_1}  \left( \frac{d}{ds}\bigg|_{s=0} (1+\abs{\nabla u\circ \Psi_s}^2)^{\frac{p}{2}} \right) \ dx \\
&=   \frac{d}{ds}\bigg|_{s=0} \int_{B_1} (1+\abs{\nabla u\circ \Psi_s}^2)^{\frac{p}{2}} \ dx \\
&= \frac{d}{ds}\bigg|_{s=0} \int_{B_1} (1+\abs{\nabla u}^2)^{\frac{p}{2}} \abs{\det \nabla \Psi_s^{-1}} dx \\
&= - \int_{B_1} (1+\abs{\nabla u}^2)^{\frac{p}{2}} \divergence\psi \ dx ,
\end{aligned}
\end{equation}
where in the last line we used the fact that $\frac{d}{ds}\big|_{s=0}\abs{\det \nabla \Psi_s^{-1}} = -\divergence\psi$.
Putting \eqref{EQ: 29ufn3unfHBU23r12r}, \eqref{EQ: 2u9fu9023fjUHNUI2uf20} and \eqref{EQ: u39hjgHBU80u2jr89} together we obtain
\begin{equation}\label{EQ: wuifen832zr7hbg82BUDIJA}
\begin{aligned}
\frac{d}{ds}\bigg|_{s=0} E_p(u_s) = p \int_{B_1}(1+\abs{\nabla u}^2)^{\frac{p}{2} -1} \sum_{i,j=1}^m \left\langle\frac{\partial u}{\partial x^i} , \frac{\partial u}{\partial x^j}\right\rangle \frac{\partial \psi^i}{\partial x^j} \ dx -  \int_{B_1} (1+\abs{\nabla u}^2)^{\frac{p}{2}} \divergence\psi \ dx
\end{aligned}
\end{equation}
For some fixed $i=1,2$ let $\psi= e_i \phi$ for some scalar $\phi \in C^\infty_c(B_1)$.
Integrating by parts
\begin{equation}
\begin{aligned}
&\hspace{-20mm} p \int_{B_1}(1+\abs{\nabla u}^2)^{\frac{p}{2} -1} \sum_{j=1}^m \left\langle\frac{\partial u}{\partial x^i} , \frac{\partial u}{\partial x^j}\right\rangle \frac{\partial \phi}{\partial x^j} \ dx \\
&=- p \int_{B_1} \divergence \left( (1+\abs{\nabla u}^2)^{\frac{p}{2} -1} \left\langle\frac{\partial u}{\partial x^i} , \nabla u \right\rangle \right)\phi \ dx
\end{aligned}
\end{equation}
Going back to \eqref{EQ: wuifen832zr7hbg82BUDIJA} one finds for $i=1,2$ that
\begin{equation}\label{EQ: bhuu89HNJqf82fqwolu}
\begin{aligned}
&\hspace{-12mm} \frac{d}{ds}\bigg|_{s=0} E_p\Big(u\circ (id+s \phi e_i)\Big) \\
&= \int_{B_1} \left[ \frac{\partial}{\partial x^i}\left( (1+\abs{\nabla u}^2)^{\frac{p}{2}} \right) -p\ \divergence\left( (1+\abs{\nabla u}^2)^{\frac{p}{2} -1} \left\langle\frac{\partial u}{\partial x^i} , \nabla u \right\rangle \right) \right] \phi\ dx
\end{aligned}
\end{equation}
From \eqref{EQ: bhuu89HNJqf82fqwolu} the claim follows.
\end{proof}

\begin{lemma}
A $p$-harmonic map $u$ is also a stationary $p$-harmonic map.
\end{lemma}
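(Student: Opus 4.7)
The plan is to derive the stationarity identity characterized in Lemma~2.7 directly from the divergence-form Euler--Lagrange equation~\eqref{EQ: Euler-Lagrange equations in div form for p harm}. Set
\[
A \coloneqq (1+\abs{\nabla u}^2)^{\frac{p}{2}-1},
\]
so that the Euler--Lagrange equation reads $-\divergence(A\,\nabla u) = A\,u\,\abs{\nabla u}^2$. The goal is to verify identity~\eqref{EQ: nuinf293uhh3fnu321fHBUOIS} of Lemma~\ref{LEMMA: Stationary p harm map equation} componentwise for each $i=1,2$.

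First I would compute the left-hand side by the chain rule:
\[
\frac{\partial}{\partial x^i}\left((1+\abs{\nabla u}^2)^{\frac{p}{2}}\right)
= p\,A\,\left\langle \partial_i \nabla u,\nabla u\right\rangle.
\]
Next I would expand the divergence on the right-hand side by the product rule:
\[
\divergence\!\left(A\,\left\langle \partial_i u,\nabla u\right\rangle\right)
= \left\langle \partial_i u,\divergence(A\,\nabla u)\right\rangle + A\,\left\langle \nabla u,\nabla\partial_i u\right\rangle.
\]
Substituting the Euler--Lagrange equation into the first summand gives
\[
\left\langle \partial_i u,\divergence(A\,\nabla u)\right\rangle
= -A\,\abs{\nabla u}^2\,\left\langle \partial_i u,u\right\rangle = 0,
\]
because the constraint $\abs{u}^2=1$ on $S^n$ forces $\langle u,\partial_i u\rangle = \tfrac12\partial_i\abs{u}^2 = 0$. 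Combining the two computations yields
\[
p\,\divergence\!\left(A\,\left\langle \partial_i u,\nabla u\right\rangle\right)
= p\,A\,\left\langle \nabla u,\nabla\partial_i u\right\rangle
= \frac{\partial}{\partial x^i}\left((1+\abs{\nabla u}^2)^{\frac{p}{2}}\right),
\]
which is exactly \eqref{EQ: nuinf293uhh3fnu321fHBUOIS}, so by Lemma~\ref{LEMMA: Stationary p harm map equation} the map $u$ is stationary.

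The main technical obstacle is regularity: the manipulation above needs the freedom to exchange partial derivatives of $u$ and to interpret the divergence pointwise. For a smooth $p$-harmonic map everything is classical, but for a general weak $p$-harmonic map into $S^n$ one should either invoke the interior $C^{1,\alpha}$-regularity available for $p$-harmonic systems into spheres (together with higher regularity away from the singular set, which is empty in our planar setting under small-energy assumptions coming from Lemma~\ref{LEMMA: epsilon reg sacks uhlenbeck for p harm}), or carry out the argument distributionally: test the Euler--Lagrange identity against $\partial_i \varphi$ for $\varphi\in C^\infty_c$, integrate by parts using only first derivatives of $u$, and exploit the algebraic identity $\langle u,\partial_i u\rangle = 0$ (valid a.e.\ by the constraint) to conclude. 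Either route yields the claim, and in the setting of the paper the $\varepsilon$-regularity and interior smoothness of critical points make the pointwise version sufficient.
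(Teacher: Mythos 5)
Your proof is correct and follows essentially the same route as the paper's: reduce to the pointwise identity \eqref{EQ: nuinf293uhh3fnu321fHBUOIS} of Lemma~\ref{LEMMA: Stationary p harm map equation}, expand both sides by the chain and product rules, cancel the two matching $pA\langle\nabla u,\nabla\partial_i u\rangle$ terms, and conclude using the Euler--Lagrange equation \eqref{EQ: Euler-Lagrange equations in div form for p harm} together with the constraint $\langle u,\partial_i u\rangle=0$. The paper likewise justifies the pointwise manipulation by citing the regularity of $p$-harmonic maps from \cite{SU81}, so your remark about needing sufficient smoothness is exactly the point the paper addresses.
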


\begin{proof}
By Lemma \ref{LEMMA: Stationary p harm map equation} we only need to check \eqref{EQ: nuinf293uhh3fnu321fHBUOIS}.
By \cite{SU81} we know that $u$ is regular and hence we can compute formally for $i=1,2$
\begin{equation}
\begin{aligned}
&\hspace{-20mm}\frac{\partial}{\partial x^i}\left( (1+\abs{\nabla u}^2)^{\frac{p}{2}} \right) -p\ \divergence\left( (1+\abs{\nabla u}^2)^{\frac{p}{2} -1} \left\langle\frac{\partial u}{\partial x^i} , \nabla u \right\rangle \right) \\
&= \underbrace{\frac{p}{2}(1+\abs{\nabla u}^2)^{\frac{p}{2}-1} 2 \left\langle\nabla u, \frac{\partial}{\partial x^i}\nabla u \right\rangle - p \left\langle\nabla\frac{\partial u}{\partial x^i}, (1+\abs{\nabla u}^2)^{\frac{p}{2}-1}\nabla u \right\rangle}_{=0} \\
&- \left\langle\frac{\partial u}{\partial x^i}, \divergence\left( (1+\abs{\nabla u}^2)^{\frac{p}{2}-1}\nabla u \right) \right\rangle \\
&=- \left\langle\frac{\partial u}{\partial x^i},  (1+\abs{\nabla u}^2)^{\frac{p}{2}-1}\abs{\nabla u}^2 u \right\rangle \\
&=- (1+\abs{\nabla u}^2)^{\frac{p}{2}-1}\abs{\nabla u}^2 \underbrace{\left\langle\frac{\partial u}{\partial x^i}, u \right\rangle}_{=\frac{\partial}{\partial x^i} \abs{u}^2=0},
\end{aligned}
\end{equation}
where we used the $p$-harmonic map equation \eqref{EQ: Euler-Lagrange equations in div form for p harm}.
\end{proof}

\begin{lemma} 
[Monotonicity Formula]
\label{LEMMA: Monotonicity Formula for p harm maps}
Let $p>2$ and let $u \in W^{1,p}(B_1,S^n)$ be a $p$-harmonic map.
Then for any $\rho\in(0,1)$ there holds
\begin{equation}\label{EQ: ijwnfeJN320j2njafaq1}
\begin{aligned}
&\hspace{-15mm}\frac{p-2}{p\ \rho} \int_{B_\rho} (1+\abs{\nabla u}^2)^{\frac{p}{2}-1} \abs{\nabla u_k}^2 \ dx \\
&=\int_{\partial B_\rho} (1+\abs{\nabla u}^2)^{\frac{p}{2}-1} \left(\abs{\frac{\partial u}{\partial r}}^2 -\frac{1}{p} \abs{\nabla u}^2 \right) d\sigma \\
&\hspace{10mm}+ \frac{2}{\rho\ p} \int_{B_\rho} (1+\abs{\nabla u}^2)^{\frac{p}{2}-1} \ dx - \frac{1}{p} \int_{\partial B_\rho} (1+\abs{\nabla u}^2)^{\frac{p}{2}-1} \ d\sigma,
\end{aligned}
\end{equation}
where $d\sigma=r\ d\vartheta$ is the surface measure on $\partial B_\rho$.
\end{lemma}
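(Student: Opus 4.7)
The plan is to derive the identity by applying the classical Pohozaev multiplier trick to the stationarity equation \eqref{EQ: nuinf293uhh3fnu321fHBUOIS} established in the previous lemma. Since $p$-harmonic maps are also stationary (as shown in the paper) and regular by Sacks--Uhlenbeck, all manipulations below can be justified classically on any $B_\rho \Subset B_1$.

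First, I would rewrite the stationarity relation in the form $\partial_j T_{ij}=0$, where the stress--energy tensor is
\begin{equation*}
T_{ij} \coloneqq (1+|\nabla u|^2)^{p/2}\,\delta_{ij} - p\,(1+|\nabla u|^2)^{p/2-1}\,\langle \partial_i u,\partial_j u \rangle.
\end{equation*}
Multiplying $\partial_j T_{ij}=0$ by $x^i$ and summing in $i$ gives $\partial_j(x^i T_{ij})=T_{ii}$. Integrating over $B_\rho$ and applying the divergence theorem (so that $\nu^j = x^j/\rho$ and $x^i T_{ij}\nu^j = \rho\,\nu^i T_{ij}\nu^j$ on $\partial B_\rho$) yields
\begin{equation*}
\rho\int_{\partial B_\rho}\!\!\Big[(1+|\nabla u|^2)^{p/2} - p\,(1+|\nabla u|^2)^{p/2-1}|\partial_r u|^2 \Big]\,d\sigma = \int_{B_\rho}\!\!\Big[2(1+|\nabla u|^2)^{p/2} - p\,(1+|\nabla u|^2)^{p/2-1}|\nabla u|^2\Big]\,dx,
\end{equation*}
using the 2D trace $T_{ii}=2(1+|\nabla u|^2)^{p/2} - p\,(1+|\nabla u|^2)^{p/2-1}|\nabla u|^2$.

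The remaining step is purely algebraic: use the identity $(1+|\nabla u|^2)^{p/2} = (1+|\nabla u|^2)^{p/2-1} + (1+|\nabla u|^2)^{p/2-1}|\nabla u|^2$ to replace every occurrence of $(1+|\nabla u|^2)^{p/2}$ in both the boundary and the bulk integrals. After dividing through by $-\rho p$ and rearranging so that the $|\nabla u|^2(1+|\nabla u|^2)^{p/2-1}$ bulk term sits alone on the left with its factor $\tfrac{p-2}{p\rho}$, one recovers exactly \eqref{EQ: ijwnfeJN320j2njafaq1}.

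There is no serious obstacle here: the whole identity is a two-line Pohozaev calculation once stationarity is written as a conservation law $\partial_j T_{ij}=0$. The only mild subtlety is bookkeeping the splitting of $(1+|\nabla u|^2)^{p/2}$ so that the final identity is expressed entirely in the ``natural'' densities $(1+|\nabla u|^2)^{p/2-1}$ and $(1+|\nabla u|^2)^{p/2-1}|\nabla u|^2$ rather than in $(1+|\nabla u|^2)^{p/2}$; this is what produces the extra bulk and boundary terms $\tfrac{2}{\rho p}\int_{B_\rho}(1+|\nabla u|^2)^{p/2-1}\,dx$ and $-\tfrac{1}{p}\int_{\partial B_\rho}(1+|\nabla u|^2)^{p/2-1}\,d\sigma$ appearing on the right-hand side.
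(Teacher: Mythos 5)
Your proposal is correct and follows essentially the same approach as the paper: both start from the stationarity identity \eqref{EQ: nuinf293uhh3fnu321fHBUOIS}, apply the Pohozaev multiplier $x^i$, and then perform the algebraic split $(1+|\nabla u|^2)^{p/2} = (1+|\nabla u|^2)^{p/2-1} + (1+|\nabla u|^2)^{p/2-1}|\nabla u|^2$ to reach \eqref{EQ: ijwnfeJN320j2njafaq1}. The only cosmetic difference is that the paper tests the equation against $\eta(x)x^i$ with $\eta$ a cutoff function approximating $\mathbf 1_{B_\rho}$ and passes to a limit, whereas you package stationarity as a conservation law $\partial_j T_{ij}=0$ for the stress--energy tensor and apply the divergence theorem directly on $B_\rho$, using interior regularity to justify this; the two routes are equivalent.
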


\begin{proof}
Let $\eta \in C^\infty_c(B_1)$.
Testing \eqref{EQ: nuinf293uhh3fnu321fHBUOIS} with $\eta(x) x^i$ we obtain
\begin{equation}
\begin{aligned}
\int_{B_1} x^i \frac{\partial \eta}{\partial x^i} (1+\abs{\nabla u}^2)^{\frac{p}{2}}
-x^i p (1+\abs{\nabla u}^2)^{\frac{p}{2} -1} \left\langle\frac{\partial u}{\partial x^i} , \nabla u \cdot \nabla \eta \right\rangle \ dx \\
= -\int_{B_1} \eta \left( (1+\abs{\nabla u}^2)^{\frac{p}{2}} - p (1+\abs{\nabla u}^2)^{\frac{p}{2} -1} \abs{\frac{\partial u}{\partial x^i}}^2 \right) \ dx
\end{aligned}
\end{equation}
Summing in $i=1,2$ one has
\begin{equation}\label{EQ: ui2hfn3iuhw8JNJ}
\begin{aligned}
\int_{B_1} x \cdot \nabla \eta \ (1+\abs{\nabla u}^2)^{\frac{p}{2}}
-p (1+\abs{\nabla u}^2)^{\frac{p}{2} -1} \left\langle x \cdot \nabla u , \nabla u \cdot \nabla \eta \right\rangle \ dx \\
= -\int_{B_1} \eta \left( 2(1+\abs{\nabla u}^2)^{\frac{p}{2}} - p (1+\abs{\nabla u}^2)^{\frac{p}{2} -1} \abs{\nabla u}^2 \right) \ dx
\end{aligned}
\end{equation}
Let $\rho\in(0,1)$.
Let $\chi \in C^\infty(\R;[0,1])$ with $\chi \equiv 0$ on $(-\infty,0]$ and $\chi \equiv 1$ on $[1,\infty)$.
Consider the test function $\eta_k(x)= \chi(k(\rho-\abs{x}))$.
Passing to the limit $k\rightarrow\infty$ in \eqref{EQ: ui2hfn3iuhw8JNJ} we have 
\begin{equation}
\begin{aligned}
&\hspace{-20mm} -\rho \int_{\partial B_\rho} (1+\abs{\nabla u}^2)^{\frac{p}{2}} \ d\sigma
+ \frac{p}{\rho} \int_{\partial B_\rho} (1+\abs{\nabla u}^2)^{\frac{p}{2} -1} \abs{x \cdot \nabla u}^2 \ d\sigma \\
&= -\int_{B_\rho} 2(1+\abs{\nabla u}^2)^{\frac{p}{2}} - p (1+\abs{\nabla u}^2)^{\frac{p}{2} -1} \abs{\nabla u}^2 \ dx \\
&= -\int_{B_\rho} (1+\abs{\nabla u}^2)^{\frac{p}{2} -1} \left( 2+ \left(2- p\right) \abs{\nabla u}^2 \right) \ dx
\end{aligned}
\end{equation}
Finally, multiplying with $(\rho\ p)^{-1}$ one has
\begin{equation}
\begin{aligned}
&\hspace{-10mm} \frac{p	-2}{\rho\ p} \int_{B_\rho} (1+\abs{\nabla u}^2)^{\frac{p}{2} -1} \abs{\nabla u}^2 \ dx - \frac{2}{\rho\ p} \int_{B_\rho} (1+\abs{\nabla u}^2)^{\frac{p}{2} -1} \ dx \\
&=-\frac{1}{p} \int_{\partial B_\rho} (1+\abs{\nabla u}^2)^{\frac{p}{2}} \ d\sigma + \int_{\partial B_\rho} (1+\abs{\nabla u}^2)^{\frac{p}{2} -1} \abs{\frac{\partial u}{\partial r}}^2 \ d\sigma \\
&=-\frac{1}{p} \int_{\partial B_\rho} (1+\abs{\nabla u}^2)^{\frac{p}{2} -1} \ d\sigma + \int_{\partial B_\rho} (1+\abs{\nabla u}^2)^{\frac{p}{2} -1} \left(\abs{\frac{\partial u}{\partial r}}^2 -\frac{1}{p} \abs{\nabla u}^2 \right) d\sigma.
\end{aligned}
\end{equation}
\end{proof}

\begin{lemma}
\label{LEMMA: On the mono est for p harm}
Let $p>2$ and let $u \in W^{1,p}(B_1,S^n)$ be a $p$-harmonic map.
Then for any $0<r<R<1$ one has
\begin{equation}
(p-2) \log\left(\frac{R}{r} \right) \norm{\nabla u}_{L^2(B_r)}^2 
\le p\norm{(1+\abs{\nabla u }^2)^{\frac{p }{2}-1}}_{L^\infty(B_R \setminus B_r)} \left(2\norm{\nabla u }_{L^2(B_R \setminus B_r)}^2 + (R^2-r^2) \right).
\end{equation}
\end{lemma}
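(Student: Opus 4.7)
My plan is to integrate the monotonicity identity \eqref{EQ: ijwnfeJN320j2njafaq1} of Lemma \ref{LEMMA: Monotonicity Formula for p harm maps} with respect to $d\rho$ over the interval $[r,R]$, exploiting that $(1+|\nabla u|^2)^{p/2-1}\ge 1$ and that the ball integral $\rho\mapsto\int_{B_\rho}|\nabla u|^2\,dx$ is nondecreasing.

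First I would multiply \eqref{EQ: ijwnfeJN320j2njafaq1} by $p\rho$ and denote the three terms on the right by $A(\rho),B(\rho),C(\rho)$, so that for every $\rho\in(0,1)$
\[
(p-2)\int_{B_\rho}(1+|\nabla u|^2)^{p/2-1}|\nabla u|^2\,dx \;=\; p\rho A(\rho)+2B(\rho)-\rho C(\rho).
\]
Since $(1+|\nabla u|^2)^{p/2-1}\ge 1$, for every $\rho\ge r$ the left-hand side dominates $(p-2)\|\nabla u\|_{L^2(B_r)}^2$. Dividing by $\rho$ and integrating from $r$ to $R$ in $d\rho$ produces the factor $\log(R/r)$:
\[
(p-2)\log(R/r)\|\nabla u\|_{L^2(B_r)}^2 \;\le\; p\int_r^R A(\rho)\,d\rho \;+\; 2\int_r^R\frac{B(\rho)}{\rho}\,d\rho \;-\; \int_r^R C(\rho)\,d\rho.
\]

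Next I would estimate each of the three integrals on the right by Fubini and the pointwise bound $M:=\|(1+|\nabla u|^2)^{p/2-1}\|_{L^\infty(B_R\setminus B_r)}$. After dropping the nonpositive $-|\nabla u|^2/p$ contribution and using $|\partial_r u|^2\le|\nabla u|^2$, the first integral obeys $p\int_r^R A(\rho)\,d\rho\le pM\|\nabla u\|_{L^2(B_R\setminus B_r)}^2$. The third integral equals $-\int_{B_R\setminus B_r}(1+|\nabla u|^2)^{p/2-1}\,dx$ and is nonpositive. For the middle integral I would split $B(\rho)$ according to $B_\rho=B_r\cup (B_\rho\setminus B_r)$: on the annulus part the pointwise bound $\le M$ gives a contribution $M\pi(\rho^2-r^2)$; on the inner ball the elementary inequality $(1+t)^{(p-2)/2}\le 1+\tfrac{p-2}{2}t$ (valid for $t\ge 0$, $p\in[2,4]$) yields
\[
\int_{B_r}(1+|\nabla u|^2)^{p/2-1}\,dx \;\le\; \pi r^2 + \tfrac{p-2}{2}\|\nabla u\|_{L^2(B_r)}^2.
\]
Integrating against $d\rho/\rho$ on $[r,R]$ and using $\int_r^R(\rho - r^2/\rho)\,d\rho = (R^2-r^2)/2 - r^2\log(R/r)$ together with $M\ge 1$, the Bernoulli contribution from $B_r$ produces a term $(p-2)\log(R/r)\|\nabla u\|_{L^2(B_r)}^2$ which is absorbed into the left-hand side, and the surviving pieces combine into $pM\bigl[\,2\|\nabla u\|_{L^2(B_R\setminus B_r)}^2+(R^2-r^2)\bigr]$, as claimed.

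The hard part is the delicate bookkeeping for $\int_r^R B(\rho)/\rho\,d\rho$: the $B_r$-portion of $B(\rho)$ cannot be controlled pointwise by $M$, and the Bernoulli estimate that replaces it is precisely sharp enough that the term it produces on the right matches the left-hand side of the target inequality. Ensuring the constants line up so that the factor ``$2$'' in front of $\|\nabla u\|_{L^2(B_R\setminus B_r)}^2$ and the $pM(R^2-r^2)$ term emerge cleanly from the combination of the $A$-contribution and the annulus part of the $B$-contribution is the crux of the calculation.
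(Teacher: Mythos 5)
Your plan follows the paper's own route --- integrate the monotonicity identity \eqref{EQ: ijwnfeJN320j2njafaq1} over $\rho\in[r,R]$, lower-bound the left side via $(1+|\nabla u|^2)^{p/2-1}\ge 1$ and $B_r\subset B_\rho$, and upper-bound the three right-hand integrals --- and you have correctly flagged a genuine subtlety that the paper glides over: the inner-ball portion $\int_{B_r}(1+|\nabla u|^2)^{p/2-1}\,dx$ of $B(\rho)$ cannot be controlled by the annular sup $M=\norm{(1+|\nabla u|^2)^{p/2-1}}_{L^\infty(B_R\setminus B_r)}$. Indeed, the paper's estimate \eqref{EQ: iuwnfUINDW1029eej1jhw} bounds the full $B_\rho$-integral by $M\,|B_\rho|$, which implicitly treats $M$ as a supremum over all of $B_R$; this is harmless in every subsequent application because the lemma is always invoked together with the global uniform bound of Lemma \ref{LEMMA: Linfty bd on na u in the necks unif}.

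However, your proposed Bernoulli fix does not close the argument. Bernoulli yields $\int_{B_r}(1+|\nabla u|^2)^{p/2-1}\,dx \le \pi r^2 + \tfrac{p-2}{2}\norm{\nabla u}_{L^2(B_r)}^2$, and after multiplying by the factor $2\log(R/r)$ that the $B_r$-portion of $2\int_r^R B(\rho)\rho^{-1}\,d\rho$ picks up, its contribution to the right side is
\begin{equation}
2\pi r^2\log\!\left(\frac{R}{r}\right) \;+\; (p-2)\log\!\left(\frac{R}{r}\right)\norm{\nabla u}_{L^2(B_r)}^2\,.
\end{equation}
The coefficient of $\log(R/r)\norm{\nabla u}_{L^2(B_r)}^2$ is exactly $p-2$, identical to the one on the left-hand side of the inequality you are trying to prove. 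Absorbing it therefore cancels the target term completely: after subtracting it from both sides you are left with an inequality of the form $0\le pM\norm{\nabla u}_{L^2(B_R\setminus B_r)}^2 + M\pi(R^2-r^2) + 2\pi(1-M)r^2\log(R/r)$, which is true but carries no information about $\norm{\nabla u}_{L^2(B_r)}$. Absorption is nontrivial only when the coefficient on the left strictly dominates the one moved over from the right, and here they coincide. Retaining the sharper quantity $\int_{B_r}(1+|\nabla u|^2)^{p/2-1}|\nabla u|^2\,dx$ in the lower bound for the left side does not rescue this either: the resulting net $B_r$-integrand $(1+t)^{p/2-1}\bigl[(p-2)t-2\bigr]$ is negative for small $t$, so it does not dominate $(p-2)t$ up to an $M$-controlled error. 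The cleanest way to make the estimate literal, as the paper's proof does in effect, is to read the $L^\infty$ norm as taken over all of $B_R$; with that reading the direct bound $\int_{B_\rho}(1+|\nabla u|^2)^{p/2-1}\,dx\le M\,|B_\rho|$ is legitimate and the Bernoulli device is unnecessary.
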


\begin{proof}
Integrating \eqref{EQ: ijwnfeJN320j2njafaq1} from $\rho=r$ to $\rho=R$ the monotonicity formula given in Lemma \ref{LEMMA: Monotonicity Formula for p harm maps} we have
\begin{equation}\label{EQ: iwbfniIZBI1389hn2}
\begin{aligned}
&\hspace{-2mm}\int_{r}^R \frac{p-2}{\rho\ p} \int_{B_\rho} (1+\abs{\nabla u}^2)^{\frac{p}{2}-1} \abs{\nabla u}^2 \ dx \ d\rho \\
&\hspace{6mm}= \int_{ B_R \setminus B_r} (1+\abs{\nabla u }^2)^{\frac{p }{2}-1} \left(\abs{\frac{\partial u }{\partial r}}^2 -\frac{1}{p } \abs{\nabla u }^2 \right) dx\\
&\hspace{12mm}+ \int_r^R \left[ \frac{2}{\rho\ p } \int_{B_\rho} (1+\abs{\nabla u }^2)^{\frac{p }{2}-1} \ dx - \frac{1}{p } \int_{\partial B_\rho} (1+\abs{\nabla u }^2)^{\frac{p }{2}-1} \ d\sigma \right] d\rho.
\end{aligned}
\end{equation}
On the one hand, 
\begin{equation}\label{EQ: iwvnUBN28hdHZ12ka}
\begin{aligned}
\int_r^R \frac{p -2}{\rho\ p} \int_{B_\rho} (1+\abs{\nabla u }^2)^{\frac{p }{2}-1} \abs{\nabla u }^2 \ dx \ d\rho
&\ge \int_r^R \frac{p -2}{\rho\ p} \int_{B_r} \abs{\nabla u }^2 \ dx \ d\rho \\
&= \frac{p -2}{p } \norm{\nabla u }_{L^2(B_r)}^2 \log\left(\frac{R}{r} \right). \\
\end{aligned}
\end{equation}
where the last inequality is valid for $\eta>0$ small and $k$ large by strong convergence. 
On the other hand,
\begin{equation}\label{EQ: iwfnIiN129enjdqwqdwS}
\int_{ B_R \setminus B_r} (1+\abs{\nabla u }^2)^{\frac{p }{2}-1} \left(\abs{\frac{\partial u }{\partial r}}^2 -\frac{1}{p } \abs{\nabla u }^2 \right) dx
\le 2 \norm{(1+\abs{\nabla u }^2)^{\frac{p }{2}-1}}_{L^\infty(B_R \setminus B_r)}  \norm{\nabla u }_{L^2(B_R \setminus B_r)}^2.
\end{equation}
and also 
\begin{equation}\label{EQ: iuwnfUINDW1029eej1jhw}
\begin{aligned}
&\hspace{-20mm}\int_r^R \left[ \frac{2}{\rho\ p } \int_{B_\rho} (1+\abs{\nabla u }^2)^{\frac{p }{2}-1} \ dx - \frac{1}{p } \int_{\partial B_\rho} (1+\abs{\nabla u }^2)^{\frac{p }{2}-1} \ d\sigma \right] d\rho\\
&\le \norm{(1+\abs{\nabla u }^2)^{\frac{p }{2}-1}}_{L^\infty(B_R \setminus B_r)}
\int_r^R \frac{1}{\rho} \abs{B_\rho}  + \frac{1}{2} \abs{\partial B_\rho} d\rho \\
&= \norm{(1+\abs{\nabla u }^2)^{\frac{p }{2}-1}}_{L^\infty(B_R \setminus B_r)} \int_r^R \rho \ d\rho \\
&\le \norm{(1+\abs{\nabla u }^2)^{\frac{p }{2}-1}}_{L^\infty(B_R \setminus B_r)} (R^2-r^2)
\end{aligned}
\end{equation}
Combining \eqref{EQ: iwbfniIZBI1389hn2}, \eqref{EQ: iwvnUBN28hdHZ12ka}, \eqref{EQ: iwfnIiN129enjdqwqdwS} and \eqref{EQ: iuwnfUINDW1029eej1jhw} the claim follows.
\end{proof}

\subsection{Hodge/Helmholtz-Weyl Decomposition}
\begin{lemma} [Hodge/Helmholtz-Weyl Decomposition]
\label{LEMMA: Hodge/Helmholtz-Weyl Decomposition}
Let $\Omega\subset \R^2$ be a simply connected bounded open set with $C^2$ boundary and let $1<p<\infty$.
Let $f\in (L^p(\Omega))^2$ be a vector field in $\Omega$. 
Then
\begin{equation}\label{EQ: onfmuw8103JN83nIdA}
f= \nabla A + \nabla^\perp B \qquad \text{ in }\Omega,
\end{equation}
where $A,B \in W^{1,p}(\Omega)$ with $ \nabla^\perp B \cdot \nu=0$ on $\partial\Omega$.
Furthermore, this decomposition is unique up to additive constants. 
\end{lemma}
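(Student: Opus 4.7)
The plan is to build $A$ first via a Neumann problem for the Laplacian and then obtain $B$ as a stream function of the remainder. Precisely, I would define $A \in W^{1,p}(\Omega)$ as a weak solution of the Neumann problem
\begin{equation*}
\int_{\Omega} \nabla A \cdot \nabla \varphi \, dx = \int_{\Omega} f \cdot \nabla \varphi \, dx, \qquad \forall \varphi \in C^{\infty}(\overline{\Omega}),
\end{equation*}
normalized by $\int_\Omega A = 0$. Since $f \in L^p(\Omega)^2$ and $\partial\Omega$ is $C^2$, the classical $L^p$ regularity theory for the Neumann Laplacian on smooth domains (Agmon–Douglis–Nirenberg, see e.g.\ Galdi's book on Navier–Stokes) yields existence, uniqueness (up to a constant) and the estimate $\|\nabla A\|_{L^p(\Omega)} \le C\|f\|_{L^p(\Omega)}$. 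By construction the vector field $g \coloneqq f - \nabla A \in L^p(\Omega)^2$ satisfies
\begin{equation*}
\int_{\Omega} g \cdot \nabla \varphi \, dx = 0, \qquad \forall \varphi \in C^{\infty}(\overline{\Omega}),
\end{equation*}
which encodes both $\divergence g = 0$ in $\Omega$ and the vanishing of the normal trace $g \cdot \nu = 0$ on $\partial\Omega$ in a suitable distributional sense.

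Next I would recover $B$ from $g$ via the Poincaré lemma in $\R^2$. Since $\Omega$ is simply connected and $\divergence g = 0$, the $1$-form $g_1\, dx^2 - g_2\, dx^1$ is closed, hence exact: there exists $B \in W^{1,p}(\Omega)$, unique up to an additive constant, such that $\partial_1 B = -g_2$ and $\partial_2 B = g_1$, i.e.\ $g = \nabla^{\perp} B$. Combined with the definition of $g$ this gives \eqref{EQ: onfmuw8103JN83nIdA}. The boundary condition $\nabla^{\perp} B \cdot \nu = 0$ on $\partial \Omega$ follows because $\nabla^{\perp} B \cdot \nu = \nabla B \cdot \tau = \partial_{\tau} B$ on $\partial \Omega$, and this tangential derivative vanishes by virtue of $g\cdot \nu = 0$; equivalently $B$ is constant on $\partial \Omega$ (which is connected since $\Omega$ is simply connected), and one normalizes by setting this constant to zero.

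For the uniqueness statement I would argue as follows: suppose $\nabla A_1 + \nabla^\perp B_1 = \nabla A_2 + \nabla^\perp B_2$ are two such decompositions. Setting $a = A_1 - A_2$ and $b = B_2 - B_1$, one gets $\nabla a = \nabla^\perp b$ in $\Omega$. Taking the divergence yields $\Delta a = 0$, while dotting with $\nu$ on $\partial \Omega$ and using the boundary condition on $\nabla^{\perp} b$ gives $\partial_\nu a = 0$ on $\partial\Omega$. Hence $a$ is a constant. Then $\nabla^\perp b = 0$, so $b$ is also constant, finishing the proof.

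The only delicate point is the careful interpretation of the normal trace $f \cdot \nu$ on $\partial \Omega$ when $f$ is merely $L^p$; this is handled in the weak formulation above by testing against $\varphi \in C^\infty(\overline\Omega)$ (not just compactly supported ones), which encodes the Neumann condition implicitly. All other steps are standard, and the regularity $B \in W^{1,p}$ follows directly from $\nabla B = (\partial_2 B, -\partial_1 B) = (g_1, g_2) \in L^p$ together with a Poincaré inequality fixing the additive constant.
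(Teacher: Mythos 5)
Your argument is correct and takes essentially the same route as the paper. Both proofs first obtain the Helmholtz--Weyl splitting $f=\nabla A + g$ with $\divergence g = 0$ in $\Omega$ and $g\cdot\nu=0$ on $\partial\Omega$ (the paper cites Theorem III.1.2 in Galdi's book as a black box; you unpack that same theorem by solving the $L^p$ Neumann problem via Agmon--Douglis--Nirenberg, which is exactly how the cited result is proved), and both then invoke the Poincar\'e lemma on the simply connected domain to write the divergence-free field $g$ as $\nabla^\perp B$, the boundary condition for $B$ being inherited automatically from $g\cdot\nu=0$. The one genuine divergence is in the uniqueness step: the paper integrates by parts to show directly that $\int_\Omega|\nabla(A-A')|^2\,dx=0$, whereas you observe that $a=A_1-A_2$ satisfies $\nabla a=\nabla^\perp b$, hence $\Delta a=0$ in $\Omega$ with vanishing weak Neumann data, so $a$ is constant and then $\nabla^\perp b=0$ as well. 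Your version is slightly more robust for $p<2$, since the paper's identity presupposes $\nabla(A-A')\in L^2(\Omega)$, which is not given a priori when $p<2$; reducing to a harmonic function with zero Neumann data sidesteps that integrability point. Otherwise the two proofs are the same in substance.
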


\begin{proof}
By Theorem III.1.2 of \cite{G11} there exists some $A \in W^{1,p}(\Omega)$ and some $g \in (L^p(\Omega))^2$ with $\divergence(g)=0$ in $\Omega$ and $g\cdot \nu=0$ on $\partial\Omega$ such that
\begin{equation}
f=\nabla A + g \qquad \text{ in }\Omega.
\end{equation}
The vector field
\begin{equation}
\widetilde g = \begin{pmatrix}
g_2\\
-g_1
\end{pmatrix}
\end{equation}
is curl-free and hence by the Poincar\`e lemma there exists some potential $B \in W^{1,p}(\Omega)$ such that $\nabla B=\widetilde g$.
That is $\nabla^\perp B=g$ with $ \nabla^\perp B \cdot \nu=0$ on $\partial\Omega$.
Finally, assume that $\nabla A + \nabla^\perp B = \nabla A^\prime + \nabla^\perp B^\prime$ with with $ \nabla^\perp B \cdot \nu=\nabla^\perp B^\prime \cdot \nu=0$ on $\partial\Omega$.
Then integrating by parts
\begin{equation}
\int_\Omega \abs{\nabla (A-A^\prime)}^2 dx 
= \int_\Omega \nabla (A-A^\prime) \cdot \nabla^\perp (B-B^\prime) dx
= \int_\Omega \divergence \left[(A-A^\prime) \cdot \nabla^\perp (B-B^\prime)\right] dx=0,
\end{equation}
implying $\nabla A= \nabla A^\prime$ and $\nabla^\perp B= \nabla^\perp B^\prime$.
\end{proof}

\begin{lemma}
\label{LEMMA: Poincare Lemma stated in our setting}
Let $\Omega\subset \R^2$ be a simply connected bounded open set with $C^2$ boundary and let $1<p<\infty$.
Let $f,g \in (L^p(\Omega))^2$ with 
\begin{equation}
\curl(f)=0 \text{ in } \Omega\qquad \text{ and }\qquad \divergence(g)=0 \text{ in } \Omega
\end{equation}
Then there exists some potentials $\Phi, \Psi \in W^{1,p}(\Omega)$ such that
\begin{equation}
f=\nabla \Phi \qquad \text{ and } \qquad g=\nabla^\perp\Psi.
\end{equation}
\end{lemma}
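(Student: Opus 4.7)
The plan is to deduce both claims from the Hodge/Helmholtz-Weyl decomposition of Lemma~\ref{LEMMA: Hodge/Helmholtz-Weyl Decomposition}, after first reducing the divergence-free statement to the curl-free one by a $90^\circ$ rotation. Indeed, if $g \in (L^p(\Omega))^2$ satisfies $\divergence(g)=0$ in $\mathcal D'(\Omega)$, then $g^\perp := (g_2,-g_1) \in (L^p(\Omega))^2$ satisfies $\curl(g^\perp) = -\divergence(g) = 0$; producing a potential $\Psi \in W^{1,p}(\Omega)$ with $\nabla \Psi = g^\perp$ then yields $\nabla^\perp \Psi = g$ directly. It therefore suffices to treat the curl-free case.

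So let $f \in (L^p(\Omega))^2$ with $\curl(f)=0$. I would apply Lemma~\ref{LEMMA: Hodge/Helmholtz-Weyl Decomposition} to obtain
\[
f = \nabla A + \nabla^\perp B \qquad \text{in } \Omega,
\]
with $A, B \in W^{1,p}(\Omega)$ and $\nabla^\perp B \cdot \nu = 0$ on $\partial\Omega$. A direct computation with the convention $\nabla^\perp B = (-\partial_y B,\partial_x B)$ gives $\curl(\nabla^\perp B) = \Delta B$, so the curl-free hypothesis forces $\Delta B = 0$ in $\mathcal D'(\Omega)$, i.e.\ $B$ is weakly harmonic. The boundary condition $\nabla^\perp B \cdot \nu = 0$ is exactly $\partial_\tau B = 0$ on $\partial\Omega$ in the trace sense, meaning that the trace of $B$ has vanishing tangential derivative along each component of $\partial\Omega$. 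Since $\Omega$ is simply connected, bounded and has $C^2$ boundary, $\partial\Omega$ is a single connected Jordan curve, and hence the trace of $B$ equals a constant $c$ on $\partial\Omega$.

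To conclude I would apply Weyl's lemma to upgrade $B$ to a classically harmonic (in fact $C^\infty$) function in $\Omega$, and then use boundary regularity of the Laplacian on the $C^2$ domain $\Omega$ with the smooth (constant) boundary datum $c$ to extend $B$ to an element of $C^0(\overline{\Omega})$. The strong maximum principle applied to $B - c$, or equivalently the uniqueness of the Dirichlet problem in $W^{1,p}_0(\Omega)$, then yields $B \equiv c$, so $\nabla^\perp B = 0$ and $\Phi := A$ is the desired potential. The main technical obstacle is the careful trace interpretation in the $L^p$-based setting: one must justify that the distributional identity $\nabla^\perp B \cdot \nu = 0$ on $\partial\Omega$ really does pass to the statement ``$B|_{\partial\Omega}$ is a constant'' in a sense strong enough to invoke Dirichlet uniqueness. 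This follows from the standard trace theorem $W^{1,p}(\Omega) \to W^{1-1/p,p}(\partial\Omega)$ on $C^2$ domains, together with the commutation of the tangential derivative with the trace, which identifies $\partial_\tau (B|_{\partial\Omega})$ with the boundary restriction $\nabla^\perp B \cdot \nu$.
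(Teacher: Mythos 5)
Your argument is correct, but it follows a genuinely different and considerably longer route than the paper's one-line proof. The paper simply observes that $f$ (and the rotated field $(g_2,-g_1)$) is a curl-free $L^p$ vector field on a simply connected domain and invokes the classical Poincar\`e lemma directly; the $W^{1,p}$ regularity of the potential is then automatic since its gradient is prescribed in $L^p$ and the domain is bounded. You instead route the curl-free case through Lemma~\ref{LEMMA: Hodge/Helmholtz-Weyl Decomposition}, computing that the divergence-free component $\nabla^\perp B$ must be harmonic with vanishing tangential derivative on the (single, connected) boundary curve, and concluding by Dirichlet uniqueness in $W^{1,p}$ that $B$ is constant. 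This is valid and nicely illustrates that, for curl-free input, the Hodge/Helmholtz-Weyl decomposition degenerates to a pure gradient; the trace-theoretic points you flag (the $W^{1-1/p,p}(\partial\Omega)$ trace, identifying $\nabla^\perp B\cdot\nu$ with $-\partial_\tau(B|_{\partial\Omega})$, and $W^{1,p}$ Dirichlet uniqueness on a $C^2$ domain) are all standard. One thing worth being aware of: the paper's proof of Lemma~\ref{LEMMA: Hodge/Helmholtz-Weyl Decomposition} itself already invokes the classical Poincar\`e lemma, applied to the rotation $(g_2,-g_1)$ of Galdi's divergence-free tangent component, in order to produce the potential $B$. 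Your derivation is therefore not circular --- that application is to a different vector field --- but it does not bypass the classical Poincar\`e lemma either; it merely reaches the conclusion by the detour $f\mapsto (A,B)\mapsto (B\ \text{constant})\mapsto f=\nabla A$, whereas the authors apply the classical result to $f$ itself. If the goal is just to justify this lemma, the direct citation the authors give is the shorter and more transparent choice.
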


\begin{proof}
The existence of $\Phi$ follows from applying the Poincar\`e lemma to $f$ and the existence of $\Psi$ follows from applying the Poincar\`e lemma to $\widetilde g =(g_2,-g_1)$.
\end{proof}

\begin{lemma}
\label{LEMMA: T assigns div free comp is bd unif}
Let $\Omega\subset \R^2$ be a simply connected bounded open set with $C^2$ boundary.
To $1<p<\infty$ consider the operator
\begin{equation}
{T_p}: (L^p(\Omega))^2 \rightarrow (L^p(\Omega))^2;
\qquad {T_p} (f)= \nabla^\perp B,
\end{equation}
which to a vector field $f\in (L^p(\Omega))^2$ assigns $\nabla^\perp B$, where 
$\nabla^\perp B$ is given by \eqref{EQ: onfmuw8103JN83nIdA}.
Then ${T_p}$ is a bounded linear operator and furthermore for any $1<p_1<p_2<\infty$ one has
\begin{equation}
\sup_{p \in [p_1,p_2]} \norm{{T_p}}<\infty.
\end{equation}
Henceforward we omit writing any dependence on $p$ and denote ${T}={T_p}$.
Finally, we also note that for any $u \in W^{1,p}(\Omega)$ one has $T(\nabla u)=0$.
\end{lemma}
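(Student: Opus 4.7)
The plan is to deduce all three claims from Lemma \ref{LEMMA: Hodge/Helmholtz-Weyl Decomposition} and classical $L^p$-theory for the Neumann problem. First, linearity of $T_p$ is an immediate consequence of the uniqueness statement in the decomposition lemma: the map $f \mapsto \nabla^\perp B$ is well-defined (modulo the trivial ambiguity of $B$ by an additive constant, which disappears after taking $\nabla^\perp$), and linearity follows from the linearity of the decomposition problem together with this uniqueness. For the $L^p$-bound, I would apply $\operatorname{div}$ to \eqref{EQ: onfmuw8103JN83nIdA}, observing that $\Delta A = \operatorname{div} f$ in $\Omega$ and, via the boundary condition $\nabla^\perp B \cdot \nu = 0$, that $\partial_\nu A = f \cdot \nu$ on $\partial\Omega$. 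Thus $A$ solves a Neumann problem for the Laplacian with data $f$, and Theorem III.1.2 of \cite{G11} (which is already invoked to prove Lemma \ref{LEMMA: Hodge/Helmholtz-Weyl Decomposition}) yields $\|\nabla A\|_{L^p(\Omega)} \leq C_p \|f\|_{L^p(\Omega)}$. The triangle inequality then gives $\|T_p(f)\|_{L^p(\Omega)} = \|\nabla^\perp B\|_{L^p(\Omega)} \leq (1+C_p) \|f\|_{L^p(\Omega)}$.

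For the uniform bound $\sup_{p\in [p_1,p_2]}\|T_p\|<\infty$, the essential point is that $T_p$ is defined by the same functional recipe (subtract the curl-free part with matching Neumann data) independently of $p$. Since $L^{p_1}(\Omega)\cap L^{p_2}(\Omega)$ is dense in every $L^p(\Omega)$ with $p_1\le p\le p_2$, and the decomposition is unique, the operators $T_p$ restrict consistently to this common dense subspace. I would then invoke the Riesz--Thorin interpolation theorem between the two endpoints $p_1$ and $p_2$ to conclude
\[
\|T_p\|_{L^p\to L^p} \;\leq\; \|T_{p_1}\|_{L^{p_1}\to L^{p_1}}^{\theta} \, \|T_{p_2}\|_{L^{p_2}\to L^{p_2}}^{1-\theta}, \qquad \tfrac{1}{p}=\tfrac{\theta}{p_1}+\tfrac{1-\theta}{p_2},
\]
which is finite and bounded uniformly on $[p_1,p_2]$.

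Finally, the identity $T(\nabla u)=0$ for $u \in W^{1,p}(\Omega)$ is a direct consequence of uniqueness: the trivial decomposition $\nabla u = \nabla u + \nabla^\perp 0$ satisfies all the requirements of Lemma \ref{LEMMA: Hodge/Helmholtz-Weyl Decomposition} (including $\nabla^\perp 0 \cdot \nu = 0$ on $\partial\Omega$), so by uniqueness the $\nabla^\perp$-component must be $0$, giving $T(\nabla u)=0$.

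The one step requiring some care is the uniform boundedness: one must make sure that the Riesz--Thorin machinery applies, i.e.\ that $T_p$ and $T_{p'}$ agree on the dense intersection $L^{p_1}\cap L^{p_2}$. That is exactly the content of uniqueness in Lemma \ref{LEMMA: Hodge/Helmholtz-Weyl Decomposition}, since for any $f$ belonging to both spaces the vector field $\nabla^\perp B$ produced by the decomposition is unambiguously defined and independent of the ambient $L^p$ setting. Once this is observed, everything reduces to the classical interpolation theorem and to the $p$-dependent Neumann estimate of Galdi, so no further difficulty arises.
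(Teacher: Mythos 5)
Your argument is correct and follows essentially the same route as the paper: well-definedness and linearity from the uniqueness in Lemma~\ref{LEMMA: Hodge/Helmholtz-Weyl Decomposition}, and the uniform bound over $[p_1,p_2]$ from Riesz--Thorin applied after checking that the $T_p$'s are consistent on the dense intersection $L^{p_1}\cap L^{p_2}$. Where you differ slightly is in the fixed-$p$ boundedness: the paper dispatches this by asserting that $T_p$ is ``a projection on a Banach space and hence bounded,'' which taken literally is not a valid inference (idempotence alone does not imply continuity); your version, which extracts the estimate $\|\nabla A\|_{L^p}\leq C_p\|f\|_{L^p}$ from the Neumann problem via Theorem~III.1.2 of \cite{G11} and then uses the triangle inequality, is the honest justification and is what the paper's cited result actually provides, so if anything your write-up closes a small gap in the paper's own phrasing. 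The rest (consistency of $T_p$ across $p$ via uniqueness, and $T(\nabla u)=0$ by plugging the trivial decomposition into the uniqueness statement) matches the intended proof.
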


\begin{proof}
The operator ${T_p}$ is well-defined by the result in Lemma \ref{LEMMA: Hodge/Helmholtz-Weyl Decomposition}. Clearly, ${T_p}$ is a projection on a Banach space and hence bounded. The uniform bound on the norm is a consequence of the Riesz-Thorin theorem. 
\end{proof}

\subsection{Weighted Poincar\`e Inequality}
\begin{lemma}[Weighted Poincar\`e Inequality]
\label{LEMMA: Lemma on neg in the necks Weighted Poincare}
\begin{equation}
\label{EQ: uwenfwjeufnwefbHBH}
\begin{gathered}
\forall \beta \in (0,2): \ \exists c_0>0: \ \exists \eta_0>0: \ \exists k_0>0: \forall \eta<\eta_0: \ \forall k \ge k_0: \ \forall f \in W^{1,2}_0(A(\eta,\delta_k)): \\
\int_{A(\eta,\delta_k)} c_0 \left( \frac{\de_k}{\et \abs{x}} \right)^\be \frac{\abs{f}^2}{\abs{x}^2} \ dx \le \int_{A(\eta,\delta_k)} \abs{\nabla f}^2 dx, \\
\int_{A(\eta,\delta_k)} c_0 \left( \frac{\abs{x}}{\et} \right)^\be \frac{\abs{f}^2}{\abs{x}^2} \ dx \le \int_{A(\eta,\delta_k)} \abs{\nabla f}^2  dx,  \\
\int_{A(\eta,\delta_k)} c_0 \frac{1}{\log^2\left(\frac{\eta^2}{\delta_k} \right)} \frac{\abs{f}^2}{\abs{x}^2} \ dx \le \int_{A(\eta,\delta_k)} \abs{\nabla f}^2  dx.
\end{gathered}
\end{equation}
\end{lemma}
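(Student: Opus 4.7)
The plan is to reduce all three inequalities to one-dimensional Hardy-type estimates on the interval $(0, M)$ with $M \coloneqq \log(\eta^2/\delta_k)$, by exploiting the rotational symmetry of the neck and a logarithmic change of variables. Throughout, I will bound $|\nabla f|^2 \ge |\partial_r f|^2$ in polar coordinates $(r,\vartheta)$, so it suffices, for each fixed $\vartheta$, to establish weighted 1D inequalities for $f(\cdot,\vartheta) \in W^{1,2}_0((\delta_k/\eta,\eta))$.

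First I will treat the second inequality. In polar coordinates
\begin{equation*}
\int_{A(\eta,\delta_k)} \left(\frac{|x|}{\eta}\right)^\beta \frac{|f|^2}{|x|^2}\, dx = \frac{1}{\eta^\beta}\int_0^{2\pi}\int_{\delta_k/\eta}^{\eta} r^{\beta-1} |f|^2\, dr\, d\vartheta,
\end{equation*}
and $\int |\nabla f|^2\, dx \ge \int r |\partial_r f|^2 dr d\vartheta$. After the substitution $r = \eta e^{-\tau}$, $\tau \in (0,M)$, and writing $F(\tau,\vartheta) = f(\eta e^{-\tau},\vartheta)$, the radial integrals become
\begin{equation*}
\frac{1}{\eta^\beta}\int_{\delta_k/\eta}^{\eta} r^{\beta-1}|f|^2 dr = \int_0^M e^{-\beta\tau} |F|^2\, d\tau, \qquad \int r|\partial_r f|^2 dr = \int_0^M |\partial_\tau F|^2 d\tau,
\end{equation*}
and $F(0,\vartheta)=0$. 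Cauchy--Schwarz then yields $|F(\tau,\vartheta)|^2 \le \tau \int_0^M |\partial_\tau F|^2 d\tau$, so integrating $\int_0^M \tau e^{-\beta\tau} d\tau \le 1/\beta^2$ gives the desired 1D estimate with $c_0 = \beta^2$.

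The first inequality follows from the same argument applied after the conformal inversion $r \mapsto \delta_k/(\eta^2 r)$, which maps $A(\eta,\delta_k)$ to itself and swaps the two boundary circles; equivalently, use $s = \eta r/\delta_k$ and then $s = e^\tau$, which converts $(|x|/\eta)^\beta$ into $(\delta_k/\eta|x|)^\beta$ while preserving the form of the radial Dirichlet integral because the measure $r|\partial_r f|^2 dr$ is invariant under such logarithmic scalings. For the third (purely logarithmic) inequality, the same change of variables gives
\begin{equation*}
\int_{A(\eta,\delta_k)} \frac{|f|^2}{|x|^2} dx = \int_0^{2\pi}\int_0^M |F|^2 d\tau d\vartheta,
\end{equation*}
and the classical 1D Poincar\'e inequality on $(0,M)$ with zero boundary conditions gives $\int_0^M |F|^2 d\tau \le (M^2/\pi^2) \int_0^M |\partial_\tau F|^2 d\tau$, which, after multiplying by $1/M^2 = 1/\log^2(\eta^2/\delta_k)$, is exactly the claimed bound.

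No genuine obstacle arises here; the only point requiring attention is uniformity of the constant $c_0$ in $\eta$ and $k$, which is guaranteed because after the logarithmic substitution the remaining 1D inequalities are either $M$-independent (Hardy-type with exponential weight, where the integral $\int_0^\infty \tau e^{-\beta\tau} d\tau$ converges) or scale linearly with $M$ in the correct way (Poincar\'e, where the $M^2$ is absorbed by the factor $1/\log^2(\eta^2/\delta_k)$). Thus a single $c_0 = c_0(\beta) > 0$ works for all admissible $\eta$ and $k$.
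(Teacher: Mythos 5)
Your proof is correct, and it supplies a self-contained argument where the paper gives none: the paper simply cites Lemma IV.1 of \cite{DR23} for this statement. The route you take — slice by angle, drop the angular part of the gradient, pass to the logarithmic variable $\tau$ on $(0,M)$ with $M=\log(\eta^2/\delta_k)$, and then invoke the 1D Hardy bound $\int_0^\infty \tau e^{-\beta\tau}\,d\tau = 1/\beta^2$ for the first two weights and the 1D Poincar\'e inequality for the third — is the standard and essentially canonical way to prove weighted Poincar\'e inequalities on conformally degenerating annuli, and it is almost certainly what underlies the cited lemma. Your observation that $c_0=\beta^2$ is uniform in $\eta,k$ because the Hardy integral is taken over all of $(0,\infty)$, and that the $M^2$ from Poincar\'e is exactly cancelled by the $\log^{-2}(\eta^2/\delta_k)$ weight, is the correct way to see the uniformity. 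One small slip: the conformal inversion of the annulus $A(\eta,\delta_k)$ onto itself is $r\mapsto \delta_k/r$ (sending $\eta\mapsto\delta_k/\eta$ and vice versa), not $r\mapsto\delta_k/(\eta^2 r)$ as you wrote; this does not matter here because your equivalent description via $s=\eta r/\delta_k$, $s=e^\tau$ is correct and is what the computation actually uses. For completeness one should also record that the slicing is justified by density of $C^\infty_c(A(\eta,\delta_k))$ in $W^{1,2}_0(A(\eta,\delta_k))$, so that the radial trace $f(\cdot,\vartheta)$ genuinely vanishes at both endpoints for every $\vartheta$.
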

This is shown in Lemma IV.1 of \cite{DR23}.

 \subsection{A Lemma on Capacity and the Lower Semicontinuity of the Morse Index}
\label{SUBSECTION: A Lemma on Capacity and the Lower Semicontinuity of the Morse Index}

\begin{lemma}
\label{LEMMA: On the point removability of Sob functions}
The set
\[
{\mathcal{V}} \;=\; \{\,f \in W^{1,2}(B_1)\;;\;\mathrm{supp}(f)\,\subset\subset\,B_1 \setminus \{0\}\}
\]
is dense in \(\,W^{1,2}(B_1 \setminus \{0\})\).  Consequently, \(W^{1,2}(B_1 \setminus \{0\}) = W^{1,2}(B_1) \).
\end{lemma}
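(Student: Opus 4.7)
The proof rests on the classical fact that a single point has vanishing $W^{1,2}$-capacity in dimension two, which I would exploit through a logarithmic cutoff construction. For $0<\varepsilon<1/4$, introduce the radial function
\begin{equation*}
\varphi_\varepsilon(x) \coloneqq \min\Bigl\{1, \max\Bigl\{0, \tfrac{\log|x|-2\log\varepsilon}{-\log\varepsilon}\Bigr\}\Bigr\},
\end{equation*}
so that $\varphi_\varepsilon\equiv 0$ on $B_{\varepsilon^2}$, $\varphi_\varepsilon\equiv 1$ on $B_1\setminus B_\varepsilon$, and an explicit computation in polar coordinates yields $\|\nabla\varphi_\varepsilon\|_{L^2(B_1)}^2\le C/|\log\varepsilon|\to 0$ as $\varepsilon\searrow 0$. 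Multiplied by any smooth radial cutoff $\psi_\delta$ that equals $1$ on $B_{1-2\delta}$ and vanishes outside $B_{1-\delta}$, the product $\psi_\delta\varphi_\varepsilon$ is smooth and compactly supported in $B_1\setminus\{0\}$.

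Given $f\in W^{1,2}(B_1\setminus\{0\})$, I would first approximate by the componentwise truncations $f_N\coloneqq (f\wedge N)\vee(-N)$; these are bounded, belong to $W^{1,2}(B_1\setminus\{0\})$, and converge to $f$ in the $W^{1,2}$-norm by dominated convergence. For each $N$ I then set $g_{N,\varepsilon,\delta}\coloneqq \psi_\delta\varphi_\varepsilon f_N$, which lies in $\mathcal{V}$ by construction. The Leibniz rule gives
\begin{equation*}
\|\nabla(\varphi_\varepsilon f_N)-\nabla f_N\|_{L^2}^2 \le 2\|(1-\varphi_\varepsilon)\nabla f_N\|_{L^2}^2 + 2N^2\|\nabla\varphi_\varepsilon\|_{L^2}^2,
\end{equation*}
where the first term vanishes as $\varepsilon\searrow 0$ by dominated convergence and the second by the key estimate above; the additional cutoff $\psi_\delta$ introduces an error tending to zero as $\delta\searrow 0$ by the standard fact that smooth radial cutoffs approximate the identity in $W^{1,2}(B_1)$.

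A diagonal argument then produces, for every $n\in\N$, triples $(N_n,\varepsilon_n,\delta_n)$ such that $g_{N_n,\varepsilon_n,\delta_n}\in\mathcal{V}$ satisfies $\|g_{N_n,\varepsilon_n,\delta_n}-f\|_{W^{1,2}(B_1\setminus\{0\})}<1/n$, establishing the density claim. For the stated consequence, note that each element of $\mathcal{V}$ belongs to $W^{1,2}(B_1)$ and that the $W^{1,2}(B_1)$ and $W^{1,2}(B_1\setminus\{0\})$ norms coincide on $\mathcal{V}$ since $\{0\}$ has Lebesgue measure zero. Thus any Cauchy sequence in $\mathcal{V}\subset W^{1,2}(B_1\setminus\{0\})$ is also Cauchy in the complete space $W^{1,2}(B_1)$, and its limit — which must agree a.e.\ with $f$ — lies in $W^{1,2}(B_1)$; this yields the reverse inclusion $W^{1,2}(B_1\setminus\{0\})\subset W^{1,2}(B_1)$, the other direction being trivial.

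The only point requiring any care is handling the potential unboundedness of $f$ in the product rule, which the truncation step resolves cleanly; the rest is a routine computation with the logarithmic cutoff, and no deeper obstacle arises.
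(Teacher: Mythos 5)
Your core construction (logarithmic cutoff near the origin, plus truncation $T_N f$) is the same as the paper's, and that part is sound: the estimate $\|\nabla\varphi_\varepsilon\|_{L^2}^2\lesssim|\log\varepsilon|^{-1}$ is correct, $T_N f\to f$ in $W^{1,2}$ by dominated convergence, and for fixed $N$ the product $\varphi_\varepsilon T_N f\to T_N f$ in $W^{1,2}$ as $\varepsilon\searrow 0$. The problem is the extra boundary cutoff $\psi_\delta$ and the claim that multiplication by $\psi_\delta$ \emph{``approximates the identity in $W^{1,2}(B_1)$''}. This is false. Already for $f\equiv 1$ one has
\begin{equation*}
\|\nabla(\psi_\delta f)\|_{L^2(B_1)}^2 \;=\; \|\nabla\psi_\delta\|_{L^2(B_1)}^2 \;\gtrsim\; \frac{1}{\delta^2}\cdot\big|B_{1-\delta}\setminus B_{1-2\delta}\big| \;\sim\; \frac{1}{\delta}\;\longrightarrow\;\infty,
\end{equation*}
so $\psi_\delta f\not\to f$ in $W^{1,2}(B_1)$. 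Multiplication by cutoffs that vanish near $\partial B_1$ converges to the identity only on the proper closed subspace $W^{1,2}_0(B_1)$, not on $W^{1,2}(B_1)$.

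This is not a repairable detail but a genuine obstruction: the $W^{1,2}$-closure of functions with support compactly contained in the \emph{open} set $B_1\setminus\{0\}$ lies inside $W^{1,2}_0(B_1)$, which is strictly smaller than $W^{1,2}(B_1)$ (it misses, e.g., the constants). So if $\mathrm{supp}(f)\subset\subset B_1\setminus\{0\}$ is read literally as staying away from $\partial B_1$ as well as from $\{0\}$, the density claim you are trying to prove is false, and no choice of $\psi_\delta$ can fix your argument. The paper's own proof never cuts off near $\partial B_1$: its approximants $T_{M}f\,(1-\psi_\epsilon)$ vanish only on a neighborhood of the origin and remain unchanged near $\partial B_1$. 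That is the weaker (and correct) reading of the lemma, and it is what the applications actually use, since there the underlying domain is the closed surface $\Sigma$ with no boundary. Delete the $\psi_\delta$ factor and your argument coincides with the paper's and is correct.
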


\begin{proof}
Let $f\in W^{1,2}(B_1\setminus\{0\} )$.
To \(\epsilon>0\) define the function \(\psi_\epsilon\colon \mathbb{R}^n \to \mathbb{R}\) by
\begin{equation}
\psi_\epsilon(x) \;=\;
\begin{cases}
1, & \text{if } |x| \le \epsilon^2,\\[6pt]
\dfrac{\log\left(\frac{|x|}{\epsilon}\right)}{\log(\epsilon)}, & \text{if } \epsilon^2 < |x| < \epsilon,\\[6pt]
0, & \text{if } |x| \ge \epsilon.
\end{cases}
\end{equation}
To $M>0$ let 
\begin{equation}
T_Mf=\begin{cases}
M ,& \text{ if } f\ge M, \\
f ,& \text{ if } -M\le f\le M, \\
-M ,& \text{ if } f\le -M.
\end{cases}
\end{equation}
Introduce
\[
f_{M,\epsilon} \;=\; T_M f \,\bigl(1 - \psi_\epsilon\bigr).
\]
Notice that \(\mathrm{supp}\bigl(f_{M,\epsilon}\bigr)\subset\subset B_1 \setminus \{0\}\) and hence \(f_{M,\epsilon}\in {\mathcal{V}}\).
Let $\rho > 0$ be any small number.
Now we want to show that we can approximate $f$ by $f_{M,\epsilon}$ up to an error $\rho$ in the $W^{1,2}$-topology by choosing $M>0$ and $\epsilon>0$ suitably. 
Choose $M_0>0$ so large that 
\begin{equation}\label{TM0}
\|T_{M_0} f - f\|_{W^{1,2}(B_1)} < \frac{\rho}{2}.
\end{equation}
Now we note that $\| \psi_\epsilon \|_{L^\infty(B_1)}\rightarrow0$ as $\epsilon\searrow0$ and also
\begin{equation}
\|\nabla \psi_\epsilon \|_{L^2(B_1)}^2
= \frac{1}{|\log(\epsilon)|^2} \int_{B_{\epsilon}\setminus B_{\epsilon^2}} \frac{1}{|x|^2}\ dx
=\frac{2\pi}{|\log(\epsilon)|^2} \int_{\epsilon^2}^\epsilon \frac{1}{r}\ dr = \frac{2\pi}{|\log(\epsilon)|} \rightarrow 0,
\end{equation}
as $\epsilon\searrow0$.
Therefore we can choose $\epsilon_0>0$ so small that 
\begin{eqnarray}\label{fMep}
\|f_{M,\epsilon}\|_{W^{1,2}(B_1)}&\le& M_0 \| \psi_{\epsilon_0} \|_{L^2(B_1)} +
M_0 \|\nabla \psi_{\epsilon_0}\|_{L^{2}(B_1)} \nonumber\\&+& \|\nabla f \|_{L^2(B_1)} \|\psi_{\epsilon_0} \|_{L^\infty(B_1)} < \frac{\rho}{2}.
\end{eqnarray}
By combining \eqref{fMep} and \eqref{TM0} we get
\begin{equation}
\begin{aligned}
\|f_{{M_0},\epsilon_0} - f\|_{W^{1,2}(B_1)}
\;&\le\;
\|T_{M_0} f - f\|_{W^{1,2}(B_1)}
\;+\;
\|T_{M_0} f\,\psi_{\epsilon_0}\|_{W^{1,2}(B_1)} \\
&\le \frac{\rho}{2} + M_0 \| \psi_{\epsilon_0} \|_{L^2(B_1)} + M_0 \|\nabla \psi_{\epsilon_0} \|_{L^2(B_1)} + \|\nabla f \|_{L^2(B_1)} \|\psi_{\epsilon_0} \|_{L^\infty(B_1)}
< \rho.
\end{aligned}
\end{equation}
Since $\rho>0$ was arbitrary we have shown the claim.
\end{proof}

In the following we are always working in the setting and with the notations  introduced in Section \ref{SECTION: Preliminary definition and results}.
(Recall for instance $u_k, u_\infty, v_k, v_\infty, V_u, Q_u(\cdot), \Sigma, S^n$.)

\begin{proposition}[Lower Semicontinuity of Morse Index]\label{LSC}
For large $k$ there holds
\begin{equation}
\operatorname{Ind}_{E}(u_\infty) + \operatorname{Ind}_{E}(v_\infty)
\le \operatorname{Ind}{E_p}(u_k)\,.
\end{equation}
\end{proposition}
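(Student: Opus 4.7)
The plan is to construct, for $k$ large, an injective linear map from a negative subspace of the bubble-tree limit Hessian into a negative subspace of the Hessian of $E_{p_k}$ at $u_k$, using the log-cutoff density from Lemma~\ref{LEMMA: On the point removability of Sob functions}.

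First, I would fix $m=\operatorname{Ind}_E(u_\infty)$ and $m'=\operatorname{Ind}_E(v_\infty)$ and pick finite-dimensional subspaces $W^\infty\subset V_{u_\infty}$ and $V^\infty\subset V_{v_\infty}$ of these dimensions on which $Q_{u_\infty}$ respectively $Q_{v_\infty}$ are strictly negative definite. By Lemma~\ref{LEMMA: On the point removability of Sob functions} (applied at the blow-up point $q\in\Sigma$, and, via stereographic projection, at $\infty\in\widehat\C$ for the bubble), together with the continuity of the quadratic forms $Q_{u_\infty}$ and $Q_{v_\infty}$, these subspaces can be replaced by nearby subspaces of the same dimension consisting of smooth sections supported in compact subsets $K\subset\subset\Sigma\setminus\{q\}$ and $K'\subset\subset\C$, respectively, on which the quadratic forms remain strictly negative. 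Denote these new subspaces again by $W^\infty$ and $V^\infty$, and let $R>0$ be such that $K'\subset B_R$.

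Second, for $\eta>0$ small enough that $K\subset\Sigma\setminus\overline{B_{2\eta}}$ and $R<1/(2\eta)$, and for $k$ large (so that $\delta_k R<\delta_k/\eta$ and $B_{2\eta}\subset\Sigma$), I would define the insertion map
\[
\iota_k:W^\infty\oplus V^\infty\longrightarrow V_{u_k},
\qquad
\iota_k(\phi,\psi)(x)=P_{u_k(x)}\!\left[\phi(x)+\psi\!\left(x/\delta_k\right)\right],
\]
where $P_{u_k(x)}$ is the orthogonal projection onto $T_{u_k(x)}S^n$. By the choice of $\eta$ and $k$, the two summands have disjoint supports, the first contained in $K\subset\Sigma\setminus B_\eta$ and the second contained in $B_{R\delta_k}\subset B_{\delta_k/\eta}$.

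Third, I would run the convergence analysis. On $\Sigma\setminus B_\eta$, Definition~\ref{Definition: Bubble tree convergence of p harm map with one bubble} gives $u_k\to u_\infty$ in $C^\infty_{\mathrm{loc}}$, so $P_{u_k}\phi\to \phi$ in $W^{1,2}$ for every $\phi\in W^\infty$; similarly $v_k(y)\to v_\infty(y)$ in $C^\infty_{\mathrm{loc}}(\C)$ ensures that after the change of variables $y=x/\delta_k$ one has $P_{u_k(\delta_k\cdot)}\psi\to\psi$ in $W^{1,2}$ for $\psi\in V^\infty$. Using the uniform $L^\infty$ bound for $|\nabla u_k|$ on $K$ (trivial from smooth convergence) and for $|\nabla v_k|$ on $B_R$ (same reason), together with $p_k\to 2$ and Corollary~\ref{COROLLARY: Limit of na uk to pk minus 2 eq 1} which gives $(1+|\nabla u_k|^2)^{p_k/2-1}\to 1$ uniformly on the relevant supports, one deduces
\[
Q_{u_k}(P_{u_k}\phi)\longrightarrow Q_{u_\infty}(\phi),
\qquad
Q_{u_k}\!\left(P_{u_k}\psi(\cdot/\delta_k)\right)\longrightarrow Q_{v_\infty}(\psi).
\]
The key point is that the anisotropic term $p_k(p_k-2)\int(1+|\nabla u_k|^2)^{p_k/2-2}(\nabla u_k\cdot\nabla w)^2$ vanishes in the limit because it carries the prefactor $(p_k-2)\to 0$ and the remaining integrand is uniformly bounded on the compact support. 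The rescaling for the bubble uses the conformal invariance $\int|\nabla_x\psi(x/\delta_k)|^2\,dx=\int|\nabla_y\psi|^2\,dy$ and $\int|\nabla u_k|^2|\psi(\cdot/\delta_k)|^2\,dx=\int|\nabla v_k|^2|\psi|^2\,dy$.

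Finally, since $Q_{u_k}$ splits over sums with disjoint supports, $Q_{u_k}(\iota_k(\phi,\psi))=Q_{u_k}(P_{u_k}\phi)+Q_{u_k}(P_{u_k}\psi(\cdot/\delta_k))\to Q_{u_\infty}(\phi)+Q_{v_\infty}(\psi)$, which is strictly negative on the unit sphere of $W^\infty\oplus V^\infty$ and, by finite-dimensional compactness, remains so for $k$ large. Injectivity of $\iota_k$ on $W^\infty\oplus V^\infty$ for large $k$ follows from the same convergence statements (the limit map $(\phi,\psi)\mapsto(\phi,\psi)$ is injective). Hence $\operatorname{Ind}_{E_{p_k}}(u_k)\ge m+m'$, proving the proposition. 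The main obstacle is really the bookkeeping around the $p_k$-dependent weight and the anisotropic term; the log-cutoff density lemma plus the $C^\infty_{\mathrm{loc}}$ convergence in the bubble-tree picture take care of everything else.
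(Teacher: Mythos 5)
Your proof is correct and follows essentially the same strategy as the paper's: approximate the negative variations for $u_\infty$ and $v_\infty$ by compactly supported ones (using the density Lemma~\ref{LEMMA: On the point removability of Sob functions} near $q$, and near infinity for the bubble), push them into $V_{u_k}$ via the tangential projection $P_{u_k}$ (and rescaling for the bubble), exploit the disjointness of the supports, and pass to the limit in the quadratic form $Q_{u_k}$, using $(p_k-2)\to 0$ to kill the anisotropic term and Corollary~\ref{COROLLARY: Limit of na uk to pk minus 2 eq 1} to normalize the weight.

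One organizational difference worth noting: you establish both linear independence of the inserted family and strict negativity of $Q_{u_k}$ on the full span in one stroke, by showing $Q_{u_k}\circ\iota_k$ converges \emph{uniformly on the unit sphere} of the finite-dimensional space $W^\infty\oplus V^\infty$ to the strictly negative form $Q_{u_\infty}\oplus Q_{v_\infty}$; negativity then forces $\iota_k$ to have trivial kernel. The paper instead proves $Q_{u_k}<0$ only on each basis element and handles linear independence separately via a Gram-determinant computation; strictly speaking one also needs negativity on the \emph{whole} span (not just on the basis), and your unit-sphere compactness argument is the cleaner way to supply that — the paper leaves it implicit. Otherwise the ingredients and the route are the same, and the sketch is sound, provided the approximations furnished by the density lemma are projected back onto $u_\infty^{-1}TS^n$ (resp.\ $v_\infty^{-1}TS^n$) to stay tangential before feeding them into $\iota_k$, and provided (as you do) one uses $\delta_k^{2-p_k}\to 1$ together with the local boundedness of $|\nabla v_k|$ to control $(1+|\nabla u_k|^2)^{p_k/2-1}$ on the rescaled bubble region.
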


\begin{proof}
We set  $N_1:= \operatorname{Ind}(u_\infty)$ and $N_2 := \operatorname{Ind}(v_\infty)$.
Let $w^1,\dots,w^{N_1}$ be a basis of 
\begin{equation}
\{ w \in V_{u_\infty} ; Q_{u_\infty}(w)<0 \}.
\end{equation}
and let $\sigma^1,\dots,\sigma^{N_2}$
\begin{equation}
\{ \sigma \in V_{v_\infty} ; Q_{v_\infty}(\sigma)<0 \}.
\end{equation}
 There holds
\begin{equation}
\label{EQ: oiuwnfer9NIOQn23dq}
w^i \cdot u_\infty=0
\qquad \text{ and } \qquad
\sigma^i \cdot v_\infty=0,~~~\mbox{for all}~ i.
\end{equation}
 
\textbf{1.} By Lemma \ref{LEMMA: On the point removability of Sob functions} there exists a sequence $(f_l^i)_l\subset W^{1,2}(\Sigma)$ and radii $r_l^i>0$ such that
\begin{equation}
\label{EQ: uinwfe09jij0i2d3nJNdq}
\lim_{l\rightarrow\infty}\norm{f_l^i- w^i}_{W^{1,2}(\Sigma)}=0, \qquad \forall i=1,\dots,N_1
\end{equation}
and with $\supp(f^i_l)\subset \Sigma \setminus B_{r_l^i}$.
For $l \in \N$, $k \in \N$ and $i=1,\dots,N_1$, let us introduce
\begin{equation}
w_{l,k}^i \coloneqq 
f^i_l-\langle f^i_l , u_k \rangle u_k \qquad \text{in } \Sigma.
\end{equation}
One has $w_{l,k}^i \in V_{u_k}$.

{\bf Claim 1.} It holds:
\begin{equation}
\label{EQ: iuwnfenUINdiu23nf234f223ya}
\lim_{l\rightarrow\infty} \limsup_{k \rightarrow \infty} \norm{w^i_{l,k}-w^i}_{W^{1,2}(\Sigma)}=0.
\end{equation}
{\bf Proof of Claim 1.}\par
To that end, bound
\begin{equation}
\begin{aligned}
\norm{w^i_{l,k}-w^i}_{W^{1,2}(\Sigma)}
&\le \norm{f^i_{l}-w^i}_{W^{1,2}(\Sigma)}
+ \norm{\langle f^i_l , u_k \rangle u_k}_{W^{1,2}(\Sigma \setminus B_{r^i_l})} \\
&\le \norm{f^i_{l}-w^i}_{W^{1,2}(\Sigma)}
+ \norm{\langle f^i_l-w_i , u_k \rangle u_k}_{W^{1,2}(\Sigma)}
+ \norm{\langle w^i , u_k \rangle u_k}_{W^{1,2}(\Sigma \setminus B_{r^i_l})} \\
&\le 2\norm{f^i_{l}-w^i}_{W^{1,2}(\Sigma)}
+ \norm{\langle w^i , u_k \rangle u_k}_{W^{1,2}(\Sigma \setminus B_{r^i_l})}
\end{aligned}
\end{equation}
Let $\rho>0$.
Then for large $l$, we have by \eqref{EQ: uinwfe09jij0i2d3nJNdq} that
\begin{equation}
\norm{f^i_{l}-w^i}_{W^{1,2}(\Sigma)} < \frac{\rho}{2}.
\end{equation}
For such a given fixed $l$, we have by strong convergence of the sequence $(u_k)_k$  in $\Sigma \setminus B_{r^i_l})$. Hence from   \eqref{EQ: oiuwnfer9NIOQn23dq} it follows  that
\begin{equation}
\lim_{k\rightarrow\infty} \norm{\langle w^i , u_k \rangle u_k}_{W^{1,2}(\Sigma \setminus B_{r^i_l})}=0.
\end{equation}
Therefore for or large $l$ we have
\begin{equation}
\limsup_{k\rightarrow\infty}\norm{w^i_{l,k}-w^i}_{W^{1,2}(\Sigma)} < \rho.
\end{equation}
We conclude the {\bf proof of Claim 1.}\par 

We can now use \eqref{EQ: iuwnfenUINdiu23nf234f223ya} to get
\begin{equation}
\lim_{l\rightarrow\infty} \limsup_{k \rightarrow \infty}
\abs{Q_{u_k}(w^i_{l,k})-Q_{u_\infty}(w^i)}=0
\end{equation}
This implies that, for large $l$ and large $k$, we have 
\begin{equation}
\label{EQ: iunwef9u2u3n2193un12ejjNqd}
Q_{u_k}(w^i_{l,k}) <0.
\end{equation} \\
\textbf{2.} Let $(g_l^i)_{l}\subset W^{1,2}(\C)$ be a sequence and $R_l^i\nearrow\infty$ as $l\to +\infty$  be  such that   $\supp(g^i_l)\subset B_{R_l^i}$, and 
\begin{equation}
\label{EQ: uq9nfenUINIUDNui9qew}
\lim_{l\rightarrow\infty}\norm{g_l^i- \sigma^i}_{W^{1,2}(\C)}=0, \qquad \forall i=1,\dots,N_2.
\end{equation}
 For $l \in \N$, $k \in \N$ (with $\delta_k \le \frac{1}{R_l^i}$) and $i=1,\dots,N_2$, let us introduce
\begin{equation}
\sigma_{l,k}^i \coloneqq
\begin{cases}
g^i_l(\frac{\cdot}{\delta_k}) -\langle g^i_l(\frac{\cdot}{\delta_k}) , u_k \rangle u_k,
 & \abs{x} \le \delta_k R^i_l \le 1 \\
0, &\text{else}.
\end{cases}
\end{equation}
One has $\sigma_{l,k}^i \in V_{u_k}$.\par

{\bf Claim 2.}  We have:
\begin{equation}
\label{EQ: ONJJD0i13diamndJN12q156}
\lim_{l\rightarrow\infty} \limsup_{k \rightarrow \infty} \norm{\sigma^i_{l,k}(\delta_k \cdot) -\sigma^i}_{W^{1,2}(\C)}=0.
\end{equation}
{\bf Proof of Claim 2.}\par
It holds
\begin{equation}
\begin{aligned}
\norm{\sigma^i_{l,k}(\delta_k \cdot) -\sigma^i}_{W^{1,2}(\C)}
&\le \norm{g^i_{l}-\sigma^i}_{W^{1,2}(\C)}
+ \norm{\langle g^i_l , v_k \rangle v_k}_{W^{1,2}(\C)} \\
&\le \norm{g^i_{l}-	\sigma^i}_{W^{1,2}(\C)}
+ \norm{\langle g^i_l-\sigma_i , v_k \rangle v_k}_{W^{1,2}(\C)}
+ \norm{\langle \sigma^i , v_k \rangle v_k}_{W^{1,2}(B_{R^i_l})} \\
&\le 2\norm{g^i_{l}-	\sigma^i}_{W^{1,2}(\C)}
+ \norm{\langle \sigma^i , v_k \rangle v_k}_{W^{1,2}(B_{R^i_l})} 
\end{aligned}
\end{equation}
 
Let $\rho>0$,
then from  \eqref{EQ: uq9nfenUINIUDNui9qew}, for large $l$, it follows   that
\begin{equation}
\norm{g^i_{l}-\sigma^i}_{W^{1,2}(\C)} < \frac{\rho}{2}.
\end{equation}
For such a given fixed $l$ we have by strong convergence of the sequence $(v_k)_k$ and the fact \eqref{EQ: oiuwnfer9NIOQn23dq} that
\begin{equation}
\lim_{k\rightarrow\infty} \norm{\langle \sigma^i , v_k \rangle v_k}_{W^{1,2}(B_{R^i_l})}=0.
\end{equation}
Hence for large $l$ we have shown that
\begin{equation}
\limsup_{k\rightarrow\infty}\norm{\sigma^i_{l,k}(\delta_k\cdot)-\sigma^i}_{W^{1,2}(\C)} < \rho.
\end{equation}
This proves \eqref{EQ: ONJJD0i13diamndJN12q156} and we conclude the proof of {\bf Claim2.}\par

We can now use \eqref{EQ: ONJJD0i13diamndJN12q156} to get
\begin{equation}
\lim_{l\rightarrow\infty} \limsup_{k \rightarrow \infty}
\abs{Q_{v_k}(\sigma^i_{l,k})-Q_{v_\infty}(\sigma^i)}=0.
\end{equation}
This implies that for large $l$ and large $k$ we have
\begin{equation}
\label{EQ: jiNIJDWIQIIOEM893n1e9n}
Q_{v_k}(\sigma^i_{l,k}) <0.
\end{equation} \\
\textbf{3.}
Now we claim that for large $l$ and large $k$ the family
\begin{equation}
\mathcal B_{l,k} \coloneqq \{ w_{l,k}^1, \dots, w_{l,k}^{N_1}, \sigma_{l,k}^1, \dots, \sigma_{l,k}^{N_2} \} \subset V_{u_k}
\end{equation}
is linearly independent.
(In the following $G(B)$ denotes the determinant of the Gram matrix of a given basis $B$.)
As $(w^i)_{i=1,\dots, N_1}$ is a linear independent family we know that the determinant of the Gram matrix is non-zero, i.e. there is some $\kappa_1>0$ such that
\begin{equation}
\label{EQ: iu2n3fpfdwi0ej2013frn381209HZu1}
G(\{w_1,\dots,w_{N_1}\})= \det \Big[ \left( \langle w^i , w^j \rangle_{L^2(\Sigma)} \right)_{i,j} \Big]\ge \kappa_1 >0.
\end{equation}
Similar as $(\sigma^i)_{i=1,\dots, N_1}$ is a linear independent family we know that the determinant of the Gram matrix is non-zero, i.e. there is some $\kappa_2>0$ such that
\begin{equation}
\label{EQ: inHUIniuqefni23f9013r}
G(\{\sigma_1,\dots,\sigma_{N_2}\})=
\det \Big[ \left( \langle \sigma^i , \sigma^j \rangle_{L^2(\C)} \right)_{i,j} \Big]\ge \kappa_2 >0.
\end{equation}
Now note that as $\supp(w^i_{l,k})\subset\Sigma\setminus B_{r^i_l}$ and $\supp(\sigma^i_{l,k})\subset B_{R^i_l\delta_k}$ for large $l$ and large $k$
we will find that
\begin{equation}
\langle w^i_{l,k} , \sigma^j_{l,k} \rangle_{L^2(\Sigma)} = 0, \qquad \forall i=1,\dots,N_1,\forall j=1,\dots,N_2.
\end{equation}
Hence, if we compute the Gram matrix of $\mathcal B_{l,k}$ we have
\begin{equation}
\label{EQ: inuweunfiNIJ3209r1nirqfuz1ws0jw}
G(\mathcal B_{l,k})=
\det \Bigg[ \left( \langle w^i_{l,k} , w^j_{l,k} \rangle_{L^2(\Sigma)} \right)_{i,j} \Bigg]\
\det \Bigg[ \left( \langle \sigma^i_{l,k} , \sigma^j_{l,k} \rangle_{L^2(\Sigma)}  \right)_{i,j} \Bigg]
\end{equation}
By \eqref{EQ: iuwnfenUINdiu23nf234f223ya} and \eqref{EQ: ONJJD0i13diamndJN12q156} we know that 
\begin{equation}
\label{EQ: ijnwfeuinIUui123d1}
 \langle w^i_{l,k} , w^j_{l,k} \rangle_{L^2(\Sigma)} \rightarrow \langle w^i , w^j \rangle_{L^2(\Sigma)},
\qquad
\langle \sigma^i_{l,k} , \sigma^j_{l,k} \rangle_{L^2(\Sigma)} \rightarrow \langle \sigma^i , \sigma^j\rangle_{L^2(\C)},
\end{equation}
as $k \rightarrow \infty$ and $l \rightarrow \infty$.
Combining \eqref{EQ: ijnwfeuinIUui123d1} with \eqref{EQ: inuweunfiNIJ3209r1nirqfuz1ws0jw} and 	\eqref{EQ: iu2n3fpfdwi0ej2013frn381209HZu1}, \eqref{EQ: inHUIniuqefni23f9013r} we find  for large $l$ and large $k$ that
\begin{equation}
G(\mathcal B_{l,k}) \ge \frac{\kappa_1 \kappa_2}{2}>0.
\end{equation}
As the determinant of the Gram matrix of $\mathcal B_{l,k}$ is non-zero we deduce that the family $\mathcal B_{l,k}$ is linearly independent.
This with \eqref{EQ: iunwef9u2u3n2193un12ejjNqd} and \eqref{EQ: jiNIJDWIQIIOEM893n1e9n} gives
\begin{equation}
N_1+N_2= \dim(span(\mathcal B_{l,k})) \le \dim(\{ w \in V_{u_k} ; Q_{u_k}(w)<0 \}) = \operatorname{Ind}(u_k).
\end{equation}
This concludes the proof of Proposition \ref{LSC}.
\end{proof}

\newpage

\end{document}